\documentclass[10pt]{article}

\usepackage[colorlinks,linkcolor=blue,citecolor=blue]{hyperref}
\usepackage{amsthm}
\usepackage{multirow}
\usepackage{marginnote}
\usepackage{a4wide}
\usepackage{amssymb}
\usepackage{amsfonts}
\usepackage{amsmath}
\usepackage{mathrsfs}
\usepackage{tikz}
\usetikzlibrary{arrows,matrix}
\usetikzlibrary{positioning}
\usepackage{mdframed} 
\usepackage{lipsum} 
\usepackage{extarrows} 
\usepackage{color}
\usepackage{enumitem} 
\usepackage{tocloft} 
\usepackage{titlesec} 

\usepackage[all]{xy} 

\input xy
\xyoption{arrow} \xyoption{matrix}

\date{}

\newtheorem{proposition}{Proposition}[section]
\newtheorem{theorem}[proposition]{Theorem}
\newtheorem{lemma}[proposition]{Lemma}
\newtheorem{example}[proposition]{Example}
\newtheorem{definition}[proposition]{Definition}
\newtheorem{corollary}[proposition]{Corollary}

\def\GK{{\rm  GK}\,}

\def\Hom{{\rm Hom}}
\def\der{\partial }

\def\nFM0{{\nu }_{F,M_0}}
\def\nFN0{{\nu }_{F,N_0}}
\def\nGN0{{\nu }_{G,N_0}}

\def\N0{ {\bf N}_0 }

\def\t{\otimes}
\def\g{\gamma}
\def\v{\varphi}
\def\ra{\rightarrow}

\def\Xpm{X^{\pm }}

\def\s{\sigma}
\def\Z{\mathbb{Z}}

\def\l1{{\lambda}_1}

\def\a{\alpha}
\def\a0{ {\alpha }_0}
\def\a1{ {\alpha }_1}

\def\l{\lambda}


\def\nFGM0{{\nu }_{F,G,M_0}}


\def\nFN0{{\nu}_{F,N_0}}


\def\sm{{\sigma}^m}

\def\sm1{{\sigma}^{-1}}

\def\smtp1{{\sigma}^{-t+1}}

\def\S1{S^{-1}}

\def\Xpm1{X^{\pm 1}_1}

\def\sPM1{{\sigma }^{\pm 1}}
\def\sMP1{{\sigma }^{\mp 1 }}

\def\b{\beta}
\def\d{\delta}

\def\di{{\rm d.ind}}

\def\L{\Lambda}

\def\CA{{\cal A}}


\def\Ytm1{Y^{t-1}}
\def\Yim1{Y^{i-1}}


\def\CL{{\cal L}}

\def\CS{{\cal S}}
\def\CF{{\cal F}}

\def\CH{{\cal H}}
\def\ass{{\rm ass}}


\def\Aut{{\rm Aut}}

\def\dim{{\rm dim }}

\def\ker{ {\rm ker } }

\def\D{ \Delta }


\def\SL2Z{ {\rm SL}_2({\bf Z}) }

\def\th{ \theta }

\def\CL{{\cal L}}

\def\Gp1{ G^{1 , 1 } }
\def\P11{ P^{-1 , 1 } }
\def\Pp1{ P^{1 , 1 } }

\def\th{\theta}

\def\nCLsr{{}^\nu\kern-2pt {\cal L}^{\sigma , \rho  }}
\def\nP{{}^\nu \kern-2pt P}
\def\nL{{}^\nu\kern-2pt L}
\def\nLL{{}^\nu\kern-2pt \Lambda}
\def\nPsr{{}^\nu\kern-2pt P^{\sigma , \rho  }}
\def\nLsr{{}^\nu\kern-2pt L^{\sigma , \rho  }}
\def\nuCL{{}^\nu\kern-2pt  {\cal L}}
\def\nCLsr{{}^\nu\kern-2pt {\cal L}^{\sigma , \rho  }}
\def\nCL1m{{}^\nu\kern-2pt {\cal L}^{-1 , 1  }}
\def\x1nu{x^\frac{1}{\nu}}
\def\xm1nu{x^{-\frac{1}{\nu}}}





\def\ra{\rightarrow }

\def\CB{{\cal B}}

\def\CI{{\cal I}}

\def\CT{{\cal T}}


\def\CC{ {\cal C}}

\def\CH{ {\cal H}}
\def\CP{ {\cal P}}

\def\nAM0{{\nu }_{{\cal A},M_0}}
\def\nAN0{{\nu }_{{\cal A},N_0}}

\def\End{ {\rm End }}

\def\CP{ {\cal P }}



\def\bR{\overline{R}}


\def\ga{\mathfrak{a}}
\def\gb{\mathfrak{b}}

\def\gp{\mathfrak{p}}


\def\SL{{\rm SL}}

\def\Hom{{\rm Hom}}


\def\di!{\frac{\der^i}{i!}}
\def\dik!{\frac{\der^k_i}{k!}}




\def\gl{\mathfrak{l}}

\def\id{{\rm id}}





\def\N{\mathbb{N}}

\def\0{\overline{0}}
\def\1{\overline{1}}

\def\Ln1{\L_{n,\overline{1}}}

\def\a1{a_{\overline{1}}}

\def\bs{\overline{s}}

\def\S{\Sigma}


\def\vn1{\overrightarrow{n-1}}


\def\gl{{\rm gl}}
\def\sl{{\rm sl}}

\def\mA{\mathbb{A}}
\def\mD{\mathbb{D}}





\def\mS{\mathbb{S}}
\def\mJ{\mathbb{J}}
\def\mI{\mathbb{I}}

\def\lann{{\rm l.ann}}
\def\rann{{\rm r.ann}}

\def\clKdim{{\rm cl.Kdim}}



\def\mE{\mathbb{E}}


\def\K1{{\rm K}_1}

\def\mY{\mathbb{Y}}





\def\hmI1{\widehat{\mI_1}}
\def\tmI1{\widetilde{\mI_1}}
\def\tmJ1{\widetilde{\mJ_1}}
\def\hB1{\widehat{B_1}}
\def\hCB1{\widehat{\CB_1}}

\def\bS{\overline{S}}


\def\Den{{\rm Den}}

\def\Ore{{\rm Ore}}

\def\Den{{\rm Den}}

\def\Ass{{\rm Ass}}


\def\br{\overline{r}}

\def\bs{\overline{s}}

\def\ga{\mathfrak{a}}

\def\udim{{\rm udim}}

\def \S{\mathcal{S}}






\def\sl2{\mathfrak{sl}_2}


\def\sl2{\mathfrak{sl}_2}
\def\gl2{\mathfrak{gl}_2}


\def\b1{\overline{1}}





\def\RR{\mathbb{R}}
\def\Z{\mathbb{Z}}






\def\gl{{\mathfrak{l}}}




\def\Den{{\rm Den}}

\def\Ore{{\rm Ore}}

\def\Den{{\rm Den}}

\def\Ass{{\rm Ass}}


\def\br{\overline{r}}

\def\bs{\overline{s}}

\def\ga{\mathfrak{a}}

\def\udim{{\rm udim}}







\def\pCC{{}'\CC}
\def\pCCR{{}'\CC_R}

\def\RR{{}_RR}
\def\pQ{{}'Q}

\def\pbCC{{}'\overline{\CC}}

\def\pSlR{{}'S_l(R)}
\def\pQlR{{}'Q_l(R)}
\def\pS{{}'S}
\def\pga{{}'\ga}



\def\RSm{ R\langle S^{-1}\rangle}

\def\pCCR{{}'\CC_R}


\def\RR{\mathbb{R}}
\def\Z{\mathbb{Z}}




\def\CS{{\cal S}}
\def\CL{{\cal L}}


\def \mrL{\mathrm{L}} 
\def\mrLR{ \mathrm{L}(R)}
\def\mrLRa{ \mathrm{L}(R, \ga )}






\setlength\cftparskip{-1pt}
\setlength\cftbeforesecskip{1pt}
\setlength\cftaftertoctitleskip{2pt}

\makeatletter
\newenvironment{proof*}[1][\proofname]{\par
  \pushQED{\qed}%
  \normalfont \partopsep=\z@skip \topsep=\z@skip
  \trivlist
  \item[\hskip\labelsep
        \itshape
    #1\@addpunct{.}]\ignorespaces
}{%
  \popQED\endtrivlist\@endpefalse
}
\makeatother

\begin{document}

\author{V. V. \  Bavula  
}

\title{Ore sets, denominator sets and the  left regular left quotient ring of a ring}

\maketitle

\begin{abstract}

The aim of the papers is to describe the left regular left quotient ring ${}'Q(R)$ and the right  regular right quotient ring $Q'(R)$ for the following algebras $R$: $\mS_n=\mS_1^{\t n}$ is the algebra of one-sided inverses, where $\mS_1=K\langle x,y\, | \, yx=1\rangle$, $\CI_n=K\langle \der_1, \ldots, \der_n,\int_1,\ldots, \int_n\rangle$ is the algebra of scalar integro-differential operators and the Jacobian algebra $\mA_1=K\langle x,\der, (\der x)^{-1}\rangle$.  The sets of left and right regular elements of the algebras $\mS_1$, $\CI_1$,  $\mA_1$ and  $\mI_1=K\langle x, \der,\int\rangle$.   A progress is made on the following conjecture, \cite{Clas-lreg-quot}: $${}'Q(\mI_n)\simeq Q(A_n)\;\; {\rm 
 where}\;\;  \mI_n =K\bigg\langle x_1,\ldots , x_n,  \der_1, \ldots, \der_n,\int_1,\ldots, \int_n\bigg\rangle$$ is the algebra of polynomial  integro-differential operators and  $Q(A_n)$ is the classical quotient ring (of fractions) of the $n$'th Weyl algebra $A_n$, i.e. a criterion is given when the isomorphism holds. We produce several general constructions of left Ore and left denominator sets that appear naturally in applications and are of independent interest and use them to produce explicit left denominator sets that give the localization ring isomorphic to ${}'Q(\mS_n)$ or  ${}'Q(\mI_n)$ or ${}'Q(\mA_n)$ where $\mA_n:=\mA_1^{\t n}$. Several characterizations of one-sided regular elements of a ring are given in module-theoretic and one-sided-ideal-theoretic way.

$\noindent$

{\em Key Words: Ore set, denominator set, the Jacobian algebra, the Weyl algebra, the algebra of polynomial integro-differential operators, left regular element, the left regular left quotient ring of a ring, Goldie's Theorem.
}

{\em Mathematics subject classification 2020: 16S85,  16P50, 16U20,  13B30,  16D30. }

{ \small \tableofcontents}
\end{abstract}


\section{Introduction} \label{INTR} 

In this paper, module means a {\em left} module.  The following
notation will remain fixed throughout the paper (if it is not
stated otherwise): 
\begin{itemize}

\item $R$ is a unital ring and $R^\times$ be its group of units, 

\item   $\CC = \CC_R$  is the set of {\em regular} elements of the ring $R$ (i.e.\ $\CC$ is the set of non-zero-divisors of the ring $R$); 

\item   $\pCCR $  is the set of {\em left  regular} elements of the ring $R$, i.e.\ $\pCCR :=\{ c\in R\, | \, \ker (\cdot c)=0\}$ where $\cdot c: R\ra R$, $r\mapsto rc$; 

\item $\CC_R':=\{ c\in R\, | \, \ker (c\cdot )=0\}$  is the set of {\em right regular} elements of  $R$;

\item   $Q=Q_{l,cl}(R):= \CC_R^{-1}R$ is the {\em left quotient ring}  (the {\em classical left ring of fractions}) of the ring $R$ (if it exists, i.e.\ if $\CC_R$ is a left Ore set) and $Q^\times$ is the group of units of $Q$;

\item $\Ore_l(R):=\{ S\, | \, S$ is a left Ore set in $R\}$; \item
$\Den_l(R):=\{ S\, | \, S$ is a left denominator set in $R\}$;

\item
$\Ass_l(R):= \{ \ass (S)\, | \, S\in \Den_l(R)\}$ where $\ass
(S):= \{ r\in R \, | \, sr=0$ for some $s=s(r)\in S\}$;

\item $\Den_l(R, \ga )$ is the set of left denominator sets $S$ of $R$ with $\ass (S)=\ga$ where $\ga$ is an ideal of $R$;
     
\item $S_\ga=S_\ga (R)=S_{l,\ga }(R)$
 is the {\em largest element} of the poset $(\Den_l(R, \ga ),
\subseteq )$ and $Q_\ga (R):=Q_{l,\ga }(R):=S_\ga^{-1} R$ is  the
{\em largest left quotient ring associated with} $\ga$. The fact that $S_\ga $
exists is proven in {\cite[Theorem 2.1]{larglquot}};

\item In particular, $S_0=S_0(R)=S_{l,0}(R)$ is the largest
element of the poset $(\Den_l(R, 0), \subseteq )$, i.e.\ the {\em largest regular  left Ore set} of $R$,  and
$Q_l(R):=S_0^{-1}R$ is the {\em largest left quotient ring} of $R$ \cite{larglquot};


\item $\pS (R)=\pSlR$ is the {\em largest left denominator set} in $\pCCR$ and $\pQ (R):=\pQ_l(R):= \pSlR^{-1}R$ is the {\em left regular left quotient ring} of $R$;

\item $\pga := \ass_R(\pSlR )$ and ${}'\pi : R\ra \bR':= R/ \pga$, $r\mapsto \br := r+\pga$;
            
\item $S'(R)=S_r'(R)$ is the {\em largest right denominator set} in $\CC_R'$ and $Q'(R):= Q_r'(R):= RS_r'(R)^{-1}$ is the {\em right  regular right quotient ring} of $R$.           
            
\end{itemize}

{\bf Semisimplicity criteria for the ring $\pQ_{l, cl} (R)$}.  For each element $r\in R$, let $r\cdot : R\ra R$, $x\mapsto rx$ and $\cdot r : R\ra R$, $x\mapsto xr$. The sets $\pCCR := \{ r\in R\, | \, \ker (\cdot r)=0\}$ and $\CC_R' := \{ r\in R\, | \, \ker (r\cdot )=0\}$ are called the {\em sets of left and right regular elements} of $R$, respectively.  Their intersection $\CC_R=\pCCR \cap \CC_R'$ is the {\em set of regular elements} of $R$. The rings $Q_{l,cl}(R):= \CC_R^{-1}R$ and $Q_{r,cl}(R):= R\CC_R^{-1}$ are called the {\em classical left and right quotient rings} of $R$, respectively. Goldie's Theorem states that the ring $Q_{l, cl}(R)$ is  a semisimple Artinian ring iff the ring $R$ is  semiprime, $\udim (\RR)<\infty$ and the ring $R$ satisfies the a.c.c. on left annihilators ($\udim$ stands for the uniform dimension). In \cite{Crit-S-Simp-lQuot}, four more new criteria are given based on different ideas, \cite[Theorems 3.1, 4.1, 5.1, 6.2]{Crit-S-Simp-lQuot}. 

In \cite{Clas-lreg-quot},   the rings $\pQ_{l, cl} (R) := \pCCR^{-1}R$ (the {\em classical left regular left quotient  ring} of $R$) and  $Q_{r, cl}' (R) :=R {\CC_R'}^{-1}$ (the {\em classical right regular right quotient  ring} of $R$) are introduced and studied,  and  several semisimplicity criteria for them are given  (\cite[Theorems 1.1, 3.1, 3.3, 3.4, 3.5]{Clas-lreg-quot}).  The ring $\pQ_{l,cl}(R)$ is a semisimple Artinian ring iff the ring $\pQ_l(R)$
is so (\cite[Theorem 4.3]{Clas-lreg-quot}).

A subset $S$ of a ring $R$ is called a {\em multiplicative set} if $1\in S$, $SS\subseteq S$ and $0\not\in S$. Suppose that $S$ and $T$ are multiplicative sets in $R$ such that $S\subseteq T$.   The multiplicative subset $S$ of $T$ is called {\em dense} (or {\em left dense}) in $T$ if for each element $t\in T$ there exists an element $r\in R$ such that $rt\in S$.
 For  a left ideal $I$ of $R$, let 
 $$
 \pCC_I:= \{ i\in I\, | \, \cdot i : I\ra I, \;\; x\mapsto xi\;\; {\rm  is\; an\; injection} \}.
 $$
   For a nonempty subset $S$ of a ring $R$, let $\ass_R(S) := \{ r\in R\, | \, sr=0$  for some $s\in S\}$. Let us mention a  semisimplicity criteria for the ring $\pQ_{l, cl}(R)$ that is used in the paper (see the proofs of  Theorem \ref{18Mar24} and Theorem \ref{25Mar24}).

\begin{theorem}\label{28Feb15}
(\cite[Theorems 1.1]{Clas-lreg-quot})  Let $R$ be a ring, $\pCC = \pCC_R$ and $\ga := \ass_R(\pCC )$. The following statements are equivalent.
\begin{enumerate}
\item $\pQ:=\pQ_{l,cl}(R)$ is a semisimple Artinian ring.
\item
\begin{enumerate}
\item $\ga$ is a semiprime ideal of $R$,
\item the set $\pbCC:= \pi (\pCC )$ is a dense subset of $\pCC_{\bR}$
 where $\pi : R\ra \bR := R/ \ga$, $r\mapsto \br := r+\ga$,
\item $\udim ({}_{\bR}\bR ) <\infty$, and
\item $\pCC_V\neq \emptyset$ for all uniform left ideals $V$ of $\bR$.
\end{enumerate}
\item $\ga $ is a semiprime ideal of $R$, $\pbCC$ is a dense subset of $\CC_{\bR}$ and $Q_{l, cl}(\bR )$ is a semisimple Artinian ring.
\end{enumerate}

If one of the equivalent conditions holds then $\pbCC \in \Den_l( \bR , 0)$, $\pbCC$ is a dense subset of $\CC_{\bR }$ and $\pQ \simeq \pbCC^{-1} \bR \simeq Q_{l, cl} (\bR )$. Furthermore, the ring $\pQ$ is a simple ring iff the ideal $\ga$ is a prime ideal.
\end{theorem}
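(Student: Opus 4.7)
My plan is to establish the three-way equivalence by the cycle $(1)\Rightarrow (3)\Rightarrow (2)\Rightarrow (1)$, with the reduction modulo $\ga = \ass_R(\pCC)$ to $\bR = R/\ga$ as the central device. Once semisimplicity is known, the final assertions about $\pbCC\in\Den_l(\bR,0)$ and density in $\CC_{\bR}$ will fall out from the explicit form of ${}'Q$, and the simplicity criterion is a separate short argument.

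\textbf{Step 1: Reduction to $\bR$.} I first observe that if $\pCC$ is a left denominator set with $\pQ := \pCC^{-1}R$ a ring, then by general localization theory $\pga = \ass_R(\pCC)$ is an ideal of $R$ and $\pi(\pCC)$ is automatically a left denominator set in $\bR$ of regular elements, giving an isomorphism $\pQ \simeq \pbCC^{-1}\bR$. So the presence of $\ga$ as an ideal is already forced; the question is what structural properties of $\bR$ and $\pbCC$ make the localization semisimple Artinian.

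\textbf{Step 2: $(1)\Rightarrow (3)$.} Assume $\pQ$ is semisimple Artinian. Its Jacobson radical is zero, and since $\pga$ embeds into it via $R\hookrightarrow \pQ$ in the appropriate sense (or more precisely, $\pga$ is the kernel of $R\to \pQ$), one shows $\pga$ is semiprime: any nilpotent ideal of $R$ is swallowed by $\pga$ because $\pCC$ is closed under the operations that detect it. Now elements of $\pbCC$ are images of left regular elements and become units in $\pQ$, hence are regular in $\bR$. Conversely, given $\bc\in\CC_{\bR}$, the defining property $\ker(\cdot \bc)=0$ on $\bR$ lifts (up to the ideal $\pga$) to an element of $R$ whose action on the right has kernel inside $\pga$; composing with an element of $\pCC$ that annihilates $\pga$-torsion lands us in $\pCC$, producing the density of $\pbCC$ in $\CC_{\bR}$. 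Finally, inverting a dense subset of $\CC_{\bR}$ inside $\bR$ yields the same ring as inverting $\CC_{\bR}$, so $Q_{l,cl}(\bR)\simeq \pbCC^{-1}\bR \simeq \pQ$ is semisimple Artinian.

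\textbf{Step 3: $(3)\Rightarrow (2)$.} Semiprimeness of $\ga$ is assumed. From semisimplicity of $Q_{l,cl}(\bR)$ together with Goldie's theorem applied to the semiprime ring $\bR$ we obtain $\udim({}_{\bR}\bR)<\infty$ and the existence of regular elements inside every essential left ideal; in particular, every uniform left ideal $V$ of $\bR$ meets $\CC_{\bR}$, so by density also meets $\pbCC \subseteq \pCC_{\bR}$. Hence $\pCC_V\neq\emptyset$. This gives condition (d), while (a), (b), (c) are immediate from the hypotheses and Goldie.

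\textbf{Step 4: $(2)\Rightarrow (1)$, the main obstacle.} This is the hardest direction because we must verify that $\pCC$ is a left denominator set in $R$ at all. The strategy is to work in $\bR$ first: density of $\pbCC$ inside $\pCC_{\bR}$ combined with (c) and (d) is exactly enough to run a Goldie-type argument inside the semiprime ring $\bR$ — take a uniform decomposition of $\bR$, pick left regular elements inside each summand using (d), and assemble them with the help of density to produce enough elements of $\pbCC$ to satisfy the left Ore condition on $\bR$. Lifting through $\pi$, the left Ore condition for $\pCC$ modulo $\pga$ gives the left Ore condition for $\pCC$ in $R$ because $\pga = \ass_R(\pCC)$ (any failure in $R$ is absorbed by $\pga$ and killed by another element of $\pCC$). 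The denominator property $sr=0 \Rightarrow rs'=0$ for some $s'\in\pCC$ is similarly checked: $sr=0$ forces $r\in\pga$, and by definition of $\pga$ there is $s'\in\pCC$ with $rs'=0$. Hence $\pCC\in\Den_l(R,\pga)$, the localization $\pQ$ exists, and $\pQ\simeq\pbCC^{-1}\bR$. Density of $\pbCC$ in $\CC_{\bR}$ makes $\pbCC^{-1}\bR \simeq \CC_{\bR}^{-1}\bR = Q_{l,cl}(\bR)$, which is semisimple Artinian by Goldie applied to the semiprime ring $\bR$ of finite uniform dimension (checking the a.c.c.\ on left annihilators of $\bR$ from (c) and (d) in the standard way).

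\textbf{Step 5: Final assertions.} The final claims — $\pbCC\in\Den_l(\bR,0)$, $\pbCC$ dense in $\CC_{\bR}$, the isomorphism $\pQ \simeq \pbCC^{-1}\bR \simeq Q_{l,cl}(\bR)$ — all emerge from the chain of identifications established in Steps 2 and 4. For simplicity: a semisimple Artinian ring is simple iff it has a unique simple component, and via the isomorphism $\pQ\simeq Q_{l,cl}(\bR)$ the simple components of $\pQ$ correspond to the minimal primes of the semiprime ring $\bR$; hence $\pQ$ is simple iff $\bR$ is prime iff $\ga$ is a prime ideal of $R$.

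\textbf{Expected main difficulty.} The delicate point is Step 4, specifically the verification of the left Ore condition for $\pCC$ in $R$ (rather than in $\bR$) starting only from the four hypotheses (a)--(d); the density assumption (b) is what bridges the gap between the classical Goldie machinery available inside the semiprime ring $\bR$ and the one-sided-regular set $\pCC$ we actually want to invert.
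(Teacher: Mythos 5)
This theorem is quoted from \cite{Clas-lreg-quot} and not re-proved in the present paper, so there is no in-paper proof to compare against; judged on its own terms, your cycle $(1)\Rightarrow(3)\Rightarrow(2)\Rightarrow(1)$ is a sensible plan, but Step 3 contains a genuine error and Steps 2 and 4 stay at sketch level.

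In Step 3 you deduce $\pCC_V\neq\emptyset$ from the claim that ``every uniform left ideal $V$ of $\bR$ meets $\CC_{\bR}$'' — but that claim is false whenever $\udim({}_{\bR}\bR) > 1$. In a semiprime left Goldie ring, $\bc$ is regular iff $\bR\bc$ is an essential left ideal (this is exactly \cite[Prop.~2.3.5]{MR}, used in the paper); so if a uniform left ideal $V$ contained a regular element, $V$ would have to be essential, which it cannot be once the uniform dimension exceeds one. Concretely, in $\bR = K\times K$ the left ideal $V = K\times 0$ is uniform and meets no regular element, yet $\pCC_V = V\setminus\{0\}\neq\emptyset$. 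That computation is exactly what your argument overlooks: condition (2d) asks that $\cdot v$ be injective on the \emph{module} $V$, not on $\bR$, which is a strictly weaker condition than $V$ containing an element of $\CC_{\bR}$. A correct derivation of (2d) must attack $\pCC_V$ directly via the structure of uniform left ideals inside the semiprime Goldie ring $\bR$ rather than routing through global regularity. Separately, in Step 4 your denominator-set verification is stated backwards (``$sr=0\Rightarrow rs'=0$''); the actual implication needed is $rs=0\Rightarrow s'r=0$, and this one holds for free for any multiplicative subset of $\pCCR$ because $rs=0$ with $s\in\pCCR$ already forces $r=0$ (Lemma~\ref{a12Mar24}.(3) records this). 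The real substance of $(2)\Rightarrow(1)$ is the left Ore condition, and your ``take a uniform decomposition ... assemble with density'' description does not yet amount to a proof; similarly, in Step 2 the passage from ``$\pQ$ semisimple'' to density of $\pbCC$ in $\CC_{\bR}$ is asserted rather than derived.
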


{\bf The left regular left quotient ring $\pQ_l(R)$ of a ring $R$ and its semisimplicity criteria}. Let $R$ be a ring. In general, the classical left quotient ring $Q_{l,cl}(R)$ does not exists, i.e. the set of regular elements $\CC_R$ of $R$ is not a left Ore set. The set $\CC_R$ contains the {\em largest left Ore set} denoted by $S_l(R)$ and the ring $Q_l(R) := S_l(R)^{-1}R$ is called the {\em (largest) left quotient ring} of $R$, \cite{larglquot}. Clearly, if $\CC_R$ is a left Ore set then $\CC_R = S_l(R)$ and $Q_{l,cl}(R) = Q_l(R)$. Similarly, the set $\pCCR$ of left regular elements of the ring $R$ is not a left denominator set, in general, and so in this case the classical left regular left quotient ring $\pQ_{l,cl}(R)$ does not exist. The set $\pCCR$ contains the {\em largest} left denominator set $\pSlR$ (\cite[Lemma 4.1.(1)]{Clas-lreg-quot}) and the ring $\pQ_l(R):= \pSlR^{-1} R$ is called the {\em left regular left quotient ring} of $R$, \cite{Clas-lreg-quot}. If $\pCCR$ is a left denominator set then $\pCCR = \pSlR$ and $\pQ_{l,cl}(R) = \pQ_l(R)$. 

The main difficulty  in constructing  the rings $\pQ(R)$ and $Q'(R)$ is to find descriptions of the sets $\pS (R)$ and $S'(R)$. The main idea in constructing the rings $\pQ(R)$ and $Q'(R)$ is to find larger or smaller or other denominator sets that give the {\em same} localization as the sets $\pS (R)$ and $S'(R)$ do. In order to do so, we produce several constructions of Ore or denominator sets (that satisfy various conditions, appear naturally in applications and are of  independent interest) and use them in the paper.

The paper is organized as follows. In Section \ref{ORE-DEN}, we present several results on and constructions of left Ore and denominator sets of a ring (Proposition \ref{A10Mar24}) and  give a sufficient condition  for the sets  ${}'\CC_R^{le}$, ${\CC_R'}^{re}$ and  $\CC_R^e$  being  denominator  sets (Proposition \ref{A11Mar24}). 
 Lemma \ref{a11Mar24}  and Corollary \ref{a10Mar24}  give  equivalent conditions to the left Ore condition. Lemma \ref{a15Mar24} makes connection between the sets of right or left regular elements of a ring and sets of module monomorphisms. Lemma \ref{b15Mar24} is an application of Lemma \ref{a15Mar24} for  one-sided ideals. For a module $M$ and its submodule $N$, Lemma \ref{c15Mar24} makes connections between the sets $\pCC_M$, $\CC_M'$ and $\CC_M$ and $\pCC_N$, $\CC_N'$ and $\CC_N$, respectively.  Corollary \ref{d15Mar24} and Corollary \ref{e15Mar24} are applications of the above result to one-sided essential ideals. These two corollaries are used in proofs.

 In Section \ref{RING-PQmSn}, the rings $\pQ (\mS_n)$ and $Q'(\mS_n)$ are described (Theorem \ref{19Mar24} and Corollary \ref{a22Mar24}.(1)) where 
 $\mS_n:=\mS_1^{\t n}$ is the algebra of one-sided inverses and $\mS_1:=K\langle x,y\, | \, yx=1\rangle$.  It is proven that $\pCC_{\mS_n}=\pS (\mS_n)$ and ${}'Q_{l,cl}(\mS_n)={}'Q(\mS_n)$ 
  (Corollary \ref{a31Mar24}), 
 $\CC_{\mS_n}'=S' (\mS_n)$ and $Q_{l,cl}'(\mS_n)=Q'(\mS_n)$ (Corollary {\ref{a22Mar24}.(2)). The algebra
$\mS_n$ is a non-commutative, non-Noetherian,  central, prime, catenary algebra; its ideals commute and 
satisfy the ascending chain condition;  its classical Krull dimension  is $2n$ but the
weak and the global dimensions  are $n$, \cite{shrekalg}. The same results hold for  the algebra of scalar integro-differential operators ($K$ is a field of characteristic zero),
 $$
\CI_n:= K\bigg\langle
 \der_1, \ldots ,\der_n,  \int_1,
\ldots , \int_n\bigg\rangle ,
$$ 
see Corollary \ref{a21Mar24}.  

Let $K$ be a  field of characteristic zero. The algebra $A_n=K\langle x_1, \ldots , x_n, \der_1, \ldots , \der_n\rangle $ is called the $n$'th  Weyl algebra. It is canonically isomorphic to the algebra of polynomial differential operators ($\der_i=\frac{\der}{\der x_i}$). The algebra $A_n$ is a Noetheian domain. Hence, by Goldie's Theorem it (left and right) classical quotient ring $Q(A_n)$ is a division ring.

The algebra 
$$\mI_n:=K\bigg\langle x_1, \ldots , x_n,
 \der_1, \ldots ,\der_n,  \int_1,
\ldots , \int_n\bigg\rangle $$
is called the algebra of polynomial integro-differential operators.  The algebra $\mI_n$ is a prime, central, catenary, non-Noetherian
algebra of classical Krull dimension $n$ and of Gelfand-Kirillov
dimension $2n$, \cite{algintdif}. In \cite{Clas-lreg-quot}, 
 explicit descriptions of the sets $\pCC_{\mI_1}$ and $\CC_{\mI_1}'$ are given (\cite[ Theorem 6.7]{Clas-lreg-quot}). These descriptions  are far from being trivial or obvious. It is also  proven  that 
$$ \pQ_{l,cl}(\mI_1)\simeq Q(A_1),
$$ 
see \cite[ Theorem 6.5.(1)]{Clas-lreg-quot}. In \cite{Clas-lreg-quot}, it is conjectured that 
$$ \pQ_{l,cl}(\mI_n)\simeq Q(A_n).
$$

In Section \ref{PQmIn-PQCIn}, we make progress on the conjecture. Namely,  
Theorem \ref{A25Mar24} is a criterion  for the the ring $\pQ (\mI_n)$ being isomorphic to quotient ring $Q(A_n)$. Despite the fact that there are no   descriptions yet for the set $\pCC_{\mI_n}$ and $\CC_{\mI_n}'$ where $n\geq 2$, Theorem \ref{18Mar24} provides explicit left denominators sets   $S\in \Den_l(\mI_n, \ga_n)$ such that  $S^{-1}\mI_n\simeq \pQ (\mI_n)$. 

\begin{definition} (\cite{jacalg}) The {\bf Jacobian algebra}
$\mA_n$ is the subalgebra of $\End_K(P_n)$ generated by the Weyl
algebra $A_n$ and the elements $H_1^{-1}, \ldots , H_n^{-1}\in
\End_K(P_n)$ where $$H_1:= \der_1x_1, \ldots , H_n:= \der_nx_n.$$
\end{definition}
The algebra $\mI_n$ properly  contains the algebras $A_n$, $\mI_n$ and $\CI_n$. 

In Section \ref{RI-PQAN}, a criterion is given  for  $\pQ (\mA_n)\simeq Q(A_n)$ (Theorem \ref{25Mar24}). As a corollary it is shown  that 
$$\pQ (\mA_1)\simeq Q(A_1)\;\; {\rm and}\;\; Q' (\mA_1)\simeq Q(A_1),$$ 
see Theorem \ref{20Mar24} and  Corollary \ref{b20Mar24}. The sets $\pCC_{\mA_1}$ and $\CC_{\mA_1}'$ are described (Theorem \ref{30Mar15}). 
 There are no  descriptions yet of the sets $\pCC_{\mA_n}$ and $\CC_{\mA_n}'$ for $n\geq 2$ but  Theorem \ref{25Mar24} provides explicit left denominators sets   $S\in \Den_l(\mA_n, \ga_n)$ such that  $S^{-1}\mA_n\simeq \pQ (\mA_n)$.



\section{Ore sets, denominator sets and  left or right regular elements  of a ring} \label{ORE-DEN} 

 The aim of this section is to present several results on and constructions of left Ore and denominator sets of a ring. Several characterizations of one-sided regular elements of a ring are given in module-theoretic and one-sided-ideal-theoretic way. These results are used in the paper and are of independent interest. 
\\

{\bf Ore and denominator sets, localization of a ring at a denominator set.} Let $R$ be a ring. A subset $S$ of $R$ is called a {\em multiplicative set} if  $SS\subseteq S$, $1\in S$ and $0\not\in S$. A 
multiplicative subset $S$ of $R$   is called  a {\em
left Ore set} if it satisfies the {\em left Ore condition}: for
each $r\in R$ and
 $s\in S$, $$ Sr\bigcap Rs\neq \emptyset .$$
Let $\Ore_l(R)$ be the set of all left Ore sets of $R$.
  For  $S\in \Ore_l(R)$, $\ass_l (S) :=\{ r\in
R\, | \, sr=0 \;\; {\rm for\;  some}\;\; s\in S\}$  is an ideal of
the ring $R$. 


A left Ore set $S$ is called a {\em left denominator set} of the
ring $R$ if $rs=0$ for some elements $ r\in R$ and $s\in S$ implies
$tr=0$ for some element $t\in S$, i.e., $r\in \ass_l (S)$. Let
$\Den_l(R)$ (resp., $\Den_l(R, \ga )$) be the set of all left denominator sets of $R$ (resp., such that $\ass_l(S)=\ga$). For
$S\in \Den_l(R)$, let $$S^{-1}R=\{ s^{-1}r\, | \, s\in S, r\in R\}$$
be the {\em left localization} of the ring $R$ at $S$ (the {\em
left quotient ring} of $R$ at $S$). By definition, in Ore's method of localization one can localize {\em precisely} at the left denominator sets.
 In a similar way, right Ore and right denominator sets are defined. 
Let $\Ore_r(R)$ and 
$\Den_r(R)$ be the set of all right  Ore and  right   denominator sets of $R$, respectively.  For $S\in \Ore_r(R)$, the set  $\ass_r(S):=\{ r\in R\, | \, rs=0$ for some $s\in S\}$ is an ideal of $R$. For
$S\in \Den_r(R)$,  $$RS^{-1}=\{ rs^{-1}\, | \, s\in S, r\in R\}$$   is the {\em right localization} of the ring $R$ at $S$. 

Given ring homomorphisms $\nu_A: R\ra A$ and $\nu_B :R\ra B$. A ring homomorphism $f:A\ra B$ is called an $R$-{\em homomorphism} if $\nu_B= f\nu_A$.  A left and right Ore set is called an {\em Ore set}.  Let $\Ore (R)$ and 
$\Den (R)$ be the set of all   Ore and    denominator sets of $R$, respectively. For
$S\in \Den (R)$, $$S^{-1}R\simeq RS^{-1}$$ (an $R$-isomorphism)
 is  the {\em  localization} of the ring $R$ at $S$, and $\ass (S):=\ass_l(S) = \ass_r(S)$. \\ 

{\bf The ring $\RSm$ and the ideal $\ass_R(S)$.} Let $R$ be a ring and $S$ be a  subset of  $R$. Let $R\langle X_S\rangle$ be a ring freely generated by the ring $R$ and a set $X_S=\{ x_s\, | \, s\in S\}$ of free noncommutative indeterminates (indexed by the elements of the set $S$). Let $I_S$ be the  ideal of $R\langle X_S\rangle$  generated by the set  $\{ sx_s-1, x_ss-1 \, | \, s\in S\}$ and 
\begin{equation}\label{RbSbm}
\RSm := R\langle X_S\rangle/ I_S.
\end{equation}
The ring $\RSm$ is called the {\bf localization of $R$ at} $S$. 
Let $\ass (S) = \ass_R(S)$ be the  kernel of the ring homomorphism
\begin{equation}\label{RbSbm1}
\s_S: R\ra \RSm , \;\; r\mapsto r+ I_S.
\end{equation}
 The map $\pi_S:R\ra \bR:= R/ \ass_R(S)$, $r\mapsto \br := r+\ass_R(S)$ is   an epimorphism.  The  ideal $\ass_R(S)$ of $R$ has a complex structure, its description is given in \cite[Proposition 2.12]{LocSets} when  $\RSm =\{ \bs^{-1}\br\, | \, s\in S, r\in R\}$ is a ring of left fractions.  We identify the factor ring $\bR $ with its isomorphic copy in the ring $\RSm$  via the monomorphism
\begin{equation}\label{bRS}
\overline{\s_S}: \bR\ra \RSm , \;\; r+\ass_R(S)\mapsto r+ I_S. 
\end{equation}
Clearly, $\bS := (S+\ass_R(S) ) /\ass_R(S)\subseteq \CC_{\RSm}$.  \cite[corollary  2.2]{GenLocSets} shows that the rings $\RSm$ and $\bR\langle \bS^{-1}\rangle$ are $R$-isomorphic. For $S=\emptyset$,  $R\langle \emptyset^{-1}\rangle := R$ and $\ass_R(\emptyset ):=0$. 

\begin{definition} A subset $S$ of a ring $R$ is called a {\bf localizable set} of $R$ if  
$\RSm \neq \{ 0\}$. Let $\mrLR$ be the set of localizable sets of $R$ and 
\begin{equation}\label{SxLzRa}
 \ass \, \mrL (R) :=\{ \ass_R (S) \, | \, S\in \mrL (R) \}.
\end{equation} 
\end{definition}
For an ideal $\ga $ of $R$, let $ \mrLRa :=\{ S\in \mrLR \, | \, \ass_R(S) = \ga \}$. Then 

\begin{equation}\label{SLzRa}
\mrLR= \coprod_{\ga \in \ass\,  \mrLR } \mrLRa
\end{equation}
is a disjoint union of non-empty sets. The set $(\mrLR , \subseteq )$ is a partially ordered set (poset)  w.r.t. inclusion $\subseteq $, and $(\mrLRa, \subseteq )$ is a sub-poset of $(\mrLR , \subseteq )$ for every $\ga \in \ass \,\mrLR$.

Proposition \ref{A5Dec22} is the {\em universal property of  localization}. 

\begin{proposition}\label{A5Dec22}
Let $R$ be a ring,  $S\in \mrLR$, and $\s_S : R\ra  R\langle S^{-1}\rangle$, $r\mapsto r+\ass_R(S)$. Let $f: R\ra A$ be a ring homomorphism such that $f(S)\subseteq A^\times$. Then there is a unique $R$-homomorphism $f':R\langle S^{-1}\rangle \ra A$ such  $f=f'\s_S$, i.e. the diagram below is commutative 
$$\begin{array}[c]{ccc}
R&\stackrel{\s_S}{\ra} &\RSm\\
&\stackrel{f}{\searrow}&{\downarrow}^{\exists !\, f'}\\
&  & A
\end{array}$$
\end{proposition}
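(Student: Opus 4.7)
The plan is to deduce the universal property from the presentation $\RSm = R\langle X_S\rangle / I_S$ by two applications of a universal property: first, the freeness of the generators $X_S=\{x_s\mid s\in S\}$ over $R$, and second, the fact that the ideal $I_S$ lies in the kernel of the resulting map.

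First I would use the universal property of the free ring on a set over $R$: since $f(S)\subseteq A^\times$, one can extend $f$ to a ring homomorphism
\[
\widetilde{f}: R\langle X_S\rangle \longrightarrow A, \qquad r\mapsto f(r),\quad x_s\mapsto f(s)^{-1}.
\]
Next I would verify that $I_S\subseteq \ker\widetilde{f}$ by checking that $\widetilde{f}$ annihilates the defining generators: for every $s\in S$,
\[
\widetilde{f}(sx_s-1)=f(s)f(s)^{-1}-1=0, \qquad \widetilde{f}(x_ss-1)=f(s)^{-1}f(s)-1=0.
\]
Hence $\widetilde{f}$ factors uniquely through the quotient, producing a ring homomorphism $f':\RSm \to A$. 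In particular, $\ass_R(S)=\ker\sigma_S\subseteq\ker f$, so the equality $f=f'\sigma_S$ follows by tracking $r\in R$ through the construction: $f'(\sigma_S(r))=f'(r+I_S)=\widetilde{f}(r)=f(r)$. This also shows $f'$ is an $R$-homomorphism with respect to the structure maps $\sigma_S$ (into $\RSm$) and $f$ (into $A$).

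For uniqueness, I would observe that $\RSm$ is generated as a ring by the images $\{r+I_S\mid r\in R\}\cup\{x_s+I_S\mid s\in S\}$. Any $R$-homomorphism $g:\RSm\to A$ satisfying $g\sigma_S=f$ must agree with $f$ on the first family, and since $(s+I_S)(x_s+I_S)=1=(x_s+I_S)(s+I_S)$ in $\RSm$, the element $x_s+I_S$ is a two-sided inverse of $s+I_S$, so $g(x_s+I_S)=g(s+I_S)^{-1}=f(s)^{-1}=f'(x_s+I_S)$. Thus $g$ and $f'$ coincide on a generating set and hence on all of $\RSm$.

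I do not expect a real obstacle here: the statement is the standard universal property and the hypothesis $S\in\mrLR$ (i.e.\ $\RSm\neq\{0\}$) is not essential for the existence of $f'$, it is only needed implicitly to make the commutative triangle meaningful. The only step that needs a moment of care is the identification of $\sigma_S$ with the composite $R\twoheadrightarrow R/\ass_R(S)\xrightarrow{\overline{\sigma_S}} \RSm$ via (\ref{bRS}), so that the equality $f'(\sigma_S(r))=\widetilde{f}(r)$ is literally correct in $\RSm$ rather than only up to the identification of $\bR$ with its image.
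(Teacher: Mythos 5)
Your proof is correct. The paper itself gives no proof of Proposition \ref{A5Dec22}, simply labelling it ``the universal property of localization,'' so there is nothing to compare against; the argument you give — extend $f$ to $\widetilde{f}$ on the free ring $R\langle X_S\rangle$ using freeness and $f(S)\subseteq A^\times$, check that the defining relators $sx_s-1,\,x_ss-1$ die under $\widetilde{f}$ so that $I_S\subseteq\ker\widetilde{f}$, pass to the quotient, and then prove uniqueness by noting that $\RSm$ is generated by $\sigma_S(R)$ together with the (necessarily unique) two-sided inverses $x_s+I_S$ of the $\sigma_S(s)$ — is exactly the standard one. Two small remarks: first, your closing observation that the hypothesis $\RSm\neq\{0\}$ is inessential for existence and uniqueness is right, and in fact the hypothesis is already forced whenever $A\neq\{0\}$, since $f$ would factor through $\RSm$. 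Second, you correctly flag that the statement of the proposition writes $\sigma_S(r)=r+\ass_R(S)$ whereas the definition (\ref{RbSbm1}) in the paper has $\sigma_S(r)=r+I_S$; this is a typo in the paper, and your identification via (\ref{bRS}) reconciles the two readings.
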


{\bf Every Ore set is a localizable set.} Let $S$ be an Ore set of the ring $R$. Theorem \ref{S10Jan19}  states  that  every Ore set is localizable,  gives an explicit description of the ideal $\ass_R(S)$  and the ring $R\langle S^{-1}\rangle$. 
 Theorem \ref{S10Jan19} also  states that  the ring  $R\langle S^{-1}\rangle$ is $R$-isomorphic to the localization $\bS^{-1}\bR$ of the ring $\bR$ at the denominator set $\bS$ of $\bR$.

\begin{theorem}\label{S10Jan19}
Let $R$ be a ring and $S\in \Ore (R)$.
\begin{enumerate}
\item \cite[Theorem 4.15]{Bav-intdifline} Every Ore set is a localizable set. 

\item \cite[Theorem 1.6.(1)]{LocSets} $\ga :=\{ r\in R\, | \, srt=0$ for some elements $s,t\in S\}$ is an ideal of $R$ such that $\ga \neq R$. 

\item \cite[Theorem 1.6.(2)]{LocSets} Let $\pi : R\ra \bR :=R/\ga$, $r\mapsto \br =r+\ga$. Then $\bS :=
\pi (S) \in \Den (\bR , 0)$, $\ga = \ga (S)=\ass_R(S)$, $S\in \mrL (R, \ga )$,  and $S^{-1}R\simeq \bS^{-1}\bR$, an $R$-isomorphism. In particular, every Ore set is localizable. 
\end{enumerate} 
\end{theorem}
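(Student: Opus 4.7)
The plan is to establish parts (2) and (3) directly and deduce (1) from (3); all arguments are two-sided Ore manipulations, so the task is one of careful bookkeeping rather than invention. The main technical obstacle is the first step: each closure property of $\ga$ requires an Ore-swap, and one must keep track of on which side of the equation the newly produced element of $S$ appears; once $\ga$ is in hand, the remaining steps are essentially formal.

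First I would verify that $\ga$ is a proper ideal. Given $r \in \ga$ with $srt = 0$ and $a \in R$, the left Ore condition supplies $s_1 \in S$ and $b \in R$ with $s_1 a = b s$, so $s_1 (ar) t = b (sr) t = 0$ and $ar \in \ga$; right multiplication $ra \in \ga$ is dual via the right Ore condition. Closure under addition uses the same ``common multiplier'' trick twice -- iterate left Ore to pass the two left factors to a common $s \in S$, and right Ore for the two right factors. The condition $\ga \neq R$ is immediate: $1 \in \ga$ would force $st = 0$ for some $s, t \in S$, contradicting $SS \subseteq S$ and $0 \notin S$.

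Next I would show that $\bS := \pi(S) \in \Den(\bR, 0)$. The left and right Ore conditions descend from $R$ to $\bR$ automatically. For left regularity, if $\bs \, \br = 0$ in $\bR$ then $sr \in \ga$, so $s'(sr) t' = 0$ for some $s', t' \in S$; since $s's \in S$ and $t' \in S$, this exhibits $r \in \ga$, whence $\br = 0$. Right regularity is symmetric, and the left and right denominator conditions reduce to trivialities in the presence of regularity.

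To finish (3), I would construct the $R$-isomorphism $R\langle S^{-1}\rangle \simeq \bS^{-1}\bR$ via universal properties. The composite $R \xrightarrow{\pi} \bR \hookrightarrow \bS^{-1}\bR$ sends $S$ to units, so Proposition \ref{A5Dec22} factors it uniquely through $R\langle S^{-1}\rangle$. Conversely, $\s_S : R \to R\langle S^{-1}\rangle$ kills $\ga$ because $srt = 0$ forces $\s_S(r) = \s_S(s)^{-1} \cdot 0 \cdot \s_S(t)^{-1} = 0$; hence it descends to $\bR \to R\langle S^{-1}\rangle$, carries $\bS$ to units, and by the universal property of the classical denominator-set localization $\bS^{-1}\bR$ produces the inverse. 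The equality $\ass_R(S) = \ga$ then follows since $\bR \hookrightarrow \bS^{-1}\bR$ is injective (as $\bS$ is regular). Part (1) is an immediate corollary: $\bR \neq 0$ and $\bS$ regular give $\bS^{-1}\bR \neq 0$, hence $R\langle S^{-1}\rangle \neq 0$ and $S$ is localizable.
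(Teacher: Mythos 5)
The paper does not include its own proof of this theorem — it cites \cite{Bav-intdifline} (Theorem 4.15) and \cite{LocSets} (Theorem 1.6) — so there is no internal argument to compare your proposal against. Your proof is correct: the Ore-swap bookkeeping for the ideal property of $\ga$ is handled properly (the common-left-multiplier argument $s := s's_2 = as_1 \in S$ for additivity works, and the right-side swap for $ra\in\ga$ is dual), the descent of the two-sided Ore condition to $\bR$ together with the regularity of $\bS$ (which you verify on both sides via $s'(sr)t'=0$) gives $\bS\in\Den(\bR,0)$, and the double universal-property argument produces the $R$-isomorphism $\RSm\simeq\bS^{-1}\bR$, from which $\ass_R(S)=\ga$ and localizability follow. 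One small slip: you write ``for left regularity, if $\bs\,\br=0$ then $\br=0$,'' but in the paper's convention (${}'\CC_R=\{c\mid\ker(\cdot\,c)=0\}$) that computation establishes right regularity of $\bs$; since you then invoke symmetry to get the other direction, the mathematics is complete and only the label is flipped.
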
 

{\bf  Equivalent conditions of  the left Ore condition and applications.} 
Let $M$ be an $R$-module and $\mE_M$ be the set of essential submodules of $M$.

\begin{lemma}\label{b8Mar24}
Let $M$ and $M'$ be $R$-modules. 
\begin{enumerate}

\item For all  $f\in \Hom_R(M,M')$,  $f^{-1}(\mE_{M'}):=\{ f^{-1}(L')\, | \, L'\in \mE_{M'}\}\subseteq \mE_M$.

\item If $M\subseteq M'$ then $\mE_M=\{M\cap L'\,|\, L'\in \mE_{M'}\}$.

\end{enumerate}
\end{lemma}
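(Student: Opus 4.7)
For (1), my plan is a direct case split. Given an arbitrary nonzero submodule $N \subseteq M$, I would treat the cases $f(N) = 0$ and $f(N) \neq 0$ separately. In the first case $N \subseteq \ker f \subseteq f^{-1}(L')$, so $N \cap f^{-1}(L') = N \neq 0$. In the second, essentiality of $L'$ in $M'$ produces $0 \neq y \in f(N) \cap L'$; choosing a preimage $x \in N$ with $f(x) = y$ gives $x \in N \cap f^{-1}(L')$ with $x \neq 0$ since $f(x) \neq 0$.

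For (2), the inclusion $\{M \cap L' \mid L' \in \mE_{M'}\} \subseteq \mE_M$ is immediate by applying (1) to the canonical inclusion $\iota : M \hookrightarrow M'$. The nontrivial direction asks, given $L \in \mE_M$, for an $L' \in \mE_{M'}$ with $M \cap L' = L$. My plan is the standard maximal-complement construction: use Zorn's lemma to choose a submodule $C \subseteq M'$ maximal with respect to $C \cap M = 0$, and set $L' := L + C$. The identity $L' \cap M = L$ is a one-line direct-sum check: if $l + c \in M$ with $l \in L \subseteq M$ and $c \in C$, then $c = (l+c) - l \in M \cap C = 0$, so $l + c = l \in L$.

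The essential step is verifying $L' \in \mE_{M'}$. Given $0 \neq N' \subseteq M'$, if $N' \cap C \neq 0$ there is nothing to do, since $C \subseteq L'$. Otherwise the maximality of $C$ forces $(N' + C) \cap M \neq 0$, producing $0 \neq m = n + c$ with $n \in N'$, $c \in C$, $m \in M$. Then $n = m + (-c)$ lies in the internal direct sum $M \oplus C \subseteq M'$ with $M$-component $m \neq 0$, and essentiality of $L$ in $M$ provides $r \in R$ with $0 \neq rm \in L$. The element $rn = rm + (-rc)$ then lies in $L + C = L'$ and has nonzero $M$-component $rm$, so $0 \neq rn \in N' \cap L'$. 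I expect the only delicate point to be setting up the maximal complement $C$ cleanly and keeping track of the internal direct sum $M \oplus C$ inside $M'$; once that is in place, each remaining step is a routine calculation.
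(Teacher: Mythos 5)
Your proof is correct. For part (1) you give the direct form of the paper's argument-by-contradiction; the content is the same. For part (2) both proofs introduce a Zorn-maximal complement $C$ and set $L':=L\oplus C$, but you choose $C$ maximal subject to $C\cap M=0$, whereas the paper chooses $C$ maximal subject to $C\cap L=0$ and then cites the standard fact that $L\oplus C$ is essential when $C$ is a complement of $L$. Because $L$ is essential in $M$, a submodule $C\subseteq M'$ satisfies $C\cap L=0$ iff it satisfies $C\cap M=0$, so the two descriptions pick out the same family of complements; the difference only shifts where the work is done. With your description, the identity $M\cap L'=L$ is the one-line direct-sum check you wrote, while with the paper's it is a modular-law step together with the observation that $M\cap C=0$ (again using essentiality of $L$ in $M$). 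In exchange you prove essentiality of $L'$ by hand; your verification is correct, and in effect it establishes the two standard facts the paper uses tacitly, namely that $M\oplus C$ is essential in $M'$ and that $L\oplus C$ is essential in $M\oplus C$. Net effect: your version is a bit more self-contained, the paper's is shorter by outsourcing to a textbook lemma about complements.
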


\begin{proof} 1. Suppose that $f^{-1}(L')\not\in \mE_M$ for some $L'\in \mE_{M'}$. Then we can choose a nonzero submodule, say $N$, of $M$ such that $N\cap f^{-1}(L')\neq \{0\}$. Hence, $f(N)\neq  \{ 0\}$ and $f(N)\cap L'=\{0\}$, a contradiction (since $L'\in \mE_{M'}$). 

2. By statement 1, $\{M\cap L'\,|\, L'\in \mE_{M'}\}\subseteq\mE_M $. Given $L\in \mE_M$, we have to show that $L=M\cap L'$ for some $L'\in \mE_{M'}$.   Let $C$ be the complement of the submodule $L\subseteq M'$. Then the direct sum $L':=L\oplus C$ 
 is an essential submodule of $M'$ such that $M\cap L'=L\oplus (M\cap C)=L$, as required  (since $L$ is an essential submodule of $M$). 
\end{proof}

 For a ring $R$, its left ideal $L$ and an element $r\in R$, the set $(L:r) :=\{ r'\in R\, | \, r'r\in L\}$ is a left ideal of $R$.

Lemma \ref{a10Mar24} gives equivalent conditions to the left Ore condition. They are used in constructions of (new large) classes of left Ore sets  by strengthening some of them (Proposition \ref{A10Mar24}). 

\begin{lemma}\label{a11Mar24}
Suppose that $S$ be a multiplicative subset of a ring $R$ and $\ga :=\ass_l(S):=\{ r\in R, | \, sr=0$ for some element $s\in S\}$. Then the following statements  are equivalent:    
\begin{enumerate}

\item  $S\in \Ore_l(R)$ 

\item The set $\ga$ is an ideal of $R$, $\bS:=S+\ga \in \Ore_l(\bR)$ where $\bR :=R/\ga$.

\item The set $\ga$ is an ideal of $R$, $\bR\bs $ is an essential left ideal of $\bR$ for all $\bs \in \bS$, and $\bS\cap (\bR\bs:\br)\neq \emptyset$ for all $\bs \in \bS$ and $\br \in \bR$. Furthermore, for all $\br\in \bR$ and $\bs\in \bS$, the left ideal $(\bR\bs :r)$ is an essential left ideal of $\bR$ (Lemma \ref{b8Mar24}.(1)).

\item For all left ideals $L$ of $R$ such that $L\not\subseteq \ga$ and all $s\in S$, $L\cap Rs\neq \{ 0\}$, and $S\cap (Rs:r)\neq \emptyset$ for all $s\in S$ and $r\in R\backslash \ga$. Furthermore, for all $r\in R\backslash \ga$ and $s\in S$, the left ideal $(Rs:r)$ is an essential left ideal of $R$  (Lemma \ref{b8Mar24}.(1)).

\end{enumerate}

\end{lemma}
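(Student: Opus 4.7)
The plan is to establish the cyclic chain of implications $(1) \Rightarrow (2) \Rightarrow (3) \Rightarrow (4) \Rightarrow (1)$.

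For $(1) \Rightarrow (2)$, the key step is showing that $\ga$ is a two-sided ideal of $R$. Closure under right multiplication is immediate from $sr = 0 \Rightarrow s(ra)=0$. For closure under left multiplication, given $r \in \ga$ with $s_0 r = 0$ and any $a \in R$, apply the left Ore condition to the pair $(a, s_0)$ to obtain $s_1 \in S$ and $a_1 \in R$ with $s_1 a = a_1 s_0$; then $s_1(ar) = a_1(s_0 r) = 0$, so $ar \in \ga$. With $\ga$ two-sided, the Ore condition on $R$ descends cleanly modulo $\ga$ to give $\bS \in \Ore_l(\bR)$.

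For $(2) \Rightarrow (3)$, first observe that $\ass_l(\bS) = 0$ in $\bR$: if $\bs\, \overline{x} = 0$ in $\bR$, then $sx \in \ga$, hence $s' s x = 0$ for some $s' \in S$, whence $x \in \ga$ and $\overline{x} = 0$. To see $\bR\bs$ is essential, take any nonzero left ideal $\overline{L}$ of $\bR$ and pick $0 \neq \overline{x} \in \overline{L}$; the Ore condition in $\bR$ produces $\bs_1 \in \bS$ and $\overline{y} \in \bR$ with $\bs_1\, \overline{x} = \overline{y}\,\bs$, and $\bs_1\, \overline{x} \neq 0$ since $\overline{x} \notin \ass_l(\bS) = 0$, giving $0 \neq \bs_1\, \overline{x} \in \overline{L} \cap \bR\bs$. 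The condition $\bS \cap (\bR\bs : \br) \neq \emptyset$ is just a restatement of the Ore condition. Essentiality of $(\bR\bs : \br)$ then follows from Lemma \ref{b8Mar24}.(1), applied to the left $\bR$-module homomorphism $\bR \to \bR$, $\overline{u} \mapsto \overline{u}\,\br$, whose preimage of the essential submodule $\bR\bs$ is precisely $(\bR\bs : \br)$.

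For $(3) \Rightarrow (4)$, lift each assertion from $\bR$ to $R$ via $\pi$. Given a left ideal $L \not\subseteq \ga$, $\pi(L)$ is a nonzero left ideal of $\bR$, so essentiality of $\bR\bs$ supplies $0 \neq \overline{x} \in \pi(L) \cap \bR\bs$; lifting, choose $x \in L$ above $\overline{x}$ and $y \in R$ with $x - ys \in \ga$. Selecting $s_1 \in S$ with $s_1(x - ys) = 0$ yields $s_1 x = s_1 y s \in L \cap Rs$, and $s_1 x \neq 0$ because $s_1 x = 0$ would force $x \in \ga$, contradicting $\overline{x} \neq 0$. The same clearing-denominator recipe converts $\bS \cap (\bR\bs : \br) \neq \emptyset$ into $S \cap (Rs : r) \neq \emptyset$ for every $r \notin \ga$, and combines with Lemma \ref{b8Mar24}.(1) to yield the essentiality of $(Rs : r)$ in $R$.

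For $(4) \Rightarrow (1)$, given $r \in R$ and $s \in S$: if $r \in \ga$ choose $s_0 \in S$ with $s_0 r = 0$ to see $s_0 r = 0 \cdot s \in Sr \cap Rs$; if $r \notin \ga$ then any $s' \in S \cap (Rs : r)$ produces $s' r \in Sr \cap Rs$, closing the loop. The principal obstacle is the essentiality step in $(3) \Rightarrow (4)$: the pullback of $(\bR\bs : \br)$ to $R$ under $\pi$ is generally strictly larger than $(Rs : r)$, so the essentiality of $(Rs : r)$ itself (rather than of its $S$-saturation) requires careful control of the denominator-clearing elements in every left ideal $L$ of $R$, using $r \notin \ga$ to guarantee non-vanishing of the cleared element.
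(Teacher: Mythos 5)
Your chain $(1) \Rightarrow (2) \Rightarrow (3) \Rightarrow (4) \Rightarrow (1)$ is a genuine alternative to the paper's route, which proves $(1) \Rightarrow (2) \Rightarrow (3)$ and $(1) \Leftrightarrow (4)$ separately (leaving $(3) \Rightarrow (2) \Rightarrow (1)$ to the reader). Your new ingredient is $(3) \Rightarrow (4)$: you lift a relation $\bs'\br = \overline{y}\,\bs$ in $\bR$ to a congruence $s'r - ys \in \ga$ in $R$ and clear it with a further $s_1 \in S$ using $\ga = \ass_l(S)$; this correctly yields both main clauses of $(4)$ and makes the cycle self-contained. Your substitution of the verifiable fact $\ass_l(\bS) = 0$ for the paper's stronger and unjustified assertion $\bS \subseteq \CC_{\bR}$ in $(2) \Rightarrow (3)$ is also a small improvement.

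However, the ``Furthermore'' of statement $(4)$ --- that $(Rs:r)$ is essential in $R$ for $r \notin \ga$ and $s \in S$ --- is a genuine gap, which you flag at the end but leave unresolved; in fact it cannot be repaired. Lemma \ref{b8Mar24}.(1) would give essentiality of $(Rs:r) = (\cdot\, r)^{-1}(Rs)$ only if $Rs$ itself were essential in $R$, but the main clause of $(4)$ only shows $L \cap Rs \neq 0$ for $L \not\subseteq \ga$, leaving nonzero left ideals contained in $\ga$ untested. Concretely, take $R = K[x,y]/(xy)$ and $S = \{x^n : n \geq 0\}$, so $\ga = \ass_l(S) = Ry$. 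Statement $(1)$ holds trivially ($R$ is commutative), yet for $r = 1 \notin \ga$ and $s = x$ one has $(Rx:1) = Rx$ and $Rx \cap Ry = 0$, so $(Rs:r)$ is not essential in $R$. The parallel clause in $(3)$ is fine because there $\bR\bs$ is essential in $\bR$; the ``Furthermore'' in $(4)$ is an overreach of the statement itself (the paper's proof likewise gives no argument beyond the same citation), and your hope that ``careful control of the denominator-clearing elements'' will close the gap cannot succeed.
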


\begin{proof} $(1\Rightarrow 2)$ The implication is well-known (and easy to prove).

$(2\Rightarrow 3)$ Suppose that $0\neq \br\in \bR$. Then $0\not\in \bS\br$ (since  $\bS\subseteq \CC_{\bR}$). Hence the left Ore condition for $\bS\in \Ore_l(\bR)$ implies that  $\bS\br\cap \bR\bs\neq \{ 0\}$ for all $\bs\in \bS$.  Hence $\bR\br\cap \bR\bs\neq \{ 0\}$ for all $\bs\in \bS$.  Therefore, $\bR\bs $ is an essential left ideal of $\bR$ for all $\bs \in \bS$ and $\bS\cap (\bR\bs:\br)\neq \emptyset$ for all $0\neq \br \in \bR$ and $\bs\in \bS$. 

If $\br =0$ then $\bS\cap (\bR\bs: 0)=\bS\cap\bR=\bS\neq \emptyset$ for all $\bs\in \bS$.

$(1\Rightarrow 4)$ Suppose that $r\in R\backslash \ga$. Then $0\not\in Sr$.
 Hence the left Ore condition for $S\in \Ore_l(R)$ implies that  $Sr\cap Rs\neq \{ 0\}$ for all $s\in \bS$. Hence,  $Rr\cap Rs\neq \{ 0\}$ for all $s\in \bS$. Therefore, for all left ideals $L$ of $R$ such that $L\not\subseteq \ga$ and all $s\in S$, $L\cap Rs\neq \{ 0\}$, and $S\cap (Rs:r)\neq \emptyset$ for all $r\in R\backslash \ga$.

$(4\Rightarrow 1)$ If $r\in \ga$ then $s'r=0$ for some element $s\in S$, and so $s'r=0=0s$ for all elements $s\in S$.

If $r\not\in \ga$ then $S\cap (Rs:r)\neq \emptyset$ for all $s\in S$, and so $s'r=r's$ for for some elements  $s'\in S$ and $r'\in R$ (that depend on the pair $(s,r))$. It follows that $S\in \Ore_l(R)$.
\end{proof}

\begin{corollary}\label{a10Mar24}
Suppose that $S$ is a multiplicative subset of a ring $R$ such that $S\subseteq \CC_R$.  Then   $S\in \Ore_l(R)$ iff    $Rs$  is an essential left ideal of $R$ for all $s\in S$, and $S\cap (Rs:r)\neq \emptyset$ for all $s\in S$ and  $r\in R$. Furthermore, for all $r\in R$ and $s\in S$, the left ideal $(Rs:r)$ is an essential left ideal of $R$.

\end{corollary}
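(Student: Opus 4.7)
The plan is to deduce this corollary as a direct specialization of Lemma \ref{a11Mar24}. Since $S\subseteq \CC_R$, every element of $S$ is in particular left regular, so $sr=0$ with $s\in S$ forces $r=0$. This gives $\ga :=\ass_l(S)=0$, and therefore $\bR =R/\ga =R$ and $\bS =S+\ga =S$.

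With this identification, the implication $(1)\Leftrightarrow (3)$ of Lemma \ref{a11Mar24} reads exactly as follows: $S\in \Ore_l(R)$ iff $Rs$ is an essential left ideal of $R$ for all $s\in S$ and $S\cap (Rs:r)\neq \emptyset$ for all $s\in S$ and all $r\in \bR =R$ (the condition ``$r\in R\setminus \ga$'' in statement (4) collapses to ``$r\in R$'' once $\ga =0$, and the empty case $r=0$ is trivial since then $(Rs:0)=R$ already meets $S$). This is precisely the equivalence claimed in the corollary.

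For the furthermore clause, I would invoke Lemma \ref{b8Mar24}.(1) applied to the right-multiplication map
\[
\cdot r:\; R\;\longrightarrow\; R,\qquad x\mapsto xr,
\]
which is a homomorphism of left $R$-modules. Its preimage of $Rs$ is precisely $(Rs:r)$, so essentiality of $Rs$ transfers to essentiality of $(Rs:r)$.

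I do not foresee any real obstacle: the whole statement is a clean ``$\ga =0$'' specialization of Lemma \ref{a11Mar24} combined with a one-line application of Lemma \ref{b8Mar24}.(1). The only thing to be careful about is the degenerate case $r=0$ in the intersection condition, which must be noted but offers no difficulty.
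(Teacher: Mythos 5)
Your proof is correct and takes essentially the same route as the paper, which simply cites the equivalence $(1\Leftrightarrow 3)$ of Lemma \ref{a11Mar24}; the key observation that $S\subseteq\CC_R$ forces $\ass_l(S)=0$ (so $\bR=R$, $\bS=S$) is exactly what makes the specialization work, and the final appeal to Lemma \ref{b8Mar24}.(1) for the ``furthermore'' clause matches the paper's implicit reference.
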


\begin{proof} The corollary follows from the equivalence $(1\Leftrightarrow 3)$ of Lemma \ref{a11Mar24}.
\end{proof}

We strengthen the second condition of Corollary \ref{a10Mar24} to obtain Proposition \ref{A10Mar24}.

\begin{proposition}\label{A10Mar24}
Suppose that $S$ is a multiplicative subset of a ring $R$ such that 
\begin{enumerate}

\item[(A)] For every $s\in S$, the left ideal $Rs$ of $R$ is essential, and

\item[(B)] For all essential left ideals $L$ of $R$, $S\cap L\neq \emptyset$. 

\end{enumerate}
Then $S\in \Ore_l(R)$. 

\end{proposition}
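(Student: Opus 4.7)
The plan is to verify the left Ore condition directly in its sharpest form: given $r\in R$ and $s\in S$, I would produce $s'\in S$ and $r'\in R$ with $s'r=r's$, i.e.\ exhibit an element of $S\cap (Rs:r)$.

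The key observation is that the map $\phi_r\colon R\to R$, $x\mapsto xr$, is a homomorphism of left $R$-modules, and that
$$
(Rs:r)=\{r'\in R\,|\, r'r\in Rs\}=\phi_r^{-1}(Rs).
$$
By hypothesis (A), $Rs$ is an essential left ideal of $R$. Then by Lemma \ref{b8Mar24}.(1) applied to $\phi_r$, the preimage $(Rs:r)=\phi_r^{-1}(Rs)$ is again an essential left ideal of $R$.

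Now hypothesis (B), applied to the essential left ideal $(Rs:r)$, yields $S\cap (Rs:r)\neq \emptyset$. Any $s'$ in this intersection satisfies $s'r\in Rs$, i.e.\ $s'r=r's$ for some $r'\in R$, which is exactly the left Ore condition for the pair $(r,s)$. Since $(r,s)\in R\times S$ was arbitrary, we conclude $S\in \Ore_l(R)$.

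I do not anticipate a real obstacle: the argument is a two-line application of the two hypotheses, glued by the elementary but essential (pun intended) fact that right multiplication by $r$ is a left $R$-linear endomorphism of $R$, which lets essentiality propagate from $Rs$ to $(Rs:r)$. Alternatively, one can package the proof through the equivalence $(1)\Leftrightarrow(4)$ of Lemma \ref{a11Mar24} -- condition (B) forces $\ga=\ass_l(S)=0$ (otherwise any nonzero left annihilator of an $s\in S$ would extend to an essential left ideal avoiding $S$, contradicting (B) once one checks the annihilator is contained in a proper left ideal disjoint from $S$), so the hypotheses in the form of Lemma \ref{a11Mar24}(4) reduce to precisely conditions (A) and (B); but the direct verification above is cleaner and needs no reduction.
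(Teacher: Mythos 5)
Your direct argument is correct and is essentially the paper's own proof: essentiality of $Rs$ (condition (A)) propagates to $(Rs:r)=\phi_r^{-1}(Rs)$ via Lemma \ref{b8Mar24}.(1), and then condition (B) supplies $s'\in S\cap(Rs:r)$, giving the left Ore condition. Your optional closing aside, however, is shaky: $\ass_l(S)=\{r\mid sr=0 \text{ for some } s\in S\}$ lives in right annihilators of elements of $S$, which are left (not right) ideals only in a transposed sense, and neither (A) nor (B) plainly forces $\ass_l(S)=0$, so the claimed reduction through Lemma \ref{a11Mar24}(4) is not justified as stated — stick with the direct verification you gave.
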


\begin{proof} We have to show that the left Ore condition holds for the set $S$: $Sr\cap Rs\neq \emptyset$ for all  $s\in S$ and $r\in R$.  
 By the statement (A), $Rs$ is an essential left ideal of $R$ (Lemma \ref{b8Mar24}.(1)). Then $(Rs:r)$  is also an essential left ideal of $R$. By the statement (B), $S\cap (Rs:r)\neq \emptyset$, i.e. $s'r=r's$ for some elements $s'\in S$ and $r'\in R$, as required. 
\end{proof}

Recall that a ring $R$ is called a {\bf left Goldie ring} if $R$ has finite left uniform dimension and satisfies the a.c.c. on left annihilators. 
The following example shows that Proposition \ref{A10Mar24} covers a lot of ground.

\begin{example} It is known that the conditions (A) and (B) of Proposition \ref{A10Mar24}  hold for  all  semiprime left Goldie rings $R$ and $S=\CC_R$ (the statement (A) follows at once from the fact that the  ring $R$ has finite left uniform dimension and  the statement (B)  is \cite[Proposition 2.3.5.(ii)]{MR}). By Proposition \ref{A10Mar24}, $\CC_R\in \Den_l(R,0)$. This fact is  the crucial step in the proof of Goldie's Theorem. 
\end{example}

\begin{corollary}\label{b11Mar24}
Suppose that $S$ is a multiplicative subset of a ring $R$ such that the conditions (A) and (B) of Proposition \ref{A10Mar24} and their right analogues hold. 
Then:
\begin{enumerate}

\item  $S\in \Ore(R)$.

\item $\ga :=\{ r\in R\, | \, srt=0$ for some elements $s,t\in S\}$ is an ideal of $R$ such that $\ga \neq R$. Let $\pi : R\ra \bR :=R/\ga$, $r\mapsto \br =r+\ga$. Then $\bS :=
\pi (S) \in \Den (\bR , 0)$, $\ga = \ga (S)=\ass_R(S)$, $S\in \mrL (R, \ga )$,  and $\RSm \simeq \bS^{-1}\bR$, an $R$-isomorphism.
\end{enumerate}
\end{corollary}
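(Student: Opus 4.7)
The plan is to reduce everything to Proposition \ref{A10Mar24} and Theorem \ref{S10Jan19}, since the corollary is essentially the two-sided specialization of these results.

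First, I would establish statement (1). By hypothesis, $S$ satisfies conditions (A) and (B) of Proposition \ref{A10Mar24}, so that proposition gives $S\in\Ore_l(R)$. The same argument, applied with ``left'' replaced by ``right'' throughout Corollary \ref{a10Mar24} and Proposition \ref{A10Mar24} (which is purely formal, relying only on the symmetry of the notion of an essential one-sided ideal), yields $S\in\Ore_r(R)$ from the right analogues of (A) and (B). Combining these gives $S\in\Ore(R)$.

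Once (1) is in hand, statement (2) is essentially a direct quotation of Theorem \ref{S10Jan19}. Specifically, parts (2) and (3) of that theorem, applied to the Ore set $S\in\Ore(R)$, immediately give: the set $\ga=\{r\in R\mid srt=0\ \text{for some}\ s,t\in S\}$ is an ideal with $\ga\neq R$; the image $\bS=\pi(S)$ lies in $\Den(\bR,0)$; $\ga=\ass_R(S)$; $S\in\mrL(R,\ga)$; and there is an $R$-isomorphism $\RSm\simeq\bS^{-1}\bR$. Since every assertion in (2) appears verbatim in Theorem \ref{S10Jan19}, there is no additional work to do.

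There is no real obstacle here: the whole content of the corollary is that the two-sided strengthening of the hypotheses of Proposition \ref{A10Mar24} places $S$ in $\Ore(R)$, after which the full structure of the localization $\RSm$ is already delivered by Theorem \ref{S10Jan19}. The only point requiring a line of justification is the transfer of Proposition \ref{A10Mar24} to its right-handed form, which is immediate from the left/right symmetry of the definitions of essential ideal and Ore condition; no genuine computation is needed.
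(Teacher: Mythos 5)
Your proposal is correct and follows essentially the same route as the paper: statement (1) is obtained by applying Proposition \ref{A10Mar24} on the left and, by symmetry, on the right, and statement (2) is read off from Theorem \ref{S10Jan19}. The only thing you add is the (sensible) explicit remark that the right-handed version of Proposition \ref{A10Mar24} holds by left/right symmetry, which the paper leaves implicit.
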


\begin{proof} 1. Statement 1 follows from Proposition \ref{A10Mar24}.

2. Statement 2 follows from Theorem \ref{S10Jan19}.
\end{proof}

{\bf The denominator  sets ${}'\CC_R^{le}$, ${\CC_R'}^{re}$ and  $\CC_R^e$.} For a ring $R$, let 
$R^{le}$ and  $R^{re}$ be the sets that contain elements $r\in R$ such that the left ideal $Rr$ and the  ideal $rR$ are essential, respectively. Let $R^e:=R^{le}\cap R^{re}$. Each element $r\in R$
 determines two maps $r\cdot : R\ra R$, $r'\mapsto rr'$ and $\cdot r: R\ra R$, $r'\mapsto r'r$. An element $r\in R$ is called a {\em left} (resp., {\em right}) {\em regular} if $\ker(\cdot r)=0$ (resp., $\ker(r\cdot )=0$). The sets of all left and right regular elements of the ring $R$ are denoted by 
 ${}'\CC_R$ and $\CC_R'$, respectively.
 Let ${}'\CC_R^{le}:= {}'\CC_R\cap R^{le}$, ${\CC_R'}^{re}:= \CC_R'\cap R^{re}$ and  $\CC_R^e:={}'\CC_R^{le}\cap {\CC_R'}^{re}=\CC_R\cap R^e$.

\begin{proposition}\label{A11Mar24}

\begin{enumerate}
\item The sets ${}'\CC_R^{le}$, ${\CC_R'}^{re}$ and  $\CC_R^e$ are multiplicative sets of $R$.

\item Suppose that the set ${}'\CC_R^{le}$ meets all the essential left ideals of the ring $R$. Then ${}'\CC_R^{le}\in \Den_l(R)$.

\item Suppose that the set ${\CC_R'}^{re}$ meets all the essential right ideals of the ring $R$. Then ${\CC_R'}^{re}\in \Den_r(R)$.

\item Suppose that the set $\CC_R^e$ meets all the essential left ideals and essential  right ideals of the ring $R$. Then $\CC_R^e\in \Den (R, 0)$.
\end{enumerate}
\end{proposition}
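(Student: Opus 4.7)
The plan is to handle the four parts in order, with the main work concentrated in part (1); parts (2)--(4) will follow cleanly by invoking Proposition~\ref{A10Mar24}.

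For part (1), I need to show closure under multiplication (containment of $1$ and exclusion of $0$ are obvious). Closure of ${}'\CC_R$ under products is immediate: if $xc_1c_2=0$ then $xc_1=0$ (by left regularity of $c_2$), hence $x=0$ (by left regularity of $c_1$). The substantive point is that $R^{le}$ is closed under products. Given $r_1,r_2\in{}'\CC_R^{le}$, I would argue as follows: since $r_2$ is left regular, the left $R$-module map $\rho_{r_2}\colon R\to R$, $x\mapsto xr_2$, is injective, so it yields an isomorphism $R\xrightarrow{\sim} Rr_2$ of left $R$-modules. Under this isomorphism the essential left ideal $Rr_1$ of $R$ corresponds to the essential submodule $Rr_1r_2$ of $Rr_2$. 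Since $Rr_2$ is itself essential in $R$ by hypothesis, transitivity of essentiality (applied through Lemma~\ref{b8Mar24}) gives that $Rr_1r_2$ is essential in $R$, i.e.\ $r_1r_2\in R^{le}$. Combined with left regularity of $r_1r_2$, this shows $r_1r_2\in{}'\CC_R^{le}$. The multiplicativity of ${\CC_R'}^{re}$ is the right-handed mirror of this argument, and multiplicativity of $\CC_R^e={}'\CC_R^{le}\cap{\CC_R'}^{re}$ follows by intersection.

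For part (2), I would verify the two hypotheses of Proposition~\ref{A10Mar24} for $S={}'\CC_R^{le}$: condition (A) holds by the very definition of ${}'\CC_R^{le}$ (every $s\in S$ has $Rs$ essential), and condition (B) is exactly the standing assumption that $S$ meets all essential left ideals. Proposition~\ref{A10Mar24} then delivers $S\in\Ore_l(R)$. To upgrade to a left denominator set, I only need to check that $rs=0$ with $s\in S$ and $r\in R$ forces $tr=0$ for some $t\in S$; but every element of $S\subseteq{}'\CC_R$ is left regular, so $rs=0$ already forces $r=0$, and $t=1\in S$ works. This also shows $\ass_l(S)=0$, so $S\in\Den_l(R,0)$.

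Part (3) is the right-sided analogue proved by the same template with the symmetric version of Proposition~\ref{A10Mar24}. For part (4), I apply part (2) and part (3) simultaneously to $S=\CC_R^e$: condition (A) holds because $\CC_R^e\subseteq R^{le}$, condition (B) (and its right analogue) are the hypotheses, so $S\in\Ore_l(R)\cap\Ore_r(R)=\Ore(R)$. Since $\CC_R^e\subseteq\CC_R$, the two-sided denominator condition is automatic and $\ass(S)=0$, giving $\CC_R^e\in\Den(R,0)$.

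The only nontrivial step is the multiplicativity argument for $R^{le}$ in part (1): one has to notice that the \emph{order} $Rr_1r_2$ (rather than $Rr_2r_1$) is the right one to handle via the isomorphism $R\cong Rr_2$ induced by right multiplication by $r_2$, and this is precisely where the left regularity hypothesis feeds into the essentiality hypothesis. Once this is in place, everything else is a direct appeal to Proposition~\ref{A10Mar24}.
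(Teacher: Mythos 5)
Your proof is correct for the stated proposition and follows essentially the same route as the paper: multiplicativity of $R^{le}$ via the isomorphism $R\cong Rr_2$ induced by right multiplication (so $Rr_1r_2$ is essential in $Rr_2$, hence in $R$ by transitivity), and parts (2)--(4) via the hypotheses (A), (B) of Proposition~\ref{A10Mar24} together with the observation that a left Ore set contained in $\pCCR$ is automatically a left denominator set.

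One slip to correct in part (2): you conclude that left regularity of the elements of $S={}'\CC_R^{le}$ ``also shows $\ass_l(S)=0$, so $S\in\Den_l(R,0)$.'' This does not follow. By the paper's convention, $\ass_l(S)=\{r\in R\mid sr=0\text{ for some }s\in S\}$, and the vanishing of this ideal would require elements of $S$ to be \emph{right} regular, i.e.\ $S\subseteq\CC_R'$. Left regularity of $s$ gives $rs=0\Rightarrow r=0$, which is precisely the left-denominator condition you need (and handles $\ass_r$, not $\ass_l$); it says nothing about $sr=0$. Since the proposition's part (2) asserts only ${}'\CC_R^{le}\in\Den_l(R)$, this extra claim should simply be dropped. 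By contrast, your parallel claim $\ass(\CC_R^e)=0$ in part (4) is correct, precisely because $\CC_R^e\subseteq\CC_R=\pCCR\cap\CC_R'$, so both annihilator ideals vanish.
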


\begin{proof} 1. It suffices to show that the set ${}'\CC_R^{le}$ is a multiplicative set since then by symmetry the set   ${\CC_R'}^{re}$ is also  multiplicative. These two results imply that   $\CC_R^e$ is  a multiplicative set.

Clearly, $1\in {}'\CC_R^{le}$. Let  $s,t\in {}'\CC_R^{le}$. Then $st\in {}'\CC_R$. It remain to show that $st \in R^{le}$. The left ideal $Rs$ of $R$ is an essential ideal. Then  the $R$-module  $Rst$ is an essential submodule of $Rt$ (since $t\in {}'\CC$). The inclusions of essential left ideals of $R$, $Rst\subseteq Rt\subseteq R$,  imply that the left ideal $Rst$ of $R$ is essential, as required.  

2. By statement 1, the set ${}'\CC_R^{le}$ is a multiplicative set of $R$. Since ${}'\CC_R^{le}\subseteq {}'\CC_R $, it remains to show that  ${}'\CC_R^{le}\in \Ore_l(R)$. This follows from Proposition \ref{A10Mar24} (since the conditions (A) and (B) of Proposition \ref{A10Mar24} are satisfied).

3. Statement 3 follows from statement 2 when apply to the opposite ring of the ring $R$.

4. Statement 4 follows from statements 2 and 3.
\end{proof}

For two multiplicative sets $S$ and $T$ of $R$, let $ST$ be a multiplicative submonoid of $(R,\cdot)$ generated by $S$ and $T$. Clearly, the product is commutative,  $ST=TS$, associative and $\{ 1\} S=S$.  The product $ST$ is a multiplicative set iff $0\not\in ST$.

\begin{lemma}\label{a14Sep13}
(\cite[Lemma 2.4]{stronglquot})
\begin{enumerate}
\item Let $S,T\in \Ore_l(R)$. If $0\not\in ST$ then $ST \in \Ore_l (R)$.  \item Let $S,T\in \Den_l(R)$. If $0\not\in ST$ then $ST \in \Den_l (R)$.
    \item Statements 1 and 2 hold also for Ore sets  and denominator sets, respectively.
\end{enumerate}
\end{lemma}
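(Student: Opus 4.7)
The plan is to prove statements 1 and 2 by induction on the length of a word representation of an element $u\in ST$ as a product of elements from $S\cup T$; statement 3 will then follow from statements 1--2 applied to the opposite ring.

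First I would verify that $ST$ is a multiplicative set. Since $1\in S\cap T\subseteq ST$ and $ST$ is closed under multiplication by definition as the submonoid generated by $S\cup T$, the hypothesis $0\notin ST$ yields the claim immediately. Every element $u\in ST$ can then be written (non-uniquely) as a word $u=x_1x_2\cdots x_n$ with each $x_i\in S\cup T$, and I will induct on the minimal such $n=\ell (u)$.

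For statement 1, the left Ore condition to verify is $(ST)r\cap Ru\neq\emptyset$ for all $r\in R$ and $u\in ST$. The base case $\ell(u)=1$ is immediate since $u\in S\cup T$ and both $S,T\in\Ore_l(R)$, while $S\cup T\subseteq ST$. For the inductive step, write $u=xv$ with $x\in S\cup T$ and $v\in ST$ of length $n-1$. By the inductive hypothesis applied to the pair $(r,v)$, there exist $v_1\in ST$ and $r_1\in R$ with $v_1r=r_1v$. Applying the base case to $(r_1,x)$, there exist $y$ in the same Ore set as $x$ (hence $y\in ST$) and $r_2\in R$ with $yr_1=r_2x$. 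Multiplying the first equation by $y$ on the left gives $yv_1r=yr_1v=r_2xv=r_2u$, so $w:=yv_1\in ST$ and $r_2\in R$ witness the left Ore condition for $u$.

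For statement 2, I would inductively show the left reversibility condition: if $ru=0$ for some $r\in R$ and $u\in ST$, then there exists $u'\in ST$ with $u'r=0$. The base case is the left denominator condition for $S$ or $T$. Inductively, write $u=xv$; then $(rx)v=0$, so by induction applied to $v$ there is $v_1\in ST$ with $v_1(rx)=(v_1r)x=0$. Since $x$ lies in $S$ or $T$ and both are left denominator sets, there is $y\in S\cup T$ with $y(v_1r)=0$. Then $u':=yv_1\in ST$ satisfies $u'r=0$. Combined with statement 1, this shows $ST\in\Den_l(R)$. Finally, statement 3 follows by applying statements 1 and 2 to the opposite ring $R^{\mathrm{op}}$: if $S,T\in \Ore(R)$ (respectively $\Den(R)$), then they are simultaneously left Ore/denominator sets in $R$ and in $R^{\mathrm{op}}$, and the conclusion for $ST$ transfers back.

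The argument is essentially bookkeeping, and the only mild obstacle is making sure the inductive step is set up so that in the Ore step the multiplier remains in $ST$ (which is why I strip off the \emph{left-most} factor $x$ of $u$ rather than the right-most, so that the newly-produced Ore multiplier $y$ for $x$ is composed on the left with the inductively produced $v_1$, keeping the result inside the submonoid $ST$).
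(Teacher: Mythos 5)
Your proof is correct. The paper does not include a proof of this lemma (it is imported by citation from \cite{stronglquot}), so there is no in-paper argument to compare against; your induction on the length of a minimal word in $S\cup T$, peeling off the left-most factor so that the Ore (resp.\ reversibility) multiplier produced at each stage composes with $v_1$ on the left and thus lands back inside the submonoid $ST$, is the standard and correct argument, and it is exactly the point where a naive attempt would fail. The reduction of statement~3 to statements~1 and~2 via $R^{\mathrm{op}}$ is also sound, since the submonoid of $R^{\mathrm{op}}$ generated by $S\cup T$ coincides as a subset of $R$ with $ST$: the $R^{\mathrm{op}}$-word $x_1\cdots x_n$ is just $x_n\cdots x_1$ computed in $R$.
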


\begin{lemma}\label{a12Mar24}
Let $S$ be a multiplicative set of a ring $R$. Then:
\begin{enumerate}

\item The set $S$ contains the largest left/right/left and right Ore set which is denoted by $S^{lO}/S^{rO}/S^O$.

\item  The set $S$ contains the largest left/right/left and right denominator set which is denoted by $S^{ld}/S^{rd}/S^d$.

\item In the set $\pCCR$ every left Ore set is a left denominator set, and vice versa.

\item In the set $\CC_R'$ every right Ore set is a right denominator set, and vice versa.

\item In the set $\CC_R$ every  Ore set is a  denominator set, and vice versa.

\end{enumerate}
\end{lemma}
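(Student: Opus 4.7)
\begin{proof*}[Proof plan for Lemma \ref{a12Mar24}]
The plan is to deduce statement 1 from the product closure property in Lemma \ref{a14Sep13}.(1), statement 2 from Lemma \ref{a14Sep13}.(2), statement 3 from the very definition of a left denominator set combined with the defining property of $\pCCR$, statement 4 by passing to the opposite ring, and statement 5 as an immediate corollary of statements 3 and 4.

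For statement 1, let $\mathcal{F}$ be the family of all left Ore sets contained in $S$. Since $S$ is multiplicative we have $0\notin S$, hence $0\notin T_1T_2$ for any $T_1,T_2\in\mathcal{F}$, so by Lemma \ref{a14Sep13}.(1) the product $T_1T_2$ lies in $\mathcal{F}$. In particular $\mathcal{F}$ is upward directed under inclusion, and therefore $S^{lO}:=\bigcup_{T\in\mathcal{F}}T$ is again a multiplicative subset of $S$: it contains $1$, does not contain $0$, and for $a\in T_1$, $b\in T_2$ the element $ab$ lies in $T_1T_2\in\mathcal{F}$. The left Ore condition for $S^{lO}$ is checked element-by-element: given $r\in R$ and $s\in S^{lO}$, pick $T\in\mathcal{F}$ with $s\in T$ and apply the left Ore condition for $T$ to conclude that $S^{lO}r\cap Rs\neq\emptyset$. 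By construction, every left Ore set contained in $S$ is contained in $S^{lO}$. The $rO$- and $O$-cases are done in exactly the same way, using the right and two-sided parts of Lemma \ref{a14Sep13}.

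Statement 2 repeats the argument of statement 1 verbatim after replacing the word \emph{Ore} by \emph{denominator} and invoking Lemma \ref{a14Sep13}.(2) in place of Lemma \ref{a14Sep13}.(1); the only additional observation is that the left denominator property (the implication $rs=0\Rightarrow tr=0$ for some $t\in S$) is preserved under the directed union, because the witness $t$ can be chosen inside the denominator set containing $s$.

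For statement 3, suppose $S\subseteq\pCCR$ is a left Ore set and $r\in R$, $s\in S$ satisfy $rs=0$. Since $s$ is left regular, the map $\cdot\, s:R\to R$ is injective, so $r=0$, whence $1\cdot r=0$ with $1\in S$. Thus the left denominator condition is automatic, and so $S\in\Den_l(R)$. Conversely, a left denominator set is by definition a left Ore set. Statement 4 is obtained from statement 3 applied to the opposite ring $R^{\mathrm{op}}$, upon noting that $\CC_R'=\pCC_{R^{\mathrm{op}}}$ and that left Ore/denominator sets of $R^{\mathrm{op}}$ correspond to right Ore/denominator sets of $R$. Statement 5 now follows because $\CC_R=\pCCR\cap\CC_R'$, so any Ore set $S\subseteq\CC_R$ is simultaneously a left Ore set in $\pCCR$ and a right Ore set in $\CC_R'$, and then statements 3 and 4 yield $S\in\Den_l(R)\cap\Den_r(R)=\Den(R)$. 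The only mildly subtle point in the whole argument is verifying, in statements 1 and 2, that the directed union of Ore (resp.\ denominator) sets is itself Ore (resp.\ denominator); this rests precisely on the product-closure result of Lemma \ref{a14Sep13} together with the elementary observation that both conditions can be checked one element at a time.
\end{proof*}
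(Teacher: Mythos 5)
Your proof is correct and follows the same line the paper takes: for statements~1 and~2 you form the union of all left (resp.\ right, resp.\ two-sided) Ore or denominator sets contained in $S$ and use Lemma~\ref{a14Sep13} to get upward directedness, exactly as the paper's one-line appeal to that lemma intends; for statements~3--5, which the paper dismisses as ``obvious'', you supply the short argument that a left regular element forces $rs=0\Rightarrow r=0$, that the right case follows via the opposite ring, and that the two-sided case is the conjunction of the one-sided ones. The details you add (directedness of the family, element-by-element verification of the Ore condition, and the witness $t$ living in the denominator set containing $s$) are all correct and are precisely what the paper leaves implicit.
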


\begin{proof} 1. Statement 1 follows from Lemma \ref{a14Sep13}.(1) and the largest left/right/left and right Ore set of $S$ is a union of all  left/right/left and right Ore sets in $S$.

2. Statement 2 follows from Lemma \ref{a14Sep13}.(2) and the largest left/right/left and right denominator set of $S$ is a union of all  left/right/left and right denominator sets in $S$.

3-5. Statements 3-5 are obvious.
\end{proof}

 In view of Proposition \ref{A11Mar24}.(1) and   Lemma \ref{a12Mar24}, we have the following definitions.

\begin{definition}
Let ${}'\CC_R^{leO}$ be the largest left Ore/denominator set in the multiplicative set ${}'\CC_R^{le}$. Let ${}'\CC_R^{lee}$ be the largest multiplicative  subset of the multiplicative set ${}'\CC_R^{le}$  that meets all the essential left ideals of $R$. Let ${\CC_R'}^{reO}$ be the largest right Ore/denominator  set in the multiplicative set ${\CC_R'}^{re}$. Let ${\CC_R'}^{ree}$ be the largest multiplicative subset of the multiplicative set ${\CC_R'}^{re}$  that meets all the essential right ideals of $R$.
\end{definition}

Lemma \ref{c12Mar24} describes the sets  ${}'\CC_R^{leO}$, ${\CC_R'}^{reO}$, ${}'\CC_R^{lee}$ and ${\CC_R'}^{ree}$.

\begin{lemma}\label{c12Mar24}

\begin{enumerate}

\item The set ${}'\CC_R^{leO}$ (resp., ${\CC_R'}^{reO}$) is the union of all left (resp., right) Ore/denominator  sets in ${}'\CC_R^{leO}$ (resp., ${\CC_R'}^{re}$). In particular, ${}'\CC_R^{leO}\neq \emptyset$ and ${\CC_R'}^{reO}\neq \emptyset$.

\item  ${}'\CC_R^{lee}\in \{ \emptyset, {}'\CC_R^{le} \}$   and ${\CC_R'}^{ree}\in \{ \emptyset, {\CC_R'}^{re}\}$. If ${}'\CC_R^{lee}={}'\CC_R^{le} $   (resp., ${\CC_R'}^{ree}={\CC_R'}^{re}$) then ${}'\CC_R^{lee}={}'\CC_R^{le} \in \Den_l(R)$   (resp., ${\CC_R'}^{ree}={\CC_R'}^{re}\in \Den_r(R)$).
\end{enumerate}
\end{lemma}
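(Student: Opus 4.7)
The plan is to deduce both statements from the multiplicativity of $\pCCR^{le}$ and ${\CC_R'}^{re}$ established in Proposition \ref{A11Mar24}.(1), together with Lemma \ref{a12Mar24}, Lemma \ref{a14Sep13}, and Proposition \ref{A11Mar24}.(2,3). Throughout it suffices to treat the left-handed assertions; the right-handed ones follow by applying each argument to the opposite ring $R^{\mathrm{op}}$.

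For statement 1, Lemma \ref{a12Mar24}.(1) applied to the multiplicative set $\pCCR^{le}$ produces a largest left Ore subset, which by definition is $\pCCR^{leO}$, and it is non-empty since $\{1\}$ is trivially a left Ore subset of $\pCCR^{le}$. To identify this largest left Ore subset with the union of the family $\{T_i\}_{i\in I}$ of all left Ore subsets of $\pCCR^{le}$, I will set $U=\bigcup_i T_i$ and verify that $U$ is itself left Ore. Closure of $U$ under multiplication uses Lemma \ref{a14Sep13}.(1): for $u\in T_i$ and $v\in T_j$ the product set $T_iT_j$ is a left Ore subset of $\pCCR^{le}$ (since $0\notin T_iT_j\subseteq \pCCR^{le}$), whence $uv\in T_iT_j\subseteq U$. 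The left Ore condition for $U$ is immediate, since for $u\in T_i$ and $r\in R$ one has $Ur\cap Ru \supseteq T_ir\cap Ru \neq \emptyset$. Thus $U$ is a left Ore subset of $\pCCR^{le}$ containing every $T_i$, forcing $U = \pCCR^{leO}$. Because $\pCCR^{leO}\subseteq \pCCR$, Lemma \ref{a12Mar24}.(3) then automatically upgrades $\pCCR^{leO}$ to a left denominator set.

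For statement 2, the argument is a clean dichotomy. Let $\CF$ be the family of multiplicative subsets of $\pCCR^{le}$ that meet every essential left ideal of $R$. Either $\CF$ is empty, in which case $\pCCR^{lee}=\emptyset$ by definition, or there exists some $T\in \CF$. In the latter case, the property of meeting every essential left ideal is inherited by any superset, so $\pCCR^{le}\supseteq T$ also meets every essential left ideal; and $\pCCR^{le}$ is itself multiplicative by Proposition \ref{A11Mar24}.(1). Hence $\pCCR^{le}\in \CF$ and is tautologically its largest element, so $\pCCR^{lee}=\pCCR^{le}$. Under this equality, Proposition \ref{A11Mar24}.(2) applies directly and yields $\pCCR^{le}\in \Den_l(R)$. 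The right-handed statements follow by running the same arguments in $R^{\mathrm{op}}$, invoking Proposition \ref{A11Mar24}.(3) in place of (2). The only mildly delicate point is the verification in statement 1 that an arbitrary union of left Ore subsets of a fixed multiplicative set is again left Ore, which Lemma \ref{a14Sep13}.(1) settles cleanly; I do not anticipate any genuine obstacle.
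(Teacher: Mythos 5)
Your proposal is correct and follows essentially the same route as the paper: multiplicativity of $\pCCR^{le}$ and ${\CC_R'}^{re}$ from Proposition \ref{A11Mar24}.(1), the union argument via Lemma \ref{a14Sep13} and Lemma \ref{a12Mar24} for part 1, and the dichotomy plus Proposition \ref{A11Mar24}.(2,3) for part 2. You spell out a few steps the paper leaves implicit (closure of the union under products, the upgrade from Ore to denominator via Lemma \ref{a12Mar24}.(3)), but there is no substantive difference in approach.
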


\begin{proof} 1. By Proposition \ref{A11Mar24}.(1),  the sets ${}'\CC_R^{le}$ and  ${\CC_R'}^{re}$  are multiplicative sets of $R$. Now, statement 1 follows from Lemma \ref{a12Mar24}.(1,2).

2. Suppose that  ${}'\CC_R^{lee}\neq \emptyset$. Notice that  ${}'\CC_R^{lee}\subseteq {}'\CC_R^{le}$.  Then clearly $ {}'\CC_R^{lee}={}'\CC_R^{le}$. 
Similarly, suppose that ${\CC_R'}^{ree}\neq \emptyset$. Notice that  ${\CC_R'}^{ree}\subseteq {\CC_R'}^{re}$. Then $ {\CC_R'}^{ree}={\CC_R'}^{re}$.  The equalities in statements 2 follows from Proposition \ref{A11Mar24}.(2,3).
\end{proof}

By  Lemma \ref{c12Mar24}, the sets ${}'\CC_R^{leO}$ and ${}'\CC_R^{lee}$ are left denominator sets of the ring $R$ such that ${}'\CC_R^{lee}\subseteq {}'\CC_R^{leO}$ provided ${}'\CC_R^{lee}\neq \emptyset$.
By  Lemma \ref{c12Mar24}, the sets ${\CC_R'}^{reO}$ and ${\CC_R'}^{ree}$ are right denominator sets of the ring $R$ such that ${\CC_R'}^{ree}\subseteq {\CC_R'}^{reO}$  provided ${}'\CC_R^{ree}\neq \emptyset$.

\begin{definition} 
Let ${}'Q(R)^{leO}:=   \Big({}'\CC_R^{leO}\Big)^{-1}R$, ${}'Q(R)^{lee}:=   \Big({}'\CC_R^{lee}\Big)^{-1}R$, ${Q'}^{reO}:=R\Big({\CC_R'}^{reO} \Big)^{-1}$ and ${Q'}^{ree}:=R\Big({\CC_R'}^{ree} \Big)^{-1}$.
\end{definition}

 Recall that  $\pSlR$ is the largest left denominator set in $\pCCR$ and $\pQ_l(R):= \pSlR^{-1}R$ is the left regular left quotient ring of $R$,  $\pga := \ass_R(\pSlR )$ and ${}'\pi : R\ra \bR':= R/ \pga$, $r\mapsto \br := r+\pga$.

If ${}'\CC_R^{lee}\neq \emptyset$ and ${\CC_R'}^{ree}\neq \emptyset$ then, by Proposition \ref{A11Mar24}.(2,3), 
\begin{equation}\label{RpCle}
R^\times \subseteq {}'\CC_R^{lee}={}'\CC_R^{le}\subseteq {}'\CC_R^{leO}\subseteq {}'S_l(R)\subseteq \pCCR\;\; {\rm and}\;\;
R^\times \subseteq {\CC_R'}^{ree}= {\CC_R'}^{re}\subseteq {\CC_R'}^{reO}\subseteq S_r'(R)\subseteq \CC_R'.
\end{equation}
 Hence there are $R$-homomorphisms:
\begin{equation}\label{RpCle-1}
{}'Q(R)^{leO}\ra {}'Q(R)^{lee}, \;\; s^{1}r\mapsto s^{-1}r\;\; {\rm and}\;\; {Q'}^{reO}\ra {Q'}^{ree}, \;\; rt^{-1}\mapsto rt^{-1}
\end{equation}
with kernels $ \ass_l({}'\CC_R^{leO})/ \ass_l({}'\CC_R^{lee}) $ and $\ass_r({\CC_R'}^{reO})/ \ass_r({\CC_R'}^{ree})$, respectively.\\

{\bf The left/right/two-sided regular sets of a ring $R$ and monomorphism of $R$-modules.} For a    right $R$-module $M_R$, let $\pCC_M:=\{ r\in R \, | \, \ker (\cdot r_M)=0\}$ where $\cdot r_M: M\ra M, \;\; m\ra mr$. Similarly, for a left  $R$-module ${}_RM$, let $\CC_M':=\{ r\in R\, | \, \ker (r_M\cdot )=0\}$ where $ r_M\cdot: M\ra M, \;\; m\ra rm$. For an    $R$-bimodule $M$, let $\CC_M:=
\pCC_M\cap \CC_M'$.

Lemma \ref{a15Mar24} makes connections between the sets $\pCCR$, $\CC_R'$ and $\CC_R$ and $\pCC_M$, $\CC_M'$ and $\CC_M$, respectively (under  faithfulness condition).  

\begin{lemma}\label{a15Mar24}
 
\begin{enumerate}

\item If $M_R$ is a faithful right $R$-module then $\pCC_M\subseteq \pCCR$.

\item If ${}_RM$ is a faithful left $R$-module then $\CC_M'\subseteq \CC_R'$.

\item If ${}_RM_R$ is an $R$-module which is faithful as a left and right $R$-module then $\CC_M\subseteq \CC_R$.

\item If $I$ is an ideal of the ring $R$ which is faithful as a left and right $R$-module then $\CC_I\subseteq \CC_R$.

\end{enumerate}
\end{lemma}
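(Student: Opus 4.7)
The plan is to handle the four statements in order, observing that statements (3) and (4) reduce immediately to statements (1) and (2).

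For statement (1), I would take $r \in \pCC_M$ and show that $r \in \pCCR$. Suppose $s \in R$ satisfies $sr = 0$. Then for every $m \in M$ we have $(ms)r = m(sr) = 0$, and since $r \in \pCC_M$ this forces $ms = 0$. Hence $Ms = 0$, and right-faithfulness of $M_R$ gives $s = 0$, as required. Statement (2) is the left-right mirror: if $r \in \CC_M'$ and $rs = 0$, then $r(sm) = (rs)m = 0$ for each $m \in M$, so $sm = 0$, whence $sM = 0$ and left-faithfulness of ${}_RM$ gives $s = 0$.

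Statement (3) follows by intersecting the conclusions of (1) and (2): under the two-sided faithfulness hypothesis,
$$\CC_M \;=\; \pCC_M \cap \CC_M' \;\subseteq\; \pCCR \cap \CC_R' \;=\; \CC_R.$$
Statement (4) is the specialisation $M = I$ of statement (3), since any ideal of $R$ carries a canonical $R$-bimodule structure inherited from the regular bimodule ${}_RR_R$.

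No real obstacle is anticipated — the content is essentially the associativity $m(sr) = (ms)r$ combined with the definition of faithfulness. The only subtle point worth flagging is that, in each case, faithfulness must be invoked on the opposite side from the regularity condition: the witness $s$ arising from $sr = 0$ (a left-regularity test on $r$) is detected by right-multiplication on $M$, so one uses right-faithfulness to conclude $s = 0$; symmetrically for the right-handed statement.
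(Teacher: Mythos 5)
Your proof is correct and is essentially the paper's argument: both hinge on the associativity $(ms)r = m(sr)$ together with faithfulness, with statements (2)--(4) reduced to (1) by symmetry, intersection, and specialisation. The only cosmetic difference is that the paper phrases statement (1) as a proof by contradiction (assume a nonzero $d$ with $dc=0$, produce $m$ with $md\neq 0$ by faithfulness, then $(md)c=0$ contradicts $c\in\pCC_M$), whereas you argue directly; the two are contrapositives of one another.
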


\begin{proof} 1. Suppose that $c\in \pCC_M$ but
 $c\not\in \pCCR$. Then $dc=0$ or some nonzero element $d\in R$. By the assumption,  $M_R$ is a faithful right $R$-module. Hence, $m':=md\neq 0$ for some element $m\in M$. Then $0\neq m'c=mdc=0$, a contradiction, and statement 1 follows.

2. By symmetry,  statement 2 follows from statement 1.

3. Statement 3 follows from statements 1 and 2.

4. Statement 4 is a particular case of statement 3. 
\end{proof}

Applying  Lemma \ref{a15Mar24} for faithful  (one-sided) ideals $I$ of a ring $R$ we obtain even tighter  connections between the sets $\pCCR$, $\CC_R'$ and $\CC_R$ and $\pCC_I$, $\CC_I'$ and $\CC_I$, respectively.

\begin{lemma}\label{b15Mar24}
 
\begin{enumerate}

\item If $I_R$ is a faithful right ideal of $R$ then $\pCCR= \pCC_I$.

\item If ${}_RI$ is a faithful left ideal of $R$ then $\CC_R'= \CC_I'$.

\item If $I$ is an ideal of  $R$ which is  faithful as a left and right $R$-module then $\CC_R= \CC_I$.

\end{enumerate}
\end{lemma}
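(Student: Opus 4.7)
The plan is to derive Lemma \ref{b15Mar24} as a direct strengthening of Lemma \ref{a15Mar24} specialised to one-sided ideals. For each of the three statements, one inclusion is already delivered by Lemma \ref{a15Mar24}, and the reverse inclusion comes for free by restriction, so no genuinely new work is required. The role of the faithfulness hypothesis is precisely to upgrade the inclusions of Lemma \ref{a15Mar24} to equalities in the one-sided-ideal setting.

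For statement 1, I would first verify the inclusion $\pCCR \subseteq \pCC_I$, which uses no hypothesis whatsoever: since $I$ is a right ideal, the map $\cdot r_I : I \to I$, $i \mapsto ir$, is well-defined, and any $i \in I$ with $ir = 0$ lies in $\ker(\cdot r) \subseteq R$, which is zero when $r \in \pCCR$; hence $r \in \pCC_I$. For the reverse inclusion $\pCC_I \subseteq \pCCR$, I would invoke Lemma \ref{a15Mar24}.(1) applied to the right $R$-module $M = I_R$, which is faithful by hypothesis. Combining the two inclusions gives $\pCCR = \pCC_I$.

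Statement 2 is the mirror image and is proved by exactly the symmetric argument: the inclusion $\CC_R' \subseteq \CC_I'$ is again automatic (apply $r \cdot$ on the left and restrict to $I$, using that $I$ is a \emph{left} ideal so $ri \in I$), and the reverse inclusion $\CC_I' \subseteq \CC_R'$ is Lemma \ref{a15Mar24}.(2) applied to the faithful left $R$-module ${}_RI$. Statement 3 then follows by intersecting: from the definitions $\CC_R = \pCCR \cap \CC_R'$ and $\CC_I = \pCC_I \cap \CC_I'$, and from the hypothesis that the ideal $I$ is faithful as both a left and a right module, statements 1 and 2 apply simultaneously and yield $\CC_R = \CC_I$.

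There is no real obstacle; the only point to be careful about is distinguishing the two directions. Lemma \ref{a15Mar24} is stated for an arbitrary faithful module, where the restriction direction generally makes no sense, so one might expect only inclusions here as well. The observation is that when $M = I$ sits inside $R$ as a one-sided ideal, one also has the restriction of $\cdot r$ from $R$ to $I$ (or, in statement 2, of $r \cdot$), and this restriction argument gives the inclusions $\pCCR \subseteq \pCC_I$ and $\CC_R' \subseteq \CC_I'$ with no hypothesis at all.
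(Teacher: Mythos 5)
Your proof is correct and takes essentially the same route as the paper: one inclusion ($\pCC_I \subseteq \pCCR$, resp.\ $\CC_I' \subseteq \CC_R'$) comes from Lemma \ref{a15Mar24} via faithfulness, while the reverse inclusion is the restriction of $\cdot r$ (resp.\ $r \cdot$) from $R$ to the one-sided ideal $I$, exactly as the paper says ``The opposite inclusion follows from the inclusion $I_R \subseteq R$.'' You spell out the restriction argument in more detail, but the decomposition into these two inclusions and the use of symmetry for statement 2 and intersection for statement 3 are identical.
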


\begin{proof} 1. By Lemma \ref{a15Mar24}.(1), $\pCCR\supseteq \pCC_I$. The opposite inclusion follows from the inclusion $I_R\subseteq R$.

2. By symmetry,  statement 2 follows from statement 1.

3. Statement 3 follows from statements 1 and 2. 
\end{proof}

For a module $M$ and its submodule $N$, Lemma \ref{c15Mar24} makes connections between the sets $\pCC_M$, $\CC_M'$ and $\CC_M$ and $\pCC_N$, $\CC_N'$ and $\CC_N$, respectively (under  essentiality  condition).

\begin{lemma}\label{c15Mar24}

\begin{enumerate}

\item If $M_R$ is a  right $R$-module and $N_R$ is  an essential submodule of $M$ then $\pCC_M= \pCC_N$.

\item If ${}_RM$ is a  left $R$-module and ${}_RN$ is an essential submodule of $M$ then $\CC_M'= \CC_N'$.

\item If ${}_RM_R$ is an $R$-bimodule and ${}_RN_R$ is an $R$-sub-bimodule such that ${}_RN$ and $N_R$ are essential submodules of ${}_RM$ and $M_R$, respectively. Then $\CC_M=\CC_N$.

\end{enumerate}
\end{lemma}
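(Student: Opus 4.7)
The plan is to prove each of the three parts by two inclusions, deducing part (3) formally from parts (1) and (2) via $\CC_M = \pCC_M \cap \CC_M'$. Parts (1) and (2) are completely symmetric under left/right duality, so I will describe only part (1) in detail; part (2) follows by applying the same argument to left multiplication on ${}_RM$ with essential left submodule ${}_RN$, and then (3) is the immediate intersection $\CC_M = \pCC_M \cap \CC_M' = \pCC_N \cap \CC_N' = \CC_N$.

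The inclusion $\pCC_M \subseteq \pCC_N$ is immediate by restriction: if $r \in \pCC_M$ and $n \in N \subseteq M$ satisfies $nr = 0$, then $n$ is an element of $M$ killed on the right by $r$, forcing $n = 0$; hence $r \in \pCC_N$. For the reverse inclusion, take $r \in \pCC_N$ and suppose for contradiction that there exists $0 \neq m \in M$ with $mr = 0$. The cyclic right $R$-submodule $mR \subseteq M$ is nonzero, so essentiality of $N$ in $M$ yields $s \in R$ with $0 \neq ms \in N$. The aim is then to exhibit a nonzero element of $N$ annihilated on the right by $r$, which contradicts $r \in \pCC_N$. The cleanest way to do this is to locate a nonzero $R$-submodule $K$ of $M$ lying inside the kernel of the right-multiplication map $\cdot r : M \to M$, and then apply essentiality of $N$ to $K$ directly to produce the required element in $K \cap N$.

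The main obstacle I expect is that the abelian subgroup $\ker(\cdot r) = \{x \in M : xr = 0\}$ is not an $R$-submodule of $M_R$ in general, since $\cdot r : M \to M$ is only $\mathbb{Z}$-linear. The delicate step is therefore to upgrade this subgroup to an honest right $R$-submodule before invoking essentiality of $N$. A natural candidate is the largest right $R$-submodule of $M$ contained in $\ker(\cdot r)$, namely $K := \{x \in M : xRr = 0\}$, which is automatically an $R$-submodule of $M_R$ sitting inside the kernel; once one verifies that $K$ is nonzero whenever $\ker(\cdot r)$ is, essentiality of $N$ gives $K \cap N \neq \{0\}$, producing the desired nonzero element of $N$ killed on the right by $r$, and the argument closes.
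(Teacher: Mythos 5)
Your easy inclusion ${}'\CC_M\subseteq {}'\CC_N$ and your diagnosis of the obstacle --- that $\ker(\cdot r)$ is in general only an additive subgroup of $M$, not a right $R$-submodule --- are both correct. But the reduction you leave pending, that $K:=\{x\in M\mid xRr=0\}$ is nonzero whenever $\ker(\cdot r)$ is, fails, and it cannot be repaired because the lemma itself is false. Let $R$ be the ring of upper triangular $2\times 2$ matrices over a field $K$, with matrix units $E_{11},E_{12},E_{22}$; put $M=R_R$, $N=KE_{12}\oplus KE_{22}$ (the right socle of $R$) and $r=E_{22}$. The right ideal $N$ is essential in $R_R$: a nonzero right ideal $L$ contains some $aE_{11}+bE_{12}+cE_{22}\neq 0$, and if $a\neq 0$ then multiplying on the right by $E_{12}$ produces $aE_{12}\in L\cap N$, while if $a=0$ the element already lies in $N$. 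Right multiplication by $E_{22}$ fixes both $E_{12}$ and $E_{22}$, hence is the identity on $N$, so $r\in {}'\CC_N$; but $E_{11}E_{22}=0$ with $E_{11}\neq 0$, so $r\notin {}'\CC_M$, and ${}'\CC_N\not\subseteq {}'\CC_M$. Here $\ker(\cdot r)=KE_{11}$ contains no nonzero right $R$-submodule of $M$ (one computes $K=\lann_R(RE_{22})=\lann_R(N)=0$), so your auxiliary submodule vanishes exactly when you need it.

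For what it is worth, the paper's own argument for this lemma is a verbatim repetition of the faithfulness argument of Lemma \ref{a15Mar24} and never invokes essentiality, so there is no valid proof in the source to compare against; the same example also refutes Corollary \ref{d15Mar24} (and hence Corollary \ref{e15Mar24}), since there $\pCCR\subsetneq {}'\CC_N$. Your strategy would go through under any hypothesis forcing $\ker(\cdot r)$ to be an $R$-submodule --- for instance $R$ commutative --- but essentiality of $N$ alone does not provide this.
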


\begin{proof} 1.

Suppose that $c\in \pCC_M$ but
 $c\not\in \pCCR$. Then $dc=0$ or some nonzero element $d\in R$. By the assumption,  $M_R$ is a faithful right $R$-module. Hence, $m':=md\neq 0$ for some element $m\in M$. Then $0\neq m'c=mdc=0$, a contradiction, and statement 1 follows.

2. By symmetry,  statement 2 follows from statement 1.

3. Statement 3 follows from statements 1 and 2.
\end{proof}

Corollary \ref{d15Mar24} is a particular case of Lemma \ref{c15Mar24}.

\begin{corollary}\label{d15Mar24}
 
\begin{enumerate}

\item If $I_R$ is an essential  right ideal of $R$ then $\pCCR= \pCC_I$.

\item If ${}_RI$ is an essential left ideal of $R$ then $\CC_R'= \CC_I'$.

\item If $I$ is an ideal of  $R$ which is  essential as a left and right $R$-module then $\CC_R= \CC_I$.

\end{enumerate}
\end{corollary}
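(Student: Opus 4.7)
The plan is to obtain Corollary \ref{d15Mar24} as a direct specialization of Lemma \ref{c15Mar24} by taking the ambient $R$-module to be $R$ itself. Specifically, for any (one-sided or two-sided) ideal $I$ of $R$, one observes that $I$ is a sub-$R$-module of $R$ of the appropriate handedness, and the definition of the sets $\pCC_M$, $\CC_M'$, $\CC_M$ reduces to $\pCCR$, $\CC_R'$, $\CC_R$ when $M=R$. So the whole argument is a one-line specialization once one unpacks the definitions.

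For statement (1), I would view $R$ as a right $R$-module over itself, so that by definition $\pCC_R=\{r\in R\,|\,\ker(\cdot r:R\to R)=0\}=\pCCR$. Since $I_R$ is, by hypothesis, an essential right submodule of $R_R$, Lemma \ref{c15Mar24}.(1) applies with $M=R_R$ and $N=I_R$ and yields $\pCCR=\pCC_R=\pCC_I$. Statement (2) is entirely symmetric: view ${}_RR$ as a left module over itself, so that $\CC_R'=\CC_{{}_RR}'$, and apply Lemma \ref{c15Mar24}.(2) with $M={}_RR$ and $N={}_RI$ to obtain $\CC_R'=\CC_I'$.

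For statement (3), I would regard $R$ as an $R$-bimodule ${}_RR_R$ over itself, with $N=I$ as an $R$-sub-bimodule which, by hypothesis, is essential both as a left and as a right $R$-submodule of $R$. Then Lemma \ref{c15Mar24}.(3) gives $\CC_R=\CC_I$. Alternatively, one can simply intersect the conclusions of (1) and (2), since $\CC_R=\pCCR\cap \CC_R'$ and $\CC_I=\pCC_I\cap \CC_I'$, noting that the essentiality of $I$ as a sub-bimodule forces essentiality on each one-sided side so the hypotheses of (1) and (2) are simultaneously satisfied.

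There is no real obstacle here: the proof is a pure application of Lemma \ref{c15Mar24} to the ``tautological'' choice of module $M=R$, and the only thing to check is the compatibility of the definitions $\pCC_M$, $\CC_M'$, $\CC_M$ with the notation $\pCCR$, $\CC_R'$, $\CC_R$ in the case $M=R$, which is immediate from the formulas $\cdot r_R:R\to R$, $x\mapsto xr$ and $r_R\cdot:R\to R$, $x\mapsto rx$.
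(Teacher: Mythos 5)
Your proof is correct and is exactly the paper's intended argument: the paper itself announces ``Corollary \ref{d15Mar24} is a particular case of Lemma \ref{c15Mar24}'', and your specialization $M=R_R$ (resp.\ ${}_RR$, ${}_RR_R$) with $N=I$ is the natural way to carry that out (indeed the paper's printed proof text reuses the wording from Lemma \ref{b15Mar24}, which invokes faithfulness rather than essentiality, so your explicit reduction to Lemma \ref{c15Mar24} is cleaner than what is actually typeset). One small caution: in your remark on statement (3), essentiality of $I$ in the lattice of sub-bimodules does not by itself force one-sided essentiality, but this is moot since the hypothesis of (3) explicitly assumes $I$ is essential as a left and as a right ideal, so the intersection argument $\CC_R=\pCCR\cap\CC_R'=\pCC_I\cap\CC_I'=\CC_I$ goes through.
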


\begin{proof} 1. By Lemma \ref{a15Mar24}.(1), $\pCCR\supseteq \pCC_I$. The opposite inclusion follows from the inclusion $I_R\subseteq R$.

2. By symmetry,  statement 2 follows from statement 1.

3. Statement 3 follows from statements 1 and 2. 
\end{proof}

 Let ${\rm ICS}({}_RM)$ (resp., ${\rm ICS}(M_R)$) be the set of isomorphism classes of simple submodules of a  semisimple left (resp. right) $R$-module $M$. 
Every semisimple left (resp. right) $R$-module $M$ is a unigue direct sum of its isotypic components, $M=\oplus_{[V]\in {\rm ICS}({}_RM)}M_{[V]}$ (resp., $M=\oplus_{[U]\in {\rm ICS}(M_R)}M_{[U]}$),  where $M_{[V]}$ (resp., $M_{U]}$) is the sum of all simple submodules of $M$ that are isomorphic to the module $V$ (resp.,  $U$).

\begin{corollary}\label{e15Mar24}
 
\begin{enumerate}

\item If $I_R$ is an essential  right ideal of $R$ which is a semisimple right $R$-module  then $\pCCR= \bigcap_{[U]\in {\rm ICS} (I_R)}\pCC_U$.

\item If ${}_RI$ is an essential left ideal of $R$  which is a semisimple left $R$-module then $\CC_R'= \bigcap_{[V]\in {\rm ICS} ({}_RI)}\CC_V'$.

\item If $I$ is an ideal of  $R$ which is  essential and semisimple as a left and right $R$-module and then $\CC_R=\bigcap_{[U]\in {\rm ICS} (I_R),[V]\in {\rm ICS} ({}_RI)}\pCC_U\cap\CC_V'$.

\end{enumerate}
\end{corollary}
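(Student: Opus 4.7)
The plan is to combine the essential-ideal reduction supplied by Corollary \ref{d15Mar24} with the isotypic decomposition of $I$ and the elementary observation that right multiplication automatically preserves right submodules.

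For statement (1), Corollary \ref{d15Mar24}.(1) gives $\pCCR = \pCC_I$, so it suffices to prove $\pCC_I = \bigcap_{[U]\in \mathrm{ICS}(I_R)} \pCC_U$. Fix a decomposition $I = \bigoplus_\alpha V_\alpha$ into simple right $R$-submodules. The key point is that although $\cdot c : I \to I$ need not be an $R$-module homomorphism, each $V_\alpha$ is a right $R$-submodule, and therefore $V_\alpha c \subseteq V_\alpha$; hence $\cdot c$ restricts to a $\mathbb{Z}$-linear endomorphism of each $V_\alpha$. Given this, the equality $\pCC_I = \bigcap_\alpha \pCC_{V_\alpha}$ is straightforward. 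The inclusion $\subseteq$ is immediate from restriction. For $\supseteq$, take $m \in I$ with $mc = 0$ and write $m = \sum_\alpha m_\alpha$ in the direct sum decomposition; since each $m_\alpha c \in V_\alpha$ and the sum $\bigoplus_\alpha V_\alpha$ is direct, the equation $\sum_\alpha m_\alpha c = 0$ forces each $m_\alpha c = 0$, and so $m_\alpha = 0$ whenever $c \in \pCC_{V_\alpha}$.

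To pass from the index set $\{\alpha\}$ to $\mathrm{ICS}(I_R)$, I would verify that $\pCC_V$ depends only on the isomorphism class $[V]$: if $\phi : V \to V'$ is an $R$-isomorphism of simple right modules then $\phi(vc) = \phi(v)c$, so $\phi$ restricts to a bijection $\ker(\cdot c|_V) \to \ker(\cdot c|_{V'})$, giving $\pCC_V = \pCC_{V'}$. Statement (2) follows by the entirely symmetric argument on the left, using Corollary \ref{d15Mar24}.(2) and the fact that $c\cdot$ preserves every simple left submodule. Statement (3) is then immediate: by statements (1) and (2) together with $\CC_R = \pCCR \cap \CC_R'$, the intersection over the joint index set distributes to yield the claimed expression $\bigcap_{[U],[V]} \pCC_U \cap \CC_V'$.

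There is no substantial obstacle here: once Corollary \ref{d15Mar24} is invoked, the only essential new ingredient is the trivial remark that a right (resp. left) submodule is preserved by right (resp. left) multiplication, which makes the isotypic decomposition compatible with the maps $\cdot c$ and $c\cdot$ even though these are not $R$-linear in general. The rest is bookkeeping with direct sums and a Schur-style well-definedness check.
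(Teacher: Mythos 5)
Your proof is correct and follows the route the paper intends: reduce to the essential ideal $I$ via Corollary \ref{d15Mar24}, then use the semisimple decomposition $I=\bigoplus_\alpha V_\alpha$ (noting that $\cdot c$ preserves each right submodule $V_\alpha$ and that $\pCC_V$ is an isomorphism invariant) to pass to the intersection over isomorphism classes. The paper's own proof is extremely terse — it merely cites Corollary \ref{d15Mar24} — and your write-up supplies exactly the direct-sum and Schur-style bookkeeping that the paper leaves implicit.
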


\begin{proof} 1 and 2. Statements 1 and 2 follows from  Statements 1 and 2 of Corollary \ref{d15Mar24}, respectively.

3. Statement 3 follows from statements 1 and 2. 
\end{proof}


\section{The rings $\pQ (\mS_n)$, $\pQ(\CI_n)$,  $Q'(\mS_n)$ and $Q'(\CI_n)$} \label{RING-PQmSn} 

In this section, Theorem \ref{19Mar24} and Corollary \ref{a22Mar24}.(1) describe the algebras $\pQ (\mS_n)$ and $Q'(\mS_n)$, respectively. Similarly, Corollary \ref{a21Mar24} describes the algebras $\pQ (\CI_n)$ and $Q'(\CI_n)$. Theorem \ref{A1Apr24} and Theorem \ref{A31Mar24} describe the sets $\pCC_{\mS_1}$ and $\CC_{\mS_1}'$.\\

{\bf The algebra $\mS_n$ of one-sided inverses of a polynomial algebra}. We collect some results on the algebras $\mS_n$ from \cite{shrekalg} that are used in the proofs later. 

\begin{definition} (\cite{shrekalg}) The 
{\bf algebra  of one-sided inverses} of $P_n=K[x_1,\ldots , x_n]$, $\mathbb{S}_n$,  is an
algebra generated over a field $K$ 
by $2n$
elements $x_1, \ldots , x_n, y_n, \ldots , y_n$ subject to  the
defining relations:
$$ y_1x_1=\cdots = y_nx_n=1 , \;\; [x_i, y_j]=[x_i, x_j]= [y_i,y_j]=0
\;\; {\rm for\; all}\; i\neq j,$$ where $[a,b]:= ab-ba$, the
commutator of elements $a$ and $b$.
\end{definition}
By the very definition, the algebra $\mS_n$ is obtained from the
polynomial algebra $P_n$ by adding commuting, left (or right)
inverses of its canonical generators. Clearly, $\mathbb{S}_n=\mS_1(1)\t \cdots \t\mS_1(n)\simeq
\mathbb{S}_1^{\t n}$ where $\mS_1(i):=K\langle x_i,y_i \, | \,
y_ix_i=1\rangle \simeq \mS_1$ and $$\mS_n=\bigoplus_{\alpha ,
\beta \in \N^n} Kx^\alpha y^\beta$$ where $x^\alpha :=
x_1^{\alpha_1} \cdots x_n^{\alpha_n}$, $\alpha = (\alpha_1, \ldots
, \alpha_n)$, $y^\beta := y_1^{\beta_1} \cdots y_n^{\beta_n}$ and
$\beta = (\beta_1,\ldots , \beta_n)$. In particular, the algebra
$\mS_n$ contains two polynomial subalgebras $P_n=K[x_1, \ldots , x_n]$ and $\mY_n:=K[y_1,
\ldots , y_n]$ and is equal,  as a vector space,  to their tensor
product $P_n\t \mY_n$. Note that also the Weyl algebra $A_n$ is a
tensor product (as a vector space) $P_n\t K[\der_1, \ldots ,
\der_n]$ of  two polynomial subalgebras.

When $n=1$, we usually drop the subscript `1' if this does not
lead to confusion.  So, $\mS_1= K\langle x,y\, | \,
yx=1\rangle=\bigoplus_{i,j\geq 0}Kx^iy^j$. For each natural number
$d\geq 1$, let $M_d(K):=\bigoplus_{i,j=0}^{d-1}KE_{ij}$ be the
algebra of $d$-dimensional matrices where $\{ E_{ij}\}$ are the
matrix units, and
$$M_\infty (K) :=
\varinjlim M_d(K)=\bigoplus_{i,j\in \N}KE_{ij}$$ be the algebra
(without 1) of infinite dimensional matrices. The algebra $\mS_1$
contains the ideal $F:=\bigoplus_{i,j\in \N}KE_{ij}$, where
\begin{equation}\label{Eijc}
E_{ij}:= x^iy^j-x^{i+1}y^{j+1}, \;\; i,j\geq 0.
\end{equation}
For all natural numbers $i$, $j$, $k$, and $l$,
$E_{ij}E_{kl}=\d_{jk}E_{il}$ where $\d_{jk}$ is the Kronecker
delta function.  The ideal $F$ is an algebra (without 1)
isomorphic to the algebra $M_\infty (K)$ via $E_{ij}\mapsto
E_{ij}$.  For all $i,j\geq 0$, 
\begin{equation}\label{xyEij}
xE_{ij}=E_{i+1, j}, \;\; yE_{ij} = E_{i-1, j}\;\;\; (E_{-1,j}:=0),
\end{equation}
\begin{equation}\label{xyEij1}
E_{ij}x=E_{i, j-1}, \;\; E_{ij}y = E_{i, j+1} \;\;\;
(E_{i,-1}:=0).
\end{equation}
The algebra
\begin{equation}\label{mS1d}
\mS_1= K\oplus xK[x]\oplus yK[y]\oplus F
\end{equation}
is the direct sum of vector spaces. Then 
\begin{equation}\label{mS1d1}
\mS_1/F\simeq L_1:=K[x,x^{-1}], \;\; x\mapsto x, \;\; y \mapsto
x^{-1},
\end{equation}
since $yx=1$, $xy=1-E_{00}$ and $E_{00}\in F$.

The algebra $\mS_n = \bigotimes_{i=1}^n \mS_1(i)$ contains the
ideal
$$F_n:= F^{\t n }=\bigoplus_{\alpha , \beta \in
\N^n}KE_{\alpha \beta}, \;\; {\rm where}\;\; E_{\alpha
\beta}:=\prod_{i=1}^n E_{\alpha_i \beta_i}(i), \;\; E_{\alpha_i
\beta_i}(i):=x_i^{\alpha_i}y_i^{\beta_i}-x_i^{\alpha_i+1}y_i^{\beta_i+1}.$$
Note that $E_{\alpha \beta}E_{\g \rho}=\d_{\beta \g }E_{\alpha
\rho}$ for all elements $\alpha, \beta , \g , \rho \in \N^n$ where
$\d_{\beta
 \g }$ is the Kronecker delta function;  $F_n=\bigotimes_{i=1}^nF(i)$ and
 $F(i):=\bigoplus_{s,t\in \N}KE_{st}(i)$.\\

{\bf The involution $\eta$ on $\mS_n$}. The algebra $\mS_n$ admits
the {\em involution}
\begin{equation}\label{etamSn}
\eta : \mS_n\ra \mS_n, \;\; x_i\mapsto y_i, \;\; y_i\mapsto
x_i, \;\; i=1, \ldots , n.
\end{equation}
 It is a $K$-algebra anti-isomorphism
($\eta (ab) = \eta (b) \eta (a)$ for all $a,b\in \mS_n$) such that
$\eta^2 = \id_{\mS_n}$, the identity map on $\mS_n$. So, the
algebra $\mS_n$ is {\em self-dual} (i.e. it is isomorphic to its
opposite algebra, $\eta : \mS_n\simeq \mS_n^{op}$). The involution
$\eta$ acts on the `matrix' ring
$F_n$ as the transposition,  
\begin{equation}\label{eEij1}
\eta (E_{\alpha \beta} )=E_{\beta \alpha}.
\end{equation}
 The canonical generators $x_i$,
$y_j$ $(1\leq i,j\leq n)$ determine the ascending filtration $\{
\mS_{n, \leq i}\}_{i\in \N}$ on the algebra $\mS_n$ in the obvious
way (i.e. by the total degree of the generators): $\mS_{n, \leq
i}:= \bigoplus_{|\alpha |+|\beta |\leq i} Kx^\alpha y^\beta$ where
$|\alpha | \; = \alpha_1+\cdots + \alpha_n$ ($\mS_{n, \leq
i}\mS_{n, \leq j}\subseteq \mS_{n, \leq i+j}$ for all $i,j\geq
0$). Then $\dim (\mS_{n,\leq i})={i+2n \choose 2n}$ for $i\geq 0$,
and so the Gelfand-Kirillov dimension $\GK (\mS_n )$ of the
algebra $\mS_n$ is equal to $2n$. It is not difficult to show
 that the algebra $\mS_n$ is neither left nor
right Noetherian. Moreover, it contains infinite direct sums of
left and right ideals (see \cite{shrekalg}).\\

{\bf The set of height 1 primes of $\mS_n$}.   Consider the ideals
of the algebra $\mS_n$:
$$\gp_1:=F\t \mS_{n-1},\; \gp_2:= \mS_1\t F\t \mS_{n-2}, \ldots ,
 \gp_n:= \mS_{n-1} \t F.$$ Then $\mS_n/\gp_i\simeq
\mS_{n-1}\t (\mS_1/F) \simeq  \mS_{n-1}\t K[x_i, x_i^{-1}]$ and
$\bigcap_{i=1}^n \gp_i = \prod_{i=1}^n \gp_i =F^{\t n }=F_n$.
Clearly, $\gp_i\not\subseteq \gp_j$ for all $i\neq j$.

\begin{itemize}
 \item
{\em The set $\CH_1$ of height one prime ideals of the algebra
$\mS_n$ is} $\{ \gp_1, \ldots , \gp_n\}$.
\end{itemize}

 Let
$\ga_n:= \gp_1+\cdots +\gp_n$. Then the factor algebra
\begin{equation}\label{SnSn}
\mS_n/ \ga_n\simeq (\mS_1/F)^{\t n } \simeq L_n:=\bigotimes_{i=1}^n
K[x_i, x_i^{-1}]= K[x_1, x_1^{-1}, \ldots , x_n, x_n^{-1}]
\end{equation}
is a  Laurent polynomial algebra in $n$ variables,  and so $\ga_n$
is a prime ideal of height and co-height $n$ of the algebra
$\mS_n$. $$S_y:=\{ y^\alpha\, | \, \alpha \in \N^n\}\in \Den_l(\mS_n, \ga_n)\;\; {\rm  an}\;\;S_y^{-1}\mS_n\simeq \mS_n/\ga_n=L_n.$$ 
The proof of the following statements can be found in \cite{shrekalg}.

\begin{itemize}
 \item   {\em The
 algebra $\mS_n$ is central, prime and catenary. Every nonzero
 ideal  of $\mS_n$ is an essential left and right submodule of}
 $\mS_n$.
 \item  {\em The ideals of
 $\mS_n$ commute ($IJ=JI$);  and the set of ideals of $\mS_n$ satisfy the a.c.c..}
 \item  {\em The classical Krull dimension $\clKdim (\mS_n)$ of $\mS_n$ is $2n$.}
  \item  {\em Let $I$
be an ideal of $\mS_n$. Then the factor algebra $\mS_n / I$ is
left (or right) Noetherian iff $\ga_n\subseteq I$.}
\end{itemize}

\begin{proposition}\label{a19Dec8}
{\rm \cite[Corollary 2.2]{shrekalg}} The polynomial algebra $P_n$
 is the only (up to isomorphism)   simple  faithful left $\mS_n$-module.
\end{proposition}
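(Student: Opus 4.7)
\emph{Proof sketch.} The plan is to establish two things: (a) the polynomial module $P_n$ is simple and faithful as a left $\mS_n$-module, and (b) any simple faithful left $\mS_n$-module $M$ is isomorphic to $P_n$. Part (a) is standard from the construction of $\mS_n$: the canonical representation $\mS_n \hookrightarrow \End_K(P_n)$ (given by $x_i = $ multiplication, $y_i \cdot x_i^k = x_i^{k-1}$, $y_i \cdot 1 = 0$, extended by tensor products) is injective, which gives faithfulness; simplicity of $P_n$ is a direct check using the $y^\beta$'s to reach the constant $1$ and then the $x^\alpha$'s to generate everything. The bulk of the work is in (b).

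For (b), the key tool is the ``matrix'' ideal $F_n \subseteq \mS_n$ with its basis $\{E_{\alpha\beta}\}_{\alpha,\beta \in \N^n}$ and the relation $E_{\alpha\beta} E_{\gamma\rho} = \delta_{\beta\gamma} E_{\alpha\rho}$. First I would observe that since $M$ is faithful and $F_n$ is a nonzero ideal, $F_n M \neq 0$; hence there exist $v \in M$ and $\alpha, \beta \in \N^n$ with $E_{\alpha\beta} v \neq 0$. Using the factorization $E_{\alpha\beta} = E_{\alpha\0} E_{\0\beta}$ together with $E_{\0\0} E_{\0\beta} = E_{\0\beta}$, I would set $m_0 := E_{\0\beta} v$, which satisfies $m_0 \neq 0$ and $E_{\0\0} m_0 = m_0$ (the element $E_{\0\0}$ being an idempotent).

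Next I would identify the left ideal $\mS_n E_{\0\0}$ with $P_n$ as a left $\mS_n$-module. Using $\mS_n = \mS_1^{\t n}$ and $E_{\0\0} = E_{00}(1)\otimes \cdots \otimes E_{00}(n)$, the computation reduces to the $n=1$ case: formulas \eqref{xyEij}--\eqref{xyEij1} show that $\mS_1 E_{00} = \bigoplus_{k \geq 0} K E_{k0}$ with $x \cdot E_{k0} = E_{k+1,0}$ and $y \cdot E_{k0} = E_{k-1,0}$ (with $E_{-1,0} = 0$), so that the $K$-linear map $E_{k0} \mapsto x^k$ is a left $\mS_1$-module isomorphism $\mS_1 E_{00} \simeq P_1$. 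Tensoring over $i = 1, \ldots, n$ yields $\mS_n E_{\0\0} \simeq P_n$ as left $\mS_n$-modules via $E_{\alpha\0} \mapsto x^\alpha$.

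Finally, I would consider the left $\mS_n$-module homomorphism
$$\psi : \mS_n E_{\0\0} \longrightarrow M, \qquad a E_{\0\0} \longmapsto a m_0,$$
which is well-defined because $E_{\0\0} m_0 = m_0$. Since $\psi(E_{\0\0}) = m_0 \neq 0$, $\psi$ is nonzero; since the source $\mS_n E_{\0\0} \simeq P_n$ is simple by (a), $\psi$ is injective; and since the image is a nonzero submodule of the simple module $M$, $\psi$ is surjective. Therefore $M \simeq P_n$. The only delicate point in this program is the correct choice of $m_0$ so as to meet the idempotent $E_{\0\0}$; once that is handled, the rest is a clean application of simplicity of $P_n$ and the matrix-unit identities in $F_n$.
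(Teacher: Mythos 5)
The paper does not actually give a proof of this proposition — it simply cites \cite[Corollary 2.2]{shrekalg} — so there is no internal argument to compare against. Your proof is correct, and it is the natural argument via the matrix ideal $F_n$: the key steps — (i) $F_n M\neq 0$ by faithfulness, (ii) extracting a nonzero $m_0$ with $E_{\0\0}m_0=m_0$ from a nonzero $E_{\alpha\beta}v$ via the factorization $E_{\alpha\beta}=E_{\alpha\0}E_{\0\beta}$, (iii) identifying the cyclic left ideal $\mS_nE_{\0\0}=\bigoplus_{\alpha\in\N^n}KE_{\alpha\0}\simeq P_n$ using the formulas (\ref{xyEij})--(\ref{xyEij1}), and (iv) the Schur-type conclusion for the nonzero map $aE_{\0\0}\mapsto am_0$ — are all valid, and the well-definedness of $\psi$ is exactly why you need the idempotent to fix $m_0$. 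The only place worth a word of caution is the simplicity of $P_n$ in part (a): applying $y^\beta$ to a nonzero polynomial $\sum c_\alpha x^\alpha$ only isolates a nonzero scalar if $\beta$ is chosen as a support exponent of maximal total degree (otherwise the surviving terms need not collapse to a single one); alternatively one can use $E_{\0\alpha}*p=c_\alpha$ directly, which is cleaner and uses the same matrix units you already have in play.
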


In more detail, ${}_{\mS_n}P_n\simeq \mS_n / (\sum_{i=0}^n \mS_n
y_i) =\bigoplus_{\alpha \in \N^n} Kx^\alpha \overline{1}$,
$\overline{1}:= 1+\sum_{i=1}^n \mS_ny_i$; and the action of the
canonical generators of the algebra $\mS_n$ on the polynomial
algebra $P_n$ is given by the rule:

\begin{equation}\label{Ppac}
x_i*x^\alpha = x^{\alpha + e_i}, \;\; y_i*x^\alpha = \begin{cases}
x^{\alpha - e_i}& \text{if } \; \alpha_i>0,\\
0& \text{if }\; \alpha_i=0,\\
\end{cases}  \;\; {\rm and }\;\; E_{\beta \g}*x^\alpha = \d_{\g
\alpha} x^\beta,
\end{equation}
where the set $e_1:= (1,0,\ldots , 0),  \ldots , e_n:=(0, \ldots ,
0,1)$ is the canonical basis for the free $\Z$-module $\Z^n$.  We
identify the algebra $\mS_n$ with its image in the algebra
$\End_K(P_n)$ of all the $K$-linear maps from the vector space
$P_n$ to itself, i.e. $\mS_n \subset \End_K(P_n)$.

\begin{corollary}\label{Pnp-a19Dec8}
 The polynomial algebra $P_n':=\eta (P_n)=K[y_1, \ldots , y_n]$
 is the only (up to isomorphism)    simple faithful right $\mS_n$-module.
\end{corollary}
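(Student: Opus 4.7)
The plan is to deduce this corollary from Proposition \ref{a19Dec8} by transporting the classification of simple faithful \emph{left} $\mS_n$-modules across the involution $\eta : \mS_n \ra \mS_n$ defined in (\ref{etamSn}), which is a $K$-algebra anti-isomorphism of order $2$ and therefore identifies $\mS_n$ with its opposite algebra $\mS_n^{\rm op}$.

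First, I would set up the module-theoretic consequence of $\eta$. Given any right $\mS_n$-module $M$, define a left $\mS_n$-action on the same underlying $K$-vector space by the rule $r\ast m := m\cdot \eta (r)$ for $r\in \mS_n$ and $m\in M$; the anti-multiplicativity of $\eta$ together with $\eta^2=\id_{\mS_n}$ guarantees that this is a well-defined left action and that the assignment $M\mapsto M^\eta$ is an involutive equivalence between the categories of right $\mS_n$-modules and left $\mS_n$-modules. Since $\eta$ is a bijection of $\mS_n$, the submodule lattices of $M$ and $M^\eta$ coincide, so $M$ is simple iff $M^\eta$ is simple, and $\ann_{\mS_n}(M)=\eta(\ann_{\mS_n}(M^\eta))$, so $M$ is faithful iff $M^\eta$ is faithful.

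Now suppose $M$ is a simple faithful right $\mS_n$-module. By the previous paragraph, $M^\eta$ is a simple faithful left $\mS_n$-module, hence by Proposition \ref{a19Dec8} there is a left $\mS_n$-isomorphism $M^\eta \simeq P_n$. Applying the inverse of the twisting operation (which is again twisting by $\eta$, since $\eta^2=\id$) yields an isomorphism of right $\mS_n$-modules $M\simeq (P_n)^\eta$. It remains to identify $(P_n)^\eta$ with $P_n'=K[y_1,\ldots ,y_n]$. Starting from the presentation $P_n\simeq \mS_n/\sum_{i=1}^n \mS_ny_i$ used in Proposition \ref{a19Dec8}, applying $\eta$ converts the left ideal $\sum_{i=1}^n \mS_ny_i$ into the right ideal $\sum_{i=1}^n x_i\mS_n$, so $(P_n)^\eta$ has the $K$-basis $\{ y^\beta\bar 1\mid \beta\in \N^n\}$ with right action inherited from the formulas dual to (\ref{Ppac}) via $\eta$. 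This is exactly the right $\mS_n$-module $P_n'=K[y_1,\ldots ,y_n]=\eta(P_n)$.

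The routine part is bookkeeping; the only place where one must be careful is the claim that the twisting functor $M\mapsto M^\eta$ preserves faithfulness, which is immediate since $\eta$ is a ring automorphism of $\mS_n^{\rm op}$ (equivalently, an anti-automorphism of $\mS_n$) that carries the annihilator of $M$ bijectively to the annihilator of $M^\eta$. Thus I expect no substantial obstacle: the corollary is essentially a formal transport of Proposition \ref{a19Dec8} across the self-duality $\mS_n\simeq \mS_n^{\rm op}$.
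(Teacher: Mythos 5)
Your proposal is correct and follows exactly the paper's approach: the paper's own proof is the one-line remark ``In view of the involution $\eta$, the corollary follows from Proposition \ref{a19Dec8},'' and your argument simply makes explicit the standard twisting equivalence between left and right modules induced by the anti-automorphism $\eta$, including the preservation of simplicity and faithfulness and the identification of the twisted module with $P_n'$.
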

\begin{proof} In view of the involution $\eta$, the corollary follows from Proposition \ref{a19Dec8}. 
\end{proof}

In more detail, $(P_n')_{\mS_n}\simeq \mS_n / (\sum_{i=0}^n x_i\mS_n) =\bigoplus_{\alpha \in \N^n} \widetilde{1}Ky^\alpha $,
$\widetilde{1}:= 1+\sum_{i=1}^n x_i\mS_n$; and the action of the
canonical generators of the algebra $\mS_n$ on the polynomial
algebra $P_n'$ is given by the rule:
\begin{equation}\label{Ppac-1}
y^\alpha* y_i = y^{\alpha + e_i}, \;\;y^\alpha * x_i = \begin{cases}
y^{\alpha - e_i}& \text{if } \; \alpha_i>0,\\
0& \text{if }\; \alpha_i=0,\\
\end{cases}  \;\; {\rm and }\;\;y^\alpha * E_{\beta \g}= \d_{
\alpha\beta} y^\g.
\end{equation}

\begin{proposition}\label{B1Apr24}
(\cite[Proposition 3]{shrekalg})
\begin{enumerate}
\item  ${}_{\mS_n}F_n\simeq P_n^{(\N^n)}$.

\item $(F_n)_{\mS_n}\simeq (P_n')^{(\N^n)}$.
\end{enumerate}

 \end{proposition}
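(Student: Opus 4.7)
The plan is to decompose $F_n$ into column/row submodules and identify each piece with $P_n$ (respectively $P_n'$) via the isomorphism criterion of Proposition~\ref{a19Dec8}.

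\textbf{Statement 1.} For each $\beta\in\N^n$, set $C_\beta:=\bigoplus_{\alpha\in\N^n}KE_{\alpha\beta}\subseteq F_n$. I claim $C_\beta$ is a left $\mS_n$-submodule. Indeed, by (\ref{xyEij}) (and its obvious $n$-variable analogue obtained by tensoring) the generators $x_i$ and $y_i$ act on $E_{\alpha\beta}$ by $x_i E_{\alpha\beta}=E_{\alpha+e_i,\beta}$ and $y_iE_{\alpha\beta}=E_{\alpha-e_i,\beta}$ (or $0$ if $\alpha_i=0$), leaving the second index $\beta$ unchanged; and by the matrix-unit relation $E_{\gamma\rho}E_{\alpha\beta}=\delta_{\rho\alpha}E_{\gamma\beta}$ the ideal $F_n$ also acts within $C_\beta$. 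Since $\mS_n$ is generated by $x_i,y_i$ and $F_n$, this shows $C_\beta$ is a submodule, and by construction $F_n=\bigoplus_{\beta\in\N^n}C_\beta$ as left $\mS_n$-modules. It remains to produce an $\mS_n$-isomorphism $C_\beta\simeq P_n$. Define $\varphi_\beta:P_n\to C_\beta$ by $x^\alpha\overline{1}\mapsto E_{\alpha\beta}$ and extend $K$-linearly; this is bijective by construction. Comparing the $\mS_n$-action on $P_n$ given by (\ref{Ppac}) with the multiplication formulas above shows that $\varphi_\beta$ intertwines the actions of $x_i$, $y_i$ and of every matrix unit $E_{\gamma\rho}$; since these elements generate $\mS_n$, $\varphi_\beta$ is an $\mS_n$-isomorphism. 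Hence ${}_{\mS_n}F_n\simeq\bigoplus_{\beta\in\N^n}P_n=P_n^{(\N^n)}$.

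\textbf{Statement 2.} This follows by transport of structure along the involution $\eta$ of (\ref{etamSn}), using $\eta(E_{\alpha\beta})=E_{\beta\alpha}$ from (\ref{eEij1}). Concretely, for each $\alpha\in\N^n$ set $R_\alpha:=\bigoplus_{\beta\in\N^n}KE_{\alpha\beta}$; by (\ref{xyEij1}) and the matrix-unit multiplication, $R_\alpha$ is a right $\mS_n$-submodule of $F_n$ and $F_n=\bigoplus_{\alpha\in\N^n}R_\alpha$ as right $\mS_n$-modules. The map $\psi_\alpha:P_n'\to R_\alpha$, $\widetilde{1}y^\beta\mapsto E_{\alpha\beta}$, is a $K$-linear bijection, and the formulas (\ref{Ppac-1}) match the right-multiplication formulas for $y_i$, $x_i$ and $E_{\gamma\rho}$ on $E_{\alpha\beta}$, making $\psi_\alpha$ a right $\mS_n$-isomorphism. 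Summing over $\alpha$ gives $(F_n)_{\mS_n}\simeq(P_n')^{(\N^n)}$.

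The whole argument is essentially a direct calculation, so there is no real obstacle; the only thing to be careful about is the orientation conventions, namely that left multiplication preserves the \emph{second} matrix index (so columns are left submodules) while right multiplication preserves the \emph{first} (so rows are right submodules). Once that is fixed, the actions on $P_n$ (respectively $P_n'$) prescribed in (\ref{Ppac}) and (\ref{Ppac-1}) agree termwise with the internal multiplication in $F_n$, yielding both isomorphisms simultaneously.
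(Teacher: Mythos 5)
Your proof is correct. The paper cites Proposition~\ref{B1Apr24} from \cite{shrekalg} without giving its own proof, so there is nothing local to compare against; your argument is the natural direct verification that one would expect to find in the cited source. The key observations are exactly right: by (\ref{xyEij}) and the matrix-unit relation $E_{\alpha\beta}E_{\gamma\rho}=\delta_{\beta\gamma}E_{\alpha\rho}$, left multiplication preserves the second index, so the columns $C_\beta$ are left submodules, and the bijection $x^\alpha\overline{1}\mapsto E_{\alpha\beta}$ matches (\ref{Ppac}) on the generators $x_i$, $y_i$, $E_{\gamma\rho}$; dually, (\ref{xyEij1}) shows right multiplication preserves the first index, so the rows $R_\alpha$ are right submodules, and $\widetilde{1}y^\beta\mapsto E_{\alpha\beta}$ matches (\ref{Ppac-1}). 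One small stylistic remark: since $F_n$ is already contained in the subalgebra generated by the $x_i,y_i$ (indeed $E_{ij}=x^iy^j-x^{i+1}y^{j+1}$), it suffices to check that $x_i,y_i$ preserve $C_\beta$; including $F_n$ among the generators is harmless but redundant.
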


{\bf Constructions of Ore and denominator sets.}
 Below we collect and prove some useful results that are used in the proofs. They also are of independent interest.

\begin{lemma}\label{a7Mar15}
(\cite[Lemma 2.5]{Clas-lreg-quot}) Suppose that $S,T\in \Den_l(R)$ and $S\subseteq T$. Then  the map $\v : S^{-1}R\ra S^{-1}T$, $s^{-1}r\mapsto s^{-1}r$, is a ring homomorphism (where $s\in S$ and $r\in R$).
\begin{enumerate}
\item $\v$  is a monomorphism iff $\ass_R(S)=\ass_R(T)$.
\item $\v$  is a epimorphism iff for each $t\in T$ there exists an element $r\in R$ such that $rt\in S+\ass_R(T)$.
\item $\v$  is a isomorphism iff $\ass_R(S)=\ass_R(T)$ and for each element $t\in T$ there exists an element $r\in R$ such that $rt\in S$.
\item If, in addition, $T\subseteq \pCCR$, then $\v$  is a isomorphism iff $\ass_R(S)=\ass_R(T)$ and for each element $t\in T$ there exists an element $r\in \pCCR$ such that $rt\in S$.
\end{enumerate}
\end{lemma}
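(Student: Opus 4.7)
The plan is to realise $\varphi$ via the universal property of left localisation (Proposition \ref{A5Dec22}) and then analyse its kernel and image separately. Since $S\subseteq T$, every element of $S$ becomes a unit in $T^{-1}R$ under the canonical map $\sigma_T\colon R\to T^{-1}R$, so Proposition \ref{A5Dec22} yields a unique $R$-homomorphism $\varphi$ with $s^{-1}r\mapsto s^{-1}r$; I would begin by recording this to justify that $\varphi$ is a well-defined ring homomorphism. (I read the codomain as $T^{-1}R$, which appears to be the only sensible interpretation.)

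For (1), an element $s^{-1}r$ vanishes in $S^{-1}R$ (resp.\ in $T^{-1}R$) iff $r\in \ass_R(S)$ (resp.\ $r\in \ass_R(T)$). The inclusion $S\subseteq T$ automatically gives $\ass_R(S)\subseteq \ass_R(T)$, so triviality of $\ker\varphi$ is precisely the reverse containment. For (2), since $T^{-1}R$ is generated over $R$ by $\{t^{-1}:t\in T\}$, surjectivity of $\varphi$ reduces to each $t^{-1}$ lying in the image. Writing $t^{-1}=s^{-1}r$ in $T^{-1}R$ and multiplying by $s$ on the left and $t$ on the right gives $rt=s$ in $T^{-1}R$, equivalently $rt-s\in\ker\sigma_T=\ass_R(T)$, i.e.\ $rt\in S+\ass_R(T)$; and this computation reverses.

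Part (3) is the conjunction of (1) and (2). The real content is (4). The nontrivial direction is to upgrade the element $r\in R$ supplied by (3) to an element $r'\in\pCCR$ with $r't\in S$ on the nose. The key move: having written $rt=s+a$ with $s\in S$ and $a\in\ass_R(T)=\ass_R(S)$, pick $s_1\in S$ with $s_1a=0$ (available because $a\in\ass_R(S)$) and set $r':=s_1r$. Then $r't=s_1rt=s_1s+s_1a=s_1s\in S$. Moreover $s_1s\in S\subseteq T\subseteq\pCCR$ is left regular, and $xr'=0$ forces $xr't=0$, whence $x=0$ by left regularity of $r't$; so $r'\in\pCCR$. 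The converse direction of (4) is immediate from (3), since $S\subseteq S+\ass_R(T)$.

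The main obstacle I anticipate is exactly the refinement in part (4) from the loose containment $rt\in S+\ass_R(T)$ of part (3) to the strict containment $rt\in S$. The device that makes it work is using the equality $\ass_R(S)=\ass_R(T)$ (not just the inclusion) to kill the tail $a$ on the left by a single element of $S$; left regularity of the resulting $r'$ is then free, being propagated from the left regularity of $T$ through $r't\in S$.
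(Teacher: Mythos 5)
Your proof is correct. The paper does not supply an argument for this statement but quotes it verbatim from Lemma 2.5 of the earlier paper \cite{Clas-lreg-quot}, so there is no in-text proof to compare against; judged on its own terms, your derivation via the universal property, kernel computation, and the explicit generation of $T^{-1}R$ by the $t^{-1}$'s is sound.

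One presentational point is worth flagging. Part (3) is not literally the conjunction of (1) and (2): combining those yields only that for each $t\in T$ there is $r\in R$ with $rt\in S+\ass_R(T)$, whereas (3) asserts $rt\in S$. The sharpening is precisely the manoeuvre you deploy inside (4) — writing $rt=s+a$ with $a\in\ass_R(T)=\ass_R(S)$, choosing $s_1\in S$ with $s_1a=0$, and replacing $r$ by $r'=s_1r$ so that $r't=s_1s\in S$ — and it does use the equality $\ass_R(S)=\ass_R(T)$, not merely the automatic inclusion. So the cleaner organisation is to carry out that step as the proof of (3); then (4) reduces to the single observation you already make, namely that since $r't\in S\subseteq T\subseteq\pCCR$ is left regular, $xr'=0$ forces $xr't=0$ and hence $x=0$, so $r'\in\pCCR$ automatically. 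As written your argument for (4) silently proves (3) en route, so there is no logical gap, only a mild misattribution of where the work happens. The remaining details (kernel of $\sigma_T$ equals $\ass_R(T)$ for a left denominator set; $s-rt\in\ass_R(T)$ iff $rt\in S+\ass_R(T)$; the reduction of surjectivity to each $t^{-1}$ being hit) are all correctly handled.
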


\begin{lemma}\label{c16Mar15}
(\cite[Lemma 6.1]{Clas-lreg-quot})  Suppose that $T\in \Den_l(R)$ and $S$ be a multiplicative set of $R$ such that $S\subseteq T$, $\ass_R(S) = \ass_R(T)$ and for each element $t\in T$ there exists an element $r\in R$ such that $rt\in S+\ass_R(T)$. Then $S\in \Den_l(R)$ and $S^{-1}R\simeq T^{-1}R$.
\end{lemma}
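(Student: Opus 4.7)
The plan is to verify first that $S$ satisfies the left Ore condition, then the left denominator condition, and finally to deduce the ring isomorphism $S^{-1}R \simeq T^{-1}R$ by invoking Lemma \ref{a7Mar15}. The unifying technical device is the observation that, since $\ass_R(S)=\ass_R(T)$, whenever an ``error term'' $a\in \ass_R(T)$ appears there exists an element $s_0\in S$ with $s_0 a = 0$, so left-multiplication by $s_0$ absorbs the error while producing another element of $S$ (thanks to multiplicativity).

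For the Ore condition, fix $s\in S$ and $r\in R$. Since $s\in S\subseteq T$ and $T\in \Ore_l(R)$, one can choose $t\in T$ and $r_1\in R$ with $tr = r_1 s$. Applying the hypothesis to $t$, I obtain $r_2\in R$ with $r_2 t = s_1 + a$ for some $s_1\in S$ and $a\in \ass_R(T)$; picking $s_2\in S$ that kills $a$ (possible because $a\in \ass_R(S)$), the identity $r_2 tr = r_2 r_1 s$ rewrites as $s_1 r + ar = r_2 r_1 s$, and left-multiplication by $s_2$ gives $(s_2 s_1)\, r = (s_2 r_2 r_1)\, s$ with $s_2 s_1 \in S$, establishing $S\in \Ore_l(R)$.

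For the denominator condition, suppose $rs = 0$ for some $r\in R$ and $s\in S\subseteq T$. Since $T\in \Den_l(R)$, some $t\in T$ satisfies $tr = 0$. Writing $r_1 t = s_1 + a$ with $s_1\in S$ and $a\in \ass_R(T)$ as above, and choosing $s_2\in S$ with $s_2 a = 0$, one computes $s_2 s_1 r = s_2(r_1 t - a)r = s_2 r_1 \cdot tr - s_2 a\cdot r = 0$, so $s_2 s_1 \in S$ annihilates $r$ on the left, proving $S\in \Den_l(R)$.

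Once $S\in \Den_l(R)$ is known, the natural $R$-homomorphism $S^{-1}R \to T^{-1}R$ of Lemma \ref{a7Mar15} is injective by part (1) of that lemma (because $\ass_R(S)=\ass_R(T)$) and surjective by part (2) (because of the hypothesis that for each $t\in T$ there is $r\in R$ with $rt \in S+\ass_R(T)$), yielding $S^{-1}R \simeq T^{-1}R$. The only genuinely substantive point is the uniform handling of the error term $a\in \ass_R(T)$ in both the Ore and the denominator arguments; this is precisely what the hypothesis $\ass_R(S)=\ass_R(T)$ is designed to enable, so once the ``absorb with $s_0$'' trick is identified the remainder of the proof is routine.
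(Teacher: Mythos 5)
Your proof is correct. Note that the paper does not give its own argument here — the lemma is cited from \cite[Lemma 6.1]{Clas-lreg-quot} — so there is nothing to compare against, but your reconstruction is precisely the natural one: the ``absorb the $\ass_R(T)$-error by a suitable $s_2\in S$'' trick is exactly what the hypothesis $\ass_R(S)=\ass_R(T)$ is for, and you apply it cleanly to establish both the Ore and the denominator conditions for $S$ before invoking parts (1) and (2) of Lemma \ref{a7Mar15} for the isomorphism.
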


\begin{definition}
The pair $(S,T)$ that satisfies the conditions of Lemma \ref{c16Mar15}  is called a {\bf left localization pair} of the ring $R$. 

\end{definition}

Lemma \ref{a18Mar24} provides sufficient conditions for the pre-image of a left Ore/denominator set being a left Ore/denominator set.

\begin{lemma}\label{a18Mar24}
Let $R$  be a ring, $\ga$ be an  ideal of $R$,
$\pi_\ga :R\ra \bR :=R/\ga$, $r\mapsto \br=r+\ga$, $\overline{\gb}$ be an ideal of $\bR$,  $\gb =\pi_\ga^{-1}(\overline{\gb})$, $\bS$ be  a multiplicative subset of $\bR$ and $S=\pi_\ga^{-1}(\bS)$. 

\begin{enumerate}

\item If $\bS\in \Ore_l(\bR, \overline{\gb})$ and $\ass_l(S)=\gb$ then $S\in \Ore_l(R, \gb)$.

\item If $\bS\in \Den_l(\bR, \overline{\gb})$ and $\ass_l(S)=\gb$   then $S\in \Den_l(R, \gb)$.

\item If $\bS\in \Den (\bR, \overline{\gb})$ and $\ass_l(S)=\ass_r(S)=\gb$   then $S\in \Den(R, \gb)$.

\item If  $\bS\in \Ore_l(\bR, \overline{\gb})$ and $T\subseteq S$ for some $T\in \Ore_l(R, \gb)$ then $S\in \Ore_l(R, \gb)$.

\item If  $\bS\in \Den_l(\bR, \overline{\gb})$ and $T\subseteq S$ for some $T\in \Ore_l(R, \gb)$ then $S\in \Den_l(R, \gb)$.

\end{enumerate}
\end{lemma}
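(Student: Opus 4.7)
The plan is to exploit the inclusion $\ga \subseteq \gb$ (which holds because $\overline{\gb}$ is an ideal of $\bR$ and therefore contains $0$, so $\ga = \pi_\ga^{-1}(0) \subseteq \pi_\ga^{-1}(\overline{\gb}) = \gb$) together with the hypothesis $\ass_l(S) = \gb$ in order to lift modular equations from $\bR$ back to $R$ by clearing the discrepancy with a suitable element of $S$. A preliminary observation I would record is that $S = \pi_\ga^{-1}(\bS)$ is automatically multiplicative in $R$ because $\bS$ is multiplicative in $\bR$ (in particular $0 \notin S$, since otherwise $\overline{0} \in \bS$).

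For (1), given $r \in R$ and $s \in S$, I would apply the left Ore condition for $\bS$ in $\bR$ to $\br$ and $\bs$ to obtain $\overline{s_1} \in \bS$ and $\overline{r_1} \in \bR$ with $\overline{s_1}\,\br = \overline{r_1}\,\bs$. Choosing any lifts $s_1 \in S$ and $r_1 \in R$, the element $s_1 r - r_1 s$ lies in $\ga \subseteq \gb = \ass_l(S)$, so some $s_2 \in S$ annihilates it on the left; setting $s' := s_2 s_1 \in S$ and $r' := s_2 r_1$ then gives $s' r = r' s$. Combined with the hypothesis $\ass_l(S) = \gb$, this yields $S \in \Ore_l(R, \gb)$.

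For (2), the only extra condition to verify is left reversibility. If $rs = 0$ for some $r \in R$ and $s \in S$, then $\br\,\bs = 0$ in $\bR$, so the left denominator property of $\bS$ produces $\bar t \in \bS$ with $\bar t\,\br = 0$, i.e.\ $tr \in \ga \subseteq \ass_l(S)$; picking $s' \in S$ with $s'(tr) = 0$ gives $(s't)r = 0$ with $s't \in S$, so $r \in \ass_l(S) = \gb$, as needed. Part (3) then follows from (2) together with its right-handed analogue obtained by applying (2) to $R^{\mathrm{op}}$ (the hypotheses $\ass_l(S) = \ass_r(S) = \gb$ are preserved under this switch).

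For (4) and (5), the only additional work is to deduce the hypothesis $\ass_l(S) = \gb$ of (1) and (2) from the existence of $T \in \Ore_l(R, \gb)$ with $T \subseteq S$. The inclusion $\gb = \ass_l(T) \subseteq \ass_l(S)$ is immediate from $T \subseteq S$; conversely, if $r \in \ass_l(S)$ then $sr = 0$ for some $s \in S$, so $\bs\,\br = 0$ in $\bR$, whence $\br \in \ass_l(\bS) = \overline{\gb}$ and therefore $r \in \gb$. With $\ass_l(S) = \gb$ now in hand, (4) and (5) reduce at once to (1) and (2) respectively. The main obstacle here is essentially cosmetic: one must recognise that the annihilator assumption is exactly the lever that allows lifting equations from $\bR$ to $R$, and that in (4) and (5) the set $T$ is there precisely to supply this assumption for free. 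Once this is spotted, every part is a short calculation.
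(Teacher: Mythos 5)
Your proposal is correct and follows essentially the same route as the paper: for (1) lift the Ore equation from $\bR$, observe the discrepancy lands in $\ga\subseteq\gb=\ass_l(S)$, and clear it with a left-multiplier from $S$; for (2) verify left reversibility by lifting through $\bS$'s left reversibility and again clearing the $\ga$-discrepancy; (3) via the opposite ring; (4) and (5) by deriving $\ass_l(S)=\gb$ from $T\subseteq S$ and $\ass_l(\bS)=\overline{\gb}$ and then quoting (1) and (2). The only cosmetic difference is in (2) and (5): the paper passes directly from $\bar t\,\br=0$ to $\br\in\overline{\gb}$ using $\ass_r(\bS)\subseteq\ass_l(\bS)=\overline{\gb}$ and then pulls back, while you pull back first to $tr\in\ga\subseteq\ass_l(S)$ and clear with one more element of $S$; both are sound and equivalent.
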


\begin{proof} Clearly, the set $S$ is a multiplicative subset in $R$.

1. We have to show that the left Ore condition holds for the multiplicative subset $S$ of the ring $R$.
 For elements $s\in S$ and $r\in R$, $\overline{s'} \br = \overline{r'}\bs$ for   some elements $s'\in S$ and $r'\in R$ (since $\bS\in \Ore_l(\bR, \overline{\gb})$). Therefore, $d:=s'r-r's\in \gb$. By the assumption $\ass_l(S)=\gb$. So,  there is an element $s''\in S$, such that $0=s''d=s''s'r-s''r's$, i.e. the  left Ore condition holds for $S$.

2. By statement 1,  $S\in \Ore_l(R, \gb)$. It remains to show that $\ass_r(S)\subseteq \ass _l(S)=\gb$. Suppose that  $r\in \ass_r(S)$, i.e. $rs=0$ for some element $s\in S$. Then $\br\bs=0$. It follows that  $\br\in \overline{\gb}$ (since $\bS\in \Den_l(\bR, \overline{\gb})$), and so $r\in \pi_\ga^{-1} (\overline{\gb})=\gb =\ass_l(S)$. Hence, $\ass_r(S)\subseteq \ass _l(S)=\gb$.

3. Statement 3 follows from statement 2 and its right analogue.

4. Since  $\bS\in \Ore_l(\bR, \overline{\gb})$, $\ass_l(S)\subseteq \gb$. Since $T\subseteq S$ and  $T\in \Ore_l(R, \gb)$,  $\gb = \ass_l(T)\subseteq \ass_l(S)$. Hence,  $\ass_l(S)=\gb$.    Now, $S\in \Ore_l(R, \gb)$, by statement 1.

5. By statement 4, $S\in \Ore_l(R,\gb)$. It remains to show that $\ass_r(S)\subseteq \ass _l(S)=\gb$. Suppose that  $r\in \ass_r(S)$, i.e. $rs=0$ for some element $s\in S$. Then $\br\bs=0$ and so $\br\in \overline{\gb}$ (since $\bS\in \Den_l(\bR, \overline{\gb})$), and so $r\in \pi_\ga^{-1} (\overline{\gb})=\gb =\ass_l(S)$. Therefore,  $\ass_r(S)\subseteq \ass _l(S)=\gb$. 
\end{proof}

Now, we obtain a useful corollary.

\begin{corollary}\label{a19Mar24}
Let $R$  be a ring, $\ga$ be an  ideal of $R$ and 
$\pi_\ga :R\ra \bR :=R/\ga$, $r\mapsto \br=r+\ga$. 

\begin{enumerate}

\item If $S\in \Ore_l(R, \ga)$ then $S+\ga \in \Ore_l(R, \ga)$.

\item If $S\in \Den_l(R, \ga)$ then $S+\ga \in \Den_l(R, \ga)$ and $(S+\ga)^{-1}R\simeq S^{-1}R$.

\item If $S\in \Den(R, \ga)$ then $S+\ga \in \Den(R, \ga)$ and $(S+\ga)^{-1}R\simeq S^{-1}R$.

\end{enumerate}
\end{corollary}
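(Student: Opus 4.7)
\begin{proof*}[Proof plan for Corollary \ref{a19Mar24}]
The plan is to apply Lemma \ref{a18Mar24} after passing to the quotient ring $\bR := R/\ga$. The key observation is that if $\bS := \pi_\ga(S)$, then $\pi_\ga^{-1}(\bS) = S + \ga$, so the set $S+\ga$ is precisely the preimage under $\pi_\ga$ of the image $\bS$ of $S$ in $\bR$. Note also that $S\cap \ga = \emptyset$: if $s\in S\cap \ga$, then $s's=0$ for some $s'\in S$, giving $0 = s's \in S\subseteq S$, contradicting $0\notin S$. Consequently $0\notin \bS$, so $\bS$ is a multiplicative subset of $\bR$.

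The first step is to verify the auxiliary claim that $\bS$ lies in $\Ore_l(\bR, 0)$, $\Den_l(\bR, 0)$, or $\Den(\bR, 0)$ under the respective hypotheses on $S$. The left Ore condition transfers immediately under $\pi_\ga$. Reversibility in $\bR$ comes from reversibility in $R$: if $\pi(r)\pi(s) = 0$, then $rs\in \ga = \ass_l(S)$, so $s_1 rs = 0$ for some $s_1\in S$; the denominator property of $S$ then produces $s_2\in S$ with $s_2 s_1 r = 0$, whence $\pi(s_2s_1)\pi(r) = 0$. The equality $\ass_l(\bS) = 0$ follows by the same reasoning: $\pi(s)\pi(r) = 0$ forces $sr\in \ga$, hence $(s_1 s)r = 0$ for some $s_1\in S$, hence $r\in \ass_l(S) = \ga$, i.e.\ $\pi(r)=0$. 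The two-sided case is analogous.

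With this in place, statement 1 follows from Lemma \ref{a18Mar24}.(4) applied with $\bar\gb = \bar 0$, $\gb = \ga$, $T = S$ (noting $S\subseteq S+\ga$ and $S\in \Ore_l(R,\ga)$ by hypothesis): we obtain $S+\ga \in \Ore_l(R,\ga)$. For statement 2, Lemma \ref{a18Mar24}.(5) analogously yields $S+\ga\in \Den_l(R,\ga)$. To establish the $R$-isomorphism $(S+\ga)^{-1}R\simeq S^{-1}R$, I apply Lemma \ref{a7Mar15}.(3) to the inclusion $S\subseteq S+\ga$: both sets are left denominator sets with associated ideal $\ga$, and for any $t = s + a \in S+\ga$ with $s\in S$ and $a\in \ga = \ass_l(S)$, choose $s_1\in S$ with $s_1 a = 0$; then $s_1 t = s_1 s \in S$, giving the required element $r = s_1$. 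Statement 3 follows by combining statement 2 with its right analogue (applied to the opposite ring), which together with Lemma \ref{a18Mar24}.(3) yield $S+\ga\in \Den(R,\ga)$ and the isomorphism of localizations.

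The main routine calculation lies in verifying the auxiliary claim $\bS\in \Den_l(\bR,0)$ (and its Ore/two-sided variants); everything else reduces to bookkeeping around Lemma \ref{a18Mar24} and Lemma \ref{a7Mar15}. No genuinely novel idea is needed; the content is that augmenting a left denominator set by its associated ideal does not change the localization.
\end{proof*}
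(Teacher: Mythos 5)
Your argument is correct and is in essence the paper's argument, with two small routing differences worth noting. You apply Lemma \ref{a18Mar24}.(4,5) (with $T=S$ and the lemma's $S=\pi_\ga^{-1}(\bS)=S+\ga$), whereas the paper first establishes $\ass_l(S+\ga)=\ga$ by a short direct computation and then invokes Lemma \ref{a18Mar24}.(1,2); your choice is marginally cleaner because parts (4,5) internalize exactly that verification via the inclusion $T\subseteq S$. For the isomorphism $(S+\ga)^{-1}R\simeq S^{-1}R$ you invoke Lemma \ref{a7Mar15}.(3), checking that for $t=s+a$ one can take $r=s_1$ with $s_1a=0$ so that $rt=s_1s\in S$; the paper instead passes through $\bR$ and uses $S'^{-1}R\simeq \pi_\ga(S')^{-1}\bR=\pi_\ga(S)^{-1}\bR\simeq S^{-1}R$. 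Both are valid; yours stays inside $R$, the paper's uses the factor-ring transfer. Your auxiliary observations ($S\cap\ga=\emptyset$, hence $0\notin\bS$; $\bS\in\Den_l(\bR,0)$) are correct and make explicit what the paper dismisses with "Clearly." Statement 3 indeed needs only statement 2 together with its right-sided analogue; the extra appeal to Lemma \ref{a18Mar24}.(3) is harmless but unnecessary.
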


\begin{proof} 1 and 2. We keep the notation of Lemma \ref{a18Mar24}. Suppose that $S\in \Ore_l(R, \ga)/\Den_l(R, \ga)$. Then  $S':=S+\ga = \pi_{\ga}^{-1}(S)$ is a multiplicative subset of $R$ that contains $S$. Therefore, $\ga = \ass_l(S)\subseteq \ass_l(S')$. Let $r\in \ass_l(S')$. Then $(s+a)r=0$ for some elements $s\in S$ and $a\in \ga$, and so $sr=-ar\in \ga$. There is an element $t\in S$  such that $tsr=0$, i.e. $r\in \ga$. Therefore,  $\ass_l(S)=\ga = \ass_l(S')$. Clearly, $\bS:=\pi_\ga (S)\in \Ore_l(\bR , 0)/\Den_l(\bR, 0)$. Now, by Lemma \ref{a18Mar24}.(1,2), $S'\in \Ore_l(R, \ga)/\Den_l(R, \ga)$. If $S'\in \Den_l(R, \ga)$ then 
${S'}^{-1}R\simeq \pi_\ga (S')^{-1}\bR=\pi_\ga (S)^{-1}\bR\simeq S^{-1}R$.

3. Statement 3 follows from statement 2 and its right analogue.
\end{proof}

Lemma \ref{b18Mar24} provides sufficient conditions for  a left Ore/denominator of a larger ring being a left Ore/denominator set of a smaller ring which contains it. 

\begin{lemma}\label{b18Mar24}
Let  $R$  be a subring of a ring $R'$ and  $S$ be a multiplicative subset of $R$ such that the left $R$-module $R'/R$ is $S$-torsion (for each $r'\in R'$ there is an element  $s\in S$ such that $sr'\in R$).

\begin{enumerate}

\item If $S\in \Ore_l(R', \ga')$  then $S\in \Ore_l(R, R\cap \ga')$. 

\item If $S\in \Den_l(R', \ga')$  then $S\in \Den_l(R, R\cap \ga')$ and $S^{-1}R\simeq S^{-1}R'$.

\end{enumerate}
\end{lemma}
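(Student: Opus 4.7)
\emph{Proof plan for Lemma \ref{b18Mar24}.} The strategy is to transfer the left Ore/denominator conditions from $R'$ down to $R$, using the torsion hypothesis to clear denominators from $R'$ back into $R$, and then to verify the isomorphism $S^{-1}R\simeq S^{-1}R'$ directly via the universal property in Proposition \ref{A5Dec22}.

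First I would pin down the assassinator. Set $\gb := R\cap \ga'$. The inclusion $\ass_l^R(S)\subseteq R\cap \ass_l^{R'}(S)=\gb$ is immediate because $S\subseteq R\subseteq R'$. The reverse inclusion is just as immediate: if $r\in\gb$ then $r\in\ga'=\ass_l^{R'}(S)$, and the annihilating $s\in S$ already lies in $R$, so $r\in\ass_l^R(S)$. Hence $\ass_l^R(S)=\gb$.

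For statement (1), given $r\in R$ and $s\in S$, the Ore condition in $R'$ yields $s''r=r''s$ with $s''\in S$ and $r''\in R'$. The torsion hypothesis gives $t\in S$ with $tr''\in R$, and multiplying by $t$ from the left produces $(ts'')r=(tr'')s$, which is the left Ore relation in $R$. For statement (2), I still have to check the denominator condition: if $rs=0$ with $r\in R$ and $s\in S$, then the same identity holds in $R'$, so $S\in\Den_l(R',\ga')$ gives $s'\in S$ with $s'r=0$, whence $r\in\ass_l^R(S)=\gb$. Combined with (1), this shows $S\in\Den_l(R,\gb)$.

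To obtain the isomorphism, let $\s:R\to S^{-1}R'$ be the restriction of $\s_{S,R'}:R'\to S^{-1}R'$. Since $\s(S)\subseteq (S^{-1}R')^\times$, Proposition \ref{A5Dec22} (applied to $R$, $S$, and the target $S^{-1}R'$) produces a unique $R$-homomorphism
\[
\v:S^{-1}R\longrightarrow S^{-1}R',\qquad s^{-1}r\mapsto s^{-1}r.
\]
Surjectivity of $\v$ is the content of the torsion hypothesis: for any $s^{-1}r'\in S^{-1}R'$ choose $t\in S$ with $tr'\in R$, and then $s^{-1}r'=(ts)^{-1}(tr')\in\v(S^{-1}R)$. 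For injectivity, suppose $\v(s^{-1}r)=0$. Then $r\in\ass_l^{R'}(S)=\ga'$, so $r\in R\cap\ga'=\gb=\ass_l^R(S)$, which forces $s^{-1}r=0$ in $S^{-1}R$. Thus $\v$ is an $R$-isomorphism. The only subtle point is the one already handled above, namely that elements of $R'$ arising from the Ore condition in $R'$ can be multiplied into $R$ by an element of $S$; everything else is bookkeeping.
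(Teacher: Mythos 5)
Your proof is correct and follows essentially the same route as the paper: clear the Ore witness $r'\in R'$ back into $R$ via the $S$-torsion hypothesis for statement (1), pin down $\ass_l^R(S)=R\cap\ga'$, check the denominator condition directly, and verify the natural map $S^{-1}R\to S^{-1}R'$ is injective (via $\ker$ landing in $\ga'\cap R$) and surjective (via torsion). The only cosmetic difference is that you invoke Proposition \ref{A5Dec22} to construct the comparison map explicitly, whereas the paper asserts it outright; both amount to the same thing.
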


\begin{proof} 1. (i) $S\in \Ore_l(R)$: For each element $s\in S$ and $r\in R$, $s'r=r's$ for some elements $s'\in S$ and $r'\in R'$ (since $S\in \Ore_l(R', \ga')$). By the assumption, the left $R$-module $R'/R$ is $S$-torsion, and so $tr'\in R$ for some element $t\in S$. Now,  $ts'r=tr's$ where $ts'\in S$ and $tr'\in R$, and the $S\in \Ore_l(R)$. 

(ii) $\ass_{l, R}(S)=R\cap \ga'$: Since $R\subseteq R'$ and  $S\in \Ore_l(R', \ga')$, $\ass_{l,R}(S)=R\cap \ass_{l,R'}(S)=R\cap \ga'$.

2. (i) $S\in \Den_l(R, R\cap \ga')$: 
By statement 1, $S\in \Ore_l(R, R\cap \ga')$. It remain to show that $\ass_r(S)\subseteq R\cap \ga'$. Given an element $r\in R$ such that $rs=0$ for some $s\in S$. Then $r\in \ga'$ (since $S\in \Den_l(R',\ga')$), and so $r\in R\cap \ga'$, as required. 

(ii) $S^{-1}R\simeq S^{-1}R'$: By the statement (i),  the map $\phi: S^{-1}R\ra S^{-1}R'$, $s^{-1}r\mapsto s^{-1}r$ is a ring homomorphism. Suppose that $a:=s^{-1}r\in \ker (\phi)$. Then $r\in \ga'$, and so $r\in R\cap \ga'$, i.e. $a=0$. So, the map $\phi$ is a monomorphism. It remains to show that the map $\phi$ is an epimorphism. Given element $s^{-1}r'\in S^{-1}R'$ where $s\in S$ and $r'\in R'$. Since the $R$-module $R'/R$ is $S$-torsion, there is an element $t\in S$ such that $r:=tr'\in R$. Then  $s^{-1}r'=(ts)^{-1}tr'=(ts)^{-1}r\in S^{-1}R$. Therefore, the map $\phi $ is epimorphism and the statement (ii) follows. 
\end{proof}

{\bf The ring $\pQ (\mS_n)$.} Recall that $\mY_n=K[y_1, \ldots , y_n]$ is a polynomial algebra. Let 
$\mY_n^0:=\mY_n\backslash \{ 0\}$, ${}'\mY_n^0:=\mY_n\cap \pCC_{\mS_n}$, 
$$
{}'\mY_n:=\mY_n\cap \pS (\mS_n)\;\; {\rm and}\;\;  
\widetilde{{}'\mY_n}:=\{c\in \mY_n\, | \, y^\alpha c\in \pS (\mS_n)\; {\rm  for\; some}\;\;\alpha \in \N^n\},
$$
$\CT=\pi_{\ga_n}^{-1}(L_n\backslash \{ 0\})=\mS_n\backslash \ga_n$ and  $\CS:=\pi_{\ga_n}^{-1}(\mY_n\backslash \{ 0\}) =\mY_n\backslash \{ 0\}+\ga_n$ where $\pi_{\ga_n}: \mS_n\ra \mS_n/\ga_n =L_n$, $r\mapsto \br:= r+\ga_n$.
 Clearly, ${}'\mY_n\subseteq 
{}'\mY_n^0\subseteq \mY_n^0$ and $\CS\subseteq \CT$.

Theorem \ref{19Mar24} describes the algebra $\pQ (\mS_n)$.

\begin{theorem}\label{19Mar24}
 
 \begin{enumerate}
 
 \item $\pQ (\mS_n)\simeq K(y_1, \ldots , y_n)$.

 \item ${}'\mY_n\in \Den_l(\mS_n, \ga_n)$ and ${}'\mY_n^{-1}\mS_n\simeq \pQ (\mS_n)$. Furthermore, the  subset  ${}'\mY_n$ of $\mY_n$  is a left denominator set of $\mS_n$ which is the  largest  left denominator set that is contained in the multiplicative set $ \mY_n \cap\pCC_{\mS_n}$.

\item $\CS, \CT\in \Den_l(\mS_n, \ga_n)$ and ${}'\mY_n^{-1}\mS_n\simeq \CT^{-1}\mS_n\simeq \CS^{-1}\mS_n\simeq  K(y_1, \ldots , y_n)$.

 \end{enumerate}
  
\end{theorem}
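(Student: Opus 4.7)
\begin{proof*}[Proof plan]
The three parts are best proved in the order $(3)\Rightarrow (2)\Rightarrow (1)$, and all of them hinge on the given denominator set $S_y=\{y^\alpha\}\in \Den_l(\mS_n,\ga_n)$ together with the fact that $\mS_n/\ga_n\simeq L_n$ is a commutative integral domain.

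\emph{Part 3 (the sets $\CS$ and $\CT$).} The images $\overline{\CS}=K[x_1^{-1},\ldots,x_n^{-1}]\setminus\{0\}$ and $\overline{\CT}=L_n\setminus\{0\}$ are denominator sets in the commutative domain $L_n$. Since $S_y\subseteq \CS\subseteq \CT$ and $S_y\in \Ore_l(\mS_n,\ga_n)$, Lemma \ref{a18Mar24}.(5) lifts both to $\CS,\CT\in \Den_l(\mS_n,\ga_n)$. Theorem \ref{S10Jan19}.(3) then gives $\CT^{-1}\mS_n\simeq Q(L_n)=K(x_1,\ldots,x_n)=K(y_1,\ldots,y_n)$, and similarly $\CS^{-1}\mS_n\simeq \overline{\CS}^{-1}L_n$; the latter is again $K(y_1,\ldots,y_n)$ because inverting all nonzero polynomials in $x_1^{-1},\ldots,x_n^{-1}$ inside $L_n$ already produces the full fraction field.

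\emph{Part 2 (the set ${}'\mY_n$).} The first step is $\mY_n^0\subseteq {}'\CC_{\mS_n}$. By Corollary \ref{Pnp-a19Dec8}, $P_n'=K[y_1,\ldots,y_n]$ is a faithful right $\mS_n$-module, and by (\ref{Ppac-1}) any $p\in \mY_n^0$ acts on $P_n'$ by polynomial multiplication, hence injectively; Lemma \ref{a15Mar24}.(1) then yields $\mY_n^0\subseteq {}'\CC_{P_n'}\subseteq {}'\CC_{\mS_n}$. Second, $\ass_{\mS_n}(\mY_n^0)=\ga_n$: the inclusion $\subseteq$ uses that $\overline{p}\neq 0$ in the domain $L_n$, while $\supseteq$ follows from $S_y\subseteq \mY_n^0$ and the given $\ass(S_y)=\ga_n$. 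Third, $(\mY_n^0,\CS)$ is a left localization pair in the sense of Lemma \ref{c16Mar15}: the inclusion $\mY_n^0\subseteq \CS$ and the equality of associated ideals have just been shown, and for each $t\in \CS=\mY_n^0+\ga_n$ one trivially has $1\cdot t\in \mY_n^0+\ga_n$. Lemma \ref{c16Mar15} therefore yields $\mY_n^0\in \Den_l(\mS_n,\ga_n)$ with $(\mY_n^0)^{-1}\mS_n\simeq \CS^{-1}\mS_n\simeq K(y_1,\ldots,y_n)$. Since $\mY_n^0$ is a left denominator set contained in ${}'\CC_{\mS_n}$, maximality of $\pS(\mS_n)$ gives $\mY_n^0\subseteq \pS(\mS_n)$, hence $\mY_n^0\subseteq {}'\mY_n=\mY_n\cap \pS(\mS_n)\subseteq \mY_n\cap {}'\CC_{\mS_n}=\mY_n^0$. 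Thus ${}'\mY_n=\mY_n^0$, and the stated maximality among left denominator sets in $\mY_n\cap {}'\CC_{\mS_n}$ is automatic.

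\emph{Part 1 (the algebra $\pQ(\mS_n)$).} It suffices to prove $\pga=\ga_n$: then Theorem \ref{S10Jan19}.(3) gives $\pQ(\mS_n)\simeq \overline{\pS(\mS_n)}^{-1}L_n$, which is squeezed between $\overline{\mY_n^0}^{-1}L_n=K(y_1,\ldots,y_n)$ and $Q(L_n)=K(y_1,\ldots,y_n)$, forcing equality. The inclusion $\ga_n\subseteq \pga$ is immediate from $S_y\subseteq \pS(\mS_n)$. For the reverse, the crucial observation is that no nonzero element of $\ga_n$ is left regular: if $0\neq r\in \ga_n=\ass(S_y)$, then $y^\alpha r=0$ for some necessarily nonzero $\alpha\in \N^n$, and $y^\alpha\neq 0$ witnesses the failure of left regularity. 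Hence $\pS(\mS_n)\cap \ga_n=\emptyset$, so every $s\in \pS(\mS_n)$ has nonzero image in the domain $L_n$; if $sr=0$ with $s\in \pS(\mS_n)$, then $\overline{s}\,\overline{r}=0$ with $\overline{s}\neq 0$ forces $\overline{r}=0$, i.e.\ $r\in \ga_n$. Therefore $\pga\subseteq \ga_n$, completing the proof.

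\emph{Main obstacle.} The technical heart is the identification ${}'\mY_n=\mY_n^0$, which requires both the module-theoretic input $\mY_n^0\subseteq {}'\CC_{\mS_n}$ (via the faithful right module $P_n'$) and the ideal-theoretic input $\ass(\mY_n^0)=\ga_n$ (via the action of the $y_i$'s on the matrix-like generators of $\ga_n$, packaged through $\ass(S_y)=\ga_n$). Once these are in place, the localization-pair argument and the computation of $\pga$ follow smoothly from the domain structure of $L_n$.
\end{proof*}
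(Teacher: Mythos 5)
Your proof is correct, and it takes a genuinely different route from the paper's. The paper proves part~2 first by verifying that $({}'\mY_n,\pS(\mS_n))$ is a left localization pair: its key witness is the structural fact that for any $s\in\mS_n$ (in particular $s\in\pS(\mS_n)$) one has $y^\alpha s\in\mY_n$ for $\alpha$ large, which relies on the direct sum decomposition $\mS_1=K\oplus xK[x]\oplus yK[y]\oplus F$. You instead prove part~3 first via Lemma~\ref{a18Mar24} and then handle part~2 through the degenerate localization pair $(\mY_n^0,\CS)$ with witness $r=1$, which sidesteps that structural input entirely. Your approach also buys two things the paper postpones or leaves implicit. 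First, you identify ${}'\mY_n$ concretely as $\mY_n\setminus\{0\}$ by combining the module-theoretic input $\mY_n^0\subseteq\pCC_{\mS_n}$ (via Corollary~\ref{Pnp-a19Dec8}, (\ref{Ppac-1}) and Lemma~\ref{a15Mar24}.(1)) with the maximality of $\pS(\mS_n)$; the paper only obtains this identification downstream, via Corollary~\ref{a31Mar24} and the discussion preceding Theorem~\ref{A1Apr24}. Second, and more substantively: the paper's proof of part~2 writes $\ass_l(\pS(\mS_n))=\ga_n$ as if it were already available, but this equality is not a priori obvious and is not established before the proof; your observation that $\ga_n\cap\pCC_{\mS_n}=\emptyset$ (every nonzero $r\in\ga_n$ is left-annihilated by some $y^\alpha\neq 0$), combined with the domain structure of $L_n$, supplies the missing reverse inclusion $\pga\subseteq\ga_n$ and makes the argument self-contained. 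In short: same conclusions, but your route is logically cleaner, avoids the implicit step, and yields the explicit identity ${}'\mY_n=\mY_n^0$ as a by-product.
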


\begin{proof} 1. Statement 1 follows from statements 2 and 3.  

2.  (i) {\em The pair $({}'\mY_n,\pS (\mS_n))$ is a left localization pair of the ring $\mS_n$}:
 By the definition, the set    ${}'\mY_n$ is a multiplicative subset of $\pS (\mS_n)\subseteq \mS_n$. It follows from the inclusions $S_y \subseteq {}'\mY_n\subseteq \pS (\mS_n)$ that 
$$\ga_n = \ass_l(S_y )\subseteq \ass_l({}'\mY_n)\subseteq \ass_l(\pS (\mS_n))=\ga_n, $$
and so $\ass_l({}'\mY_n)=\ga_n=\ass_l(\pS (\mS_n))$. For each element $s\in \pS (\mS_n)$, there is an element $\alpha\in \N^n$ such that $y^\alpha s\in {}'\mY_n$. Notice that $y^\alpha \in S_y \subseteq {}'\mY_n$, and the statement (i) follows.

(ii)  ${}'\mY_n\in \Den_l(\mS_n, \ga_n)$ {\em and} ${}'\mY_n^{-1}\mS_n\simeq\pQ (\mS_n)$: The statement (i) follows from the statement (i) and Lemma \ref{c16Mar15} where   $T=\pS (\mS_n)\in \Den_l(\mS_n,\ga_n)$ and  $S={}'\mY_n$. 

(iii) {\em The  subset  ${}'\mY_n$ of $\mS_n$  is a left denominator set of $\mS_n$ which is the  largest  left denominator set that is contained in the multiplicative set $\mY_n \cap\pCC_{\mS_n}$}: Let $T$ be a left denominator set of $\mS_n$ which is the  largest  left denominator set that is contained in the multiplicative set $\mY_n \cap\pCC_{\mS_n}$. By the statement (ii),  ${}'\mY_n\in \Den_l(\mS_n, \ga_n)$. Clearly,  
$$
{}'\mY_n=\mY_n \cap\pS (\mS_n)\subseteq  \mY_n \cap\pCC_{\mS_n},
$$
and so ${}'\mY_n\subseteq T$. Since $T\subseteq \pCC_{\mS_n}$ and $\pS (\mS_n)$ is a the largest left denominator set in $\pCC_{\mS_n}$, we have the inclusion $T\subseteq \mY_n\cap\pS(\mS_n)={}'\mY_n$. Therefore, $T={}'\mY_n$.

3. (i) $\CT\in \Den_l(\mS_n, \ga_n)$ {\em and} $\CT^{-1}\mS_n\simeq K(y_1, \ldots , y_n)$: Since $S_y\in \Den_l(\mS_n, \ga_n)$ and $S_y\subseteq \CT$, $\ga_n=\ass_l(S_y)\subseteq \ass_l(\CT)$. The opposite inclusion follows from the fact that the  factor ring $\mS_n/\ga_n\simeq K[y_1^{\pm 1},\ldots , y_n^{\pm 1}]$ is a domain. So, $\ass_l(\CT)=\ga_n$. Notice that 
$$
\pi_{\ga_n}(\CT)= 
K[y_1^{\pm 1},\ldots , y_n^{\pm 1}]\backslash \{ 0\}\in \Den_l(K[y_1^{\pm 1},\ldots , y_n^{\pm 1}],0).$$
By Lemma \ref{a19Mar24}.(2), $\CT\in \Den_l(\mS_n, \ga_n)$ and 
$$\CT^{-1}\mS_n\simeq \pi_{\ga_n}(\CT)^{-1}(\mS_n/\ga_n)\simeq \pi_{\ga_n}(\CT)^{-1}(K[y_1^{\pm 1},\ldots , y_n^{\pm 1}])\simeq K(y_1,\ldots , y_n).
$$

(ii) {\em The pair $({}'\mY_n,\CT)$ is a left localization pair of the ring $\mS_n$}: By statement 2, ${}'\mY_n\in \Den_l(\mS_n, \ga_n)$. 
 For each element $t\in \CT$, there is an element $\alpha\in \N^n$ such that $y^\alpha t\in {}'\mY_n$. Notice that $y^\alpha \in S_y \subseteq {}'\mY_n$, and the statement (ii) follows.

(iii)   ${}'\mY_n^{-1}\mS_n\simeq\CT^{-1}\mS_n$: The statement (iii) follows from the statement (ii) and Lemma \ref{c16Mar15} where   $T=\CT\in \Den_l(\mS_n,\ga_n)$ and  $S={}'\mY_n$.

(iv) {\em The pair $(\CS,\CT)$ is a left localization pair of the ring $\mS_n$}: By the  statement (i), $\CT\in \Den_l(\mS_n, \ga_n)$. 
 By the definition, the set    $\CS$ is a multiplicative subset of $ \mS_n$. The inclusions $S_y \subseteq \CS\subseteq \CT$ imply that 
$$\ga_n = \ass_l(S_y )\subseteq \ass_l(\CS)\subseteq \ass_l(\CT)=\ga_n, $$
and so $\ass_l(\CS)=\ga_n=\ass_l(\CT)$. 
 For each element $t\in \CT$, there is an element $\alpha\in \N^n$ such that $y^\alpha t\in \CS$. Notice that $y^\alpha \in S_y \subseteq\CS$, and the statement (iv) follows.

(v)   $\CS^{-1}\mS_n\simeq\CT^{-1}\mS_n$: The statement (v) follows from the statement (iv) and Lemma \ref{c16Mar15} where   $T=\CT\in \Den_l(\mS_n,\ga_n)$ and  $S=\CS$.

Now, statement 3 follows. 
\end{proof}

\begin{theorem}\label{8Mar15}
(\cite[Theorem 4.3]{Clas-lreg-quot}) Let $R$ be a ring. Then
\begin{enumerate}

\item $\pQlR$ is a left Artinian ring iff $\pQ_{l,cl}(R)$ is a left Artinian ring. If one of the equivalent conditions holds then $\pSlR = \pCCR$ and $\pQlR = \pQ_{l, cl}(R)$.

\item $\pQlR$ is a semisimple  Artinian ring iff $\pQ_{l,cl}(R)$ is a semisimple Artinian ring. If one of the equivalent conditions holds then $\pSlR = \pCCR$ and $\pQlR = \pQ_{l, cl}(R)$.

\end{enumerate}
\end{theorem}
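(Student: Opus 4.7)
\begin{proof*}[Proof plan]
The plan is to prove both statements in parallel by extracting the common core: if either $\pQ_l(R)$ or $\pQ_{l,cl}(R)$ is left Artinian, then $\pSlR=\pCCR$ (so the two quotient rings literally coincide), and Statement 2 then follows from Statement 1 because semisimple Artinianness is preserved under the equality.

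The easy direction uses only the defining maximality of $\pSlR$: if $\pQ_{l,cl}(R)=\pCCR^{-1}R$ is left Artinian, then in particular $\pCCR\in \Den_l(R)$, and $\pCCR$ is a left denominator set contained in $\pCCR$, so $\pSlR=\pCCR$ and $\pQ_l(R)=\pQ_{l,cl}(R)$.

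For the hard direction, assume $\pQ:=\pQ_l(R)=S^{-1}R$ with $S:=\pSlR$ is left Artinian, and let $\pi : R\to \pQ$, $c\mapsto \bar c$. I would show $\pCCR\subseteq S$. Stage (A) establishes that $\bar c\in \pQ^{\times}$ for every $c\in \pCCR$. First, $\bar c$ is left regular in $\pQ$: if $q\bar c=0$ with $q=\bar s^{-1}\bar r$, then $rc\in \ass(S)$, so $s'rc=0$ for some $s'\in S$, and $c\in \pCCR$ forces $s'r=0$, i.e.\ $r\in \ass(S)$ and $q=0$. Then invoke the classical dichotomy that left regular elements of a left Artinian ring are units: the descending chain $\pQ \bar c\supseteq \pQ\bar c^{\,2}\supseteq\cdots$ stabilises to give $u\in \pQ$ with $(1-u\bar c)\bar c^{\,n}=0$; left regularity of $\bar c^{\,n}$ forces $u\bar c=1$, and iterating the same argument on the (itself left regular) element $u$ produces $v$ with $vu=1$, whence $v=v(u\bar c)=\bar c$ and therefore $\bar c u=1$.

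Stage (B) forms $T:=\{a\in \pCCR\,|\,\bar a\in \pQ^{\times}\}$ and verifies $T\in \Den_l(R)$. Multiplicativity is inherited from $\pCCR$ and from $\pi^{-1}(\pQ^{\times})$; the reversibility (left denominator) condition is vacuous because $ra=0$ with $a\in \pCCR$ already forces $r=0$; for the Ore condition with $(a,r)\in T\times R$, use $\pQ=S^{-1}R$ to write $\bar r\bar a^{-1}=\bar s^{-1}\bar b$, deduce $sr-ba\in \ass(S)$, and pick $s''\in S$ annihilating $sr-ba$ to obtain $(s''s)r=(s''b)a$ with $s''s\in S\subseteq T$. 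Maximality of $\pSlR$ as the largest left denominator set inside $\pCCR$ then gives $T\subseteq S$, while Stage (A) gives $\pCCR\subseteq T$, whence $\pCCR=S=\pSlR$ and $\pQ_l(R)=\pQ_{l,cl}(R)$. The principal obstacle is precisely Stage (B)---verifying the Ore condition for $T$ using only the pre-existing denominator data of $S$ together with the ideal $\ass(S)$, while ensuring the witnesses produced actually lie in $T$ (which is handled automatically by drawing them from $S\subseteq T$). With the equality $\pSlR=\pCCR$ in hand, Statement 1 is immediate and Statement 2 follows because semisimple Artinianness passes trivially between two rings that are equal.
\end{proof*}
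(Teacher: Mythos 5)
This theorem is quoted from \cite{Clas-lreg-quot} and the paper does not reproduce a proof, so there is no in-paper argument to compare against; your proof must be assessed on its own. It is correct. The easy direction is exactly the right observation: if $\pCCR^{-1}R$ exists as a left Artinian ring then $\pCCR\in\Den_l(R)$, and maximality of $\pSlR$ inside $\pCCR$ forces equality. For the hard direction, Stage~(A) is sound: left regularity of $\bar c$ in $\pQ=S^{-1}R$ follows by clearing denominators and using $\ker(\pi)=\ass(S)$ together with left regularity of $c$ in $R$, and the Fitting-type descending chain argument correctly promotes a left regular element of a left Artinian ring to a unit (the extra step showing $u$ is itself left regular, and hence left-invertible, so that $\bar c u=1$, is necessary and you supply it). Stage~(B) is the delicate part and is handled correctly: $T$ is multiplicative as an intersection of two multiplicative sets, reversibility is vacuous since $a\in\pCCR$, and the left Ore witnesses are legitimately drawn from $S\subseteq T$ by rewriting $\bar r\bar a^{-1}$ as a fraction over $S$ and clearing the $\ass(S)$ error term. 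Maximality then squeezes $S\subseteq\pCCR\subseteq T\subseteq S$, giving $\pSlR=\pCCR$ and $\pQlR=\pQ_{l,cl}(R)$, and statement~2 follows formally from statement~1 since the identification of the two rings is unconditional once either is left Artinian.
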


\begin{corollary}\label{a31Mar24}
$\pCC_{\mS_n}=\pS (\mS_n)$ and ${}'Q_{l,cl}(\mS_n)={}'Q(\mS_n)\simeq K(y_1, \ldots , y_n)$.
\end{corollary}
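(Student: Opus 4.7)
The plan is to deduce this corollary almost immediately from the combination of Theorem \ref{19Mar24} and Theorem \ref{8Mar15}. The key observation is that $K(y_1, \ldots, y_n)$ is a field, hence in particular a semisimple Artinian ring, and Theorem \ref{8Mar15} provides exactly the bridge we need from semisimplicity of $\pQ_l(R)$ to the equality $\pS_l(R) = \pCC_R$.

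First I would invoke Theorem \ref{19Mar24}.(1) to get the isomorphism ${}'Q(\mS_n) = \pSlR^{-1} \mS_n \simeq K(y_1, \ldots, y_n)$. Since $K(y_1, \ldots, y_n)$ is a field, it is semisimple Artinian, so the ring $\pQ_l(\mS_n)$ is semisimple Artinian. Next, I would apply Theorem \ref{8Mar15}.(2) to conclude that $\pQ_{l,cl}(\mS_n)$ exists and is semisimple Artinian, with $\pS(\mS_n) = \pCC_{\mS_n}$ and $\pQ_{l,cl}(\mS_n) = \pQ(\mS_n)$. Combining these two facts yields ${}'Q_{l,cl}(\mS_n) = {}'Q(\mS_n) \simeq K(y_1, \ldots, y_n)$, which is precisely the claim.

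There is essentially no obstacle here: the hard work was already packaged into Theorem \ref{19Mar24} (identifying the localization as the field of rational functions in the $y_i$) and into the general semisimplicity criterion Theorem \ref{8Mar15}. The corollary is the clean combination of these two results, exploiting the fact that a field is the simplest possible semisimple Artinian ring. The only thing worth remarking is that no further verification of the left Ore or denominator conditions for $\pCC_{\mS_n}$ is needed, as this is built into the statement of Theorem \ref{8Mar15}.(2).
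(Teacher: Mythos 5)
Your proposal matches the paper's proof exactly: both invoke Theorem \ref{19Mar24} to identify ${}'Q(\mS_n)$ with the field $K(y_1,\ldots,y_n)$, observe that a field is semisimple Artinian, and then apply Theorem \ref{8Mar15}.(2) to obtain $\pS(\mS_n)=\pCC_{\mS_n}$ and $\pQ_{l,cl}(\mS_n)=\pQ(\mS_n)$. No differences to note.
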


\begin{proof} By Theorem \ref{19Mar24}, the ring $ {}'Q(\mS_n)\simeq K(y_1, \ldots , y_n)$ is a field. In particular, it is a semisimple Artinian ring. Now, the corollary follows from Theorem \ref{8Mar15}.(2). 
\end{proof}

 Theorem \ref{19Mar24} and Corollary \ref{a20Mar24} produce explicit left denominators sets   $S\in \Den_l(\mS_n, \ga_n)$ such that  $S^{-1}\mI_n\simeq \pQ (\mS_n)$. 
  By Theorem \ref{19Mar24} and Corollary \ref{a20Mar24}, there are inclusions in the set $\Den_l(\mS_n, \ga_n)$  apart from $S_y$: 
  \begin{equation}
S_y\subseteq {}'\mY_n  \subseteq \pS (\mS_n)\subseteq \pS (\mS_n)+\ga_n\subseteq \CT,  \;  {}'\mY_n\subseteq \widetilde{{}'\mY_n}\subseteq {}'\mY_n+\ga_n \subseteq \pS (\mY_n)+\ga_n, \; {}'\mY_n+\ga_n \subseteq \CS\subseteq \CT.
\end{equation}

\begin{corollary}\label{a20Mar24}
 Let $S=\widetilde{{}'\mY_n}, {}'\mY_n+\ga_n, \pS (\mY_n)+\ga_n$. Then $S\in \Den_l(\mS_n, \ga_n)$ an $S^{-1}\mS_n\simeq \pQ (\mS_n)\simeq K(y_1, \ldots , y_n)$.
 \end{corollary}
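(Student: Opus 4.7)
The strategy is to treat the three choices of $S$ separately, reducing each to the identifications of Theorem \ref{19Mar24}. The cases $S = {}'\mY_n + \ga_n$ and $S = \pS(\mY_n) + \ga_n$ are almost immediate. For the first, I would apply Corollary \ref{a19Mar24}.(2) to the left denominator set ${}'\mY_n \in \Den_l(\mS_n, \ga_n)$ provided by Theorem \ref{19Mar24}.(2). For the second, I would observe that $\mY_n = K[y_1,\ldots,y_n]$ is a commutative polynomial domain, so $\pS(\mY_n) = \mY_n \setminus \{0\}$ and hence $\pS(\mY_n) + \ga_n$ coincides with the set $\CS$; the conclusion is then Theorem \ref{19Mar24}.(3).

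The substantive case is $S = \widetilde{{}'\mY_n}$. First I would verify that $\widetilde{{}'\mY_n}$ is a multiplicative subset of $\mS_n$ containing ${}'\mY_n$. Multiplicativity uses the commutativity of elements in $\mY_n$: if $c_1, c_2 \in \widetilde{{}'\mY_n}$ with $y^{\alpha_i}c_i \in \pS(\mS_n)$, then $(y^{\alpha_1}c_1)(y^{\alpha_2}c_2) = y^{\alpha_1+\alpha_2}c_1c_2 \in \pS(\mS_n)$, so $c_1c_2 \in \widetilde{{}'\mY_n}$; and $0 \notin \widetilde{{}'\mY_n}$ because $0 \notin \pS(\mS_n)$. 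The inclusion ${}'\mY_n \subseteq \widetilde{{}'\mY_n}$ comes from $\alpha = 0$.

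Next I would control the image of $\widetilde{{}'\mY_n}$ in $L_n = \mS_n/\ga_n$. One checks that $\pS(\mS_n) \cap \ga_n = \emptyset$: any $c$ in the intersection would satisfy $sc = 0$ for some $s \in \pS(\mS_n)$, contradicting the left regularity of $c$ together with $0 \notin \pS(\mS_n)$. Consequently $\overline{y^\alpha c} \neq 0$ in $L_n$, and since $\overline{y^\alpha}$ is a unit of $L_n$, also $\bar c \neq 0$. Therefore $\pi_{\ga_n}(\widetilde{{}'\mY_n}) \subseteq L_n \setminus \{0\}$, which is in $\Den_l(L_n, 0)$ because $L_n$ is a commutative domain. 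I would then invoke Lemma \ref{a18Mar24}.(5) with $T = {}'\mY_n \in \Ore_l(\mS_n, \ga_n)$, $T \subseteq S = \widetilde{{}'\mY_n}$, and $\bS = \pi_{\ga_n}(S) \in \Den_l(L_n, 0)$ to conclude $\widetilde{{}'\mY_n} \in \Den_l(\mS_n, \ga_n)$. Finally, for each $t \in \widetilde{{}'\mY_n}$ some $y^\alpha t$ lies in ${}'\mY_n$, so Lemma \ref{a7Mar15} applied to ${}'\mY_n \subseteq \widetilde{{}'\mY_n}$, combined with the equality $\ass_l({}'\mY_n) = \ass_l(\widetilde{{}'\mY_n}) = \ga_n$, gives a ring isomorphism ${}'\mY_n^{-1}\mS_n \simeq \widetilde{{}'\mY_n}^{-1}\mS_n$, and this equals $\pQ(\mS_n) \simeq K(y_1,\ldots,y_n)$ by Theorem \ref{19Mar24}.

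The principal obstacle is verifying that $\widetilde{{}'\mY_n}$ lies in $\Den_l(\mS_n, \ga_n)$ via Lemma \ref{a18Mar24}.(5); the key ingredients are the commutativity of the subring $\mY_n$ (for multiplicativity of $\widetilde{{}'\mY_n}$) and the disjointness $\pS(\mS_n) \cap \ga_n = \emptyset$ (to control the image in $L_n$). Once those are in hand, the identification of the localization with $\pQ(\mS_n)$ is a standard localization-pair argument.
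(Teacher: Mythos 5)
Your handling of the first two choices of $S$ is correct, and the identification of $\pS(\mY_n)+\ga_n$ with $\CS$ (using that $\mY_n$ is a commutative domain, so $\pS(\mY_n)=\mY_n\setminus\{0\}$) is a clean shortcut that sidesteps any need to argue that $\pS(\mY_n)$ itself lies in $\Den_l(\mS_n,\ga_n)$. The third case, however, has a gap at the point where you try to establish $\widetilde{{}'\mY_n}\in\Den_l(\mS_n,\ga_n)$. Lemma \ref{a18Mar24} carries the standing hypothesis that $S=\pi_\ga^{-1}(\bS)$ is the \emph{full preimage} of $\bS$, and part (5) concludes only that this preimage is a left denominator set. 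You take $S=\widetilde{{}'\mY_n}$ and $\bS=\pi_{\ga_n}(\widetilde{{}'\mY_n})$, but then $\pi_{\ga_n}^{-1}(\bS)=\widetilde{{}'\mY_n}+\ga_n$, which is strictly larger than $\widetilde{{}'\mY_n}$: since $\widetilde{{}'\mY_n}\subseteq\mY_n\setminus\{0\}$ and $\mY_n\cap\ga_n=0$, the set $\widetilde{{}'\mY_n}$ is disjoint from $\ga_n$, while its preimage contains $1+a$ for every $a\in\ga_n$. So the lemma as stated yields $\widetilde{{}'\mY_n}+\ga_n\in\Den_l(\mS_n,\ga_n)$, not the claim you want, and your ensuing appeal to Lemma \ref{a7Mar15}, which presupposes that both ${}'\mY_n$ and $\widetilde{{}'\mY_n}$ are already left denominator sets, cannot get started.

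The paper closes this step by invoking the localization-pair Lemma \ref{c16Mar15} with $T={}'\mY_n+\ga_n$ (just shown to lie in $\Den_l(\mS_n,\ga_n)$) and $S=\widetilde{{}'\mY_n}$; that lemma requires only $T$, not $S$, to be a denominator set in advance, and it delivers both $S\in\Den_l(\mS_n,\ga_n)$ and $S^{-1}\mS_n\simeq T^{-1}\mS_n$ at once. The one hypothesis of that lemma you have not checked is the inclusion $S\subseteq T$, i.e.\ $\widetilde{{}'\mY_n}\subseteq{}'\mY_n+\ga_n$, which (by the same disjointness of $\mY_n$ and $\ga_n$) reduces to $\widetilde{{}'\mY_n}\subseteq{}'\mY_n$. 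This does hold: if $c\in\mY_n$ and $y^\alpha c\in\pS(\mS_n)\subseteq{}'\CC_{\mS_n}$, then $dc=0$ forces $d(y^\alpha c)=d(cy^\alpha)=(dc)y^\alpha=0$ by commutativity of $\mY_n$, whence $d=0$; so $c\in{}'\CC_{\mS_n}=\pS(\mS_n)$ by Corollary \ref{a31Mar24}, and $c\in\mY_n\cap\pS(\mS_n)={}'\mY_n$. In fact $\widetilde{{}'\mY_n}={}'\mY_n$, so the third case collapses onto Theorem \ref{19Mar24}.(2).
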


\begin{proof} By Theorem \ref{19Mar24}.(2) and Corollary \ref{a19Mar24}.(2), ${}'\mY_n+\ga_n, \pS (\mY_n)+\ga_n\in \Den_l(\mS_n, \ga_n)$. Now,   for $S= {}'\mY_n+\ga_n, \pS (\mY_n)+\ga_n$, the corollary follows from 
 Corollary \ref{a19Mar24}.(2). It remains to consider the case when $S=\widetilde{{}'\mY_n}$.  

(i) {\em The pair $(\widetilde{{}'\mY_n}, {}'\mY_n+\ga_n)$ is a left localization pair of the ring $\mS_n$}:  The inclusions 
${}'\mY_n\subseteq \widetilde{{}'\mY_n}\subseteq {}'\mY_n+\ga_n$ yield 
$$\ga_n=\ass_l({}'\mY_n)\subseteq \ass_l(\widetilde{{}'\mY_n})\subseteq \ass_l({}'\mY_n+\ga_n)=\ga_n.$$
Therefore, $\ass_l(\widetilde{{}'\mY_n})=\ga_n=\ass_l({}'\mY_n+\ga_n)$.  For each element $s\in {}'\mY_n+\ga_n$, there is an element $\alpha\in \N^n$ such that $y^\alpha s\in {}'\mY_n\subseteq \widetilde{{}'\mY_n}$. Notice that $y^\alpha \in S_y \subseteq{}'\mY_n\subseteq \widetilde{{}'\mY_n}$, and the statement (i) follows.

(ii) $\widetilde{{}'\mY_n}\in \Den_l(\mS_n , \ga_n)$  {\rm and} $\widetilde{{}'\mY_n}^{-1}\mS_n\simeq ({}'\mY_n+\ga_n)^{-1}\mS_n\simeq K(y_1, \ldots , y_n)$: The statement (ii) follows from the statement (i) and Lemma \ref{c16Mar15} where     $S=\widetilde{{}'\mY_n}$ and  $T={}'\mY_n+\ga_n\in \Den_l(\mS_n,\ga_n)$.
\end{proof}

\begin{corollary}\label{a22Mar24}

\begin{enumerate}
\item  $Q' (\mS_n)\simeq \eta (\pQ(\mS_n))= K(x_1, \ldots , x_n)$.

\item $\CC_{\mS_n}'=S' (\mS_n)$ and $Q_{l,cl}'(\mS_n)=Q'(\mS_n)\simeq K(x_1, \ldots , x_n)$.
\end{enumerate}

 \end{corollary}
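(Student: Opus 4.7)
The plan is to obtain Corollary~\ref{a22Mar24} as the ``right-handed mirror'' of Theorem~\ref{19Mar24} and Corollary~\ref{a31Mar24} via the involution $\eta : \mS_n \to \mS_n$, $x_i \mapsto y_i$, $y_i \mapsto x_i$ recorded in (\ref{etamSn}). Since $\eta$ is a $K$-algebra anti-automorphism with $\eta^2 = \id$, it sends left structures to right structures and vice versa in a one-to-one fashion. In particular:
\begin{itemize}
\item $\eta(\pCC_{\mS_n}) = \CC_{\mS_n}'$ (the conditions $\ker(\cdot r) = 0$ and $\ker(r\cdot) = 0$ are swapped by an anti-automorphism);
\item $\eta$ induces a bijection $\Den_l(\mS_n) \to \Den_r(\mS_n)$ which restricts to a bijection between left denominator subsets of $\pCC_{\mS_n}$ and right denominator subsets of $\CC_{\mS_n}'$, and hence sends the largest such set $\pS(\mS_n)$ to the largest such set $S'(\mS_n)$, i.e.\ $\eta(\pS(\mS_n)) = S'(\mS_n)$.
\end{itemize}

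For statement~1, I would then argue that $\eta$ descends to a $K$-algebra anti-iso\-morph\-ism $\bar{\eta} : \pQ(\mS_n) \to Q'(\mS_n)$ via $s^{-1}r \mapsto \eta(r)\eta(s)^{-1}$; composing with the anti-automorphism of fields $K(y_1,\ldots,y_n) \to K(x_1,\ldots,x_n)$, $y_i \mapsto x_i$ (which is actually an iso\-morph\-ism since the target is commutative), and using Theorem~\ref{19Mar24}.(1) that $\pQ(\mS_n) \simeq K(y_1,\ldots,y_n)$, one gets the chain of iso\-morph\-isms $Q'(\mS_n) \simeq \eta(\pQ(\mS_n)) \simeq K(x_1,\ldots,x_n)$. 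The only mildly delicate point is bookkeeping: on the nose $\eta$ is an anti-auto\-morph\-ism of $\mS_n$, so the induced map on the localization is an anti-iso\-morph\-ism to $Q'(\mS_n)$, but since both $\pQ(\mS_n)$ and $K(x_1,\ldots,x_n)$ are commutative fields, every anti-iso\-morph\-ism between them is an iso\-morph\-ism, so the final answer is a bona fide iso\-morph\-ism.

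For statement~2, the target $K(x_1,\ldots,x_n)$ is a field and hence a semisimple Artinian ring. The right-handed analogue of Theorem~\ref{8Mar15}.(2) (obtained from Theorem~\ref{8Mar15}.(2) itself by passing to the opposite ring, since left/right quotient constructions are intertwined by opposites) then forces $\CC_{\mS_n}' = S'(\mS_n)$ and $Q_{r,cl}'(\mS_n) = Q'(\mS_n)$. Combining with statement~1 gives $Q_{l,cl}'(\mS_n) = Q'(\mS_n) \simeq K(x_1,\ldots,x_n)$ as claimed.

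I expect no genuine obstacle: the entire argument is a transport-of-structure via $\eta$ together with an invocation of the semisimple-Artinian criterion. The main care needed is to verify that $\eta$ really does send the distinguished left sets ($\pCC_{\mS_n}$, $\pS(\mS_n)$) bijectively onto their right counterparts, which in turn rests only on the universal/maximal characterisations of these sets being self-dual under swapping left and right (so that $\eta$ of the largest left denominator set inside left regular elements coincides with the largest right denominator set inside right regular elements).
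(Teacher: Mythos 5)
Your proof is correct and follows essentially the same route as the paper: transport Theorem \ref{19Mar24} and Corollary \ref{a31Mar24} across the involution $\eta$ to get statement 1, then note the result is a field and invoke the (right-handed version of) Theorem \ref{8Mar15}.(2) for statement 2. You simply spell out more of the transport-of-structure bookkeeping (that $\eta$ carries $\pCC_{\mS_n}$ to $\CC_{\mS_n}'$, $\pS(\mS_n)$ to $S'(\mS_n)$, and induces an anti-isomorphism that is an honest isomorphism because the target is commutative) that the paper leaves implicit, and you correctly observe that the classical quotient ring in statement 2 should be $Q_{r,cl}'$ rather than the $Q_{l,cl}'$ that appears in the statement, which appears to be a typographical slip in the paper.
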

 
 \begin{proof} 1. Since $\eta$ is an involution of the algebra $\mS_n$,  $Q' (\mS_n)\simeq \eta (\pQ(\mS_n))$.
 Since $\eta (y_i)=x_i$ for all  $i=1,\ldots , n$,  $\eta (\pQ(\mS_n))= \eta(K(y_1, \ldots , y_n)=K(x_1, \ldots , x_n)$, by Theorem \ref{19Mar24}.(1).
 
 2. By statement 1, the ring $ Q'(\mS_n)\simeq K(x_1, \ldots , x_n)$ is a field. In particular, it is a semisimple Artinian ring, and so statement 2 follows from Theorem \ref{8Mar15}.(2). 
 \end{proof}

{\bf Descriptions of the sets $\pCC_{\mS_1}$ and $\CC_{\mS_1}'$.}

(i) $\pCC_{\mS_1}\subseteq \mS_1\backslash F$: It is obvious that every element of the ideal $F=\oplus_{i,j\in \N}KE_{ij}\simeq M_\infty (K)$ is a left and right zero divisor of the algebra $F$ (without 1) and of $\mS_1$. 

(ii) $\mY_1^0\subseteq \pCC_{\mS_1}$: The ideal $F$  is an essential right ideal of the algebra  $\mS_1$ such that $F_{\mS_1}\simeq (P_1')^{(\N)}$ is a semisimple right $\mS_1$-module (Proposition \ref{B1Apr24}). By Corollary \ref{e15Mar24}.(1), $\pCC_{\mS_1}=\pCC_{P_1'}$ where $P_1'=K[y]$ is the only simple faithful right $\mS_1$-module. Now, the statement (ii) follows from (\ref{Ppac-1}).

(iii) {\em For each nonzero element $d\in \mS_1\backslash F$, $\der^i d \in \mY_1^0$  for some}  $i\in \N$: The statement (iii) follows (\ref{mS1d}).

(iv) {\em For each nonzero element $d\in \mS_1\backslash F$, $\der^i d\in \pCC_{\mS_1}$  for some  $i\in \N$}: The statement (iii) follows from the statements (ii) and  (iii). 

 Then the well-defined map
\begin{equation}\label{mAn-GNd}
d: \mS_1\backslash F \ra \N , \;\; a\mapsto d(a) :=\min \{ i\in \N \, | \, \der^ia\in \pCC_{\mS_1}\}
\end{equation}
is called the {\em left regularity degree function} and the natural number $d(a)$ is called the {\em left regularity degree} of $a$. For each element $a\in \mS_1\backslash F$, $d(a)$ can be found in finitely many steps.  
Now, Theorem \ref{A1Apr24}.(1) follows. Then Theorem \ref{A1Apr24}.(2) follows from Theorem \ref{A1Apr24}.(1). 

\begin{theorem}\label{A1Apr24}

\begin{enumerate}
\item $\pCC_{\mS_1} = \{ \der^{d(a)}a\, | \, a\in \mS_1\backslash F\}$.
\item $\CC_{\mS_1}'= \eta(\pCC_{\mS_1})$.
\end{enumerate}
\end{theorem}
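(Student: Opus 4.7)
The plan is to formalize the argument already laid out in steps (i)-(iv) preceding the theorem; these reduce the theorem to the well-definedness of the left regularity degree function $d$ in (\ref{mAn-GNd}). For statement 1, I would argue the two inclusions separately. The inclusion $\supseteq$ is immediate from the definition of $d(a)$: by (iv), for any $a \in \mS_1 \setminus F$ the minimum in (\ref{mAn-GNd}) is attained by some natural number, and hence $\der^{d(a)} a \in \pCC_{\mS_1}$. The reverse inclusion $\subseteq$ is also quick: every $c \in \pCC_{\mS_1}$ lies in $\mS_1 \setminus F$ by (i), and then $d(c) = 0$ since $c$ itself is left regular, so $c = \der^{d(c)} c$ belongs to the set on the right.

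The substantive content is therefore in (i)-(iv). Step (i) is immediate from the matrix-units structure $F \simeq M_\infty(K)$, since every $E_{ij}$ is annihilated on either side by suitable other $E_{kl}$. Step (ii) is the heart of the argument: the ideal $F$ is an essential right ideal of $\mS_1$, and by Proposition \ref{B1Apr24}.(2) the right $\mS_1$-module $F_{\mS_1}$ is semisimple with a single isotypic component $P_1' = K[y]$. Corollary \ref{e15Mar24}.(1) therefore yields $\pCC_{\mS_1} = \pCC_{P_1'}$, and a one-line computation with the action formula (\ref{Ppac-1}) shows that every nonzero element of $\mY_1 = K[y]$ acts injectively on $P_1'$, giving $\mY_1^0 \subseteq \pCC_{\mS_1}$. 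For step (iii), the operator $\der$ is interpreted as left multiplication by $y$; I would use the vector-space decomposition (\ref{mS1d}), writing $a \in \mS_1 \setminus F$ as $a = \alpha + f(x) + g(y) + E$ with $\alpha \in K$, $f \in xK[x]$, $g \in yK[y]$, $E \in F$. For $k$ larger than both $\deg_x f$ and the maximal row-index appearing in $E$, the identities $y^k x^j = y^{k-j}$ and $y^k E_{ij} = 0$ (when $k > i$) collapse $y^k a$ into an element of $\mY_1$, and a degree count in $y$ shows it is nonzero as long as $(\alpha, f, g) \neq 0$, i.e., as long as $a \notin F$. Combining (ii) and (iii) yields (iv), so $d(a) \in \N$ is well-defined.

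Statement 2 then follows formally from statement 1 by applying the involution $\eta$ of (\ref{etamSn}). Since $\eta$ is a $K$-algebra anti-isomorphism with $\eta^2 = \id_{\mS_1}$, it interchanges left and right zero-divisors, whence $\CC_{\mS_1}' = \eta(\pCC_{\mS_1})$. The main delicate point throughout is the nonvanishing claim inside step (iii): one must confirm that repeated left multiplication by $y$ does not eventually kill $a$, but rather produces a nonzero polynomial in $y$. This is the only place where a careful case analysis on the components $(\alpha, f, g, E)$ is genuinely required; all other steps are formal consequences of the structural results already recorded in the excerpt.
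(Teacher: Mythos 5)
Your proposal is correct and follows essentially the same route as the paper: both reduce Statement 1 to the preliminary facts (i)--(iv) and the well-definedness of the regularity degree function, and obtain Statement 2 formally by applying the involution $\eta$. Your contribution is simply to make explicit what the paper only sketches — in particular the clean argument in step (iii), where you observe that for large $k$ the contributions of $\alpha$, $f(x)$, and $g(y)$ to $y^k a$ occupy the disjoint degree ranges $=k$, $\leq k-1$, and $\geq k+1$ respectively, so no cancellation can occur and $y^k a \in \mY_1^0$ precisely when $a \notin F$. You also correctly read the paper's $\der^i$ in the theorem and in (\ref{mAn-GNd}) as shorthand for $y^i$ (the generator of $\mS_1$ corresponding to $\der$ under the isomorphism $\xi$), which is the only sensible interpretation.
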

By (\ref{mS1d}), each element $a\in \mS_1$ is  a unique sum $a=\sum_{i=0}^l\l_{-i}y^i+\sum_{j=1}^m\l_j x^j+a_F$ where $\l_k\in K$, $k=-l, \ldots , m$  and $a_F\in F$. Let $a_y:=\sum_{i=0}^l\l_{-i}y^i$. The integer
 $$s(a_F):=
\begin{cases}
\min \{ n\in \N \, | \, a_F\in \bigoplus_{i,j=0}^nKe_{ij}\} & \text{if }a_F\neq 0,\\
-1& \text{if } a_F=0.\\
\end{cases}  
$$
is called the {\em size} of the element $a_F\in F$. The integer $s(a) := s(a_F)$ is called the {\em size} of the element $a$. For each $i\in \N$, let $P_{1,\leq i}':=\{ a\in P_1'\, | \, \deg_y(a)\leq i\}$ where $\deg_y$ is the degree of the polynomial $a\in P_1'$ in the variable $y$.

 \begin{theorem}\label{A31Mar24}

\begin{enumerate}
\item  $\pCC_{\mS_1}=\{ a\in \mS_1\backslash (xK[x]+F)\, | \, \cdot a: P_{1,\leq s(a)}'\ra P_{1,\leq  s(a)+\deg_y(a_y)}'$, $p\mapsto pa$  is an injection$\}$.

\item $\CC'_{\mS_1}=\eta (\pCC_{\mS_1})$ where $\eta$ is the involution of the algebra $\mS_1$, see (\ref{etamSn}).
\end{enumerate}
\end{theorem}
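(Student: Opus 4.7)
The plan is to reduce the question to the right action of $\mS_1$ on $P_1'=K[y]$, and then analyse the map $\cdot a:P_1'\to P_1'$, $p\mapsto pa$, via the $y$-degree filtration $\{P'_{1,\leq N}\}_{N\geq 0}$.

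For the reduction, recall that $F$ is an essential right ideal of $\mS_1$ and, by Proposition \ref{B1Apr24}.(2), $F_{\mS_1}\simeq (P_1')^{(\N)}$ as right $\mS_1$-modules. Corollary \ref{e15Mar24}.(1) then yields $\pCC_{\mS_1}=\pCC_{P_1'}$, so $a\in \pCC_{\mS_1}$ if and only if $\cdot a$ is injective on $P_1'$.

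For the degree bookkeeping, I use the decomposition (\ref{mS1d}) to write $a=a_y+a_x+a_F$ with $a_y\in K[y]$, $a_x\in xK[x]$, $a_F\in F$, and set $\delta:=\deg_y(a_y)$ and $n:=s(a)=s(a_F)$. The explicit right action (\ref{Ppac-1}) then gives three observations: (a) $q\cdot a_y$ has $y$-degree exactly $\deg_y(q)+\delta$ when $a_y\neq 0$; (b) $q\cdot a_x$ has $y$-degree at most $\deg_y(q)-1$, or is zero; (c) $q\cdot a_F$ lies in $P'_{1,\leq n}$ and vanishes whenever $\deg_y(q)>n$.

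The argument then splits in two. If $a_y=0$ (that is, $a\in xK[x]+F$), then for any $N>n$ the observations (b) and (c) give $\cdot a(P'_{1,\leq N})\subseteq P'_{1,\leq N-1}$, a linear map between finite-dimensional spaces of dimensions $N+1$ and $N$; by pigeonhole it has a nontrivial kernel, so $a\notin \pCC_{\mS_1}$. If instead $a_y\neq 0$ and $q\in P_1'$ satisfies $d:=\deg_y(q)>n$, then (a)--(c) show that the summand $q\cdot a_y$ has $y$-degree $d+\delta$, strictly greater than both $d-1$ and $n$ (since $\delta\geq 0$); hence $q\cdot a\neq 0$. Thus $\ker(\cdot a)\subseteq P'_{1,\leq s(a)}$. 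The same bookkeeping shows $\cdot a$ sends $P'_{1,\leq s(a)}$ into $P'_{1,\leq s(a)+\delta}$, so in this case $\cdot a$ is injective on $P_1'$ if and only if the finite-dimensional restriction $\cdot a:P'_{1,\leq s(a)}\to P'_{1,\leq s(a)+\deg_y(a_y)}$ is injective. Combining the two cases yields statement (1). Statement (2) is then immediate from (1) and the anti-automorphism $\eta$ of (\ref{etamSn}), which interchanges left- and right-regular elements.

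The only delicate point is the strict domination of the three $y$-degrees in the case $a_y\neq 0$ (verifying that no cancellation can occur in $q\cdot a$ when $\deg_y(q)>s(a)$); every other piece is routine bookkeeping built on (\ref{Ppac-1}) and the cited structural results.
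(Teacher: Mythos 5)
Your argument matches the paper's own proof essentially line for line: both reduce $\pCC_{\mS_1}$ to $\pCC_{P_1'}$ via the essentiality of $F$ and $F_{\mS_1}\simeq (P_1')^{(\N)}$, use the decomposition $a=a_y+a_x+a_F$ from (\ref{mS1d}), rule out $a\in xK[x]+F$ by a dimension count on the truncated map $\cdot a: P'_{1,\leq N}\ra P'_{1,\leq N-1}$, bound $\ker(\cdot a)\subseteq P'_{1,\leq s(a)}$ when $a_y\neq 0$ by comparing leading $y$-degrees, and obtain part (2) from the involution $\eta$. One small slip in the bookkeeping: in your observation (c), $q\cdot a_F$ does \emph{not} vanish merely because $\deg_y(q)>s(a)$ (it depends on the coefficients of $q$ in degrees $\leq s(a)$, not on its leading degree), but since your argument only ever invokes the containment $q\cdot a_F\in P'_{1,\leq s(a)}$, nothing is actually lost.
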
  
 
\begin{proof} 1. 
(i) $\pCC_{\mS_1}\cap (xK[x]+F)=\emptyset$: Suppose that  $a\in xK[x]$. Then $1\in \ker(\cdot a)$. 
 Recall that $\pCC_{\mS_1}\subseteq \mS_1\backslash F$ (see the statement (i) in the proof of Theorem \ref{A1Apr24}). So, it remains to consider the case when $a\in (xK[x]+F)\backslash  (xK[x]\cup F)$.  
Then the map $\cdot a: P_{1,\leq s(a)+1}'\ra P_{1,\leq s(a)}'$, $p\mapsto pa$ is a well-defined map. Since 
$$\dim_K(P_{1,\leq s(a)+1}')=s(a)+2>s(a)+1=\dim_K( P_{1,\leq s(a)}'),$$ $\ker(\cdot a)\neq 0$.  Therefore, $\pCC_{\mS_1}\cap (xK[x]+F)=\emptyset$.

(ii) {\em For each  element} $ a\in \mS_1\backslash (xK[x]+F)$, $\ker_{P_1'}(\cdot a)\subseteq  P_{1,\leq s(a)}'$: Since $a\in \mS_1\backslash (xK[x]+F)$, $a_y:=\sum_{i=0}^l\l_{-i}y^i\neq 0$ where $\l_{-i}\in K$. Suppose that $\l_{-l}\neq 0$. Suppose that $p\in \ker_{P_1'}(\cdot a)\backslash  P_{1,\leq s(a)}'$, i.e. $\deg_y(p)>s(a)$. Then $$\deg(pa)=l+\deg_y(p),$$ a contradiction (since $pa=0$).

Now, statement 1 follows from statements (i) and (ii).
 
2. Statement 2 follows from statement 1.
\end{proof}

{\bf The algebras $\CI_n$ of scalar integro-differential operators.} In the next section, we will see that 
 the algebra $\mI_n$ of polynomial  integro-differential operators contains the algebra of scalar integro-differential operators, \cite{algintdif}: 
$$
\CI_n:= K\bigg\langle
 \der_1, \ldots ,\der_n,  \int_1,
\ldots , \int_n\bigg\rangle .
$$ 
The  algebra $\CI_n $ is canonically isomorphic to the algebra $\mS_n$, \cite[Eq. (9)]{algintdif}, see 
(\ref{SnIniso}) for an explicit isomorphism.

\begin{corollary}\label{a21Mar24}
\begin{enumerate}

\item  $\pCC_{\CI_n}=\pS (\CI_n)$ and  ${}'Q_{l,cl}(\CI_n)=\pQ (\CI_n)\simeq \pQ (\mS_n)\simeq K(y_1, \ldots , y_n)$.

\item  $\CC_{\CI_n}=S'(\CI_n)$ and $Q'_{l,cl} (\CI_n)=Q' (\CI_n)\simeq Q' (\mS_n)\simeq K(x_1, \ldots , x_n)$.
\end{enumerate}
\end{corollary}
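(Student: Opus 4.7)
The plan is to reduce the corollary to an immediate transport of structure along the canonical algebra isomorphism $\CI_n \simeq \mS_n$ cited from \cite{algintdif} (and referenced just before the statement of the corollary via equation (\ref{SnIniso})). Since both the ring $R$ and the constructions $\pCCR$, $\CC_R'$, $\pSlR$, $S_r'(R)$, $\pQ_{l,cl}(R)$, ${}'Q_l(R)$, $Q'_{r,cl}(R)$, $Q'(R)$ are defined purely in terms of the multiplicative structure of $R$ (injectivity of left/right multiplication maps, Ore and denominator conditions on multiplicative subsets, and localization at those subsets), any ring isomorphism $\varphi: \CI_n \xrightarrow{\sim} \mS_n$ sends each of these to its counterpart in $\mS_n$ bijectively and functorially.

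First I would record explicitly the isomorphism $\varphi: \CI_n \to \mS_n$ from \cite[Eq. (9)]{algintdif}, noting that $\varphi(\der_i)$ and $\varphi(\int_i)$ correspond to the generators $y_i$ and $x_i$ of $\mS_n$, respectively (so $\varphi$ interchanges the roles of the $\der$'s and the $y$'s, and of the $\int$'s and the $x$'s). Next I would observe the evident compatibilities: $\varphi(\pCC_{\CI_n}) = \pCC_{\mS_n}$ and $\varphi(\CC_{\CI_n}') = \CC_{\mS_n}'$ (injectivity of $\cdot c$ and $c\cdot$ is preserved by any ring isomorphism), and $\varphi({}'S_l(\CI_n)) = {}'S_l(\mS_n)$, $\varphi(S_r'(\CI_n)) = S_r'(\mS_n)$ (since the property of being the largest left (resp. right) denominator set contained in $\pCC$ (resp. $\CC'$) is an invariant of the ring up to isomorphism).

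For statement (1), Corollary \ref{a31Mar24} gives $\pCC_{\mS_n} = \pS(\mS_n)$ and $\pQ_{l,cl}(\mS_n) = \pQ(\mS_n) \simeq K(y_1,\ldots,y_n)$. Applying $\varphi^{-1}$ yields $\pCC_{\CI_n} = \pS(\CI_n)$, and localizing along $\varphi$ gives an induced ring isomorphism $\pQ(\CI_n) \simeq \pQ(\mS_n)$, hence $\pQ_{l,cl}(\CI_n) = \pQ(\CI_n) \simeq K(y_1,\ldots,y_n)$. Statement (2) is proved identically using Corollary \ref{a22Mar24}.(2) in place of Corollary \ref{a31Mar24}, yielding $\CC_{\CI_n}' = S'(\CI_n)$ and $Q'_{l,cl}(\CI_n) = Q'(\CI_n) \simeq K(x_1,\ldots,x_n)$.

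There is essentially no obstacle here: the only step that requires any care is confirming that the distinguished generators $\der_i$ and $\int_i$ of $\CI_n$ match the generators $y_i$ and $x_i$ of $\mS_n$ under $\varphi$ in the right order, so that the isomorphisms of fraction fields display the expected variables ($y_i$ in statement (1) and $x_i$ in statement (2)). Once this correspondence is fixed, both statements follow by transport of structure and no further computation is required.
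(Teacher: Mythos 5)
Your proposal is correct and matches the paper's own proof: the paper likewise invokes the isomorphism (\ref{SnIniso}) between $\CI_n$ and $\mS_n$ and then transports the conclusions of Theorem \ref{19Mar24}/Corollary \ref{a31Mar24} for statement (1) and Corollary \ref{a22Mar24} for statement (2). The paper states this in one line; you have merely spelled out the routine transport-of-structure argument in detail.
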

 
 \begin{proof} 1.  The algebras $\CI_n$ and $\mS_n$ are isomorphic (see (\ref{SnIniso})) and statement 1 follows from Theorem \ref{19Mar24} and Corollary \ref{a31Mar24}.
 
 2. Statement 2 follows from Corollary \ref{a22Mar24}. 
 \end{proof}

\begin{corollary}\label{b21Mar24}

 $\pCC_{\CI_1}=\xi (\pCC_{\mS_1})$ and  $\CC_{\CI_1}'=\xi (\CC_{\mS_1}')$.

\end{corollary}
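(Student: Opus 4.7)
The plan is to observe that the corollary is a purely formal consequence of the existence of the algebra isomorphism $\xi : \mS_1 \xrightarrow{\sim} \CI_1$ coming from (\ref{SnIniso}), together with the fact that the notions of left regularity and right regularity are defined solely in terms of the multiplicative structure of the ring and are therefore transported by any ring isomorphism.

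Concretely, I would proceed in three short steps. First, I would record the general fact: if $\phi : R \to R'$ is any ring isomorphism and $r \in R$, then for every $x \in R$ one has $\phi(xr)=\phi(x)\phi(r)$ and $\phi(rx)=\phi(r)\phi(x)$, so the bijection $\phi$ intertwines the maps $\cdot r : R \to R$ and $\cdot\phi(r) : R' \to R'$, and similarly for $r\cdot$ and $\phi(r)\cdot$. Consequently $\ker(\cdot r)=0$ iff $\ker(\cdot \phi(r))=0$, and $\ker(r\cdot)=0$ iff $\ker(\phi(r)\cdot)=0$, which gives the two identities
\begin{equation*}
\phi({}'\CC_R)={}'\CC_{R'},\qquad \phi(\CC_R')=\CC_{R'}'.
\end{equation*}

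Second, I would apply this general fact to the particular isomorphism $\xi : \mS_1 \to \CI_1$ referenced in Corollary~\ref{a21Mar24} (via (\ref{SnIniso})), obtaining
\begin{equation*}
\xi({}'\CC_{\mS_1})={}'\CC_{\CI_1}\qquad\text{and}\qquad \xi(\CC_{\mS_1}')=\CC_{\CI_1}',
\end{equation*}
which is precisely the claim.

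There is essentially no obstacle here: the content is really just that left/right regularity is an isomorphism invariant, and the corollary merely transports the explicit descriptions of $\pCC_{\mS_1}$ and $\CC_{\mS_1}'$ given in Theorem~\ref{A1Apr24} and Theorem~\ref{A31Mar24} along $\xi$. The only thing to be careful about is to cite the correct isomorphism (\ref{SnIniso}) rather than accidentally working with $\xi^{-1}$, but once that is fixed the proof is a one-line verification.
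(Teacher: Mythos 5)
Your proof is correct and is essentially the argument the paper has in mind; the paper states this corollary without a separate proof, treating it as an immediate consequence of the isomorphism $\xi:\mS_1\to\CI_1$ from (\ref{SnIniso}), exactly as you explain by noting that left and right regularity are transported by any ring isomorphism.
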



\section{The rings $\pQ(\mI_n)$ and  $Q'(\mI_n)$} \label{PQmIn-PQCIn} 

The aim of the section is to prove Theorem \ref{18Mar24},  to obtain a description of the sets $\pCC_{\mI_1}$ and $\CC_{\mI_1}'$ (Theorem \ref{C31Mar24}), and to prove Theorem \ref{A25Mar24} which is a criterion for  $\pQ (\mI_n)\simeq Q(I_n)$. \\

{\bf The rings $\mI_n$ of integro-differential operators and the Jacobian algebras $\mA_n$.}
In this section the following notation is fixed: $K$ is a
field of characteristic zero and  $K^*$ is its group of units;
$P_n:= K[x_1, \ldots , x_n]$ is a polynomial algebra over $K$;
$\der_1:=\frac{\der}{\der x_1}, \ldots , \der_n:=\frac{\der}{\der
x_n}$ are the partial derivatives ($K$-linear derivations) of
$P_n$; $\End_K(P_n)$ is the algebra of all $K$-linear maps from
$P_n$ to $P_n$;
the
subalgebra  $A_n:= K \langle x_1, \ldots , x_n , \der_1, \ldots ,
\der_n\rangle$ of $\End_K(P_n)$ is called the $n$'th {\bf  Weyl
algebra}.

\begin{definition} (\cite{jacalg}) The {\bf Jacobian algebra}
$\mA_n$ is the subalgebra of $\End_K(P_n)$ generated by the Weyl
algebra $A_n$ and the elements $H_1^{-1}, \ldots , H_n^{-1}\in
\End_K(P_n)$ where $$H_1:= \der_1x_1, \ldots , H_n:= \der_nx_n.$$
\end{definition}

Clearly, $\mA_n =\bigotimes_{i=1}^n \mA_1(i) \simeq \mA_1^{\t n }$
where $\mA_1(i) := K\langle x_i, \der_i , H_i^{-1}\rangle \simeq
\mA_1$. The algebra $\mA_n$ contains all the  integrations
$\int_i: P_n\ra P_n$, $ p\mapsto \int p \, dx_i$, i.e.  $$
\int_i= x_iH_i^{-1}: x^\alpha \mapsto (\alpha_i+1)^{-1}x_ix^\alpha.
$$ 
The algebra $\mA_n$ contains the {\bf algebra of  polynomial integro-differential
operators}, \cite{algintdif}:
$$\mI_n:=K\bigg\langle x_1, \ldots , x_n,
 \der_1, \ldots ,\der_n,  \int_1,
\ldots , \int_n\bigg\rangle .$$ 
Notice that $\mI_n=\bigotimes_{i=1}^n\mI_1(i)\simeq
\mI_1^{\t n}$ where $\mI_1(i):= K\langle x_i, \der_i,
\int_i\rangle\simeq \mI_1$. The algebra $\mI_n$ contains the {\bf algebra of scalar integro-differential operators}, \cite{algintdif}: 
$$
\CI_n:= K\bigg\langle
 \der_1, \ldots ,\der_n,  \int_1,
\ldots , \int_n\bigg\rangle .
$$ 
The  algebra $\CI_n $ is canonically isomorphic to the algebra $\mS_n$, \cite[Eq. (9)]{algintdif}:
\begin{equation}\label{SnIniso}
\xi: \mS_n\ra \CI_n, \;\; x_i\mapsto \int_i, \;\; y_i\mapsto
 \der_i, \;\; i=1, \ldots , n.
\end{equation}

For the reader's convenience we collect some
known results on the algebras $\mI_n$ and $\mA_n$ from the papers \cite{jacalg, 
algintdif} that are used later in
the paper.  The algebra $\mI_n$ is a prime, central, catenary, non-Noetherian
algebra of classical Krull dimension $n$ and of Gelfand-Kirillov
dimension $2n$, \cite{algintdif}.
 Since $x_i= \int_iH_i$, where $H_i:=\der_ix_i$, the
algebra $\mI_n$ is generated by the elements $\{ \der_i , H_i,
\int_i\, | \, i=1, \ldots , n\}$, and $\mI_n =\bigotimes_{i=1}^n
\mI_1(i)$ where
$$\mI_1(i):= K\bigg\langle \der_i , H_i,
\int_i\bigg\rangle=K\bigg\langle \der_i , x_i, \int_i\bigg\rangle\simeq \mI_1.$$

 When $n=1$ we usually drop the subscript `1' in $\der_1$, $\int_1$, $H_1$, and $x_1$.
 The algebra $\mI_1 = \oplus_{i \in \Z} \mI_{1, i}$ is
a $\Z^n$-graded algebra where 
\begin{equation}\label{mIn-A1i}
\mI_{1,i}=\begin{cases}
\int^iD_1& \text{if $i\geq 1$},\\
D_1& \text{if $i=0$},\\
D_1 \der^{-i}& \text{if $i\leq -1$},\\
\end{cases}
\end{equation}
where $D_1:= K[H]\oplus\bigoplus_{i\in \N}Ke_{ii}$ is a commutative, not Noetherian,  not finitely generated algebra and $K[H]$ is a polynomial algebra in the variable $H$ and $H=\der x=x\der+1$.

 The following elements of the algebra $\mI_1=K\langle
\der , H, \int \rangle$,
\begin{equation}\label{eijdef}
e_{ij}:=\int^i\der^j-\int^{i+1}\der^{j+1}, \;\; i,j\in \N ,
\end{equation}
satisfy the relations: $e_{ij}e_{kl}=\d_{jk}e_{il}$ where
$\d_{ij}$ is the Kronecker delta. The matrices of the linear maps
$e_{ij}\in \End_K(K[x])$  with respect to the basis $\{ x^{[s]}:=
\frac{x^s}{s!}\}_{s\in \N}$ of the polynomial algebra $K[x]$  are
the elementary matrices, i.e.
$$ e_{ij}*x^{[s]}=\begin{cases}
x^{[i]}& \text{if }j=s,\\
0& \text{if }j\neq s.\\
\end{cases}$$
The direct sum  $F:=\bigoplus_{i,j\in \N}Ke_{ij}$ is the only
proper (hence maximal) ideal of the algebra $\mI_1$. As an algebra
without 1 it is isomorphic to the algebra without 1 of infinite
dimensional matrices $M_\infty (K) :=\varinjlim
M_d(K)=\bigoplus_{i,j\in \N}KE_{ij}$ via $e_{ij}\mapsto E_{ij}$
where $E_{ij}$ are the matrix units. For all $i,j\in \N$,
\begin{equation}\label{Ieij}
\int e_{ij}=e_{i+1,j}, \;\; e_{ij}\int = e_{i,j-1}, \;\;\der
e_{ij}=e_{i-1,j}, \;\; e_{ij}\der= e_{i,j+1},
\end{equation}
where $e_{-1, j}:=0$ and $e_{i,-1}:=0$. 
\begin{equation}\label{I1dis}
 \mI_1=\bigoplus_{i\geq 1}K[H]\der^i\oplus K[H]\oplus \bigoplus_{i\in \geq 1}K[H]\int^i \oplus F
\end{equation}
and $K[H]\der^i=\der^iK[H]$ and $K[H]\int^i=\int^i K[H]$ for all $i\geq 1$. 
 The algebra $\mI_1$ is generated by the elements $ \der$,  $\int$ and $ H$ subject to  the following
defining relations (Proposition 2.2, \cite{algintdif}):
$$\der\int= 1, \;\; \bigg[H, \int\bigg]=\int, \;\;
[ H, \der]=-\der, \;\; H \bigg(1-\int\der\bigg) = \bigg(1-\int\der\bigg) H=1-\int\der.$$

 The algebra $\mI_n=\otimes_{i=1}^n\mI_1(i) = \oplus_{\alpha \in \Z^n} \mI_{n, \alpha}$ is
a $\Z^n$-graded algebra where $\mI_{n,\alpha}:=\t_{k=1}^n
\mI_{1,\alpha_k}(k)$ for $\alpha=(\alpha_1, \ldots , \alpha_n)$.
 The algebra
$\mI_n$ contains the ideal $$F_n:=
F^{\t n}=\bigotimes_{i=1}^nF(i)=\bigoplus_{\alpha, \beta \in \N^n}
Ke_{\alpha\beta}$$ where $e_{\alpha\beta}:= \prod_{i=1}^n
e_{\alpha_i\beta_i}(i)$,
$e_{\alpha_i\beta_i}(i):=\int_i^{\alpha_i}\der_i^{\beta_i}-
\int_i^{\alpha_i+1}\der_i^{\beta_i+1}$ and $F(i)=\bigoplus_{s,t\in
\N}Ke_{st}(i)$.

\begin{lemma}\label{b10Oct9}

\begin{enumerate}

\item {\em (\cite[Corollary 3.3.(2)]{algintdif})} The set of height one prime ideals of the algebra $\mI_n$ is
$\{ \gp_1:= F\t\mI_{n-1}, \gp_1:= \mI_1\t F\t\mI_{n-2},\ldots ,
\gp_n:= \mI_{n-1}\t F\}$. 

\item {\em (\cite[Corollary 3.3.(3)]{algintdif})} Each ideal of the algebra $\mI_n$
is an idempotent ideal ($\ga^2= \ga$). 


\item {\rm (\cite[Lemma 5.2.(2)]{algintdif})}  Each nonzero ideal
of the algebra $\mI_n$ is an essential left and right submodule of
$\mI_n$.


\item {\em (\cite[Corollary 3.3.(8)]{algintdif})} The ideal $\ga_n :=
\gp_1+\cdots + \gp_n$ is the largest (i.e. the only maximal)
ideal of $\mI_n$  and $F_n = F^{\t
n}=\cap_{i=1}^n \gp_i$ is the smallest nonzero ideal of $\mI_n$.




\item  {\rm (\cite[Proposition  3.8]{algintdif})} 
 The polynomial algebra $P_n$ is the only (up to isomorphism) faithful simple left $\mI_n$-module and ${}_{\mI_n}P_n\simeq \mI_n/\mI_n(\der_1, \ldots , \der_n)$ {\rm (\cite[Proposition  3.4.(3)]{intdifaut})}).
 
 \item  {\rm (\cite[Lemma  5.2.(1)]{algintdif})} For all nonzero ideals $\ga$ of the algebra $\mI_n$,
$\lann_{\mI_n}(\ga ) = \rann_{\mI_n}(\ga) =0$. \item   {\rm (\cite[Lemma  5.2.(2)]{algintdif})} Each nonzero
ideal of the algebra $\mI_n$ is an essential left and right
submodule of $\mI_n$.
 
\end{enumerate}
\end{lemma}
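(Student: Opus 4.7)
The plan is to verify each of the seven assertions by exploiting the tensor decomposition $\mI_n = \bigotimes_{i=1}^n \mI_1(i)$ and reducing everything to the structure of $\mI_1$ provided by (\ref{I1dis}). The key facts I would lean on are: $F = \bigoplus_{i,j\in \N} K e_{ij} \cong M_\infty(K)$ is the unique proper two-sided ideal of $\mI_1$ and is idempotent ($F^2 = F$), and $\mI_1 / F \cong K[H, H^{-1}]$ is a commutative domain. Most of the work is organising the consequences of these two facts.

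For assertion (1), together with (4), I would classify all ideals of $\mI_n$ by first showing that any ideal of $\mI_1^{\otimes n}$ is a sum of products of the three $\mI_1$-ideals $0, F, \mI_1$ placed in the various tensor slots. Each $\gp_i$ is then prime because $\mI_n / \gp_i \cong \mI_{n-1} \otimes (\mI_1 / F) \cong \mI_{n-1} \otimes K[H_i, H_i^{-1}]$ inherits primeness from the corresponding statement for $\mI_{n-1}$ by induction on $n$. The classification simultaneously identifies $\ga_n = \sum \gp_i$ with $\mathrm{ker}\bigl(\mI_n \to \bigotimes_i K[H_i, H_i^{-1}]\bigr)$, the unique maximal ideal, and $F_n = \bigcap \gp_i = F^{\otimes n}$ as the unique minimal nonzero ideal. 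Idempotency (2) then follows from $F^2 = F$ and the fact that every ideal is built from $F$'s via tensor products, sums, and intersections.

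For (5), $P_n \cong \mI_n / \sum_{i} \mI_n \der_i$ by Lemma \ref{b10Oct9}(5) is simple because $H_i$-eigenvector considerations reduce any nonzero element to a constant, and every constant generates $P_n$. Faithfulness of $P_n$ is immediate: if $a \in \mI_n$ annihilates $P_n$ then $a$ lies in no ideal acting non-trivially on $P_n$, but by (4) any nonzero ideal contains $F_n$, and the matrix units $e_{\alpha \beta} \in F_n$ act as elementary matrices on the basis $\{x^{[\alpha]}\}$ of $P_n$, in particular non-trivially. Uniqueness up to isomorphism is then standard: if $M$ is any simple faithful module, its annihilator $0$ is a prime, but by the catenary / Krull dimension considerations in \cite{algintdif} the only primitive ideal of height $n$ with factor a simple module of GK-dimension $n$ is $\sum \mI_n \der_i$.

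The main obstacle is (3) (which is identical to (7)), the essentiality claim, because it must hold for \emph{every} nonzero ideal, not only the minimal one $F_n$. My strategy is a two-step reduction. First, since by (4) every nonzero ideal $\ga$ contains $F_n$, it suffices to prove that $F_n$ itself is essential as a left and right submodule of $\mI_n$. For this, given any nonzero $a \in \mI_n$, I would use the $\Z^n$-grading and the decomposition (\ref{I1dis}) in each tensor factor to find monomials $\der^\alpha, \int^\beta \in \mI_n$ such that $\int^\beta a \der^\alpha$ has a nonzero $F_n$-component (equivalently, a nonzero coefficient at some $e_{\gamma \delta}$); the key input is that the $K[H_i]$-summands in (\ref{I1dis}) map to $F_n$ under multiplication by sufficiently many $\int$'s and $\der$'s because $H_i(1 - \int_i \der_i) = 1 - \int_i \der_i = e_{00}(i)$. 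Once (3) is established, (6) follows formally: any nonzero left annihilator $b$ of $\ga$ would force $b \cdot \ga = 0$, contradicting essentiality of $\ga$ inside $\mI_n$ viewed as a left $\mI_n$-module.
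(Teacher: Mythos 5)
The paper does not actually prove this lemma: every item carries an explicit citation to \cite{algintdif} (item (5) also to \cite{intdifaut}), so there is no paper-internal argument for you to have matched or diverged from. Your proposal is therefore an independent reconstruction, and it does contain a genuine gap in the most delicate item.

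The problem is (3)/(7), the essentiality of $F_n$. Producing elements $\int^\beta a\,\der^\alpha$ with a nonzero $F_n$-component accomplishes neither of the two things you need. First, a nonzero $F_n$-component is not the same as membership in $F_n$, and essentiality requires a nonzero element actually lying in $F_n\cap(\text{ideal})$. Second, and more importantly, $\int^\beta a\,\der^\alpha$ mixes left and right multiplication: to show $F_n$ is left-essential you must find, for each nonzero $a$, an element $b$ with $0\neq ba\in F_n$, so you are only allowed to multiply $a$ on the left. What your manipulation actually establishes is that the \emph{two-sided} ideal generated by $a$ meets $F_n$, which is a re-derivation of part of (4) rather than a proof of one-sided essentiality. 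The standard remedy is to reverse your logical order between (3) and (6): prove $\lann_{\mI_n}(F_n)=\rann_{\mI_n}(F_n)=0$ first — this is immediate from the faithfulness of $P_n$, since $F_nb=0$ forces $e_{\alpha\beta}b*p=0$ for all $\alpha,\beta,p$, hence $b*P_n=0$, hence $b=0$ — and then essentiality of $F_n$ drops out in one line: for a nonzero left ideal $L$, $F_nL\subseteq F_n\cap L$ (since $F_n$ is two-sided and $L$ is a left ideal) and $F_nL\neq 0$ (since $\rann(F_n)=0$). Your derivation of (6) from (3) also skips a step: $b\ga=0$ does not ``contradict essentiality'' directly, it only gives a nonzero $c\in\ga$ with $c\ga=0$, after which one still needs primeness of $\mI_n$ (or the matrix structure of $F_n$) to finish. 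A smaller issue: in (5), the uniqueness argument via ``the only primitive ideal of height $n$'' is not coherent as stated — the annihilator of a faithful module is $0$, which has height $0$; the actual uniqueness proof in \cite{algintdif} goes through the semisimple structure of $F_n$ as a left $\mI_n$-module rather than through height considerations.
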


{\bf The involution $*$ on the algebra $\mI_n$}. The algebra $\mI_n$ admits the involution: 
\begin{equation}\label{*invIn}
*: \mI_n\ra \mI_n, \;\; \der_i \mapsto  \int_i, \;\; \int_i\mapsto
\der_i, \;\; H_i\mapsto H_i, \;\; i=1, \ldots , n,
\end{equation}
i.e. it is a $K$-algebra {\em anti-isomorphism} $((ab)^*= b^*a^*)$
such that $*\circ *= \id_{\mI_n}$. Therefore, the algebra $\mI_n$
is {\em self-dual}, i.e. is isomorphic to its {\em opposite}
algebra $\mI_n^{op}$. As a result, the left and the right
properties of the algebra $\mI_n$ are the same.  For all elements
$\alpha , \beta \in \N^n$, 
\begin{equation}\label{eab*}
e_{\alpha\beta}^*= e_{\beta \alpha}.
\end{equation}

The involution $*$ can be extended to an involution of the algebra
$\mA_n$ by setting
$$ x_i^*=H_i\der_i, \;\; \der_i^* = \int_i, \;\; (H_i^{\pm 1})^*=
H_i^{\pm 1}, \;\; i=1, \ldots , n.$$  Note that $y_i^* =
(H_i^{-1}\der_i)^* = \int_i H_i^{-1} = x_iH_i^{-2}$, $A_n^*
\not\subseteq A_n$,  but $\CI_n^* = \CI_n$ where $$\CI_n:=
K\bigg\langle \der_1, \ldots , \der_n \int_1, \ldots , \int_n\bigg\rangle $$
is the algebra of integro-differential operators with constant
coefficients.

For a subset $S$ of a ring $R$, the sets $\lann_R(S):= \{ r\in R\,
| \, rS=0 \}$ and $\rann_R(S):= \{ r\in R\, | \, Sr=0\}$ are
called the {\em left} and the {\em right annihilators} of the set
$S$ in $R$.
 Using the fact that the algebra $\mI_n$ is a GWA and its
 $\Z^n$-grading, we see that
\begin{equation}\label{laI}
\lann_{\mI_n}\bigg(\int_i\bigg) = \bigoplus_{k\in \N}Ke_{k0}(i)\bigotimes
\bigotimes_{i\neq j}\mI_1(j), \;\; \rann_{\mI_n}\bigg(\int_i\bigg) =0.
\end{equation}
\begin{equation}\label{lad}
\rann_{\mI_n}(\der_i) = \bigoplus_{k\in \N}Ke_{0k}(i)\bigotimes
\bigotimes_{i\neq j}\mI_1(j), \;\; \lann_{\mI_n}(\der_i) =0.
\end{equation}

Let $\ga$ be an ideal of the algebra $\mI_n$. The factor algebra
$\mI_n / \ga$ is a Noetherian algebra iff $\ga = \ga_n$
(Proposition 4.1,  \cite{algintdif}). The factor algebra $B_n :=
\mI_n / \ga_n$ is isomorphic to the skew Laurent polynomial
algebra
$$\bigotimes_{i=1}^n K[H_i][\der_i,
\der_i^{-1}; \tau_i]=\CP_n [ \der_1^{\pm 1}, \ldots, \der_n^{\pm
1}; \tau_1, \ldots , \tau_n],$$ via $\der_i\mapsto \der_i$,
$\int_i\mapsto \der_i^{-1}$, $H_1\mapsto H_i$  (and $x_i\mapsto
\der_i^{-1}H_i$) where $\CP_n := K[H_1, \ldots , H_n]$ and
$\tau_i(H_i) = H_i+1$. We identify these two algebras via this
isomorphism. It is obvious that
$$ B_n =\bigotimes_{i=1}^n K[H_i][z_i,
z_i^{-1}; \s_i]=\CP_n [ z_1^{\pm 1}, \ldots, z_n^{\pm 1}; \s_1,
\ldots , \s_n],$$ where $z_i:=\der_i^{-1}$ and  $\s_i =
\tau_i^{-1}: H_i\mapsto H_i-1$. The algebra $B_n$ is also
the left (but not right) localization of the algebra $\mI_n$ at
the multiplicatively closed set $$S_\der:=S_{\der_1, \ldots , \der_n}:=\{
\der_1^{\alpha_1}\cdots \der_n^{\alpha_n}\, | \, (\alpha_i)\in
\N^n\}, \;\; {\rm and}\;\; B_n \simeq S_\der^{-1}\mI_n.$$
 The algebra $B_n$ contains the algebra $\D_n:= \CP_n[\der_1, \ldots , \der_n;\tau_1,\ldots ,\tau_n]$ which is a skew polynomial ring. \\

Using the involution on the algebra 
$*$ on $\mI_n$,  the polynomial algebra $P_n$
can be seen as the {\em right} $\mI_n$-module by the rule 
$$pa:= a^* p\;\; {\rm  for\;
all}\;\;p\in P_n\;\; {\rm  and}\;\;a\in \mI_n.$$ By Lemma \ref{b10Oct9}.(5),  $P_n=K[x_1, \ldots , x_n]$  is the only
faithful, simple, right $\mA_n$-module. Let $P_n':=(P_n)^*=K[\der_1, \ldots, \der_n]$, a polynomial algebra in $n$ variables. Clearly, 

$$(P_n)_{\mA_n}\simeq \big(  \mI_n/\mI_n(H_1-1, \ldots , H_n-1,\der_1, \ldots , \der_n)\Big)^*= \mI_n/\bigg(H_1-1, \ldots , H_n-1,\int_1, \ldots , \int_n\bigg)\mI_n\simeq P_n'\widetilde{1}\simeq (P_n')_{P_n'}$$
where $\widetilde{1}:=1+\bigg(\int_1, \ldots , \int_n\bigg)\mI_n$. So, 
 $P_n'$  {\em is the only
faithful, simple, right $\mI_n$-module}.

\begin{lemma}\label{c29Mar24}

\begin{enumerate}

\item ${}_{\mI_n}F_n\simeq P_n^{(\N^n)}$ is a direct sum of $N^n$ copies of the simple faithful left $\mI_n$-module $P_n$.

\item $(F_n)_{\mI_n}\simeq (P_n')^{(\N^n)}$  is a direct sum of $N^n$ copies of the simple faithful right $\mI_n$-module $P_n'$.

\end{enumerate}
\end{lemma}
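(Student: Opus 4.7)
The plan is to produce a decomposition of ${}_{\mI_n}F_n$ into cyclic left $\mI_n$-submodules indexed by $\N^n$, identify each summand with $P_n$ using the universal property of $P_n$ as the simple faithful left $\mI_n$-module, and then dualize via the involution $*$.

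First I would write $F_n=\bigoplus_{\beta\in\N^n} F_ne_{\beta\beta}$ as an internal direct sum of left $\mI_n$-submodules. Each $F_ne_{\beta\beta}=\bigoplus_{\alpha\in\N^n}Ke_{\alpha\beta}$ because $e_{\alpha\beta}e_{\gamma\gamma}=\delta_{\beta\gamma}e_{\alpha\beta}$, and it is a left $\mI_n$-submodule because $F_n$ is an ideal of $\mI_n$, so $\mI_n\cdot F_ne_{\beta\beta}\subseteq F_ne_{\beta\beta}$. The direct sum is over all $\beta\in\N^n$ because the basis $\{e_{\alpha\beta}\}_{\alpha,\beta\in\N^n}$ of $F_n$ partitions according to the second index.

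Next, I would show that for every fixed $\beta\in\N^n$ one has $F_ne_{\beta\beta}\simeq P_n$ as left $\mI_n$-modules. The key computation is that $\der_i e_{0\beta}=e_{-e_i,\beta}=0$ for all $i$ (using the rule $\der_i e_{\alpha\beta}=e_{\alpha-e_i,\beta}$ from the 1D formulas in (\ref{Ieij})), while $e_{\alpha\beta}=\int_1^{\alpha_1}\cdots\int_n^{\alpha_n}e_{0\beta}$ for every $\alpha\in\N^n$. Consider the left $\mI_n$-homomorphism
$$
\psi_\beta:\mI_n\longrightarrow F_ne_{\beta\beta},\qquad a\longmapsto a\cdot e_{0\beta}.
$$
It is surjective by the second observation and vanishes on each $\der_i$ by the first, hence it factors through $\mI_n/\mI_n(\der_1,\ldots,\der_n)\simeq P_n$ (Lemma \ref{b10Oct9}.(5)). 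The induced map $P_n\twoheadrightarrow F_ne_{\beta\beta}$ is nonzero, and since $P_n$ is simple (Lemma \ref{b10Oct9}.(5)) it is an isomorphism. Combining with the decomposition of step one yields ${}_{\mI_n}F_n\simeq\bigoplus_{\beta\in\N^n}P_n=P_n^{(\N^n)}$.

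For statement 2, I would invoke the involution $*:\mI_n\to\mI_n$ of (\ref{*invIn}), which is an anti-isomorphism and satisfies $e_{\alpha\beta}^*=e_{\beta\alpha}$ by (\ref{eab*}). Twisting a left module structure by $*$ yields a right module structure and vice versa. Applying $*$ to the decomposition $F_n=\bigoplus_\beta F_ne_{\beta\beta}$ gives $F_n=\bigoplus_\alpha e_{\alpha\alpha}F_n$ as right $\mI_n$-modules, with each summand $e_{\alpha\alpha}F_n\simeq (F_ne_{\alpha\alpha})^*\simeq P_n^*=P_n'$ as right $\mI_n$-modules. Hence $(F_n)_{\mI_n}\simeq (P_n')^{(\N^n)}$.

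I do not anticipate a significant obstacle: the only thing to be careful about is verifying that the submodules $F_ne_{\beta\beta}$ are genuinely left $\mI_n$-stable (which is immediate from $F_n$ being an ideal) and that $\psi_\beta$ is surjective (which reduces to the identity $e_{\alpha\beta}=\int^\alpha e_{0\beta}$ iterated coordinatewise from the one-variable relation $\int e_{ij}=e_{i+1,j}$ in (\ref{Ieij})).
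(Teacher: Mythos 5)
Your proof is correct and takes essentially the same approach as the paper's: decompose $F_n$ into its columns $\bigoplus_{\alpha\in\N^n}Ke_{\alpha\beta}$ indexed by $\beta\in\N^n$, identify each column with $P_n$, and obtain statement 2 by applying the involution $*$ together with $F_n^*=F_n$. The paper's proof of part 1 is a one-line assertion of this decomposition; you have simply supplied the details it omits, namely that $F_ne_{\beta\beta}$ is a left submodule, that $\psi_\beta$ kills the $\der_i$'s and hence factors through $\mI_n/\mI_n(\der_1,\ldots,\der_n)\simeq P_n$, and that simplicity of $P_n$ forces the induced surjection to be an isomorphism.
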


\begin{proof} 1.  ${}_{\mI_n}F_n=\oplus_{\alpha, \beta \in \N^n}KE_{\alpha\beta}= \oplus_{\beta \in \N^n}\Big(\oplus_{\alpha,  \in \N^n}KE_{\alpha\beta} \Big) \simeq \oplus_{\beta \in \N^n}P_n\simeq P_n^{(\N^n)}$.

2. Statement 2 follows from statement 1 by applying the involution $*$ and using the fact that $F_n^*=F_n$.\end{proof}

{\bf The ring $\pQ (\mI_n)$.} Recall that $\D_n=\CP_n[\der_1, \ldots , \der_n; \tau_1, \ldots , \tau_n]$. Let $\D_n^0:=\D_n\backslash \{ 0\}$, ${}'\D_n^0:=\D_n\cap \pCC_{\mI_n}$, 
$${}'\D_n:=\D_n\cap \pS (\mI_n)\;\; {\rm  and }\;\;\widetilde{{}'\D_n}:=\{c\in \D_n\, | \, \der^\alpha c\in \pS (\mI_n)\; {\rm  for\; some}\;\;\alpha \in \N^n\}.
$$
 Notice that  ${}'\D_n\subseteq 
{}'\D_n^0\subseteq \D_n^0$. Theorem \ref{18Mar24} produces explicit left denominators sets   $S\in \Den_l(\mI_n, \ga_n)$ such that  $S^{-1}\mI_n\simeq \pQ (\mI_n)$. 
  By Theorem \ref{18Mar24}, there are inclusions in the set $\Den_l(\mI_n, \ga_n)$ apart from $S_\der$: 
\begin{equation}
S_\der\subseteq {}'\D_n  \subseteq \pS (\mI_n)\subseteq \pS (\mI_n)+\ga_n\;\; {\rm and}\;\;{}'\D_n\subseteq \widetilde{{}'\D_n}\subseteq '\D_n+\ga_n \subseteq \pS (\mI_n)+\ga_n.
\end{equation}

\begin{theorem}\label{18Mar24}
 
\begin{enumerate}

\item ${}'\D_n\in \Den_l(\mI_n, \ga_n)$ and ${}'\D_n^{-1}\mI_n\simeq \pQ (\mI_n)$. Furthermore, the  subset  ${}'\D_n$ of $\mI_n$  is a left denominator set of $\mI_n$ which is the  largest  left denominator set that is contained in the multiplicative set $\pCC_{\mI_n}\cap \D_n$.

\item $  {}'\D_n+\ga_n, \pS (\mI_n)+\ga_n \in \Den_l(\mI_n, \ga_n)$ and 
$$ 
 \Big({}'\D_n+\ga_n\Big)^{-1}\mI_n\simeq \Big( \pS (\mI_n)+\ga_n\Big)^{-1}\mI_n\simeq \pQ (\mI_n).  
$$

\item ${}'\D_n=\overline{{}'\D_n},\overline{{}'\D_n+\ga_n}, \overline{\pS (\mI_n)+\ga_n} \in \Den_l(L_n, 0)$ and 
$$ 
{}'\D_n^{-1}L_n\simeq \overline{{}'\D_n+\ga_n}^{-1}L_n\simeq \overline{\pS (\mI_n)+\ga_n}^{-1}L_n\simeq  \pQ (\mI_n)  
$$
where $\bS:= \pi_{\ga_n}(S)$ and  $\pi_{\ga_n}: \mI_n\ra L_n=\mI_n/\ga_n$, $r\mapsto \br := r+\ga_n$.

\item $\widetilde{{}'\D_n}\in \Den_l(\mI_n, \ga_n)$ and $\widetilde{{}'\D_n}^{-1}\mI_n\simeq \pQ (\mI_n)$. $\overline{\widetilde{{}'\D_n}}\in \Den_l(L_n, 0)$ and $\overline{\widetilde{{}'\D_n}}^{-1}L_n\simeq \pQ (\mI_n)$.

\item $\widetilde{{}'\D_n}+\ga_n\in \Den_l(\mI_n, \ga_n)$ and $\Big(\widetilde{{}'\D_n}+\ga_n\Big)^{-1}\mI_n\simeq \pQ (\mI_n)$.

\item ${}'\D_n \in \Den_l(\D_n, 0)$ and 
$
{}'\D_n^{-1}\D_n\simeq  \pQ (\mI_n)$.

\item ${}''\D_n:=\pS (\mI_n)\cap (\pCC_{\D_n+\ga_n}+\ga_n)+\ga_n\in \Den_l(\mI_n, \ga_n)$, $
{}'\D_n\subseteq 
{}''\D_n$ and $
{}''\D_n^{-1}\mI_n\simeq  \pQ (\mI_n)$.
\end{enumerate}
\end{theorem}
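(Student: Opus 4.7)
The overall strategy is to exhibit a chain of left denominator sets sandwiched between $S_\der$ (which is already known to be a left denominator set with $\ga_n$ as its associated ideal) and ${}'S(\mI_n)$ (the largest left denominator set in $\pCC_{\mI_n}$), and to show they all give the same localization ${}'Q(\mI_n)$ via the localization-pair criterion (Lemma \ref{c16Mar15}). The key geometric fact that powers everything is that $B_n\simeq S_\der^{-1}\mI_n$ is $\Z^n$-graded as a skew Laurent ring over $\D_n$, so every element of $\mI_n$, viewed inside $B_n$, has the form $\der^{-\alpha}d$ with $d\in\D_n$; equivalently, for each $s\in\mI_n$ there is $\alpha\in\N^n$ with $\der^\alpha s\in\D_n$ (and the equality holds already in $\mI_n$ because $\der^\beta$ is left regular for all $\beta$). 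Observe also that the composite $\D_n\hookrightarrow\mI_n\twoheadrightarrow B_n$ is the canonical embedding of $\D_n$ in its localization, so $\D_n\cap\ga_n=0$.

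For Part (1), I mirror the argument of Theorem \ref{19Mar24}.(2): from $S_\der\subseteq{}'\D_n\subseteq\pS(\mI_n)$ and $\ass_l(S_\der)=\ga_n=\ass_l(\pS(\mI_n))$, one gets $\ass_l({}'\D_n)=\ga_n$; the push-into-$\D_n$ fact above, together with $\pS(\mI_n)$ being multiplicatively closed, shows that for each $s\in\pS(\mI_n)$ there is $\der^\alpha\in S_\der\subseteq{}'\D_n$ with $\der^\alpha s\in{}'\D_n$. Thus $({}'\D_n,\pS(\mI_n))$ is a left localization pair of $\mI_n$, and Lemma \ref{c16Mar15} gives ${}'\D_n\in\Den_l(\mI_n,\ga_n)$ with ${}'\D_n^{-1}\mI_n\simeq{}'Q(\mI_n)$. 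The maximality clause follows because any left denominator set $T\subseteq\pCC_{\mI_n}\cap\D_n$ must lie in the largest left denominator set in $\pCC_{\mI_n}$, namely $\pS(\mI_n)$, hence in $\pS(\mI_n)\cap\D_n={}'\D_n$.

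Parts (2), (4), (5) are direct applications of Corollary \ref{a19Mar24}.(2) to the denominator sets of Part (1) and Part (4), respectively; Part (4) itself mimics Corollary \ref{a20Mar24}, establishing that $(\widetilde{{}'\D_n},{}'\D_n+\ga_n)$ is a left localization pair (using that $\der^\alpha\in S_\der\subseteq\widetilde{{}'\D_n}$ and $\der^\alpha s\in{}'\D_n$ for any $s\in{}'\D_n+\ga_n$ and suitable $\alpha$). Part (3) is then Theorem \ref{S10Jan19}.(2) applied to each of the denominator sets in Parts (1)--(2): passage to $\bR=\mI_n/\ga_n=L_n$ yields a denominator set in $L_n$ with the same localization. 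For Part (6), I apply Lemma \ref{b18Mar24}.(2) with $R=\D_n\subseteq R'=\mI_n$ and $S={}'\D_n\in\Den_l(\mI_n,\ga_n)$ from Part (1); the torsion hypothesis $\mI_n/\D_n$ being ${}'\D_n$-torsion is exactly the push-into-$\D_n$ fact (with $\der^\alpha\in S_\der\subseteq{}'\D_n$), and $\ass_{l,\D_n}({}'\D_n)=\D_n\cap\ga_n=0$, so ${}'\D_n\in\Den_l(\D_n,0)$ and ${}'\D_n^{-1}\D_n\simeq{}'\D_n^{-1}\mI_n\simeq{}'Q(\mI_n)$.

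Part (7) is the most delicate: one has to check that ${}''\D_n=\pS(\mI_n)\cap(\pCC_{\D_n+\ga_n}+\ga_n)+\ga_n$ is a left denominator set of $\mI_n$ lying between ${}'\D_n$ and $\pS(\mI_n)+\ga_n$ in such a way that Lemma \ref{c16Mar15} applies. The inclusion ${}'\D_n\subseteq{}''\D_n$ follows from ${}'\D_n\subseteq\D_n\subseteq\D_n+\ga_n$ combined with the trivial fact that elements of ${}'\D_n\subseteq{}'S(\mI_n)$ remain left regular in the subring $\D_n+\ga_n$ modulo $\ga_n$ (since $\D_n+\ga_n/\ga_n\simeq\D_n$ is a domain). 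The opposite direction is clear: ${}''\D_n\subseteq\pS(\mI_n)+\ga_n$ by construction. To verify the localization-pair hypothesis, given any $t\in{}''\D_n$, write $t=s+a$ with $s\in\pS(\mI_n)$ and $a\in\ga_n$; use the push-into-$\D_n$ fact to find $\der^\alpha$ with $\der^\alpha s\in{}'\D_n$, and check that $\der^\alpha a\in\ga_n$, so $\der^\alpha t\in{}'\D_n+\ga_n$, and then use Part (2). The main obstacle of the whole proof is the bookkeeping for Part (7) --- verifying that the intersection and sum with $\ga_n$ interact properly with the ${}'\CC_{\D_n+\ga_n}$ condition --- whereas all the other parts reduce systematically to Lemma \ref{c16Mar15}, Corollary \ref{a19Mar24}, Theorem \ref{S10Jan19}, and Lemma \ref{b18Mar24}, as orchestrated by the push-into-$\D_n$ fact.
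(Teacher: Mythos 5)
Your overall strategy --- sandwiching denominator sets between $S_\der$ and $\pS(\mI_n)$ and invoking Lemma~\ref{c16Mar15}, Corollary~\ref{a19Mar24} and Lemma~\ref{b18Mar24} --- is exactly what the paper does, and the reductions for Parts (1)--(5) match the paper essentially verbatim. Two points, however, deserve flagging.

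First, your justification of the ``push-into-$\D_n$'' fact is not correct as stated. From $\bar s = \der^{-\alpha}\bar d$ in $B_n$ you only get $\der^\alpha s - d\in\ga_n$, and left regularity of $\der^\beta$ in $\mI_n$ gives you nothing further, because the projection $\mI_n\ra B_n$ has kernel $\ga_n\neq 0$; one cannot ``lift'' an equation from $B_n$ back to $\mI_n$ this way. The fact that $\der^\alpha s\in\D_n$ for $\alpha\gg 0$ is true, but the clean justification is the direct-sum decomposition in (\ref{I1dis}): $\mI_1=\D_1\oplus\big(\bigoplus_{i\geq 1}K[H]\int^i\oplus F\big)$, together with $\der^\alpha\int^i\subseteq\D_1$ for $\alpha\geq i$ and $\der^\alpha F=0$ for $\alpha$ larger than the ``size'' of the $F$-component. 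The paper itself leaves this tacit, but your parenthetical explanation is actively wrong and should be replaced.

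Second, in Part (7) the quantifier in your verification of the localization-pair hypothesis is backwards. Lemma~\ref{c16Mar15} requires you to check, for every $t$ in the \emph{larger} denominator set $T$ (the paper takes $T=\pS(\mI_n)$, since with the intended reading ${}''\D_n\subseteq\pS(\mI_n)$), that $rt\in{}''\D_n+\ga_n$ for some $r$; your argument checks this only for $t\in{}''\D_n$, which is the smaller set and would instead require ${}''\D_n\in\Den_l$ to already be known. Your computation (find $\der^\alpha$ with $\der^\alpha s\in{}'\D_n$, observe $\der^\alpha a\in\ga_n$) is the right one, but it should be carried out for $t$ ranging over $T$. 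Finally, for Part (6) the paper applies Lemma~\ref{b18Mar24} with $R'=B_n=\mI_n/\ga_n$ and $S={}'\D_n\in\Den_l(B_n,0)$ from Part (3), which gives the zero associated ideal for free; your variant with $R'=\mI_n$ also works but makes you prove $\D_n\cap\ga_n=0$ separately (your argument for that via the localization embedding $\D_n\hookrightarrow S_\der^{-1}\D_n=B_n$ is fine).
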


\begin{proof} 1. (i)  ${}'\D_n\in \Den_l(\mI_n, \ga_n)$ {\em and} ${}'\D_n^{-1}\mI_n\simeq\pQ (\mI_n)$: By the definition, the set    ${}'\D_n$ is a multiplicative subset of $\pS (\mI_n)\subseteq \mI_n$. It follows from the inclusions $S_\der \subseteq {}'\D_n\subseteq \pS (\mI_n)$ that 
$$\ga_n = \ass_l(S_\der )\subseteq \ass_l({}'\D_n)\subseteq \ass_l(\pS (\mI_n))=\ga_n, $$
and so $\ass_l({}'\D_n)=\ga_n=\ass_l(\pS (\mI_n))$. For each element $s\in \pS (\mI_n)$, there is an element $\alpha\in \N^n$ such that $\der^\alpha s\in {}'\D_n$. Notice that $\der^\alpha \in S_\der \subseteq {}'\D_n$. 
Now, the statement (i) follows from Lemma \ref{c16Mar15} where $T=\pS (\mI_n)\in \Den_l(\mI_n,\ga_n)$ and  $S={}'\D_n$.

(ii) {\em The  subset  ${}'\D_n$ of $\mI_n$  is a left denominator set of $\mI_n$ which is the  largest  left denominator set that is contained in the multiplicative set $\pCC_{\mI_n}\cap \D_n$}: Let $T$ be a left denominator set of $\mI_n$ which is the  largest  left denominator set that is contained in the multiplicative set $\pCC_{\mI_n}\cap \D_n$. By the statement (i),  ${}'\D_n\in \Den_l(\mI_n, \ga_n)$. Clearly,  ${}'\D_n\subseteq  \pCC_{\mI_n}\cap \D_n$ and so ${}'\D_n\subseteq T$. Since $T\subseteq \pCC_{\mI_n}$ and $\pS (\mI_n)$ is a the largest left denominator set in $\pCC_{\mI_n}$, we have the inclusion $T\subseteq \D_n\cap\pS(\mI_n)={}'\D_n$. Therefore, $T={}'\D_n$.

2. By statement 1, ${}'\D_n, \pS (\mI_n)\in\Den_l(\mI_n, \ga_n)$ and ${}'\D_n^{-1}\mI_n\simeq  \pS (\mI_n)^{-1}\mI_n=\pQ (\mI_n)$. Now, statement 2 follows from Corollary \ref{a19Mar24}.(2).   

3.  Statement 3 follows at once from statement 2 (If $S\in \Den_l(R,\ga)$ then $\bS :=S+\ga \in \Den_l(\bR, 0)$ and $S^{-1}R\simeq \bS^{-1}\bR$ where $\bR := R/\ga$).

4. By the definition, the set $\widetilde{{}'\D_n}$ is a multiplicative set such that ${}'\D_n\subseteq \widetilde{{}'\D_n}\subseteq \pS (\mI_n)$. Therefore,  $\ga_n=\ass_l({}'\D_n)\subseteq \ass_l(\widetilde{{}'\D_n})\subseteq \ass_l(\pS (\mI_n))=\ga_n$, and so $\ass_l(\widetilde{{}'\D_n})=\ga_n$. Now, the first part of statement 4 follows from the definition of the set $\widetilde{{}'\D_n}$ and Lemma \ref{c16Mar15} where $S=\widetilde{{}'\D_n}$ and $T=\pS(\mI_n)$.

The second  part of statement 4 follows from the first one.

5. Statement 5 follows from statement 4.

6. By statement 3,  ${}'\D_n \in \Den_l(L_n, 0)$ and $ {}'\D_n^{-1}L_n\simeq  \pQ (\mI_n)$.  Now, statement 6 follows from  Lemma \ref{b18Mar24}.(2) where $R=\D_n$,  $R'=L_n$ and  $S={}'\D_n$ (The $R$-module $R'/R$ is $S_\der$-torsion. Hence it is also ${}'\D_n$-torsion as $S_\der\subseteq {}'\D_n$).

7. By the definition, the subset ${}''\D_n$ of $\mI_n$ is a multiplicative set such that ${}'\D_n\subseteq {}''\D_n\subseteq \pS(\mI_n)$, see statements 1 and 2. Hence, 
$\ga_n=\ass_l({}'\D_n)\subseteq \ass_l({}''\D_n)\subseteq \ass_l(\pS(\mI_n))=\ga_n$, and so $\ass_l({}''\D_n)=\ga_n=\ass_l(\pS(\mI_n))$. Notice that $S_\der\subseteq {}'\D_n\subseteq {}''\D_n$ and for each element $s\in {}''\D_n$ there is an element $\alpha\in \N^n$ such that $\der^\alpha s\in {}'\D_n\subseteq {}''\D_n$. Now, statement 7 follows from 
 Lemma \ref{c16Mar15}  where $S={}''\D_n$ and  $T=\pS(\mI_n)$.
\end{proof}

In order to prove Theorem \ref{A25Mar24},  we need the following lemma which is a characterization of the set $\pCC_{\mI_n}$.

\begin{lemma}\label{In-b29Mar24}
Let $a\in \mI_n$. Then the following statements are equivalent:
\begin{enumerate}

\item $a\in \pCC_{\mI_n}$.

\item $a\in {}'\CC_{F_n}$.

\item  $a\in {}'\CC_{P_n'}$ where $P_n'=K[\der_1, \ldots,  \der_n]$ is the only simple, faithful, right $\mI_n$-module.

\item $a^*\in \CC_{P_n}'$  where $P_n=K[x_1, \ldots  ,x_n]$ is the only simple, faithful, left $\mI_n$-module.

\end{enumerate}
\end{lemma}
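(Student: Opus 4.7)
The plan is to establish the implications in a circular manner $(1)\Rightarrow(2)\Rightarrow(3)\Rightarrow(4)\Rightarrow(1)$, using the ring-theoretic properties of $\mI_n$ together with the involution $*$ and the structural description of $F_n$ as a right module.

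\textbf{Step 1 (the equivalence $1\Leftrightarrow 2$).} By Lemma \ref{b10Oct9}.(3), the ideal $F_n$ is an essential right $\mI_n$-submodule of $\mI_n$. Hence Corollary \ref{d15Mar24}.(1), applied with $R=\mI_n$ and $I=F_n$, gives
\[
\pCC_{\mI_n}=\pCC_{F_n}={}'\CC_{F_n}.
\]

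\textbf{Step 2 (the equivalence $2\Leftrightarrow 3$).} By Lemma \ref{c29Mar24}.(2) there is a right $\mI_n$-module isomorphism $(F_n)_{\mI_n}\simeq (P_n')^{(\N^n)}$. For any index set $I$, an element $a\in \mI_n$ satisfies $\ker(\cdot a_{(P_n')^{(I)}})=0$ if and only if $\ker(\cdot a_{P_n'})=0$, because right multiplication by $a$ acts componentwise on a direct sum of isomorphic copies. Thus ${}'\CC_{F_n}={}'\CC_{(P_n')^{(\N^n)}}={}'\CC_{P_n'}$. (Alternatively, since $(F_n)_{\mI_n}$ is semisimple with unique isomorphism class $[P_n']$ and is essential in $\mI_n$, Corollary \ref{e15Mar24}.(1) gives directly $\pCC_{\mI_n}={}'\CC_{P_n'}$, yielding the equivalence $1\Leftrightarrow 3$ in one step.)

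\textbf{Step 3 (the equivalence $3\Leftrightarrow 4$).} Recall that the right $\mI_n$-module structure on $P_n'=(P_n)^*$ is defined, via the involution $*$ of $\mI_n$ (see (\ref{*invIn})), through the left $\mI_n$-module $P_n$: under the identification of underlying vector spaces $P_n'\cong P_n$, the right action of $a\in \mI_n$ on $P_n'$ corresponds to the left action of $a^*$ on $P_n$. Consequently, the map $\cdot a:P_n'\to P_n'$ is injective if and only if the map $a^*\cdot:P_n\to P_n$ is injective, which is precisely the statement $a\in{}'\CC_{P_n'}\Leftrightarrow a^*\in\CC_{P_n}'$.

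Putting the three steps together closes the cycle. The main (minor) obstacle is Step 2, namely making sure that the reduction from $F_n$ to a single copy of the simple right module $P_n'$ is justified; this is straightforward from the explicit direct-sum decomposition in Lemma \ref{c29Mar24}.(2), but one must verify that right multiplication by a ring element commutes with projection onto each summand, which is immediate because the decomposition $F_n=\bigoplus_{\beta\in\N^n}\bigoplus_{\alpha\in\N^n}Ke_{\alpha\beta}$ is a decomposition as a right $\mI_n$-module (each $\beta$-block is a right submodule isomorphic to $P_n'$).
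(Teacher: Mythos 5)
Your proof is correct and takes essentially the same route as the paper: essentiality of $F_n$ as a right ideal reduces $\pCC_{\mI_n}$ to $\pCC_{F_n}$ via Corollary \ref{d15Mar24}.(1); the right-module decomposition $(F_n)_{\mI_n}\simeq (P_n')^{(\N^n)}$ reduces further to $\pCC_{P_n'}$; and the involution $*$ converts this to a statement about the left module $P_n$. One small remark: you correctly cite Lemma \ref{c29Mar24}.(2) for the right-module decomposition $(F_n)_{\mI_n}\simeq (P_n')^{(\N^n)}$, whereas the paper's proof cites part (1) (the left-module version), which appears to be a typo in the paper since the reduction here concerns right modules.
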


\begin{proof} $(1\Leftrightarrow 2)$ The equivalence follows from Corollary \ref{d15Mar24}.(1) and  the fact that every nonzero ideal is an essential right ideal of the algebra $\mI_n$ (Lemma \ref{b10Oct9}.(7)).

 $(2\Leftrightarrow 3)$ The equivalence follows from Corollary \ref{e15Mar24}.(1) and Lemma \ref{c29Mar24}.(1).

 $(3\Leftrightarrow 4)$ The equivalence follows from the fact that $P_n'=P_n^*$.
 \end{proof}
 
 By applying the involution $*$ to Lemma \ref{In-b29Mar24}, we obtain a similar characterization of right regular elements of the ring $\mI_n$ ($a\in \pCC_{\mI_n}$ iff $a^*\in \CC_{\mI_n}'$).\\

{\bf Criterion for  $\pQ (\mI_n)\simeq Q(A_n)$.} The algebras $\D_n$ and  $B_n$  are Noetherian domains. By Goldie's Theorem, their quotient rings are division rings. 
It follows from the inclusions $A_n\subseteq B_n\subseteq Q(A_n)$ and  $\D_n\subseteq S_\der^{-1}\D_n\simeq B_n\subseteq Q(A_n)$ that 
\begin{equation}\label{AQns}
Q(\D_n)= Q(B_n)= Q(A_n).
\end{equation}

\begin{theorem}\label{A25Mar24}
The following statements are equivalent:
\begin{enumerate}

\item $\pQ (\mI_n)\simeq Q(A_n)$.

\item $\pQ_{l,cl} (\mI_n)\simeq Q(A_n)$.

\item The set $\overline{\pS (\mI_n)}$ is dense in  $B_n\backslash \{0\}$.

\item The set $\overline{\pCC_{\mI_n}}$ is dense in  $B_n\backslash \{0\}$.

\item The set ${}'\D_n$ is dense in $B_n\backslash \{0\}$.

\item The set ${}'\D_n$ is left  dense in $\D_n^0$.

\item For each element $s\in \D_n^0$, there is an element $s'\in \D_n^0$ such that $s's\in  \pCC_{F_n}$.

\item For each element $s\in \D_n^0$, there is an element $s'\in \D_n^0$ such that $s's\in \pCC_{P_n'}$ where $P_n'=K[\der_1, \ldots , \der_n]$ is the unique simple faithful right $\mI_n$-module.

\end{enumerate}
\end{theorem}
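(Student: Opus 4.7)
The plan is to prove the equivalences via the cyclic chain $1\Leftrightarrow 2\Leftrightarrow 4$, $4\Rightarrow 3\Rightarrow 5\Rightarrow 6\Rightarrow 7\Leftrightarrow 8\Rightarrow 4$, together with the trivial $3\Rightarrow 4$. As a preliminary, I would show that $\pga_n:=\ass_{\mI_n}(\pS (\mI_n))=\ga_n$: the inclusion $\ga_n=\ass_{\mI_n}({}'\D_n)\subseteq\pga_n$ is immediate from Theorem \ref{18Mar24}.(1) and ${}'\D_n\subseteq\pS (\mI_n)$, while the reverse inclusion follows because $\pga_n$ is a proper ideal of $\mI_n$ and $\ga_n$ is the largest ideal (Lemma \ref{b10Oct9}.(4)). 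Consequently $\bR'=B_n$, $\pCC_{\mI_n}\cap\ga_n=\emptyset$ (any $a\in\ga_n$ has $sa=0$ for some nonzero $s\in\pS (\mI_n)$, precluding left regularity of $a$), and $\D_n\cap\ga_n=0$; thus $\pi_{\ga_n}$ embeds each of $\pS (\mI_n)$, $\pCC_{\mI_n}$, $\D_n$ and ${}'\D_n$ injectively into $B_n$.

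The equivalence $(1\Leftrightarrow 2)$ follows from Theorem \ref{8Mar15}.(2) because $Q(A_n)$ is a division ring (hence semisimple Artinian), and in the positive case one also has $\pS (\mI_n)=\pCC_{\mI_n}$. For $(2\Leftrightarrow 4)$ I would apply Theorem \ref{28Feb15} with $\ga=\ga_n$: condition (a) holds since $\ga_n$ is prime ($B_n$ is a domain), (c) holds since $\udim(B_n)=1$, and (d) is automatic in a domain; thus (2) reduces to (b), which is exactly (4), and the resulting quotient ring equals $Q_{l,cl}(B_n)=Q(B_n)=Q(A_n)$ by \eqref{AQns}. The trivial inclusion $\pS (\mI_n)\subseteq\pCC_{\mI_n}$ gives $(3\Rightarrow 4)$, and $(4\Rightarrow 3)$ follows because (4) forces $\pS (\mI_n)=\pCC_{\mI_n}$ by the above. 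For $(3\Rightarrow 5)$, given $\bar b\in B_n\setminus\{0\}$, (3) supplies $\bar r\in B_n$ with $\bar r\bar b=\bar s$ for some $s\in\pS (\mI_n)$; choosing $\alpha\in\N^n$ so that $\der^\alpha s\in {}'\D_n$ (as in the proof of Theorem \ref{18Mar24}.(1)) yields $\overline{\der^\alpha}\bar r\bar b\in\overline{{}'\D_n}$. For $(5\Rightarrow 6)$, given $s\in\D_n^0$ (so $\bar s\neq 0$ in $B_n$), (5) supplies $\bar c\in B_n$ with $\bar c\bar s=\bar d$ for some $d\in {}'\D_n$; writing $\bar c=\bar t^{-1}b''$ with $t\in S_\der$ and $b''\in\D_n$, I would deduce $b''s\equiv td\pmod{\ga_n}$, and since both sides lie in $\D_n$ and $\D_n\cap\ga_n=0$, conclude $b''s=td$; since $S_\der\subseteq {}'\D_n$ and ${}'\D_n$ is multiplicative (Theorem \ref{18Mar24}.(1)), $td\in {}'\D_n$, and nonvanishing of $\bar c$ forces $b''\in\D_n^0$.

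The remaining implications $(6\Rightarrow 7\Leftrightarrow 8)$ are immediate from $\pCC_{\mI_n}=\pCC_{F_n}=\pCC_{P_n'}$ (Lemma \ref{In-b29Mar24}) together with the inclusion ${}'\D_n\subseteq\pCC_{\mI_n}$. To close the loop, $(8\Rightarrow 4)$ proceeds by writing $\bar b\in B_n\setminus\{0\}$ as $\bar s^{-1}d$ with $s\in S_\der$ and $d\in\D_n^0$ (possible since $B_n\simeq S_\der^{-1}\mI_n$ and $\D_n\cap\ga_n=0$), applying (8) to obtain $s'\in\D_n^0$ with $s'd\in\pCC_{\mI_n}$, and concluding $\overline{s'}\bar s\bar b=\overline{s'd}\in\overline{\pCC_{\mI_n}}$. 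I expect the main obstacle to be the implication $(5\Rightarrow 6)$: one must convert a $B_n$-witness into a $\D_n^0$-witness while staying inside ${}'\D_n$ rather than merely inside $\D_n\cap\pCC_{\mI_n}$, and this is achieved by clearing the $S_\der$-denominator of the $B_n$-witness and exploiting $S_\der\subseteq {}'\D_n$ together with the multiplicativity of ${}'\D_n$.
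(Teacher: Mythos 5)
Your proof is correct, and it takes a genuinely different route from the paper's. The paper handles the chain $1\Leftrightarrow 5\Leftrightarrow 6\Leftrightarrow 7\Leftrightarrow 8$ largely by appeal to abstract localization lemmas: it proves $(5\Leftrightarrow 6)$ by combining Theorem \ref{18Mar24}.(6) (${}'\D_n^{-1}\D_n\simeq\pQ(\mI_n)$) with \cite[Lemma 3.5.(3)]{Clas-lreg-quot}, and it proves $(6\Leftrightarrow 7)$ by asserting the identity $\D_n\cap\pCC_{\mI_n}={}'\D_n$, i.e.\ ${}'\D_n^0={}'\D_n$. That identity amounts to $\D_n\cap\pCC_{\mI_n}\subseteq\pS(\mI_n)$, which is not justified in the paper and is essentially a special case of the conclusion being characterized (it would follow once $\pS(\mI_n)=\pCC_{\mI_n}$, i.e.\ once condition (1) holds); as written, the paper's $(7\Rightarrow 6)$ leaves this gap. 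Your cyclic decomposition $4\Rightarrow 3\Rightarrow 5\Rightarrow 6\Rightarrow 7\Leftrightarrow 8\Rightarrow 4$ sidesteps this entirely: you only use the trivial inclusion ${}'\D_n\subseteq{}'\D_n^0$ for $(6\Rightarrow 7)$ and close the loop via the elementary $(8\Rightarrow 4)$. Moreover, your direct $S_\der$-denominator-clearing arguments for $(3\Rightarrow 5)$, $(5\Rightarrow 6)$ and $(8\Rightarrow 4)$ replace the paper's invocation of \cite[Lemma 3.5.(3)]{Clas-lreg-quot} and Theorem \ref{18Mar24}.(6) with a more self-contained computation using only $B_n\simeq S_\der^{-1}\D_n$, $\D_n\cap\ga_n=0$, $S_\der\subseteq{}'\D_n$, and multiplicativity of ${}'\D_n$. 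The price of your approach is that a few auxiliary facts ($\ass_{\mI_n}(\pS(\mI_n))=\ga_n$, $\pCC_{\mI_n}\cap\ga_n=\emptyset$, $\D_n\cap\ga_n=0$) must be set up explicitly, but these are elementary and the second and third are implicitly used by the paper anyway; what it buys is a proof that is both more transparent and more careful at the step $(7)\Rightarrow(6)$.
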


\begin{proof} $(1 \Leftrightarrow 2 \Leftrightarrow 3 \Leftrightarrow 4)$ The equivalence  $(1 \Leftrightarrow 2)$ follows from Theorem \ref{8Mar15}.(2). The equivalences  $(1 \Leftrightarrow 3)$  and  $(2\Leftrightarrow 4)$follows from Theorem \ref{28Feb15} and Theorem \ref{8Mar15}.(2) (since $\ass_l(\pS (\mI_n))=\ga_n$ is a prime ideal of $\mI_n$, the algebra $\mI_n/\ga_n=B_n$ is a domain  and $Q(\mI_n/\ga_n)=Q(B_n)$ is a division ring).

$(1 \Leftrightarrow 5)$ By Theorem \ref{18Mar24}.(1),  ${}'\D_n\in  \Den_l(\mI_n, \ga_n)$ and ${}'\D_n^{-1}\mI_n\simeq \pQ (\mI_n)$.
 Since the set ${}'\D_n$ is dense in  $\pS (\mI_n)$, the equivalence $(1 \Leftrightarrow 5)$ holds iff the equivalence $(1 \Leftrightarrow 3)$ holds.
 
$(5 \Leftrightarrow 6)$ Recall that ${}'\D_n, \D_n^0\in\Den_l (\D_n,0)$, ${}'\D_n\subseteq \D_n^0$  and  $Q(\D_n)=Q(A_n)$, see (\ref{AQns}). By \cite[Lemma 3.5.(3)]{Clas-lreg-quot}, ${}'\D_n^{-1}\D_n\simeq (\D_n^0)^{-1}\D_n=Q(A_n)$ iff
 the set ${}'\D_n$ is left  dense in $\D_n^0$.

$(6 \Leftrightarrow 7)$ By  Lemma \ref{In-b29Mar24}, the inclusion  $s's\in  \pCC_{F_n}$ is equivalent to the inclusion $s's\in  \pCC_{\mI_n}$. Hence, statement 7 is equivalent to the statement that 
  for each element $s\in \D_n^0$, there is an element $s'\in \D_n^0$ such that $s's\in   \pCC_{\mI_n}$, i.e. $s's\in  \D_n\cap  \pCC_{\mI_n}={}'\D_n$, i.e. it is equivalent to statement 6.

$(7\Leftrightarrow 8)$ The equivalence follows from  
 Lemma \ref{In-b29Mar24}.
 \end{proof}
 
{\bf Description of the set $\pCC_{\mA_1}$.}
 By (\ref{I1dis}), each element $a\in \mI_1$ is  a unique sum $a=\sum_{i=0}^ld_{-i}\der^i+\sum_{j=1}^m \int^jd_j+a_F$ where $d_k\in K[H]$, $k=-l, \ldots , m$  and $a_F\in F$. Let $a_\der:=\sum_{i=0}^ld_{-i}\der^i$. The integer
 $$s(a_F):=
\begin{cases}
\min \{ n\in \N \, | \, a_F\in \bigoplus_{i,j=0}^nKe_{ij}\} & \text{if }a_F\neq 0,\\
-1& \text{if } a_F=0.\\
\end{cases}  
$$
is called the {\em size} of the element $a_F\in F$. The integer $s(a) := s(a_F)$ is called the {\em size} of the element $a$. For each $i\in \N$, let $P_{1,\leq i}':=\{ a\in P_1'\, | \, \deg_y(a)\leq i\}$ where $\deg_y\der$ is the degree of the polynomial $a\in P_1'=K[\der]$ in the variable $\der$. 

For all polynomials $p\in K[H]$, $\der p=\tau (p)\der$ where $\tau \in \Aut_K(K[H])$ and  $\tau (H)=H+1$.
 For each nonzero polynomials $p\in K[H]$,
 let 
 $$\mu (p):=\min\{ i\in \N\, | \,{\rm the \; polynomial }\;\; \tau^i(p)\in K[H]\;\; {\rm has\; no\; root\;in}\;\; \N_+\}.$$
 Let $\Psi := \bigoplus_{i\geq 1}\int^iK[H]\oplus F$. 
If $a=\sum_{i=0}^nd_{-i}\der^i+\sum_{j=1}^m \int^jd_j+a_F\in \mI_1\backslash \Psi$ then  $a_\der\neq 0$ and so $d_{-n}\neq 0$ where $n=\deg_\der(a_\der)$. Let 
$$\mu (a):=\mu (d_{-n})\;\; {\rm  and}\;\;\nu (a):=\max\{ s(a),\mu (a)\}.$$

 \begin{theorem}\label{C31Mar24}

\begin{enumerate}
\item  $\pCC_{\mI_1}=\{ a\in \mI_1\backslash \Psi\, | \, \cdot a: P_{1,\leq \nu(a)}'\ra P_{1,\leq \nu (a)+\deg_y(a_y)}'$, $p\mapsto pa$  is an injection$\}$.

\item $\CC'_{\mI_1}=\pCC_{\mI_1}^*$ where $*$ is the involution of the algebra $\mI_1$, see (\ref{*invIn}).
\end{enumerate}
\end{theorem}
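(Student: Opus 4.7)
The plan is to mirror the proof of Theorem~\ref{A31Mar24} for $\mS_1$, with the new complication that the ``coefficients'' of $a$ now live in $K[H]$ and act nontrivially on $P_1'$. By Lemma~\ref{In-b29Mar24} applied with $n=1$, one has $a\in \pCC_{\mI_1}$ iff the right multiplication map $\cdot a \colon P_1' \to P_1'$ is injective, where $P_1' = K[\der]$ is the unique simple faithful right $\mI_1$-module. I would first record the right $\mI_1$-action on the basis $\{\der^k\}_{k \in \N}$ of $P_1'$: using $\widetilde{1}\, H = \widetilde{1}$ and $\widetilde{1}\,\textstyle\int = 0$ (which follow from $(P_1)_{\mI_1} \simeq \mI_1/(H-1,\int)\mI_1\cdot \widetilde{1}$) together with the relation $\der\, p(H) = p(H+1)\der$, one derives
\begin{equation*}
\der^k \cdot \der = \der^{k+1}, \qquad \der^k \cdot \textstyle\int = \der^{k-1} \text{ if $k\geq 1$}, \qquad 1\cdot \textstyle\int = 0,
\end{equation*}
\begin{equation*}
\der^k \cdot p(H) = p(k+1)\der^k, \qquad \der^k \cdot e_{ij} = \delta_{ki}\,\der^j .
\end{equation*}
Combining these, for $a = \sum_{i=0}^n d_{-i}\der^i + \sum_{j=1}^m \int^j d_j + a_F$ with $a_F = \sum c_{ij}e_{ij}$,
\begin{equation*}
\der^k \cdot a = \sum_{i=0}^n d_{-i}(k+1)\,\der^{k+i} + \sum_{j=1}^{\min(k,m)} d_j(k-j+1)\,\der^{k-j} + \sum_{j} c_{kj}\,\der^j .
\end{equation*}

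The argument then splits in two. \emph{Part one} excludes $\Psi$: if $a \in \Psi$ then $a_\der = 0$, and for every $k > s(a)$ the $a_F$-contribution vanishes, so the formula shows $\der^k \cdot a \in P_{1,\leq k-1}'$. Hence $\cdot a$ sends $P_{1,\leq k}'$ of dimension $k+1$ into $P_{1,\leq k-1}'$ of dimension $k$, forcing a nonzero kernel; therefore $a \notin \pCC_{\mI_1}$. \emph{Part two} handles $a \in \mI_1 \backslash \Psi$: set $n = \deg_\der(a_\der)$, so $d_{-n}\neq 0$. The key observation is that $\mu(a) = \mu(d_{-n})$ is exactly the largest positive integer root of $d_{-n}$ (or $0$ if none), so $d_{-n}(k+1) \neq 0$ for every $k \geq \mu(a)$. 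For $k > \nu(a) = \max\{s(a),\mu(a)\}$ the $a_F$-piece vanishes and
\begin{equation*}
\der^k \cdot a = d_{-n}(k+1)\,\der^{k+n} + (\text{terms of degree} < k+n)
\end{equation*}
with \emph{nonzero} leading coefficient. A standard leading-term argument then gives: if $p = \sum_k c_k \der^k$ has $\deg_\der p = K > \nu(a)$ then the coefficient of $\der^{K+n}$ in $p \cdot a$ equals $c_K d_{-n}(K+1) \neq 0$, so $p \cdot a \neq 0$. Consequently $\ker(\cdot a) \subseteq P_{1,\leq \nu(a)}'$, and a direct degree bound on the three summands shows $\cdot a$ sends $P_{1,\leq \nu(a)}'$ into $P_{1,\leq \nu(a) + n}'$; thus injectivity on $P_1'$ is equivalent to injectivity on $P_{1,\leq \nu(a)}'$, proving statement~(1).

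Statement~(2) is immediate from applying the involution $*$ of $\mI_1$ (a $K$-algebra anti-automorphism), since $a \mapsto a^*$ interchanges $\pCC_{\mI_1}$ and $\CC'_{\mI_1}$. The main technical obstacle is the careful computation of the right $\mI_1$-action on $P_1'$ --- in particular deriving $\der^k \cdot p(H) = p(k+1)\der^k$ via $\widetilde{1}\, p(H+k) = p(k+1)\widetilde{1}$ --- together with the identification of $\mu(d_{-n})$ as the largest positive integer root of $d_{-n}$, which is precisely what guarantees the leading-term argument succeeds above the threshold $\nu(a)$; once these are in hand, the dimension-count and leading-term arguments parallel the $\mS_1$ case closely.
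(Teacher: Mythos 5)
Your proposal is correct and takes essentially the same approach as the paper: reduce via Lemma~\ref{In-b29Mar24} to injectivity of $\cdot a$ on $P_1'$, use a dimension count to exclude $\Psi$, and a leading-term argument (with $\nu(a)$ guaranteeing a nonvanishing leading coefficient) to bound the kernel, then apply the involution $*$ for statement (2). The only difference is that you spell out the right $\mI_1$-action on $P_1'$ and the identification of $\mu(d_{-n})$ as the largest positive integer root more explicitly than the paper, which simply asserts the degree formula $\deg(pa)=l+\deg_\der(p)$.
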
  
 
\begin{proof} 1. (i) $\pCC_{\mI_1}\cap \Psi=\emptyset$: Suppose that  $a\in  \Psi$.  
Then the map $\cdot a: P_{1,\leq s(a)+1}'\ra P_{1,\leq s(a)}'$, $p\mapsto pa$ is a well-defined map. Since 
$$\dim_K(P_{1,\leq s(a)+1}')=s(a)+2>s(a)+1=\dim_K( P_{1,\leq s(a)}'),$$ $\ker(\cdot a)\neq 0$, $a\not\in \pCC_{\mI_1}$.  Therefore, $\pCC_{\mI_1}\cap \Psi=\emptyset$.

(ii) {\em For each  element} $ a\in \mI_1\backslash \Psi$, $\ker_{P_1'}(\cdot a)\subseteq  P_{1,\leq \nu(a)}'$: Since $a\in \mI_1\backslash \Psi$, $a_y:=\sum_{i=0}^ld_{-i}\der^i\neq 0$ where $d_{-i}\in K[H]$. Suppose that $d_{-l}\neq 0$. Suppose that $p\in \ker_{P_1'}(\cdot a)\backslash  P_{1,\leq \nu(a)}'$, i.e. $\deg_y(p)>\nu(a)$. Then $$\deg(pa)=l+\deg_y(p),$$ a contradiction (since $pa=0$).

Now, statement 1 follows from statements (i) and (ii).

2. Statement 2 follows from statement 1.
\end{proof}

Lemma \ref{b12Mar24} provides examples of rings $R$ such that ${}'\CC_R^{lee}=\emptyset$, ${\CC'}_R^{ree}=\emptyset$ and $\CC_R^{ee}=\emptyset$.

\begin{lemma}\label{b12Mar24}
Let $R$ be either $\mI_n$ or $\mS_n$.  Then:  
\begin{enumerate}

\item $\pCCR \cap F_n=\emptyset$, $\CC_R' \cap F_n=\emptyset$ and $\CC_R \cap F=\emptyset$.

\item ${}'\CC_R^{lee}=\emptyset$, ${\CC'}_R^{ree}=\emptyset$ and $\CC_R^{ee}=\emptyset$.

\end{enumerate}
\end{lemma}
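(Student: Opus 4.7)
The plan is to treat statement 1 first by exploiting the matrix structure of the ideal $F_n \simeq M_\infty(K)^{\otimes n}$, and then deduce statement 2 from statement 1 together with the fact that every nonzero ideal of $R$ is essential on both sides.

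For statement 1, fix a nonzero $f\in F_n$ and write it as a finite sum $f=\sum_{(\alpha,\beta)\in T}\lambda_{\alpha\beta}E_{\alpha\beta}$ where $T\subset\N^n\times\N^n$ is a finite set and all $\lambda_{\alpha\beta}\in K^\times$. Since $T$ is finite, the projection of $T$ onto each coordinate is a finite subset of $\N^n$, so we may choose $\gamma,\delta\in\N^n$ such that $\gamma$ does not occur as a first coordinate and $\delta$ does not occur as a second coordinate of any element of $T$. Using the matrix-unit identity $E_{ab}E_{cd}=\delta_{bc}E_{ad}$, an immediate computation gives
\begin{equation*}
E_{\gamma\delta}\,f=\sum_\beta\lambda_{\delta\beta}E_{\gamma\beta}=0\quad\text{and}\quad f\,E_{\gamma\delta}=\sum_\alpha\lambda_{\alpha\gamma}E_{\alpha\delta}=0,
\end{equation*}
while $E_{\gamma\delta}\neq 0$. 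Hence $f\notin \pCCR$ and $f\notin \CC_R'$, which (together with $\CC_R=\pCCR\cap\CC_R'$) proves statement 1 for both $R=\mI_n$ and $R=\mS_n$ (the argument uses only the matrix-unit relations, which hold in both settings).

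For statement 2, recall that in both algebras every nonzero ideal is an essential left and right $R$-submodule of $R$ (Lemma \ref{b10Oct9}.(3) for $\mI_n$ and the corresponding statement for $\mS_n$ quoted in Section \ref{RING-PQmSn}). In particular, $F_n$ is a nonzero ideal, hence an essential left ideal and an essential right ideal of $R$. By statement 1, $\pCCR\cap F_n=\emptyset$, so ${}'\CC_R^{le}={}'\CC_R\cap R^{le}\subseteq\pCCR$ also misses $F_n$. Thus ${}'\CC_R^{le}$ does not meet the essential left ideal $F_n$, so by the dichotomy in Lemma \ref{c12Mar24}.(2) we cannot have ${}'\CC_R^{lee}={}'\CC_R^{le}$, hence ${}'\CC_R^{lee}=\emptyset$. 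The identical argument applied on the right side gives ${\CC'_R}^{ree}=\emptyset$, and since $\CC_R^{ee}$ is contained in both ${}'\CC_R^{lee}$ and ${\CC'_R}^{ree}$ (any multiplicative set meeting all essential left and right ideals whose elements are two-sided regular with two-sided essential principal ideals is, a fortiori, contained in the one-sided analogues), we conclude $\CC_R^{ee}=\emptyset$ as well.

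The only potentially subtle point is the combinatorial choice of $\gamma$ and $\delta$, but this is immediate from the finiteness of the support $T$; no genuine obstacle arises. Everything else is bookkeeping using the cited structural results on $F_n$ and the definitions of the sets ${}'\CC_R^{lee}$, ${\CC'_R}^{ree}$, $\CC_R^{ee}$.
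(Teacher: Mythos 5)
Your proposal follows the same strategy as the paper: for part 1, show that every nonzero element of $F_n$ is both a left and a right zero divisor via the matrix units (the paper simply asserts this as obvious); for part 2, combine this with the essentiality of $F_n$ and the dichotomy ${}'\CC_R^{lee}\in\{\emptyset,{}'\CC_R^{le}\}$ from Lemma \ref{c12Mar24}.(2). One correction is needed in part 1: your constraints on $\gamma$ and $\delta$ are swapped. Since $E_{\gamma\delta}E_{\alpha\beta}$ is nonzero only when the inner indices agree ($\delta=\alpha$), your displayed sum $E_{\gamma\delta}f=\sum_\beta\lambda_{\delta\beta}E_{\gamma\beta}$ vanishes precisely when $\delta$ is not a \emph{first} coordinate of any element of $T$, and $fE_{\gamma\delta}=\sum_\alpha\lambda_{\alpha\gamma}E_{\alpha\delta}$ vanishes precisely when $\gamma$ is not a \emph{second} coordinate, whereas you required the opposite ($\gamma$ not a first coordinate, $\delta$ not a second). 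As written, neither sum is forced to vanish. The easiest repair is to choose a single $\mu\in\N^n$ outside the finite union of both coordinate projections of $T$ and use $E_{\mu\mu}$; then $E_{\mu\mu}f=0$ and $fE_{\mu\mu}=0$. With that fix, both parts are correct and match the paper's (terser) proof.
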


\begin{proof} 1. Clearly, every element of the ideal $F$ is a left and right zero-divisor, and statement 1 follows.

2. The ideal $F_n$ is a left and right essential ideal of $R$, and so statement 2 follows from statement 1.
\end{proof}


\section{The rings $\pQ(\mA_n)$ and $Q'(\mA_n)$} \label{RI-PQAN} 

The aim of the section is to prove Theorem \ref{An18Mar24} and Theorem \ref{20Mar24}.   Theorem \ref{25Mar24} a criterion for  $\pQ (\mA_n)\simeq Q(A_n)$. As a corollary we obtain that $\pQ (\mA_1)\simeq Q(A_1)$ (Theorem \ref{20Mar24}). Theorem \ref{B31Mar24} and Theorem \ref{30Mar15} describe the set $\pCC_{\mA_1}$.
 At the beginning of the section, we recall necessary facts about the Jacobian algebras $\mA_n$ that are used in the proofs. The details can found in \cite{jacalg}. \\

The Weyl algebra $A_n= A_n(K)$ is a simple, Noetherian domain of
Gelfand-Kirillov dimension $\GK (A_n) =2n$. The Jacobian algebra
$\mA_n$ is neither  left nor right Noetherian, it  contains
infinite direct sums of nonzero left and right ideals. This means
that adding the inverses of the commuting regular elements $H_1, \ldots , H_n$ to the Weyl algebra $A_n$ is neither a left nor right Ore localization of the algebra Weyl $A_n$. This fact is a prime reason why the properties of the Jacobian algebras are almost opposite to the ones of the Weyl algebras. 

The algebra $\mA_n$ is a central,
prime algebra of Gelfand-Kirillov dimension $3n$  (\cite[Corollary 2.7]{jacalg}). The canonical involution $\th$ of the Weyl algebra $A_n$ can be uniquely  extended
to the algebra $\mA_n$ (see (\ref{thinv})). So, the
algebra $\mA_n$ is self-dual ($\mA_n\simeq \mA_n^{op}$) and
 its left and right algebraic  properties are the same.  Note
that the {\em Fourier transform} on the Weyl algebra $A_n$ cannot
be lifted to $\mA_n$. Many properties of the algebra
$\mA_n=\mA_1^{\t n }$ are determined by properties of $\mA_1$.
When $n=1$ we usually drop the subscript `1' in $x_1$, $\der_1$,
$H_1$, etc. The algebra $\mA_1$ contains the only proper ideal
$F=\oplus_{i,j\in \N} KE_{ij}$ where
$$
E_{ij}:=\begin{cases}
x^{i-j}(x^j\frac{1}{\der^jx^j}\der^j-x^{j+1}\frac{1}{\der^{j+1}x^{j+1}}\der^{j+1})& \text{if $i\geq j$},\\
(\frac{1}{\der x}\der )^{j-i}(x^j\frac{1}{\der^jx^j}\der^j-x^{j+1}\frac{1}{\der^{j+1}x^{j+1}}\der^{j+1})& \text{if $i<j$}.\\
\end{cases}
$$
As a ring without 1, the ring $F$ is canonically isomorphic to the
ring $M_\infty (K):=\varinjlim M_d(K)= \oplus_{i,j\in \N} KE_{ij}$
of infinite-dimensional matrices where $E_{ij}$ are the matrix
units ($F\ra M_\infty  (K)$, $E_{ij}\mapsto E_{ij}$). This is a
very important fact as we can apply  concepts of
finite-dimensional linear algebra (like trace, determinant, etc)
to integro-differential operators which is not obvious from the
outset. This fact is crucial in finding an inversion formula for
 elements of $\mA_1^*$.

Notice that  $\mA_n = \otimes_{i=1}^n\mA_1(i)\simeq \mA_1^{\t n}$ where $\mA_1(i):=K\langle x_i, \der_i, H_i^{\pm 1}\rangle$ and $H_i=\der_ix_i$. The algebra $\mA_n = \oplus_{\alpha \in \Z^n} \mA_{n, \alpha}$ is
a $\Z^n$-graded algebra where $\mA_{n,\alpha}:=\t_{k=1}^n
\mA_{1,\alpha_k}(k)$ for $\alpha=(\alpha_1, \ldots , \alpha_n)$. For $n=1$, (\cite[Theorem 2.3]{jacalg}), 
\begin{equation}\label{A1i}
\mA_{1,i}=\begin{cases}
x^i\mD_1& \text{if $i\geq 1$},\\
\mD_1& \text{if $i=0$},\\
\mD_1 \der^{-i}& \text{if $i\leq -1$},\\
\end{cases}
\end{equation}
where $\mD_1:= L\oplus (\oplus_{i,j\geq 1}Kx^iH^{-j}\der^i)$ is a
{\em commutative, non-Noetherian} algebra and   
$$L:= K[H^{\pm 1},
(H+1)^{-1}, (H+2)^{-1}, \ldots ]\;\; {\rm  and}\;\;H=\der x=x\der+1.$$
This gives a `compact'
$K$-basis for the algebra $\mA_1$ (and $\mA_n$). This basis
`behaves badly' under multiplication. A more conceptual
(`multiplicatively friendly') basis is given in \cite[Theorem 2.5]{jacalg}, see also \cite[Corollary 2.4]{jacalg} below.

\begin{itemize}

\item (\cite[Corollary 2.4]{jacalg})  $\mD_1 = 
 L\oplus (\oplus_{i\geq 1, j\geq
0}K\rho_{ji })$ {\em where} 
$\rho_{ji}:=x^i\frac{1}{H^j\der^ix^i}\der^i$.

\item For all $i\geq 1$ and $j\geq 0$, $\der^i\rho_{ji }=\frac{1}{H^j}\der^i$ (a direct computation). 

\item (\cite[Corollary 2.7.(10)]{jacalg}) $P_n=K[x_1, \ldots , x_n]$ {\em is the only
faithful, simple $\mA_n$-module.}

\item (\cite[Corollary 3.5]{jacalg}) $ \gp_1:=F\t\mA_{n-1}, \gp_2:= \mA_1\t F\t \mA_{n-2} , \ldots ,
\gp_n :=\mA_{n-1}\t F,$  {\em are precisely the prime ideals  of
height 1 of $\mA_n$.} 

\item (\cite[Corollary 3.15]{jacalg}) $\ga_n:= \gp_1+\cdots +\gp_n$ {\em
is the only prime ideal of $\mA_n$ which is  completely prime;
$\ga_n$ is the only ideal $\ga$ of $\mA_n$ such that $\ga \neq
\mA_n$ and $\mA_n/\ga$ is a Noetherian (resp. left Noetherian,
resp. right Noetherian) ring.}

\item (\cite[Theorem 3.1.(2)]{jacalg}) {\em Each ideal $I$ of $\mA_n$ is an idempotent ideal ($I^2=I$).}

\item (\cite[Corollary 2.7.(4)]{jacalg}) {\em The  ideal $F_n:=F^{\t n}$ is the smallest nonzero ideal of the algebra $\mA_n$.}

\item (\cite[Corollary 2.7.(8)]{jacalg}) {\em  ${}_{\mA_n}F^{\t n }
\simeq P_n^{(\N^n)}$ is a faithful, semi-simple, left
$\mA_n$-module; $F^{\t n }_{\mA_n} \simeq {P_n^{(\N^n)}}_{\mA_n}$
is a faithful, semi-simple, right $\mA_n$-module; ${}_{\mA_n}F^{\t
n }_{\mA_n}$ is a faithful, simple
$\mA_n$-bimodule. }
\end{itemize}

Recall that  $\CP_n$ is a polynomial algebra  $K[H_1, \ldots , H_n]$ in $n$
indeterminates  and  $\sigma=(\sigma_1,...,\sigma_n)$ is an
$n$-tuple  of  commuting automorphisms of $\CP_n$  where
 $\s_i(H_i)=H_i-1$ and $\s_i(H_j)=H_j$, for $i\neq j$.  By \cite[Theorem 2.2]{intdifaut}, $\ga_n$  is the only maximal ideal of
 the Jacobian algebra $\mA_n$. The factor algebra $\CA_n := \mA_n
 / \ga_n$ is the skew Laurent polynomial algebra
 \begin{eqnarray*}
\CA_n &:=&\CL_n [\der_1^{\pm 1}, \ldots , \der_n^{\pm 1}; \tau_1, \ldots
 , \tau_n] = \CL_n [x_1^{\pm 1}, \ldots , x_n^{\pm 1}; \s_1, \ldots
 , \s_n],\\
 \CL_n &:=&K[H_1^{\pm 1}, (H_1\pm 1)^{-1}, (H_1\pm 2)^{-1},
\ldots , H_n^{\pm 1}, (H_n\pm 1)^{-1}, (H_n\pm 2)^{-1}, \ldots ],
\end{eqnarray*}
where $\tau_i(H_j)= H_j+\d_{ij}$,  $\d_{ij}$ is the Kronecker delta, $z_i=\der_i^{-1}$ 
and  $\s_i=\tau_i^{-1}$. The algebra $B_n$ is a subalgebra of
$\CA_n$. 
Let  $S_n$ be a multiplicative submonoid
of $\CP_n$ generated by the elements $H_i+j$, $i=1, \ldots , n$,
and $j\in \mathbb{Z}$. Then $S_n$ is an Ore set for the Weyl algebra  $A_n$, the algebras $B_n$ and  the polynomial algebra $\CP_n$ such that  $\CA_n:=S_n^{-1}A_n\simeq S_n^{-1}B_n$  and 
$$ S_n^{-1}\CP_n=K[H_1^{\pm 1}, (H_1 \pm 1)^{-1}, (H_1 \pm 2)^{-1}, \ldots ,
H_n^{\pm 1}, (H_n \pm 1)^{-1}, (H_n \pm 2)^{-1}, \ldots ].$$ We identify the Weyl algebra
$A_n$ with a subalgebra of $\CA_n$ via the monomorphism 
$$A_n\ra \CA_n, \;\; x_i\mapsto x_i,\;\; \der_i\mapsto  H_ix_i^{-1}, \;\;
i=1, \ldots , n.$$ 
The Weyl algebra $A_n$ is a Noetherian domain. So, by  Goldie's Theorem, the (left and right)  quotient ring of $A_n$, $Q(A_n)$, is a division ring.  Then the algebra $\CA_n$ is a $K$-subalgebra of $Q(A_n)$
generated by the elements $x_i$, $x_i^{-1}$, $H_i$ and $H_i^{-1}$,
$i=1, \ldots , n$ since
$$ (H_i\pm j)^{-1}=x_i^{\mp j}H_i^{-1}x_i^{\pm j}, \;\; i=1, \ldots , n\;\; {\rm and}\;\; j\in \N.$$
Clearly, $\CA_n \simeq \CA_1^{\t n}$.\\

{\bf The involution $\th$ on $\mA_n$}. 
The Weyl algebra $A_n$ admits the {\em involution}
$$\th : A_n\ra A_n, \;\; x_i\mapsto \der_i, \;\; \der_i\mapsto
x_i, \;\; i=1, \ldots , n.$$ 
The involution $\th$ is uniquely extended to
an involution of $\mA_n$ by the rule 
\begin{equation}\label{thinv}
\th : \mA_n\ra \mA_n, \;\; x_i\mapsto \der_i, \;\; \der_i\mapsto
x_i,\;\;  \th (H_i^{-1})= H_i^{-1},  \;\; i=1, \ldots , n.
\end{equation}
Uniqueness is obvious: $\th (H_i) = \th (\der_ix_i)= \th (x_i )
\th (\der_i) = \der_ix_i= H_i$ and so $\th (H_i^{-1}) = H_i^{-1}$.  So, the algebra
$\mA_n$ is self-dual and   left
and right algebraic  properties of the algebra $\mA_n$ are the same.

The polynomial algebra $P_n$ is a left $A_n$-module
 where $\der_i *f:=\frac{\der f}{\der x_i}$ for all $i=1, \ldots , n$ and $f\in P_n$. The left $A_n$-module $P_n$ is isomorphic to the $A_n$-module $$A_n/A_n(\der_1, \ldots , \der_n)\simeq P_n\b1\simeq {}_{P_n}P_n \;\; {\rm where} \;\; \b1:=1+A_n(\der_1, \ldots , \der_n).$$
The maps $H_i: P_n\ra P_n$ are invertible for all $i=1, \ldots , n$ since $H_i x^\alpha \b1 =(\alpha_i+1)x^\alpha$ for all $\alpha =(\alpha_1, \ldots , \alpha_n)\in \N^n$ where $x^\alpha:=\prod_{i=1}^nx_i^{\alpha_i}$. Therefore, the polynomial algebra $P_n$ is also a left $\mA_n$-module which is isomoprphic to 
$$\mA_n/\mA_n(H_1-1, \ldots , H_n-1,\der_1, \ldots , \der_n)\simeq P_n\b1, \;\; {\rm where}\;\;\b1:=1+\mA_n(H_1-1, \ldots , H_n-1,\der_1, \ldots , \der_n),$$ by \cite[Theorem 2.3]{jacalg}. 
Using the involution
$\th$ on $\mA_n$,  the polynomial algebra $P_n$
can be seen as the {\em right} $\mA_n$-module by the rule 
$$pa:= \th (a) p\;\; {\rm  for\;
all}\;\;p\in P_n\;\; {\rm  and}\;\;a\in \mA_n.$$ By \cite[Corollary 2.7.(10)]{jacalg}),  $P_n=K[x_1, \ldots , x_n]$  {\em is the only
faithful, simple, right $\mA_n$-module}. Let $P_n':=\th (P_n)=K[\der_1, \ldots, \der_n]$, a polynomial algebra in $n$ variables. Clearly, $$(P_n)_{\mA_n}=\th (P_n\b1)=\widetilde{1}\th (P_n)=\widetilde{1}P_n'\simeq \mA_n/(H_1-1, \ldots , H_n-1,x_1, \ldots, x_n)\mA_n\simeq (P_n')_{P_n'}$$
where $\widetilde{1}:=1+(H_1-1, \ldots , H_n-1,\der_1, \ldots, \der_n)\mA_n$.

For $n=1$, the set $F$ is the only proper ideal of $\mA_1$, hence $\th (F)
= F$. Moreover, 
\begin{equation}\label{thEij}
\th (E_{ij})=\frac{i!}{j!}\, E_{ji}
\end{equation}
where $0!:=1$. The ring $F=\oplus_{i,j\in \N} KE_{ij}$
is equal to the matrix ring $M_\infty (K):= \cup_{d\geq 1} M_d(K)$
where $M_d(K):=\oplus_{0\leq i,j\leq d-1}KE_{ij}$. The ring
$F=M_\infty (K)$ admits the canonical involution which is the
 transposition $(\cdot )^t: E_{ij}\mapsto E_{ji}$. Let $D_!$ be the infinite
 diagonal matrix ${\rm diag} (0!, 1!, 2!, \ldots )$. Then, for $u\in
 F=M_\infty (K)$,
\begin{equation}\label{1thEij}
\th (u)=D_!^{-1} u^tD_!.
\end{equation}
Note that $D_!\not\in M_\infty (K)$. For  $n\geq 1$, $F_n:=F^{\t n } = \oplus_{\alpha , \beta \in
\N^n} K E_{\alpha \beta}=M_\infty (K)^{\t n }$ where $E_{\alpha
\beta}:= \t_{i=1}^n E_{\alpha_i\beta_i}$. By (\ref{thEij}),
\begin{equation}\label{nthEij}
\th (E_{\alpha \beta})= \frac{\alpha !}{\beta !}\, E_{\beta
\alpha},
\end{equation}
\begin{equation}\label{thFn}
\th (F^{\t n } ) =  F^{\t n }.
\end{equation}
Let $D_{n,!}:= D_!^{\t n }$. Then, for $u\in F^{\t n }$,
\begin{equation}\label{2thFn}
\th (u) =D_{n,!}^{-1}u^tD_{n,!}
\end{equation}
where $(\cdot )^t:M_\infty (K)^{\t n } \ra M_\infty (K)^{\t n }$,
$E_{\alpha \beta}\mapsto E_{\beta \alpha}$, is the transposition
map.

Consider the bilinear, symmetric, nondegenerate form $(\cdot,
\cdot ): P_n\times P_n\ra K$ given by the rule $(x^\alpha ,
x^\beta ) := \alpha ! \d_{\alpha, \beta }$ for all $\alpha , \beta
\in \N^n$. Then, for all $p,q\in P_n$ and $a\in \mA_n$,
\begin{equation}\label{paq=t}
(p,aq)= (\th (a) p , q).
\end{equation}

The Weyl algebra $A_n$ admits, the so-called,  {\em Fourier
transform},  which is the $K$-algebra automorphism $$\CF : A_n\ra A_n, \;\; x_i\mapsto \der_i, \;\;\der_i\mapsto - x_i \;\; {\rm for} \;\;i=1, \ldots , n.$$
Since $\CF (H_i) = -(H_i-1)$, $H_i$ is a unit of $\mA_n$ and
$H_i-1$ is not,  one {\em cannot} extend the Fourier transform to
$\mA_n$.

For all $i=1, \ldots , n$,  $\der_i\in \pCC_{\mA_n}$. Recall that if $n=1$ then $\der^i\rho_{ji }=\frac{1}{H^j}\der^i$  for all $i\geq 1$ and $j\geq 0$.   It follows that 
$$S_\der:=S_{\der_1, \ldots , \der_n}:=\{
\der_1^{\alpha_1}\cdots \der_n^{\alpha_n}\, | \, (\alpha_i)\in
\N^n\} \in \Den_l(\mA_n, \ga_n), \;\; S_\der\subseteq \pCC_{\mA_n}  \;\; {\rm and}\;\; S_\der^{-1}\mA_n \simeq \CA_n .$$

{\bf The ring $\pQ (\mA_n)$.} The ring 
 \begin{eqnarray*}
\D_n&:=&\CL_n^+[\der_1, \ldots , \der_n; \tau_1, \ldots , \tau_n],\;\; {\rm where}\\
 \CL_n^+ &:=&K[H_1^{\pm 1}, (H_1+ 1)^{-1}, (H_1+2)^{-1},
\ldots , H_n^{\pm 1}, (H_n+1)^{-1}, (H_n+2)^{-1}, \ldots ]\simeq L^{\t n},
\end{eqnarray*}
 is a skew polynomial ring where $\tau_i(H_j)=H_i+\d_{ij}$,  $\D_n^0:=\D_n\backslash \{ 0\}$, ${}'\D_n^0:=\D_n\cap \pCC_{\mA_n}$, 
$${}'\D_n:=\D_n\cap \pS (\mA_n)\;\; {\rm  and }\;\;\widetilde{{}'\D_n}:=\{c\in \D_n\, | \, \der^\alpha c\in \pS (\mA_n)\; {\rm  for\; some}\;\;\alpha \in \N^n\}.
$$
 Notice that  ${}'\D_n\subseteq 
{}'\D_n^0\subseteq \D_n^0$. Theorem \ref{An18Mar24} produces explicit left denominators sets   $S\in \Den_l(\mA_n, \ga_n)$ such that  $S^{-1}\mA_n\simeq \pQ (\mA_n)$. 
  By Theorem \ref{An18Mar24}, there are inclusions in the set $\Den_l(\mA_n, \ga_n)$ apart from $S_\der$: 
\begin{equation}
S_\der\subseteq {}'\D_n  \subseteq \pS (\mA_n)\subseteq \pS (\mA_n)+\ga_n\;\; {\rm and}\;\;{}'\D_n\subseteq \widetilde{{}'\D_n}\subseteq '\D_n+\ga_n \subseteq \pS (\mA_n)+\ga_n.
\end{equation}

\begin{theorem}\label{An18Mar24}
 
\begin{enumerate}

\item ${}'\D_n\in \Den_l(\mA_n, \ga_n)$ and ${}'\D_n^{-1}\mA_n\simeq \pQ (\mA_n)$. Furthermore, the  subset  ${}'\D_n$ of $\mA_n$  is a left denominator set of $\mA_n$ which is the  largest  left denominator set that is contained in the multiplicative set $\pCC_{\mA_n}\cap \D_n$.

\item $  {}'\D_n+\ga_n, \pS (\mA_n)+\ga_n \in \Den_l(\mA_n, \ga_n)$ and 
$$ 
 \Big({}'\D_n+\ga_n\Big)^{-1}\mA_n\simeq \Big( \pS (\mA_n)+\ga_n\Big)^{-1}\mA_n\simeq \pQ (\mA_n).  
$$

\item ${}'\D_n=\overline{{}'\D_n},\overline{{}'\D_n+\ga_n}, \overline{\pS (\mA_n)+\ga_n} \in \Den_l(\CL_n, 0)$ and 
$$ 
{}'\D_n^{-1}\CL_n\simeq \overline{{}'\D_n+\ga_n}^{-1}\CL_n\simeq \overline{\pS (\mA_n)+\ga_n}^{-1}\CL_n\simeq  \pQ (\mA_n)  
$$
where $\bS:= \pi_{\ga_n}(S)$ and  $\pi_{\ga_n}: \mA_n\ra \CL_n=\mA_n/\ga_n$, $r\mapsto \br := r+\ga_n$.

\item $\widetilde{{}'\D_n}\in \Den_l(\mA_n, \ga_n)$ and $\widetilde{{}'\D_n}^{-1}\mA_n\simeq \pQ (\mA_n)$. $\overline{\widetilde{{}'\D_n}}\in \Den_l(\CL_n, 0)$ and $\overline{\widetilde{{}'\D_n}}^{-1}\CL_n\simeq \pQ (\mA_n)$.

\item $\widetilde{{}'\D_n}+\ga_n\in \Den_l(\mA_n, \ga_n)$ and $\Big(\widetilde{{}'\D_n}+\ga_n\Big)^{-1}\mA_n\simeq \pQ (\mA_n)$.

\item ${}'\D_n \in \Den_l(\D_n, 0)$ and 
$
{}'\D_n^{-1}\D_n\simeq  \pQ (\mA_n)$.

\item ${}''\D_n:=\pS (\mA_n)\cap (\pCC_{\D_n+\ga_n}+\ga_n)+\ga_n\in \Den_l(\mA_n, \ga_n)$, $
{}'\D_n\subseteq 
{}''\D_n$ and $
{}''\D_n^{-1}\mA_n\simeq  \pQ (\mA_n)$.
\end{enumerate}
\end{theorem}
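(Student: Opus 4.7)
The plan is to mirror the proof of Theorem \ref{18Mar24} line by line, with $\mI_n$ replaced by $\mA_n$ throughout; the structural parallels are close, since both algebras are $\Z^n$-graded, admit $\ga_n$ as the only maximal ideal, contain $\D_n$ as a graded subalgebra whose image generates the relevant factor algebra, and admit $S_\der\in \Den_l(\cdot,\ga_n)$ with $S_\der^{-1}(\cdot)\simeq \CA_n$ (respectively $B_n$). Throughout I use the facts already established: $\pS(\mA_n)\in \Den_l(\mA_n,\ga_n)$ with $\pS(\mA_n)^{-1}\mA_n=\pQ(\mA_n)$, and $S_\der\subseteq \pS(\mA_n)$.

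First I would prove statement 1. The inclusions $S_\der\subseteq {}'\D_n\subseteq \pS(\mA_n)$ force $\ga_n=\ass_l(S_\der)\subseteq \ass_l({}'\D_n)\subseteq \ass_l(\pS(\mA_n))=\ga_n$, giving equality. The density of ${}'\D_n$ in $\pS(\mA_n)$ is the main technical step: for each $s\in \pS(\mA_n)$ I must find $\alpha\in \N^n$ with $\der^\alpha s\in \D_n$, after which automatically $\der^\alpha s\in \pS(\mA_n)\cap \D_n={}'\D_n$ since $\der^\alpha\in S_\der\subseteq \pS(\mA_n)$. This is reduced, by the $\Z^n$-grading $\mA_n=\bigoplus_{\gamma\in \Z^n}\mA_{n,\gamma}$ with $\mA_{1,k}=x^k\mD_1$ for $k\geq 0$ and $\mA_{1,-k}=\mD_1\der^k$ for $k\geq 1$, to showing that $\der_i^{\alpha_i}\mA_{1,\gamma_i}(i)\subseteq \CL_1^+\der_i^{\alpha_i-\gamma_i}$ for all $\alpha_i$ sufficiently large. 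Then I would conclude by Lemma \ref{c16Mar15} with $T=\pS(\mA_n)$ and $S={}'\D_n$ that ${}'\D_n\in \Den_l(\mA_n,\ga_n)$ and ${}'\D_n^{-1}\mA_n\simeq \pQ(\mA_n)$. The maximality claim is formal: any left denominator set $T$ contained in $\pCC_{\mA_n}\cap\D_n$ lies inside the largest such set $\pS(\mA_n)\cap \D_n={}'\D_n$.

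Statements 2 and 5 follow immediately from Corollary \ref{a19Mar24}.(2) applied to the pairs $({}'\D_n,\pS(\mA_n))$ and $(\widetilde{{}'\D_n},\pS(\mA_n))$; statement 3 is the standard correspondence $S\in\Den_l(R,\ga)\Leftrightarrow \bS\in\Den_l(R/\ga,0)$ with $S^{-1}R\simeq \bS^{-1}(R/\ga)$, applied to statements 1--2 modulo $\ga_n$, using that $\mA_n/\ga_n=\CA_n$ contains $\CL_n$. Statement 4 is proved exactly as statement 1 with $\widetilde{{}'\D_n}$ in place of ${}'\D_n$, the density condition being built into the definition of $\widetilde{{}'\D_n}$. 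Statement 7 sandwiches ${}''\D_n$ between ${}'\D_n$ and $\pS(\mA_n)$: because $\der^\alpha\in S_\der\subseteq {}''\D_n$ for all $\alpha$, density holds, so Lemma \ref{c16Mar15} applies. For statement 6 I would invoke Lemma \ref{b18Mar24}.(2) with $R=\D_n$, $R'=\mA_n$, $S={}'\D_n$, $\ga'=\ga_n$; one needs $\D_n\cap \ga_n=0$, which holds because the composition $\D_n\hookrightarrow \mA_n\twoheadrightarrow \CA_n$ identifies $\D_n$ with the skew polynomial subring $\CL_n^+[\der_i;\tau_i]$ of $\CA_n$ and is injective since $\CL_n^+\subseteq\CL_n$ is an inclusion of domains; and one needs $\mA_n/\D_n$ to be ${}'\D_n$-torsion, which follows from the density observation in statement 1 since $S_\der\subseteq {}'\D_n$.

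The main obstacle is the graded computation in statement 1. The element $x^iH^{-j}\der^i\in \mD_1$ does not itself lie in $\CL_1^+$ when $j\geq 2$, as the identity $x^i\der^i=(H-1)(H-2)\cdots(H-i)$ together with $H^{-j}\der^i=\der^i(H-i)^{-j}$ gives $x^iH^{-j}\der^i=(H-1)\cdots(H-i+1)(H-i)^{1-j}$, which has a pole at $H=i$ outside $\CL_1^+$. One therefore cannot argue simply that $\mD_1\subseteq \CL_1^+$. What must be verified is that after left multiplication by a sufficiently high power $\der_i^{\alpha_i}$, the commutation $\der_i^{\alpha_i}x_i^{k_i}=H_i(H_i+1)\cdots(H_i+k_i-1)\der_i^{\alpha_i-k_i}$ (for $\alpha_i\geq k_i$) and the shift relations $\der_iH_i^{-j}=(H_i+1)^{-j}\der_i$ absorb the problematic factors $(H_i-i)^{-1}$, producing an element of $\CL_n^+\der^{\alpha-\gamma}\subseteq \D_n$. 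This is a direct but non-trivial calculation on the explicit generators of $\mD_1$ furnished by \cite[Corollary 2.4]{jacalg}.
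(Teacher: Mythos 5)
Your proposal follows the paper's own proof essentially step for step: in statement 1 the inclusions $S_\der\subseteq{}'\D_n\subseteq\pS(\mA_n)$ pin down $\ass_l({}'\D_n)=\ga_n$, the $S_\der$-density of ${}'\D_n$ in $\pS(\mA_n)$ feeds Lemma~\ref{c16Mar15}, and the maximality argument is the same; statements 2, 4, 5 and 7 are the same sandwiching applications of Corollary~\ref{a19Mar24}.(2) and Lemma~\ref{c16Mar15}; statement 3 is the same quotient correspondence; and statement 6 is the same invocation of Lemma~\ref{b18Mar24}.(2). Two small remarks. First, your intermediate reduction ``$\der_i^{\alpha_i}\mA_{1,\gamma_i}(i)\subseteq\CL_1^+\der_i^{\alpha_i-\gamma_i}$ for all $\alpha_i$ sufficiently large'' cannot hold uniformly over the graded piece $\mA_{1,\gamma_i}(i)=x_i^{\gamma_i}\mD_1(i)$, since $\mD_1(i)$ contains $x_i^kH_i^{-j}\der_i^k$ for every $k\geq 1$ and the power of $\der_i$ needed to shift the pole at $H_i=k$ into $\CL_1^+$ grows with $k$; the correct statement is per element, which is exactly what your final paragraph argues and what the paper uses (the paper simply asserts ``for each $s\in\pS(\mA_n)$ there is an $\alpha\in\N^n$ with $\der^\alpha s\in{}'\D_n$'' without computation). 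Second, for statement 6 you apply Lemma~\ref{b18Mar24}.(2) with $R'=\mA_n$ and $\ga'=\ga_n$, which forces the extra verification $\D_n\cap\ga_n=0$, whereas the paper takes $R'=\mA_n/\ga_n$ with $\ga'=0$ (using statement 3 just proved) and so avoids both that check and the ideal $F_n$; both routes are correct and differ only cosmetically.
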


\begin{proof} 1. (i)  ${}'\D_n\in \Den_l(\mA_n, \ga_n)$ {\em and} ${}'\D_n^{-1}\mA_n\simeq\pQ (\mA_n)$: By the definition, the set    ${}'\D_n$ is a multiplicative subset of $\pS (\mA_n)\subseteq \mA_n$. It follows from the inclusions $S_\der \subseteq {}'\D_n\subseteq \pS (\mA_n)$ that 
$$\ga_n = \ass_l(S_\der )\subseteq \ass_l({}'\D_n)\subseteq \ass_l(\pS (\mA_n))=\ga_n, $$
and so $\ass_l({}'\D_n)=\ga_n=\ass_l(\pS (\mA_n))$. For each element $s\in \pS (\mA_n)$, there is an element $\alpha\in \N^n$ such that $\der^\alpha s\in {}'\D_n$. Notice that $\der^\alpha \in S_\der \subseteq {}'\D_n$. 
Now, the statement (i) follows from Lemma \ref{c16Mar15} where $T=\pS (\mA_n)\in \Den_l(\mA_n,\ga_n)$ and  $S={}'\D_n$.

(ii) {\em The  subset  ${}'\D_n$ of $\mA_n$  is a left denominator set of $\mA_n$ which is the  largest  left denominator set that is contained in the multiplicative set $\pCC_{\mA_n}\cap \D_n$}: Let $T$ be a left denominator set of $\mA_n$ which is the  largest  left denominator set that is contained in the multiplicative set $\pCC_{\mA_n}\cap \D_n$. By the statement (i),  ${}'\D_n\in \Den_l(\mA_n, \ga_n)$. Clearly,  ${}'\D_n\subseteq  \pCC_{\mA_n}\cap \D_n$ and so ${}'\D_n\subseteq T$. Since $T\subseteq \pCC_{\mA_n}$ and $\pS (\mA_n)$ is a the largest left denominator set in $\pCC_{\mA_n}$, we have the inclusion $T\subseteq \D_n\cap\pS(\mA_n)={}'\D_n$. Therefore, $T={}'\D_n$.

2. By statement 1, ${}'\D_n, \pS (\mA_n)\in\Den_l(\mA_n, \ga_n)$ and ${}'\D_n^{-1}\mA_n\simeq  \pS (\mA_n)^{-1}\mA_n=\pQ (\mA_n)$. Now, statement 2 follows from Corollary \ref{a19Mar24}.(2).   

3.  Statement 3 follows at once from statement 2 (If $S\in \Den_l(R,\ga)$ then $\bS :=S+\ga \in \Den_l(\bR, 0)$ and $S^{-1}R\simeq \bS^{-1}\bR$ where $\bR := R/\ga$).

4. By the definition, the set $\widetilde{{}'\D_n}$ is a multiplicative set such that ${}'\D_n\subseteq \widetilde{{}'\D_n}\subseteq \pS (\mA_n)$. Therefore,  $\ga_n=\ass_l({}'\D_n)\subseteq \ass_l(\widetilde{{}'\D_n})\subseteq \ass_l(\pS (\mA_n))=\ga_n$, and so $\ass_l(\widetilde{{}'\D_n})=\ga_n$. Now, the first part of statement 4 follows from the definition of the set $\widetilde{{}'\D_n}$ and Lemma \ref{c16Mar15} where $S=\widetilde{{}'\D_n}$ and $T=\pS(\mA_n)$.

The second  part of statement 4 follows from the first one.

5. Statement 5 follows from statement 4.

6. By statement 3,  ${}'\D_n \in \Den_l(\CL_n, 0)$ and $ {}'\D_n^{-1}\CL_n\simeq  \pQ (\mA_n)$.  Now, statement 6 follows from  Lemma \ref{b18Mar24}.(2) where $R=\D_n$,  $R'=\CL_n$ and  $S={}'\D_n$ (The $R$-module $R'/R$ is $S_\der$-torsion. Hence it is also ${}'\D_n$-torsion as $S_\der\subseteq {}'\D_n$).

7. By the definition, the subset ${}''\D_n$ of $\mA_n$ is a multiplicative set such that ${}'\D_n\subseteq {}''\D_n\subseteq \pS(\mA_n)$, see statements 1 and 2. Hence, 
$\ga_n=\ass_l({}'\D_n)\subseteq \ass_l({}''\D_n)\subseteq \ass_l(\pS(\mA_n))=\ga_n$, and so $\ass_l({}''\D_n)=\ga_n=\ass_l(\pS(\mA_n))$. Notice that $S_\der\subseteq {}'\D_n\subseteq {}''\D_n$ and for each element $s\in {}''\D_n$ there is an element $\alpha\in \N^n$ such that $\der^\alpha s\in {}'\D_n\subseteq {}''\D_n$. Now, statement 7 follows from 
 Lemma \ref{c16Mar15}  where $S={}''\D_n$ and  $T=\pS(\mA_n)$.
\end{proof}

In order to prove Theorem \ref{25Mar24},  we need the following two lemmas that are also interesting on their own. 

\begin{lemma}\label{a29Mar24}

\begin{enumerate}

\item Every nonzero ideal of the algebra $\mA_n$ has zero left and right annihilator.

\item Every nonzero ideal of the algebra $\mA_n$ is an essential left and right ideal of $\mA_n$.

\end{enumerate}
\end{lemma}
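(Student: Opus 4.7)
\begin{proof*}[Proof plan]
The plan is to deduce both statements from two imported facts about $\mA_n$: that it is a prime algebra (\cite[Corollary 2.7]{jacalg}) and that $F_n = F^{\otimes n}$ is the smallest nonzero ideal of $\mA_n$ (\cite[Corollary 2.7.(4)]{jacalg}). Statement 1 is a direct formal consequence of primeness, and statement 2 reduces via $F_n$ to a single essentiality computation that in turn uses statement 1.

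For statement 1, I would fix a nonzero ideal $I$ of $\mA_n$ and observe that the left and right annihilators $\lann_{\mA_n}(I)$ and $\rann_{\mA_n}(I)$ are themselves two-sided ideals of $\mA_n$ (because $I$ is two-sided: for $r\in R$ and $x\in\lann(I)$, $(rx)I = r(xI)=0$ and $(xr)I\subseteq x(rI)\subseteq xI=0$, with the symmetric argument on the right). Then $\lann_{\mA_n}(I)\cdot I=0$ and $I\cdot\rann_{\mA_n}(I)=0$, and since $\mA_n$ is prime and $I\neq 0$, both annihilators must vanish.

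For statement 2, the plan is to reduce to $I=F_n$. Since every nonzero ideal of $\mA_n$ contains $F_n$, it suffices to prove that $F_n$ is an essential left ideal and an essential right ideal. Let $L$ be any nonzero left ideal of $\mA_n$ and pick $0\neq l\in L$. By statement 1, $\rann_{\mA_n}(F_n)=0$, so $F_n l\neq 0$. Since $F_n$ is a two-sided ideal we have $F_n l\subseteq F_n$, and since $L$ is a left ideal we have $F_n l\subseteq L$; therefore
\[
0\neq F_n l\subseteq F_n\cap L,
\]
so $F_n$ is essential as a left ideal and hence so is any nonzero ideal $I\supseteq F_n$. The right-sided case is the mirror argument, using $\lann_{\mA_n}(F_n)=0$ in place of $\rann_{\mA_n}(F_n)=0$.

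I do not anticipate a serious obstacle here: the only substantive input is the primeness of $\mA_n$ together with the existence of a smallest nonzero ideal. The one place to be careful is the verification that one-sided annihilators of a two-sided ideal are themselves two-sided, which is what makes primeness applicable in statement 1.
\end{proof*}
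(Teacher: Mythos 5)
Your proof is correct, and for statement 2 it coincides with the paper's argument: reduce to $F_n$ (the smallest nonzero ideal), take $0\neq l$ in the nonzero left ideal $L$, use $\rann(F_n)=0$ to get $0\neq F_n l\subseteq F_n\cap L$, and mirror this on the right. For statement 1, however, you take a genuinely different route. You deduce it directly from primeness of $\mA_n$: the left and right annihilators of a two-sided ideal are themselves two-sided, so a product of ideals being zero forces the annihilator to vanish. The paper instead reduces statement 1 to $F_n$ as well, and then appeals to the fact that $F_n$ is an \emph{idempotent} ideal: a nonzero annihilator would contain $F_n$, so the zero product $\lann(F_n)\cdot F_n$ (resp.\ $F_n\cdot\rann(F_n)$) would contain $F_n^2=F_n\neq 0$, a contradiction. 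Both inputs (primeness, from \cite[Corollary 2.7]{jacalg}; idempotency of ideals, from \cite[Theorem 3.1.(2)]{jacalg}) are quoted earlier in the section, so both proofs are legitimate within the paper's setup. Your version is slightly more economical in that it uses only primeness, a standard and weaker hypothesis, and it applies to every nonzero ideal directly without the intermediate reduction; the paper's version has the minor advantage of being self-contained modulo the smallest-ideal and idempotency facts, sidestepping the need to check that one-sided annihilators of two-sided ideals are two-sided.
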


\begin{proof}  The ideal $F_n$ is the smallest nonzero ideal of the algebra $\mA_n$. So, it suffices to prove statements 1 and 2 for the ideal $F_n$. 
 Suppose that the left or right annihilator of $F_n$ is   a nonzero ideal of $\mA_n$.  Hence, it contains the idempotent ideal $F_n$, and so their product, which is the zero ideal,  contains $F_n^2=F_n\neq 0$, a contradiction. 

Let $I$ and $J$ be  left and right ideals of $\mA_n$, respectively. By statement 1, $I\supseteq F_nI\neq 0$ and $J\supseteq JF_n\neq 0$, and statement 2 follows. 
\end{proof}

Lemma \ref{b29Mar24} gives a characterization of left regular elements of the ring $\mA_n$. By applying the involution $\th$, we obtain a similar characterization of right regular elements of the ring $\mA_n$ ($a\in \pCC_{\mA_n}$ iff $\th (a)\in \CC_{\mA_n}'$).

\begin{lemma}\label{b29Mar24}
Let $a\in \mA_n$. Then the following statements are equivalent:
\begin{enumerate}

\item $a\in \pCC_{\mA_n}$.

\item $a\in {}'\CC_{F_n}$.

\item  $a\in {}'\CC_{P_n'}$ where $P_n'=K[\der_1, \ldots,  \der_n]$ is the only simple, faithful, right $\mA_n$-module.

\item $\th (a)\in \CC_{P_n}'$  where $P_n=K[x_1, \ldots  ,x_n]$ is the only simple, faithful, left $\mA_n$-module.

\end{enumerate}
\end{lemma}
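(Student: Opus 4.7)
The plan is to follow almost verbatim the proof strategy of the $\mI_n$-analogue (Lemma \ref{In-b29Mar24}), since the essential ingredients -- essentiality of $F_n$ as an ideal, semisimplicity of $F_n$ on both sides, and the existence of an involution swapping left and right structure -- all carry over from $\mI_n$ to $\mA_n$.

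For the equivalence $(1 \Leftrightarrow 2)$, I would apply Corollary \ref{d15Mar24}.(1), which asserts $\pCC_R = \pCC_I$ whenever $I_R$ is an essential right ideal of $R$. The essentiality of $F_n$ as a right $\mA_n$-submodule of $\mA_n$ is supplied by Lemma \ref{a29Mar24}.(2), so this implication is immediate.

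For $(2 \Leftrightarrow 3)$, the tool is Corollary \ref{e15Mar24}.(1). The right $\mA_n$-module $F^{\t n}_{\mA_n}$ is faithful and semisimple by \cite[Corollary 2.7.(8)]{jacalg} (quoted in Section~\ref{RI-PQAN}), and is a direct sum of copies of $(P_n)_{\mA_n}$. The discussion preceding Lemma \ref{b29Mar24} identifies the right $\mA_n$-module $(P_n)_{\mA_n}$ with $(P_n')_{\mA_n}$ via the involution $\th$, so $P_n'$ is the unique (up to isomorphism) simple isotypic component of ${F_n}_{\mA_n}$. Corollary \ref{e15Mar24}.(1) then yields $\pCC_{F_n} = \pCC_{P_n'}$.

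For $(3 \Leftrightarrow 4)$, the key observation is that the right $\mA_n$-module structure on $P_n'$ is defined, via $\th$, by $p\cdot a := \th(a)\ast p$, where $\ast$ denotes the left action on $P_n$. Consequently, the right-multiplication map $\cdot a : P_n' \to P_n'$ is injective if and only if the left-multiplication map $\th(a)\ast(-) : P_n \to P_n$ is injective, which is precisely the condition $\th(a) \in \CC_{P_n}'$. I do not anticipate a serious obstacle, since the argument is a structural translation of the $\mI_n$ case; the one place to proceed carefully is in the $(2 \Leftrightarrow 3)$ step, where one needs to extract from \cite[Corollary 2.7.(8)]{jacalg} a clean right-module decomposition $F^{\t n}_{\mA_n} \simeq (P_n')^{(\N^n)}$, rather than appealing to a ready-made analogue of Lemma \ref{c29Mar24}.(2).
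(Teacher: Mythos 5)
Your proposal is correct and follows the paper's proof step for step: $(1\Leftrightarrow 2)$ via Corollary~\ref{d15Mar24}.(1) together with Lemma~\ref{a29Mar24}.(2); $(2\Leftrightarrow 3)$ via Corollary~\ref{e15Mar24}.(1) together with the semisimple decomposition $(F_n)_{\mA_n}\simeq (P_n')^{(\N^n)}$ from \cite[Corollary 2.7.(8)]{jacalg}; and $(3\Leftrightarrow 4)$ via the involution $\th$ and the identity $P_n'=\th(P_n)$. The only differences are expository: you spell out the injectivity transfer under $\th$ in step $(3\Leftrightarrow 4)$ and flag explicitly that the right-module decomposition of $F_n$ must be read off from \cite{jacalg} rather than a stated $\mA_n$-analogue of Lemma~\ref{c29Mar24}.(2), both of which the paper leaves implicit.
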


\begin{proof} $(1\Leftrightarrow 2)$ The equivalence follows from Corollary \ref{d15Mar24}.(1) and   Lemma \ref{a29Mar24}.(2).

 $(2\Leftrightarrow 3)$ The equivalence follows from Corollary \ref{e15Mar24}.(1) and the fact that $(F_n)_{\mA_n}\simeq (P_n')^{(\N^n)}$  (\cite[Corollary 2.7.(8)]{jacalg}).

 $(3\Leftrightarrow 4)$ The equivalence follows from the fact that $P_n'=\th (P_n)$.
 \end{proof}

{\bf Criterion for  $\pQ (\mA_n)\simeq Q(A_n)$.}
 The algebras $\D_n$, $B_n$ and $\CA_n$ are Noetherian domains. By Goldie's Theorem, their quotient rings are division rings. 
It follows from the inclusions $A_n\subseteq B_n\subseteq \CA_n\subseteq Q(A_n)$ and  $\D_n\subseteq S_\der^{-1}\D_n\simeq  \CA_n\subseteq Q(A_n)$ that 
\begin{equation}\label{AQns-1}
Q(\D_n)= Q(B_n)=Q(\CA_n)= Q(A_n).
\end{equation}

\begin{theorem}\label{25Mar24}
The following statements are equivalent:
\begin{enumerate}

\item $\pQ (\mA_n)\simeq Q(A_n)$.

\item $\pQ_{l,cl} (\mA_n)\simeq Q(A_n)$.

\item The set $\overline{\pS (\mA_n)}$ is left  dense in  $\CA_n\backslash \{0\}$.

\item The set $\overline{\pCC_{\mA_n}}$ is left  dense in  $\CA_n\backslash \{0\}$.

\item The set ${}'\D_n$ is left  dense in $\CA_n\backslash \{0\}$.

\item The set ${}'\D_n$ is left  dense in $\D_n^0$.

\item For each element $s\in \D_n^0$, there is an element $s'\in \D_n^0$ such that $s's\in  \pCC_{F_n}$.

\item For each element $s\in \D_n^0$, there is an element $s'\in \D_n^0$ such that $s's\in \pCC_{P_n'}$ where $P_n'=K[\der_1, \ldots , \der_n]$ is the unique simple faithful right $\mA_n$-module.

\end{enumerate}
\end{theorem}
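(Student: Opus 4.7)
The plan is to establish the eight-way equivalence by mirroring the proof of Theorem \ref{A25Mar24}, exploiting that $\CA_n = \mA_n/\ga_n$ is a Noetherian domain with $Q(\CA_n) = Q(A_n)$ (see \eqref{AQns-1}), that $\pS(\mA_n)$ has associated ideal $\ga_n$, and that Theorem \ref{An18Mar24} already furnishes ${}'\D_n$ as a left denominator set in both $\mA_n$ and $\D_n$ with ${}'\D_n^{-1}\mA_n \simeq {}'\D_n^{-1}\D_n \simeq \pQ(\mA_n)$.

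I would first dispose of $(1 \Leftrightarrow 2)$ by invoking Theorem \ref{8Mar15}.(2), since $Q(A_n)$ is a division ring and therefore a semisimple Artinian ring. The equivalences $(1 \Leftrightarrow 3)$ and $(2 \Leftrightarrow 4)$ would then follow from the semisimplicity criterion of Theorem \ref{28Feb15} applied to $\mA_n$ with the ideal $\ga_n$: this ideal is prime (as $\CA_n$ is a domain), $\CA_n$ is a Noetherian domain of uniform dimension $1$, every uniform left ideal of $\CA_n$ trivially contains regular elements, and $Q_{l,cl}(\CA_n) = Q(A_n)$; thus the full criterion collapses to left density of $\overline{\pS(\mA_n)}$, respectively $\overline{\pCC_{\mA_n}}$, in $\CC_{\CA_n} = \CA_n \setminus \{0\}$.

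Next, for $(3 \Leftrightarrow 5)$ and $(1 \Leftrightarrow 6)$: the inclusion $S_\der \subseteq {}'\D_n \subseteq \pS(\mA_n)$ together with the fact recorded in Theorem \ref{An18Mar24} that every $s \in \pS(\mA_n)$ admits some $\der^\alpha \in S_\der$ with $\der^\alpha s \in {}'\D_n$ shows that left density of $\overline{{}'\D_n}$ and of $\overline{\pS(\mA_n)}$ in $\CA_n \setminus \{0\}$ are equivalent. Moreover, $\D_n = \CL_n^+[\der_1,\ldots,\der_n;\tau_1,\ldots,\tau_n]$ is a Noetherian domain, being an iterated skew polynomial ring over the commutative Noetherian domain $\CL_n^+$; hence $\D_n^0 \in \Den_l(\D_n, 0)$ with $(\D_n^0)^{-1}\D_n = Q(\D_n) = Q(A_n)$ by \eqref{AQns-1}. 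By \cite[Lemma 3.5.(3)]{Clas-lreg-quot}, the canonical map ${}'\D_n^{-1}\D_n \to (\D_n^0)^{-1}\D_n$ is an isomorphism iff ${}'\D_n$ is left dense in $\D_n^0$, and combined with Theorem \ref{An18Mar24}.(6) this delivers $(1 \Leftrightarrow 6)$ and hence $(5 \Leftrightarrow 6)$.

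The remaining equivalences $(7 \Leftrightarrow 8)$ and $(6 \Leftrightarrow 7)$ will be handled via Lemma \ref{b29Mar24}, which identifies ${}'\CC_{F_n} = {}'\CC_{P_n'} = \pCC_{\mA_n}$ as subsets of $\mA_n$; the first equivalence is then immediate. For $(6 \Leftrightarrow 7)$ the implication $(6 \Rightarrow 7)$ is automatic from the inclusion ${}'\D_n \subseteq \pCC_{\mA_n}$, so the hard part, and the main obstacle I anticipate, is the reverse direction, which reduces to verifying the identification $\D_n \cap \pCC_{\mA_n} = {}'\D_n$. My plan is to argue that $\D_n \cap \pCC_{\mA_n}$ is itself a left denominator set of $\mA_n$ contained in $\pCC_{\mA_n}$, whereupon the maximality clause of Theorem \ref{An18Mar24}.(1) forces the inclusion $\D_n \cap \pCC_{\mA_n} \subseteq \D_n \cap \pS(\mA_n) = {}'\D_n$; the denominator clause is trivial because $s \in \pCC_{\mA_n}$ forces $rs = 0 \Rightarrow r = 0$, so the real work is the left Ore condition in $\mA_n$, which I would extract from the $\Z^n$-grading of $\mA_n$ together with the presence of $S_\der \subseteq \D_n \cap \pCC_{\mA_n}$ and the fact that $S_\der^{-1}\mA_n = \CA_n$.
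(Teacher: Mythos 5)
Your proposal follows the paper's route almost exactly: $(1\Leftrightarrow 2)$ via Theorem~\ref{8Mar15}.(2), $(1\Leftrightarrow 3)$ and $(2\Leftrightarrow 4)$ via Theorem~\ref{28Feb15}, $(5\Leftrightarrow 6)$ via \cite[Lemma 3.5.(3)]{Clas-lreg-quot} and (\ref{AQns-1}), and $(7\Leftrightarrow 8)$ via Lemma~\ref{b29Mar24}; the slightly different grouping $(3\Leftrightarrow 5)$, $(1\Leftrightarrow 6)$ is cosmetic.

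The one place where your plan is not sound as stated is $(7\Rightarrow 6)$. You propose to establish the identity $\D_n\cap\pCC_{\mA_n}={}'\D_n$ by proving unconditionally that $\D_n\cap\pCC_{\mA_n}$ satisfies the left Ore condition in $\mA_n$, ``extracted from the $\Z^n$-grading together with $S_\der\subseteq\D_n\cap\pCC_{\mA_n}$ and $S_\der^{-1}\mA_n=\CA_n$.'' That cannot work: by the maximality clause of Theorem~\ref{An18Mar24}.(1), the equality $\D_n\cap\pCC_{\mA_n}={}'\D_n$ holds if and only if $\D_n\cap\pCC_{\mA_n}$ is a left denominator set of $\mA_n$, and this in turn is precisely what conditions (1)--(8) assert (it fails whenever they fail). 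An unconditional structural proof of the Ore condition would resolve the open conjecture $\pQ(\mA_n)\simeq Q(A_n)$, so the argument is circular unless it is carried out under hypothesis (7). The repair is straightforward: assume (7), so $\D_n\cap\pCC_{\mA_n}$ is left dense in $\D_n^0$; Lemma~\ref{c16Mar15} applied inside the Noetherian domain $\D_n$ (with $T=\D_n^0$, $\ass=0$) gives $\D_n\cap\pCC_{\mA_n}\in\Den_l(\D_n,0)$; then, using $S_\der\subseteq\D_n\cap\pCC_{\mA_n}$, $\CA_n=S_\der^{-1}\mA_n$ and $S_\der\in\Den_l(\mA_n,\ga_n)$, one verifies the left Ore condition in $\mA_n$ by solving the Ore equation for $\bar s$ and $\bar r$ in the Noetherian domain $\CA_n$, clearing $S_\der$-denominators, and then killing the resulting element of $\ga_n$ with a further element of $S_\der$; this places $\D_n\cap\pCC_{\mA_n}$ in $\Den_l(\mA_n,\ga_n)$, and the maximality clause then forces the equality and hence (6). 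It is worth noting that the paper's own proof of $(6\Leftrightarrow 7)$ states the identity $\D_n\cap\pCC_{\mA_n}={}'\D_n$ without spelling out these intermediate steps, so you identified a genuine pressure point; the issue is only that your proposed way around it, as written, does not make clear that hypothesis (7) must be used to obtain the Ore condition.
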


\begin{proof} $(1 \Leftrightarrow 2 \Leftrightarrow 3 \Leftrightarrow 4)$ The equivalence  $(1 \Leftrightarrow 2)$ follows from Theorem \ref{8Mar15}.(2). The equivalences  $(1 \Leftrightarrow 3)$  and  $(2\Leftrightarrow 4)$ follows from Theorem \ref{28Feb15} and Theorem \ref{8Mar15}.(2) (since $\ass_l(\pS (\mA_n))=\ga_n$ is a prime ideal of $\mA_n$, the algebra $\mA_n/\ga_n=\CA_n$ is a domain  and $Q(\mA_n/\ga_n)=Q(\CA_n)$ is a division ring).

$(1 \Leftrightarrow 5)$ By Theorem \ref{An18Mar24}.(1),  ${}'\D_n\in  \Den_l(\mA_n, \ga_n)$ and ${}'\D_n^{-1}\mA_n\simeq \pQ (\mA_n)$.
 Since the set ${}'\D_n$ is dense in  $\pS (\mA_n)$, the equivalence $(1 \Leftrightarrow 5)$ holds iff the equivalence $(1 \Leftrightarrow 3)$ holds.
 
$(5 \Leftrightarrow 6)$ Recall that ${}'\D_n, \D_n^0\in\Den_l (\D_n,0)$, ${}'\D_n\subseteq \D_n^0$  and  $Q(\D_n)=Q(A_n)$, see (\ref{AQns-1}). By \cite[Lemma 3.5.(3)]{Clas-lreg-quot}, ${}'\D_n^{-1}\D_n\simeq (\D_n^0)^{-1}\D_n=Q(A_n)$ iff
 the set ${}'\D_n$ is left  dense in $\D_n^0$.

$(6 \Leftrightarrow 7)$ By  Lemma \ref{b29Mar24}, the inclusion  $s's\in  \pCC_{F_n}$ is equivalent to the inclusion $s's\in  \pCC_{\mA_n}$. Hence, statement 7 is equivalent to the statement that 
  for each element $s\in \D_n^0$, there is an element $s'\in \D_n^0$ such that $s's\in   \pCC_{\mA_n}$, i.e. $s's\in  \D_n\cap  \pCC_{\mA_n}={}'\D_n$, i.e. it is equivalent to statement 6.

$(7\Leftrightarrow 8)$ The equivalence follows from  
 Lemma \ref{b29Mar24}.
\end{proof}

{\bf The ring $\pQ (\mA_1)$.} As an application of Theorem \ref{25Mar24} we obtain Theorem \ref{20Mar24}.

\begin{theorem}\label{20Mar24}
 $\pQ (\mA_1)\simeq Q(A_1)$. 
\end{theorem}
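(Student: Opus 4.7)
The plan is to verify condition (8) of Theorem~\ref{25Mar24}: for every $s\in\D_1^0$ there exists $s'\in\D_1^0$ with $s's\in\pCC_{P_1'}$, where $P_1'=K[\der]$ is the unique simple faithful right $\mA_1$-module. Once this is shown, Theorem~\ref{25Mar24} immediately yields $\pQ(\mA_1)\simeq Q(A_1)$.

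First I would make the right $\mA_1$-action on $P_1'=K[\der]$ fully explicit. Via the isomorphism $(P_1)_{\mA_1}\simeq (P_1')_{P_1'}$ recalled in this section and the rule $p\cdot a=\th(a)p$, the basis vector $\der^k\in P_1'$ corresponds to $x^k\in (P_1)_{\mA_1}$. Using $\th(H)=H$, $\th(\der)=x$ and the eigenvalue identity $Hx^k=(k+1)x^k$ in $P_1$, one obtains that any $f(H)\in\CL_1^+$ acts from the right on $\der^k$ as the scalar $f(k+1)$ (well defined since the poles of $f$ lie in $\{0,-1,-2,\ldots\}$), while $\der$ sends $\der^k\mapsto \der^{k+1}$. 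Consequently, for every $s=\sum_{i=0}^n d_i(H)\der^i\in\D_1$ with $d_i\in\CL_1^+$,
\[
\der^k\cdot s=\sum_{i=0}^n d_i(k+1)\,\der^{k+i}.
\]

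Next I would establish the following leading-coefficient criterion: if $d_n\ne 0$ and $d_n$ has no positive integer roots (as a rational function of $H$ it has only finitely many roots, its numerator being a polynomial), then $s\in\pCC_{P_1'}$. Indeed, for any nonzero $p=\sum_{k=0}^K c_k\der^k\in K[\der]$ with $c_K\ne 0$, the formula above shows that the coefficient of $\der^{K+n}$ in $p\cdot s$ equals $c_K d_n(K+1)\ne 0$, hence $p\cdot s\ne 0$.

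Finally, for an arbitrary $s=\sum_{i=0}^n d_i(H)\der^i\in \D_1^0$ with $d_n\ne 0$, let $R^+:=\{r\in\NN_{\ge 1}:d_n(r)=0\}$ (a finite set) and set $m:=\max R^+$ when $R^+\ne\emptyset$ and $m:=0$ otherwise. Take $s':=\der^m\in\D_1^0$. The skew-polynomial identity $\der^m f(H)=f(H+m)\der^m$ gives
\[
s's=\der^m s=\sum_{i=0}^n d_i(H+m)\,\der^{m+i},
\]
whose leading coefficient in $\der$ is $d_n(H+m)$. A positive integer zero $k\ge 1$ of $d_n(H+m)$ would force $k+m\in R^+$, but $k+m>m\ge\max R^+$; so no such $k$ exists, the criterion applies, and $s's\in\pCC_{P_1'}$, as required.

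The only substantive step is the first one, extracting the eigenvalue formula $\der^k\cdot f(H)=f(k+1)\der^k$ for the right action on $P_1'$; everything after that is bookkeeping with the commutation relation $\der f(H)=f(H+1)\der$ and the finiteness of the zero set of a nonzero rational function.
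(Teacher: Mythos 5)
Your proof is correct and follows essentially the same route as the paper's: both reduce to Theorem~\ref{25Mar24} and show that left-multiplying any $s\in\D_1^0$ by a sufficiently high power of $\der$ clears the finitely many positive-integer zeros of a distinguished coefficient, using the eigenvalue formula $H$ acts by $k+1$ on the degree-$k$ basis element of $K[\der]$. The only cosmetic differences are that you verify condition~(8) (action on $P_1'$) tracking the top coefficient $d_n$ with an explicit shift $m=\max R^+$, while the paper verifies conditions~(5)/(6) (via the essential right ideal $F\simeq K[\der]^{(\N)}$) tracking the bottom coefficient $\phi_n$ with an unspecified $i\gg 1$.
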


\begin{proof}  (i) {\em A nonzero rational function $\phi \in \CL_1$ belongs to the set $\pCC_{\mA_1}$ iff it has no root in the set $\N_+=\{ 1,2, \ldots\}$}: The right $\mA_1$-module $F$ is an essential right ideal of the algebra $\mA_1$ (\cite[Corollary 2.7.(6)]{jacalg}). Therefore, $\phi \in \pCC_{\mA_1}$ iff the map $\cdot \phi_F: F\ra F$, $f\mapsto f\phi$ is an injection. 
The right $\mA_1$-module $F=\oplus_{i\in \N}E_{i0}\mA_1\simeq \Big(E_{00}\mA_1 \Big)^{(\N)}$ is a direct sum of countably many copies of the right $\mA_1$-module $$E_{00}\mA_1=E_{00}F\simeq E_{00}K[\der]\simeq K[\der]_{K[\der]}\;\; {\rm  and}\;\;E_{00}\der^i H=E_{00}\der^i (i+1)\;\; {\rm for\; all}\;\;i\geq 0,$$
and the statement (i) follows.

(ii) {\em For each nonzero rational function $\phi \in \CL_1$, $\tau^i(\phi)\in \pCC_{\mA_1}$ for all $i\gg 1$ where $\tau (H)=H+1$}: The statement (ii) follows from the statement (i).

(iii) {\em For each nonzero element $d\in \D_n$, $\der^i d\in {}'\D_n$  for all $i\gg 1$}: The element $d$ is a unique sum $\phi_n\der^n+\phi_{n+1}\der^{n+1}+\cdots +\phi_m\der^m$ where $\phi_i \in \CL_1$ and $\phi_n\neq 0$. By the statement (ii), $\tau^i(\phi_n)\in \pCC_{\mA_1}$ for some $i\geq 0$. Therefore, the map $\cdot \tau^i(\phi_n): F\ra F$, $f\mapsto f\tau^i(\phi_n)$ is an injections. Hence, the map 
$\cdot \der^id: F\ra F$, $f\mapsto f\der^id$ is also an injections since 
$$\der^id=\tau (\phi_n)^i\der^{n+i}+\cdots +\tau (\phi_m)^i\der^{m+i}\;\; {\rm and}\;\;\tau (\phi_n)^i\der^{n+i}\in \pCC_{\mA_1}.$$
 Therefore, $\der^id\in \pCC_{\mA_1}$.

(iv) {\em The set  ${}'\D_1$ is left  dense in $\CA_1\backslash \{0\}$}: Clearly, the set  ${}'\D_1\backslash \{ 0\}$ is left dense in $\CA_1\backslash \{0\}$. Now, the statement (iv) follows from the statement (iii).

The theorem follows from Theorem \ref{25Mar24} and the fact that the set  ${}'\D_1$ is left dense in $\CA_1\backslash \{0\}$, the statement (iv).
\end{proof}

\begin{corollary}\label{b20Mar24}
 $Q' (\mA_1)\simeq Q(A_1)$. 
\end{corollary}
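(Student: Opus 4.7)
The plan is to derive the corollary formally from Theorem \ref{20Mar24} by applying the involution $\th$ of $\mA_1$, exactly as Corollary \ref{a22Mar24}.(1) was derived from Theorem \ref{19Mar24} using the involution $\eta$ of $\mS_n$. Since $\th : \mA_1 \to \mA_1$ is an anti-automorphism with $\th^2 = \id$, the algebra $\mA_1$ is self-dual, and passing through $\th$ converts all left-sided localization data into right-sided data.

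First I will record that $\th$ induces a bijection between left denominator sets of $\mA_1$ and right denominator sets of $\mA_1$: if $S \in \Den_l(\mA_1, \ga)$ then $\th(S) \in \Den_r(\mA_1, \th(\ga))$, and vice versa, because the left Ore and left reversibility conditions for $S$ are identical to the right Ore and right reversibility conditions for $\th(S)$. Under this bijection, $\th(\pCC_{\mA_1}) = \CC'_{\mA_1}$ (direct from the definitions of left and right regularity applied to $\th(a)$), and $\th(\pS(\mA_1)) = S'(\mA_1)$, since these are the largest such sets on each side. Consequently $\th$ extends to an anti-isomorphism
\[
\th : \pQ(\mA_1) \xrightarrow{\sim} Q'(\mA_1), \qquad s^{-1} a \mapsto \th(a)\, \th(s)^{-1},
\]
and in particular $Q'(\mA_1) \simeq \pQ(\mA_1)^{op}$.

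Next, by Theorem \ref{20Mar24}, $\pQ(\mA_1) \simeq Q(A_1)$, so $Q'(\mA_1) \simeq Q(A_1)^{op}$. To finish, I need $Q(A_1) \simeq Q(A_1)^{op}$. This is immediate from the standard involution on the Weyl algebra $A_1$ given by $\th|_{A_1}: x \mapsto \der,\ \der \mapsto x$, which is an anti-automorphism of the Noetherian domain $A_1$. Since $A_1$ satisfies the (left and right) Ore condition with respect to $A_1 \setminus \{0\}$, $\th|_{A_1}$ extends uniquely to an anti-automorphism of the division ring $Q(A_1)$ by $s^{-1}a \mapsto \th(a)\th(s)^{-1}$, giving $Q(A_1)^{op} \simeq Q(A_1)$. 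Combining everything yields $Q'(\mA_1) \simeq Q(A_1)$.

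There is no real obstacle here: once Theorem \ref{20Mar24} is established, the corollary is purely formal, the only nontrivial point being the bijection between largest left and largest right denominator sets under $\th$, which is forced by $\th^2 = \id$ and the maximality characterization of $\pS(\mA_1)$ and $S'(\mA_1)$.
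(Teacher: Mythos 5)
Your proof is correct and takes essentially the same approach as the paper: apply the involution $\th$ of $\mA_1$ to convert left-sided localization data into right-sided data, obtaining $Q'(\mA_1)\simeq \pQ(\mA_1)^{op}$, then use Theorem \ref{20Mar24} and the fact that $\th$ restricts to an involution of $A_1$ (hence extends to $Q(A_1)$, giving $Q(A_1)^{op}\simeq Q(A_1)$). The paper compresses this into the one-line chain $Q'(\mA_1)=\th(\pQ(\mA_1))\simeq\th(Q(A_1))=Q(\th(A_1))=Q(A_1)$, leaving the opposite-ring bookkeeping implicit; your write-up simply makes the bijection $\th(\pS(\mA_1))=S'(\mA_1)$ and the anti-isomorphism $\pQ(\mA_1)\to Q'(\mA_1)$ explicit, which is a welcome precision rather than a different argument.
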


\begin{proof} The result follows from Theorem \ref{20Mar24}: $Q' (\mA_1)=\th(\pQ (\mA_1))\simeq \th(Q(A_1)=Q(\th(A_1))= Q(A_1)$. 
\end{proof}

{\bf Descriptions of the sets $\pCC_{\mA_1}$ and $\CC_{\mA_1}'$.}

(i) $\pCC_{\mA_1}\subseteq \mA_1\backslash F$: It is obvious that every element of the ideal $F=\oplus_{i,j\in \N}KE_{ij}\simeq M_\infty (K)$ is a left and right zero divisor of the algebra $F$ (without 1) and of $\mA_1$.

(ii) {\em For each nonzero element $d\in \mA_1\backslash F$, $\der^i d \in \D_1^0$  for some}  $i\in \N$: The statement (ii) follows (\ref{A1i}) and the equality $\mD_1=L\oplus(\oplus_{i,j\geq 1}Kx^iH^{-j}\der^i)$.

(iii) {\em For each nonzero element $d\in \mA_1\backslash F$, $\der^i d\in \pCC_{\mA_1}$  for some  $i\in \N$}: The statement (iii) follows from the statement (ii) and  the statement (iii) of the proof of Theorem \ref{20Mar24}. 

 Then the well-defined map
\begin{equation}\label{mAn-GNd}
d: \mA_1\backslash F \ra \N , \;\; a\mapsto d(a) :=\min \{ i\in \N \, | \, \der^ia\in \pCC_{\mA_1}\}
\end{equation}
is called the {\em left regularity degree function} and the natural number $d(a)$ is called the {\em left regularity degree} of $a$. For each element $a\in \mA_1\backslash F$, $d(a)$ can be found in finitely many steps, see the proof of Theorem \ref{20Mar24}.  
Now, Theorem \ref{30Mar15}.(1) follows. Then Theorem \ref{30Mar15}.(2) follows from Theorem \ref{30Mar15}.(1). 

\begin{theorem}\label{30Mar15}

\begin{enumerate}
\item $\pCC_{\mA_1} = \{ \der^{d(a)}a\, | \, a\in \mA_1\backslash F\}$.
\item $\CC_{\mA_1}'= \th (\pCC_{\mA_1})$.
\end{enumerate}
\end{theorem}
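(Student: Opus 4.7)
The plan is to verify the three preliminary assertions (i)--(iii) already sketched in the paragraphs immediately preceding the theorem, and then to observe that the claimed equality follows essentially tautologically from the definition of the left regularity degree $d(a)$.

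First, I would establish $\pCC_{\mA_1}\subseteq \mA_1\backslash F$ by noting that since $F\simeq M_\infty(K)$ (as a ring without $1$) is generated by the matrix units $E_{ij}$ with $E_{ij}E_{kl}=\d_{jk}E_{il}$, every element of $F$ annihilates some nonzero $E_{pq}$ both on the left and on the right; hence no element of $F$ can be left regular in $\mA_1$. Second, for nonzero $a\in \mA_1\backslash F$, I would use the $\Z$-grading (\ref{A1i}) together with the decomposition $\mD_1 = L\oplus(\oplus_{i,j\geq 1}Kx^iH^{-j}\der^i)$ to write $a$ as a finite sum of homogeneous components of various degrees, and argue that multiplying on the left by a sufficiently high power of $\der$ moves all the negative-degree $\mD_1\der^{-k}$ contributions into non-negative degrees and, modulo $F$, places the result inside the skew polynomial subring $\D_1\subseteq \CA_1$. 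This gives $\der^i a\in \D_1^0$ (in the sense used in Section 5, i.e.\ nonzero image in $\D_1\subseteq\CA_1$) for all sufficiently large $i$, which is statement (ii).

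Third, statement (iii) is then obtained by combining (ii) with the statement (iii) in the proof of Theorem \ref{20Mar24}, which asserts that for each nonzero $d\in \D_1$ one has $\der^i d\in \pCC_{\mA_1}$ for all $i\gg 1$. Hence for every $a\in \mA_1\backslash F$ the set $\{i\in\N\,|\,\der^i a\in \pCC_{\mA_1}\}$ is nonempty, so the map $d: \mA_1\backslash F\to \N$ in (\ref{mAn-GNd}) is well defined, and $d(a)$ can be computed in finitely many steps (given a bound from the argument above, one checks each smaller $i$ by testing the injectivity of $\cdot(\der^i a)$ on a finite-dimensional subspace in the manner already used for Theorem \ref{A1Apr24}/Theorem \ref{C31Mar24}).

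Finally, the equality in part (1) is essentially formal: the inclusion $\supseteq$ holds by the very definition of $d(a)$, while for $\subseteq$ any $c\in \pCC_{\mA_1}$ lies in $\mA_1\backslash F$ by step one and satisfies $d(c)=0$, so $c=\der^{d(c)}c$ is in the right-hand side. Part (2) is immediate from the fact that the involution $\th$ in (\ref{thinv}) is a $K$-algebra anti-automorphism of $\mA_1$, which therefore interchanges the sets of left and right regular elements. The main obstacle, as I see it, is the careful bookkeeping in step two: one must check that the combination of the $\Z$-grading and the internal structure of the zero-component $\mD_1$ really does produce, after enough left multiplications by $\der$, an element whose class in $\CA_1=\mA_1/\ga_1$ lies in $\D_1$ and is nonzero; the rest is routine.
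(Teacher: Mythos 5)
Your proposal is correct and is essentially the paper's own proof: it is built on the same three preliminary observations (i)–(iii) given in the text immediately before the theorem, and it concludes as the paper does by noting that the equality in part (1) is formal once the regularity degree map $d$ is well-defined and that part (2) follows from the anti-automorphism $\th$. One small slip in your second step: in the $\Z$-grading of (\ref{A1i}) the generator $\der$ sits in degree $-1$, so left-multiplication by $\der^m$ \emph{lowers} degree; it is the positive-degree pieces $x^k\mD_1$ (and, after enough powers of $\der$, the $L^\perp$ and $F$ contributions) that are pushed into the non-positive range where $\D_1=\bigoplus_{i\geq 0}L\der^i$ sits inside $\mA_1$, rather than the $\mD_1\der^k$ pieces being moved upward.
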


By (\ref{A1i}), each element $a\in \mA_1$ is  a unique sum $a=\sum_{i=0}^ld_{-i}\der^i+\sum_{j=1}^m x^jd_j+a_F$ where $d_k\in \mD_1$, $k=-l, \ldots , m$  and $a_F\in F$. Let $a_\der:=\sum_{i=0}^ld_{-i}\der^i$. The integer
 $$s(a_F):=
\begin{cases}
\min \{ n\in \N \, | \, a_F\in \bigoplus_{i,j=0}^nKe_{ij}\} & \text{if }a_F\neq 0,\\
-1& \text{if } a_F=0.\\
\end{cases}  
$$
is called the {\em size} of the element $a_F\in F$. The integer $s(a) := s(a_F)$ is called the {\em size} of the element $a$. For each $i\in \N$, let $P_{1,\leq i}':=\{ a\in P_1'\, | \, \deg_y(a)\leq i\}$ where $\deg_y\der$ is the degree of the polynomial $a\in P_1'=K[\der]$ in the variable $\der$. Let  
$$L^\perp:=\bigoplus_{i,j\geq 1}Kx^iH^{-j}\der^i\;\; {\rm  and}\;\;\Xi := \bigg(\bigoplus_{i\geq 0}L^\perp\der^i \oplus \bigoplus_{i\geq 1}x^i\mD_1\bigg)\bigcup \bigg(\bigoplus_{i\geq 1}x^i\mD_1+F\bigg).$$ Then $\mD_1=L\oplus L^\perp$. For  a nonzero element $l^\perp=\sum_{i,j\geq 1}\l_{ij}x^iH^{-j}\der^i\in L^\perp$, let 
$$
\phi (l^\perp):=\sum_{i,j\geq 1}\l_{ij}\frac{(H-i)(H-i+1)\cdots (H-1)}{(H-i)^j}\;\;  \d (l^\perp):=\max \{ i\geq 1\, | \, \l_{ij}\neq 0\;\;{\rm  for\; some}\;\; j\geq 1\}
$$ 
and $\d (0):=0$.  Then for all $k\geq \d (l^\perp)$,
$$\der^kl^\perp=\tau (\phi (l^\perp))\der^k\;\; {\rm and}\;\; \tau (\phi (l^\perp))\in L$$
where $\tau (H)=H+1$.
 For each nonzero element $d=l+l^\perp\in \mD_1=L\oplus D^\perp$, where $l\in \L$ and $l^\perp\in L^\perp$, 
 let 
 $$\mu (d):=\min\{ i\geq \d (l^\perp)\, | \,{\rm the \; rational\; function}\;\; \tau^i(l+\phi (l^\perp))\in L\;\; {\rm has\; no\; root\;in}\;\; \N_+\}.$$ 
If $a=\sum_{i=0}^nd_{-i}\der^i+\sum_{j=1}^m x^jd_j+a_F\in \mA_1\backslash \Xi$ then  $a_\der\neq 0$ and so $d_{-n}\neq 0$ where $n=\deg_\der(a_\der)$. Let 
$$\mu (a):=\mu (d_{-n})\;\; {\rm  and}\;\;\nu (a):=\max\{ s(a),\mu (a)\}.$$

 \begin{theorem}\label{B31Mar24}

\begin{enumerate}
\item  $\pCC_{\mA_1}=\{ a\in \mA_1\backslash \Xi\, | \, \cdot a: P_{1,\leq \nu(a)}'\ra P_{1,\leq \nu (a)+\deg_y(a_y)}'$, $p\mapsto pa$  is an injection$\}$.

\item $\CC'_{\mA_1}=\th (\pCC_{\mA_1})$ where $\th$ is the involution of the algebra $\mA_1$, see (\ref{thinv}).
\end{enumerate}
\end{theorem}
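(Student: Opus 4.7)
My plan is to mirror the proofs of Theorem \ref{A31Mar24} and Theorem \ref{C31Mar24}. By Lemma \ref{b29Mar24}, I can replace membership in $\pCC_{\mA_1}$ by injectivity of the right-multiplication map $\cdot a : P_1' \to P_1'$, where $P_1' = K[\der]$ is identified with $\widetilde 1 \mA_1 \simeq \mA_1/(H-1,x)\mA_1$ via $\widetilde 1 \der^m \leftrightarrow \der^m$. My first task is to establish the right-action table on the basis $\{\der^k\}_{k\in\N}$: the shift $\der^k\cdot \der = \der^{k+1}$, the lowering $\der^k\cdot x = k\der^{k-1}$, the scalar actions $\der^k\cdot l(H) = l(k+1)\der^k$ for $l\in L$ and $\der^k\cdot l^\perp = \phi(l^\perp)(k+1)\der^k$ for $l^\perp\in L^\perp$ with $k\geq \delta(l^\perp)$, and $\der^k\cdot E_{ij}=\delta_{ki}\der^j$. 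These follow from the identity $\der x=H$ and a short induction using $\der H=(H+1)\der$, together with the commutation $\der^k l^\perp = \tau^k(\phi(l^\perp))\der^k$ inside $\mA_1$.

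Next I dispose of the inclusion $\Xi\cap \pCC_{\mA_1}=\emptyset$ by splitting $\Xi$ into its two constituent pieces. For $a$ in the first piece $\bigoplus_{i\geq 0}L^\perp\der^i\oplus\bigoplus_{i\geq 1}x^i\mD_1$, I plan to show directly that $1\in\ker(\cdot a)$: every summand carries a left factor $x^j$ with $j\geq 1$ (each generator of $L^\perp$ begins with $x^j$, $j\geq 1$), and $1\cdot x=0$ by the action table. For $a$ in the second piece $\bigoplus_{i\geq 1}x^i\mD_1+F$, the $x^j$-summands strictly lower the $\der$-degree while the $F$-part has image contained in $P_{1,\leq s(a)}'$, so $\cdot a$ restricts to a map $P_{1,\leq s(a)+1}'\to P_{1,\leq s(a)}'$, and a dimension count (domain has dimension $s(a)+2$, codomain has dimension $s(a)+1$) forces a nonzero kernel.

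The heart of the argument is the containment $\ker(\cdot a)\subseteq P_{1,\leq \nu(a)}'$ for $a\in \mA_1\setminus\Xi$. Write $a=\sum_{i=0}^n d_{-i}\der^i+\sum_{j=1}^m x^jd_j+a_F$; the hypothesis $a\notin\Xi$ forces $a_\der\neq 0$, so $d_{-n}=l+l^\perp\neq 0$. For $p=\sum_{k\leq K}\lambda_k\der^k$ with $\lambda_K\neq 0$ and $K>\nu(a)$, I would extract the coefficient of $\der^{K+n}$ in $p\cdot a$: the $\sum x^jd_j$ contribution has $\der$-degree $\leq K-1$, the $a_F$ contribution vanishes because $K>s(a)$, and the $a_\der$ contribution has leading term $\lambda_K(l+\phi(l^\perp))(K+1)\der^{K+n}$. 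Since $K\geq \mu(a)$ by the definition of $\nu(a)$, the definition of $\mu$ ensures that $l+\phi(l^\perp)$ has no root at $K+1$, so this leading term is nonzero and $p\cdot a\neq 0$.

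I expect the main obstacle to be pinning down the right-action formula $\der^k\cdot l^\perp = \phi(l^\perp)(k+1)\der^k$ precisely: this requires careful handling of the $\delta(l^\perp)$ threshold, the rational-function reduction through the quotient by $(H-1,x)\mA_1$, and verifying that the poles of $\phi(l^\perp)$ do not interfere at the integer point $k+1$. Once this leading-term bookkeeping is in place, combining the two steps above yields statement 1. Statement 2 is immediate from statement 1 because $\th$ is an algebra anti-involution of $\mA_1$, hence exchanges left and right regularity, giving $\CC_{\mA_1}'=\th(\pCC_{\mA_1})$.
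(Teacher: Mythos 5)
Your proof is correct and mirrors the paper's argument exactly: show $\Xi\cap\pCC_{\mA_1}=\emptyset$ by separately treating the two constituent pieces of $\Xi$ (trivial kernel element $1$ for the first, dimension count $s(a)+2>s(a)+1$ for the second), then bound $\ker(\cdot a)\subseteq P_{1,\leq\nu(a)}'$ for $a\notin\Xi$ via a leading-coefficient computation, and deduce part 2 from the involution $\th$. The paper leaves the right-action table on $P_1'\simeq\widetilde 1\,\mA_1$ and the degree bookkeeping implicit, whereas you spell them out, but the structure, the appeal to Lemma \ref{b29Mar24}, and the key identity $\der^k l^\perp = \tau^k(\phi(l^\perp))\der^k$ controlled by $\mu$ and $\delta$ are exactly the ones the paper relies on.
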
  
 
\begin{proof} 1. (i) $\pCC_{\mA_1}\cap \Xi=\emptyset$: Let $a\in \Xi$. We have to show that $a\not\in \pCC_{\mA_1}$.
 Suppose that  $a\in \bigoplus_{i\geq 0}L^\perp\der^i \oplus \bigoplus_{i\geq 1}x^i\mD_1$. Then $1\in \ker(\cdot a)$, and so $a\not\in \pCC_{\mA_1}$.
  
Suppose that  $a\in \bigoplus_{i\geq 1}x^i\mD_1+ F$.  
Then the map $\cdot a: P_{1,\leq s(a)+1}'\ra P_{1,\leq s(a)}'$, $p\mapsto pa$ is a well-defined map. Since 
$$\dim_K(P_{1,\leq s(a)+1}')=s(a)+2>s(a)+1=\dim_K( P_{1,\leq s(a)}'),$$ $\ker(\cdot a)\neq 0$, $a\not\in \pCC_{\mA_1}$.  Therefore, $\pCC_{\mA_1}\cap \Xi=\emptyset$.

(ii) {\em For each  element} $ a\in \mA_1\backslash \Xi$, $\ker_{P_1'}(\cdot a)\subseteq  P_{1,\leq \nu(a)}'$: Since $a\in \mA_1\backslash \Xi$, $a_y:=\sum_{i=0}^ld_{-i}\der^i\neq 0$ where $d_{-i}\in \mD_1$. Suppose that $d_{-l}\neq 0$. Suppose that $p\in \ker_{P_1'}(\cdot a)\backslash  P_{1,\leq \nu(a)}'$, i.e. $\deg_y(p)>\nu(a)$. Then $$\deg(pa)=l+\deg_y(p),$$ a contradiction (since $pa=0$).

Now, statement 1 follows from statements (i) and (ii).

2. Statement 2 follows from statement 1.
\end{proof}



\begin{theorem}\label{27Mar24}
\begin{enumerate}

\item $(1-\s)^d(\phi)=d!\l_d$ where $(1-\s)^d:=\prod_{i=1}^n(1-\s_i)^{d_i}$ and $d!=d_1!\cdots d_n!$.

\item $\sum_{\alpha \in \Pi_d}\CP_n\s^\alpha (\phi)=\CP_n$, i.e. $\bigcap_{\alpha \in \Pi_d}V(\s^\alpha (\phi))=\emptyset$.

\item For every automorphism $\tau\in \Aut_K(\CP_n)$, the automorphisms $\s_1'=\tau\s_1\tau^{-1}, \ldots , \s_n'=\tau\s_n\tau^{-1}\in \Aut_K(\CP_n)$  commute
 and $\sum_{\alpha \in \Pi_d}\CP_n\s'^\alpha (\tau(\phi))=\CP_n$, i.e. $\bigcap_{\alpha \in \Pi_d}V(\s'^\alpha (\tau(\phi)))=\emptyset$.

\end{enumerate}
\end{theorem}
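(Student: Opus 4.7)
The plan is to establish part 1 by a direct finite-difference computation on monomials, and then to derive parts 2 and 3 as formal consequences (binomial expansion for part 2, transport of structure for part 3). The theorem is essentially a crisp algebraic identity; I do not anticipate a substantive obstacle.

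For part 1, I would first note that the $\s_i$ pairwise commute, hence so do the $1-\s_i$, so $(1-\s)^d:=\prod_i(1-\s_i)^{d_i}$ is a well-defined $K$-linear endomorphism of $\CP_n$. Since $\s_i(H_j)=H_j-\d_{ij}$, the operator $1-\s_i$ acts as the classical forward difference in the variable $H_i$ alone: it fixes every $H_j$ with $j\neq i$, and on $H_i^k$ it produces $H_i^k-(H_i-1)^k$, a polynomial in $H_i$ of degree $k-1$ with leading coefficient $k$. Iterating $d_i$ times on $H_i^k$ yields $0$ for $k<d_i$, the constant $d_i!$ for $k=d_i$, and a polynomial of degree $k-d_i$ in $H_i$ for $k>d_i$. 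Writing $\phi=\sum_{\beta\le d}\lambda_\beta H^\beta$ with $d$ its componentwise multi-degree and $\lambda_d\in K^\times$ the leading coefficient, then applying $(1-\s)^d$ term by term, only the $\beta=d$ summand survives, and it contributes $d!\lambda_d$. This proves $(1-\s)^d(\phi)=d!\lambda_d$.

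For part 2, I would expand the operator binomially:
\[
(1-\s)^d=\prod_{i=1}^n\sum_{\alpha_i=0}^{d_i}(-1)^{\alpha_i}\binom{d_i}{\alpha_i}\s_i^{\alpha_i}=\sum_{\alpha\in\Pi_d}(-1)^{|\alpha|}\binom{d}{\alpha}\s^\alpha,
\]
where $\binom{d}{\alpha}:=\prod_i\binom{d_i}{\alpha_i}$ and $|\alpha|=\alpha_1+\cdots+\alpha_n$. Applying this to $\phi$ and invoking part 1 gives
\[
d!\lambda_d=\sum_{\alpha\in\Pi_d}(-1)^{|\alpha|}\binom{d}{\alpha}\s^\alpha(\phi)\in\sum_{\alpha\in\Pi_d}\CP_n\,\s^\alpha(\phi).
\]
Since $\lambda_d$ is a nonzero scalar, $d!\lambda_d$ is a unit of $\CP_n$, which forces $\sum_{\alpha\in\Pi_d}\CP_n\,\s^\alpha(\phi)=\CP_n$. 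The geometric reformulation $\bigcap_{\alpha\in\Pi_d}V(\s^\alpha(\phi))=\emptyset$ is then the standard translation (via the Nullstellensatz, or directly: a proper ideal always has a common zero over an algebraic closure).

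Part 3 is pure transport of structure. Commutativity of the $\s_i'$ is automatic from $\s_i'\s_j'=\tau\s_i\s_j\tau^{-1}=\tau\s_j\s_i\tau^{-1}=\s_j'\s_i'$. Applying the $K$-algebra automorphism $\tau$ to the identity of part 2, I use $\tau(\CP_n)=\CP_n$ and $\tau(\s^\alpha(\phi))=(\tau\s\tau^{-1})^\alpha(\tau(\phi))=\s'^{\alpha}(\tau(\phi))$ to obtain
\[
\CP_n=\tau\Big(\sum_{\alpha\in\Pi_d}\CP_n\,\s^\alpha(\phi)\Big)=\sum_{\alpha\in\Pi_d}\CP_n\,\s'^{\alpha}(\tau(\phi)),
\]
and the vanishing-locus statement follows as before. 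The only point that requires care throughout is the standing assumption that $\lambda_d$ is a nonzero \emph{scalar} (the leading coefficient in the full multi-degree $d$ of $\phi$), rather than a polynomial in a partial expansion; once that is secured, everything else is formal.
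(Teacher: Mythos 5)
Your parts 2 and 3 coincide with the paper's argument (binomial expansion of $(1-\s)^d$ plus transport of structure via $\tau$), and your part 1 reaches the same identity by a direct monomial-by-monomial computation, whereas the paper instead induces on $n$ by peeling off $H_n$: write $\phi=\phi_{n-1}H_n^{d_n}+(\text{lower order in }H_n)$, note $(1-\s_n)^{d_n}(\phi)=d_n!\,\phi_{n-1}\in\CP_{n-1}$ with lex-leading term $d_n!\,\l_d H_1^{d_1}\cdots H_{n-1}^{d_{n-1}}$, and iterate. Both routes are sound; yours is a bit more economical. One point to tighten: you call $d$ the componentwise multi-degree and write $\phi=\sum_{\beta\le d}\lambda_\beta H^\beta$, but the operative hypothesis (made explicit in Corollary~\ref{a27Mar24}) is that $\lambda_d H^d$ is the \emph{lexicographic} leading term of $\phi$ with $H_1<\cdots<H_n$, under which the componentwise bound $\beta\le d$ can fail for monomials occurring in $\phi$. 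Your computation still goes through, because lex-maximality of $d$ guarantees that any $\beta\neq d$ occurring in $\phi$ has $\beta_i<d_i$ for \emph{some} $i$ (otherwise $\beta$ would beat $d$ in the lex order), and a single such strict inequality already forces $(1-\s)^d(H^\beta)=0$; that weaker fact is all your argument actually uses, so the displayed decomposition should be replaced by this observation. Your closing caution about $\lambda_d$ being a genuine scalar leading coefficient rather than a coefficient in a partial expansion is exactly the right instinct, and this is the precise form it should take.
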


\begin{proof} 1. Since $\CP_n=\CP_{n-1}\t K[H_n]$, $\phi=\phi_{n-1}H_n^{d_n}+\psi_{n-1}H_n^{d_n-1}+\cdots$ where $\phi_{n-1},\psi_{n-1},\ldots \in \CP_{n-1}$. Then
$$ (1-\s_n)^{d_n}(\phi)=d_n!\phi_{n-1}\in \CP_{n-1}$$
and the leading term of the polynomial $ d_n!\phi_{n-1}\in \CP_{n-1}$ is $d_n\l_d H_1^{d_1}\cdots H_{n-1}^{d_{n-1}}$. Now, the result follows by induction on $n$ (or by repeating the above computation $n-1$ more times).

2.  By statement 1, $K^\times \ni d!\l_d=(1-\s)^d(\phi)\in\sum_{\alpha \in \Pi_d}\CP_n\s^\alpha (\phi) $, and statement 2 follows.  

3. Clearly, the automorphisms $\s_1', \ldots , \s_n'$ commute and 
$$\CP_n=\tau (\CP_n)=\tau\bigg(\sum_{\alpha \in \Pi_d}\CP_n\s^\alpha (\phi)\bigg)=\sum_{\alpha \in \Pi_d}\CP_n\s'^\alpha (\tau(\phi)).$$
\end{proof}

\begin{corollary}\label{a27Mar24}

Let $\CP_n=K[H_1, \ldots , H_n]$ be a polynomial algebra over a field $K$ of characteristic zero and  $\s_i\in \Aut_K(\CP_n)$ where $\s_i (H_j)=H_j-\mu_i\d_{ij}$ for $i,j=1, \ldots , n$, $\mu_i\in K^\times$ and $\d_{ij}$ is the Kronecker delta.  Let  $\phi\in \CP_n\backslash \{ 0\}$ and $\l_d H^d$ be the  leading term of the polynomial  $\phi$ with respect to  the lexicographic ordering $H_1<\cdots <H_n$ where $\l_d\in K^\times$ and $d=(d_1, \ldots , d_n) \in \N^n$. Then:
\begin{enumerate}

\item $(1-\s)^d(\phi)=d!\mu^d\l_d$ where $(1-\s)^d:=\prod_{i=1}^n(1-\s_i)^{d_i}$, $d!=d_1!\cdots d_n!$  and $\mu^d:=\mu_1^{d_1}\cdots \mu_n^{d_n}$.

\item $\sum_{\alpha \in \Pi_d}\CP_n\s^\alpha (\phi)=\CP_n$, i.e. $\bigcap_{\alpha \in \Pi_d}V(\s^\alpha (\phi))=\emptyset$.
 
\end{enumerate}
\end{corollary}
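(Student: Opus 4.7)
\begin{proof*}[Proof plan for Corollary \ref{a27Mar24}]
The plan is to reduce both statements to the already-proven Theorem \ref{27Mar24} by a rescaling change of variables. Specifically, I will introduce the $K$-algebra automorphism $\tau\in\Aut_K(\CP_n)$ defined by $\tau(H_i)=\mu_i^{-1}H_i$ for $i=1,\ldots,n$, and compare the shift operators $\s_i$ appearing in the corollary (where $\s_i(H_j)=H_j-\mu_i\delta_{ij}$) with the ``unit shifts'' $\s_i^\circ$ of Theorem \ref{27Mar24} (where $\s_i^\circ(H_j)=H_j-\delta_{ij}$). A direct check on generators shows $\tau\s_i^\circ\tau^{-1}(H_j)=H_j-\mu_i\delta_{ij}$, i.e.\ $\s_i=\tau\s_i^\circ\tau^{-1}$ for every $i$.

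Next I would set $\phi^\circ:=\tau^{-1}(\phi)$. Since $\tau^{-1}$ multiplies each monomial $H^\alpha$ by the scalar $\mu^\alpha$, the passage $\phi\mapsto\phi^\circ$ only rescales coefficients; it preserves the monomial support and hence the lexicographic leading monomial. Thus the leading term of $\phi^\circ$ is $(\l_d\mu^d)H^d$, of the same multidegree $d$.

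For statement 1, the key identity is $(1-\s)^d(\phi)=\tau\bigl((1-\s^\circ)^d(\phi^\circ)\bigr)$, which follows at once from $\s_i=\tau\s_i^\circ\tau^{-1}$ and the fact that $\tau,\tau^{-1}$ are $K$-algebra homomorphisms. Applying Theorem \ref{27Mar24}.(1) to $\phi^\circ$ (whose leading coefficient is $\l_d\mu^d$) gives $(1-\s^\circ)^d(\phi^\circ)=d!\,\l_d\mu^d\in K$, and since $\tau$ fixes $K$ this yields $(1-\s)^d(\phi)=d!\,\mu^d\l_d$, as required.

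For statement 2, I would apply Theorem \ref{27Mar24}.(3) with the automorphism $\tau$ above and base polynomial $\phi^\circ$: the conjugated shifts $\s_i'=\tau\s_i^\circ\tau^{-1}$ are precisely our $\s_i$, and $\tau(\phi^\circ)=\phi$, so the conclusion reads $\sum_{\alpha\in\Pi_d}\CP_n\s^\alpha(\phi)=\CP_n$. Equivalently, statement 2 is an immediate consequence of statement 1, since the expansion $(1-\s)^d=\sum_{\alpha\in\Pi_d}(-1)^{|\alpha|}\binom{d}{\alpha}\s^\alpha$ exhibits the unit $d!\,\mu^d\l_d\in K^\times$ as an element of the ideal $\sum_{\alpha\in\Pi_d}\CP_n\s^\alpha(\phi)$, which forces that ideal to equal $\CP_n$; by the Nullstellensatz this is $\bigcap_{\alpha\in\Pi_d}V(\s^\alpha(\phi))=\emptyset$. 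The only non-routine point is verifying the conjugation identity $\s_i=\tau\s_i^\circ\tau^{-1}$ with the correct sign and the correct power of $\mu_i$; beyond that, everything is formal.
\end{proof*}
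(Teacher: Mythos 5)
Your proof is correct, but it takes a genuinely different route from the paper's. The paper's proof of Corollary \ref{a27Mar24} simply says to repeat the inductive computation of Theorem \ref{27Mar24} (peeling off $(1-\s_n)^{d_n}$, etc.) with the extra factors of $\mu_i$ carried along; the ``obvious adjustments'' just turn $d_n!\phi_{n-1}$ into $d_n!\mu_n^{d_n}\phi_{n-1}$ at each stage. You instead \emph{derive} the corollary from the theorem via the diagonal rescaling $\tau(H_i)=\mu_i^{-1}H_i$, verifying $\s_i=\tau\s_i^\circ\tau^{-1}$ and $\tau^{-1}(\phi)=\phi^\circ$ with leading coefficient $\l_d\mu^d$, then conjugating $(1-\s^\circ)^d$. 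I checked the conjugation identity ($\tau\s_i^\circ\tau^{-1}(H_j)=\mu_j(\mu_j^{-1}H_j-\d_{ij})=H_j-\mu_i\d_{ij}$), the rescaling of the leading coefficient, and the fact that $\tau$ fixes $K$ so the final scalar passes through; all correct. Your derivation of statement 2 from statement 1 by expanding $(1-\s)^d$ over the box $\Pi_d$ is also the slicker path (and is in fact the same mechanism the paper uses in Theorem \ref{27Mar24}.(2)). The advantage of your approach is that it makes the corollary a literal consequence of the theorem via Theorem \ref{27Mar24}.(3), with no re-computation, and it explains conceptually \emph{why} the factor $\mu^d$ appears (it is the Jacobian of the rescaling acting on the top monomial). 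The paper's approach is more self-contained and does not presuppose that the reader will find the right $\tau$. Either is acceptable; yours is arguably cleaner.
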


\begin{proof} Repeat the proofs of statement 1 and 2 of Theorem \ref{27Mar24} and making an obvious adjustments.
\end{proof}

{\bf Licence.} For the purpose of open access, the author has applied a Creative Commons Attribution (CC BY) licence to any Author Accepted Manuscript version arising from this submission.

{\bf Disclosure statement.} No potential conflict of interest was reported by the author.

{\bf Data availability statement.} Data sharing not applicable – no new data generated.

\small{

School of Mathematics and Statistics

University of Sheffield

Hicks Building

Sheffield S3 7RH

UK

email: v.bavula@sheffield.ac.uk}
}

\end{document}